\documentclass[11pt,letterpaper,one side]{amsart}
\pdfoutput=1
\usepackage{mathmode}
\usepackage{bm,yhmath}
\usepackage{tikz,tikz-cd}
\usepackage{enumitem}
\usepackage{graphicx,array}
\usepackage{stackengine,scalerel}
\usepackage{amsthm}
\usepackage{multirow}
\usepackage{longtable}
\usepackage{ragged2e}
\graphicspath{ {./figures/} }
\def\PAA{profinitely almost auto-symmetric}

\def\XX{\mathscr{G}}
\def\Z{\mathbb{Z}}
\def\MCG{\mathrm{MCG}}
\def\[#1{[\![#1]\!]}
\def\abs#1{\left |#1 \right |}
\def\im{\mathrm{im}}
\def\Tor{\mathrm{tor}}
\def\Aut{\mathrm{Aut}}

\def\Fp{\mathbb{F}_p}
\def\O{\mathcal{O}}
\def\Stab{\mathrm{Stab}}
\def\tto{\longrightarrow}
\def\ttt{\xrightarrow{\tiny \cong}}
\def\T{\hat{T}}
 \DeclareRobustCommand{\longleftmapsto}{\text{\reflectbox{$\longmapsto$}}}
\def\quotient#1#2{%
    \raise1ex\hbox{$#1$}\Big/\lower1ex\hbox{$#2$}%
}
\def\quo#1#2{%
    \raise0.6ex\hbox{$#1$}\big/\lower0.6ex\hbox{$#2$}%
}
\def\Hom{\mathrm{Hom}}
\def\F{{f^\dagger}}
\def\R{\mathbb{R}}

\def\limi{\varprojlim}

\def\Zx{\widehat{\Z}^{\times}}

\def\hfree#1{H_1({#1},\Z)_{\mathrm{free}}}
\def\ab#1{{#1}^{\text{ab}}}
\def\Ab#1{{#1}^{\text{Ab}}}

\def\tensor{\widehat{\mathbb{Z}}\otimes_{\mathbb{Z}}}

\newcommand\ttimes{\mathbin{\ThisStyle{\ensurestackMath{%
  \stackengine{-1\LMpt}{\SavedStyle\times}
  {\SavedStyle_{\hstretch{.9}{\mkern1mu\sim}}}{O}{c}{F}{T}{S}}}}}
\tikzset{%
  symbol/.style={
    draw=none,
    every to/.append style={
      edge node={node [sloped, allow upside down, auto=false]{$#1$}}
    },
  },
}
\def\CircleArrowright{\ensuremath{%
\reflectbox{\rotatebox[origin=c]{180}{$\circlearrowleft$}}}}
\newtheorem*{THMA}{\autoref{Mainthm: SF rigid}}
\newtheorem*{THMB}{\autoref{Mainthm: SF non-rigid}}
\newtheorem*{THMC}{\autoref{Mainthm: Almost rigid}}
\newtheorem*{CORM}{\autoref{Maincor: Almost rigid}}

\def\dt{(DT)}

\makeatletter
\DeclareRobustCommand\bigop[1]{%
  \mathop{\vphantom{\sum}\mathpalette\bigop@{#1}}\slimits@
}
\newcommand{\bigop@}[2]{%
  \vcenter{%
    \sbox\z@{$#1\sum$}%
    \hbox{\resizebox{\ifx#1\displaystyle.9\fi\dimexpr\ht\z@+\dp\z@}{!}{$\m@th#2$}}%
  }%
}

\newcommand{\addresseshere}{%
  \enddoc@text\let\enddoc@text\relax
}

\makeatother

\newcommand{\bigK}{\DOTSB\bigop{\#}}

\title{Profinite almost rigidity in 3-manifolds}
\author{Xiaoyu Xu}
\address{Beijing International Center for Mathematical Research\\
Peking University\\
 Beijing 100871, P.R. China}
\email{xuxiaoyu@stu.pku.edu.cn}

\begin{document}
\begin{sloppypar}
\maketitle

\begin{abstract}
We prove that any compact, orientable 3-manifold with empty or  toral boundary is profinitely almost rigid among all compact, orientable  3-manifolds. 
     In other words, 
the profinite completion of its fundamental group determines its homeomorphism type to finitely many possibilities.  
%
    Moreover, the profinite completion of the fundamental group of a mixed 3-manifold together with the peripheral structure uniquely determines the homeomorphism type of its Seifert part, i.e. the maximal graph manifold components in the JSJ-decomposition.  
    On the other hand,  
without assigning the peripheral structure, the profinite completion of a mixed 3-manifold group may not uniquely determine the fundamental group of its Seifert part. 
    The proof is based on JSJ-decomposition.
\end{abstract}

\setcounter{tocdepth}{1}
\tableofcontents 

\section{Introduction}\label{SEC: Intro}

It is a useful idea to distinguish the properties of a finitely generated group $G$ through  the collection of its finite quotient groups, denoted by $\mathcal{C}(G)$. When $G=\pi_1(M^3)$ is a 3-manifold group, $\mathcal{C}(G)$ corresponds to the lattice of finite regular coverings of $M$. It is known that among finitely generated groups, the profinite completion encodes full data of finite quotients.
\begin{definition}\label{DEF: Profinite completion}
Let $G$ be a group, the \textit{profinite completion} of $G$ is defined as $$\widehat{G}=\limi\limits_{N\lhd_f G} \quo{G}{N}\;,$$ where $N$ ranges over all finite-index normal subgroups of $G$.
\end{definition}
\begin{fact}[{\cite{DFPR82}}]\label{Finite quotient}
    For two finitely generated groups $G_1$ and $G_2$, $\mathcal{C}(G_1)=\mathcal{C}(G_2)$ if and only if $\widehat{G_1}\cong \widehat{G_2}$.
\end{fact}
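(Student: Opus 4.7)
The forward implication $(\Leftarrow)$ is essentially formal. By the universal property of profinite completion, every surjection $G_i \twoheadrightarrow Q$ onto a finite group extends uniquely to a continuous surjection $\widehat{G_i} \twoheadrightarrow Q$; conversely, since $G_i$ has dense image in $\widehat{G_i}$, every continuous finite quotient of $\widehat{G_i}$ restricts to a finite quotient of $G_i$. Thus $\mathcal{C}(G_i)$ coincides with the set of isomorphism classes of continuous finite quotients of $\widehat{G_i}$, which is manifestly an invariant of the topological isomorphism type of $\widehat{G_i}$.

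For the reverse implication $(\Rightarrow)$ the plan is a K\"onig-style compactness argument. For each finite group $Q \in \mathcal{C}(G_1) = \mathcal{C}(G_2)$, form
\[ I_Q \;:=\; \bigl\{ (\phi_1,\phi_2) : \phi_i \in \mathrm{Epi}(G_i,Q) \bigr\}\big/\Aut(Q), \]
with $\Aut(Q)$ acting diagonally by post-composition. Because each $G_i$ is finitely generated and $Q$ finite, $I_Q$ is finite; because $Q$ is a common quotient, $I_Q$ is non-empty. As $Q$ ranges over a cofinal family of finite quotients, with transition maps induced by post-composition along the quotient maps $Q' \twoheadrightarrow Q$, the collection $\{I_Q\}$ becomes an inverse system of non-empty finite sets. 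Invoking the standard fact that inverse limits of non-empty finite sets are non-empty, I would choose a coherent family $(\phi_{1,Q},\phi_{2,Q})_Q \in \varprojlim I_Q$; the two projections then assemble into continuous homomorphisms $\widehat{G_1} \to \widehat{G_2}$ and $\widehat{G_2} \to \widehat{G_1}$. A check that each is surjective (every finite quotient of the target is realized in the chosen family) and that they agree with the identity modulo every finite quotient completes the argument.

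The principal technical obstacle is verifying that the transition maps $I_{Q'} \to I_Q$ are well-defined on $\Aut$-orbits and that the resulting diagram is genuinely an inverse system. This hinges on the naturality of the diagonal $\Aut$-action under post-composition, and forces one to work with \emph{pairs} of surjections modulo automorphisms on both sides simultaneously, rather than with individual normal subgroups of $G_1$ or $G_2$ separately. Once this bookkeeping is in place, every remaining step is a formal consequence of the profinite topology and the universal property of completion.
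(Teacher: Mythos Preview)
Your compactness strategy is the right one and matches the standard argument, but the obstacle you flag is more serious than your resolution suggests. For a fixed surjection $\pi\colon Q' \to Q$, the map $I_{Q'} \to I_Q$ sending $[(\phi_1, \phi_2)]$ to $[(\pi \phi_1, \pi \phi_2)]$ is well-defined on $\Aut(Q')$-orbits only if every $\alpha \in \Aut(Q')$ descends along $\pi$ to some $\beta \in \Aut(Q)$ with $\beta\pi = \pi\alpha$; equivalently, only if $\ker \pi$ is characteristic in $Q'$. This is not a general naturality fact about the diagonal action --- it is a genuine constraint on which cofinal system one may use, and for an arbitrary tower of finite quotients it simply fails.

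The paper (in its appendix, proving the more general \autoref{THM: finite quotients with marked subgroups}, of which this fact is the case $m=0$) resolves this precisely by building a cofinal system with characteristic kernels: set $U_n = \bigcap_{[G_1:U] \le n} U$ and $V_n = \bigcap_{[G_2:V] \le n} V$. Then $U_{n-1}/U_n$ is characteristic in $G_1/U_n$ by construction, and a short counting argument using $\mathcal{C}(G_1) = \mathcal{C}(G_2)$ shows $G_1/U_n \cong G_2/V_n$. The inverse system becomes $X_n = \{\text{isomorphisms } G_1/U_n \to G_2/V_n\}$; transition maps are now well-defined exactly because of the characteristic property, and an element of $\varprojlim X_n$ assembles directly into an isomorphism $\widehat{G_1} \cong \widehat{G_2}$. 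This also dissolves your ``two projections'' step: there is no need to build two maps and verify they are mutually inverse, nor to run a separate cofinality check on the $G_2$ side.
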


The geometrization theorem implies that a 3-manifold is largely determined by its fundamental group. For instance, the fundamental group of a closed, orientable, prime 3-manifold determines its homeomorphism type except for the special case of lens spaces, see \cite[Theorem 2.1.2]{AFW15}. 
As we are studying the fundamental group $\pi_1M^3$ through its finite quotients $\mathcal{C}(\pi_1M)$, it is natural to ask whether $\mathcal{C}(\pi_1M)$ (or equivalently, $\widehat{\pi_1M}$) determines the homeomorphism type of $M$. 
This question has been formulated as the profinite rigidity within 3-manifolds.

\begin{definition}\label{indef: almost rigidity}
Let $\mathscr{M}$ be a class of  3-manifolds. For any $M\in \mathscr{M}$, we denote $$\Delta_{\mathscr{M}}(M)=\{ N\in \mathscr{M}\mid \widehat{\pi_1N}\cong \widehat{\pi_1M}\}.$$ 
We say that a manifold $M$ is \textit{profinitely rigid} in $\mathscr{M}$ if $\Delta_{\mathscr{M}}(M)=\{M\}$, and $M$ is \textit{profinitely almost rigid} in $\mathscr{M}$ if $\Delta_{\mathscr{M}}(M)$ is finite.  
The class $\mathscr{M}$ itself is called \textit{profinitely (almost) rigid} if every $M\in \mathscr{M}$ is profinitely (almost)  rigid in $\mathscr{M}$. 
\end{definition}

To avoid the trivial ambiguities, we always include the following convention.
\begin{convention}\label{conv1}
In our context, we always assume that a compact 3-manifold has no boundary spheres. Indeed, the boundary spheres can be excluded through capping 3-balls while preserving the fundamental group.
\end{convention}



Within 3-manifolds, the question of profinite rigidity is closely related with geometrization. 
In a series of works, Wilton-Zalesskii \cite{WZ17,WZ17b,WZ19} showed that among compact, orientable, irreducible 3-manifolds with empty or incompressible toral boundary, the profinite completion determines whether the 3-manifold is geometric; it further determines the geometry type within the geometric case, and determines the prime decomposition and the JSJ-decomposition in the non-geometric case. 

For geometric 3-manifolds, Grunewald-Pickel-Segal \cite{GPS80} showed the profinite almost rigidity of $Sol$-manifolds, while Stebe \cite{Stebe} and more examples by Funar \cite{Fun13} showed that the class of $Sol$-manifolds is not profinitely rigid.  Wilkes \cite{Wil17} showed the profinite almost rigidity of closed orientable Seifert fibered spaces, including the profinite rigidity in $\mathbb{S}^2\times\mathbb{E}^1$, $\mathbb{E}^3$, $Nil$ and ${ \widetilde{\mathrm{SL}(2,\mathbb{R})}}$  geometries, and the only non-rigid exceptions are those  constructed by Hempel \cite{Hem14} within $\mathbb{H}^2\times \mathbb{E}^1$ geometry.
Indeed, the non-rigid examples \cite{Fun13,Hem14,Stebe} are all given by commensurable  surface bundles over circle. Recently, Liu \cite{Liu23} proved the profinite almost rigidity of finite-volume hyperbolic 3-manifolds. In addition, Bridson, Reid, McReynolds and Spitler \cite{BMRS20,BR22} have found various examples of profinitely rigid closed hyperbolic 3-manifolds, including the Weeks manifold and the $\mathrm{Vol(3)}$ manifold. Indeed, they showed that the fundamental groups of these hyperbolic 3-manifolds are actually profinitely rigid among all finitely generated, residually finite groups. However, whether profinite rigidity holds in the hyperbolic case is still unknown. 

For non-geometric 3-manifolds, Wilkes \cite{Wil18} proved that closed graph manifolds are profinitely almost rigid but not profinitely rigid, 
and he presented a complete profinite classification for graph manifolds, including many profinitely rigid examples. Therefore, the  crucial topic remaining for profinite almost rigidity in 3-manifolds is the mixed manifolds (including purely hyperbolic ones).

%

Within 3-manifolds, two well-known theorems exhibit the relationship between the fundamental group and the homeomorphism type of a 3-manifold.
\begin{theorem}[{\cite[Corollary 6.5]{Wal68}}]\label{Wald}
The fundamental group of a compact, Haken, boundary-incompressible 3-manifold together with its peripheral structure (i.e. the conjugacy classes of the peripheral subgroups) uniquely determines the homeomorphism type of this 3-manifold.
\end{theorem}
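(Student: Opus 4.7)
The plan is to reduce the statement to Waldhausen's main rigidity theorem for homotopy equivalences of Haken manifolds, via obstruction theory. Suppose $M$ and $N$ are compact, Haken, boundary-incompressible 3-manifolds, and $\varphi:\pi_1 M\xrightarrow{\cong}\pi_1 N$ is an isomorphism sending the conjugacy classes of peripheral subgroups of $M$ bijectively onto those of $N$. The goal is to realize $\varphi$ by a homeomorphism $M\to N$.

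First I would show that $M$ and $N$ are aspherical, i.e.\ are $K(\pi,1)$'s. This follows from the Sphere Theorem together with irreducibility (boundary-incompressibility together with Hakenness forces $M$ to be irreducible, using the standard hierarchy; in particular $\pi_2 M=0$), plus the fact that the universal cover of a Haken manifold has no fake 3-cells, and is therefore contractible. Granting asphericity, obstruction theory produces a continuous map $f:M\to N$ inducing $\varphi$ on fundamental groups. By adjusting $f$ up to homotopy on each boundary torus separately (again using asphericity of the boundary tori together with the hypothesis that $\varphi$ preserves peripheral structure), I can arrange that $f$ restricts on $\partial M$ to a map $\partial M\to\partial N$ which on each component realizes the corresponding isomorphism of peripheral subgroups, and is therefore homotopic to a homeomorphism of each torus boundary component.

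With such a map $f:(M,\partial M)\to(N,\partial N)$ in hand, I would check that $f$ is a homotopy equivalence of pairs: both $M$ and $N$ are $K(\pi,1)$'s, $\varphi$ is an isomorphism, and $f|_{\partial M}$ is a homotopy equivalence on each component; a standard Mayer--Vietoris or direct obstruction-theoretic argument gives a homotopy inverse. Now the main theorem of Waldhausen \cite{Wal68} applies: a homotopy equivalence of pairs between Haken 3-manifolds which restricts to a homeomorphism on the boundary is homotopic, rel boundary, to a homeomorphism. I would apply this to conclude that $f$ is homotopic to a homeomorphism $M\xrightarrow{\cong}N$ inducing $\varphi$, which is the desired conclusion.

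The hard step, and the heart of Waldhausen's original paper, is the last one: the inductive argument along a hierarchy for $M$ that upgrades a peripheral homotopy equivalence to a homeomorphism. The induction runs on the length of a hierarchy of incompressible surfaces in $M$, at each stage cutting along a surface, using the homotopy equivalence to find a corresponding incompressible surface in $N$ (after homotoping $f$ to be transverse and then simplifying the preimage), and appealing to the induction hypothesis on the resulting simpler Haken pairs. Ensuring that these surface-straightening moves can be carried out equivariantly with respect to the peripheral data, and that the cutting produces Haken pieces to which the induction actually applies, is the main technical obstacle; everything else in the argument is essentially formal, given asphericity and the peripheral compatibility of $\varphi$.
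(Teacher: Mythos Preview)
The paper does not give a proof of this statement: it is quoted verbatim as a classical theorem of Waldhausen, with only the citation \cite[Corollary 6.5]{Wal68} and no argument in the text. (The same result is invoked again later as \autoref{PROP: Homeo induce}, also without proof.) So there is nothing to compare against on the paper's side.

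Your outline is the standard one and is essentially correct, but one remark: you write that ``boundary-incompressibility together with Hakenness forces $M$ to be irreducible, using the standard hierarchy''. In fact irreducibility is part of the \emph{definition} of Haken; it is not derived from the other hypotheses. Apart from that slip, the reduction to Waldhausen's main theorem (realize $\varphi$ by a map of pairs via asphericity and obstruction theory, homotope to a homeomorphism on the boundary, then apply the hierarchy argument of \cite{Wal68}) is exactly how the cited result is proved.
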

\begin{theorem}
\label{Joh}
Among compact, orientable 3-manifolds, the  fundamental group determines the homeomorphism type  to finitely many possibilities, but possibly not unique.
\end{theorem}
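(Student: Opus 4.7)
The plan is to split Theorem \ref{Joh} into its two assertions---finite determination and non-uniqueness---and to handle them by quite different tools. The first requires combining the prime and JSJ decompositions with the geometrization theorem and Waldhausen's rigidity (our Theorem \ref{Wald}); the second requires only an explicit example.

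For finite determination, I first reduce to prime 3-manifolds. By the Kneser-Milnor theorem and Convention \ref{conv1}, we can write $M = M_1 \# \cdots \# M_k$ with $\pi_1(M) \cong \pi_1(M_1) \ast \cdots \ast \pi_1(M_k)$. Grushko's theorem together with the Kurosh subgroup theorem ensure that the multiset of isomorphism classes of prime-factor fundamental groups is an invariant of $\pi_1(M)$, so it suffices to bound the number of prime 3-manifolds realising any given $\pi_1$. In the closed prime orientable case, geometrization combined with Mostow rigidity and the classification of Seifert fibered spaces yields that $\pi_1$ determines the homeomorphism type uniquely, with the sole ambiguity being among lens spaces, where only finitely many $L(p,q)$ share $\Z/p\Z$. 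For prime 3-manifolds with nonempty non-spherical boundary, handlebodies and compression bodies are classified directly by their boundary/free $\pi_1$, and otherwise the manifold is Haken; Waldhausen's Theorem \ref{Wald} then reduces the problem to showing that the set of peripheral structures on $\pi_1(M)$ modulo $\Aut(\pi_1(M))$ is finite. This last finiteness follows from the group-theoretic JSJ decomposition, since peripheral subgroups are detected through their conjugacy into vertex groups together with suitable maximality conditions, leaving only finitely many $\Aut$-orbits.

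For the non-uniqueness assertion it suffices to exhibit one example. Classical constructions---such as Seifert fibered spaces with distinct Seifert invariants yielding isomorphic fundamental groups, or regluings of JSJ pieces by appropriate automorphisms of the torus interfaces that act trivially on $\pi_1$ but not on the topology---produce pairs of non-homeomorphic 3-manifolds sharing $\pi_1$, as discussed in \cite{Joh79} itself.

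The main obstacle in the above plan is the finiteness of peripheral structures up to $\Aut(\pi_1(M))$ in the Haken case with nonempty boundary. While each individual peripheral subgroup can be recognised from the group structure, the delicate point is controlling the regluing ambiguity of JSJ pieces together with the modular (fiber-slope) automorphisms of their Seifert summands, to ensure that the total ambiguity is bounded. This bookkeeping---carried out in \cite{Joh79} via the characteristic submanifold theory and its mapping class group---is exactly what turns a potentially infinite family of candidate manifolds into \emph{finitely many} homeomorphism types.
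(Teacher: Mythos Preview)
The paper does not prove Theorem~\ref{Joh}; it attributes the result to Johannson \cite[Corollary 29.3]{Joh79} for irreducible manifolds with non-empty incompressible boundary, to Swarup \cite{Swa80} for irreducible manifolds with boundary dropping the boundary-incompressibility hypothesis, to Kneser's conjecture \cite{Hei72} for the reduction to irreducible factors, and to geometrization for the closed case. Your sketch follows this same architecture, and you correctly identify the heart of the bounded case---finiteness of peripheral structures modulo $\Aut(\pi_1)$---as Johannson's characteristic-submanifold analysis.

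There is, however, one genuine gap. Your route through Theorem~\ref{Wald} requires boundary-incompressibility, and you dispatch the remaining compressible-boundary case by saying ``handlebodies and compression bodies are classified directly.'' That dichotomy fails: an irreducible $3$-manifold with compressible boundary need not be a handlebody or a compression body. For instance, the boundary connect sum of a hyperbolic knot exterior with a solid torus is irreducible with compressible boundary, yet its fundamental group is neither free nor a free product of closed-surface groups, so it is neither a handlebody nor a compression body. For such manifolds some peripheral subgroups fail to inject, so Theorem~\ref{Wald} as stated is unavailable. This is precisely why the paper separately invokes Swarup \cite{Swa80}, who established finiteness for irreducible $3$-manifolds with boundary without assuming boundary-incompressibility. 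You can either cite Swarup here, or argue by maximal boundary compression---reducing to boundary-incompressible pieces and then bounding the finitely many ways of reattaching $1$-handles---which is essentially Swarup's method.
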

\autoref{Joh} was proven by Johannson  {\cite[Corollary 29.3]{Joh79}}  for irreducible  3-manifolds with non-empty incompressible boundary, and independently by Swarup \cite{Swa80} for irreducible  3-manifolds with   boundary, without the boundary-incompressibility assumption. The full version of \autoref{Joh} then follows from the validity of the Kneser's conjecture \cite{Hei72}, together with the geometrization theorem for the closed case, see \cite[Theorem 2.1.2]{AFW15}.
%

The goal of this paper is to establish a partial analog of these two theorems in the profinite setting.

Let us first consider \autoref{Wald}. Accordingly, in dealing with 3-manifolds with boundary, it is worthwhile to investigative a specific class of profinite isomorphisms, i.e. those respecting the peripheral structure.


\begin{definition}
For compact 3-manifolds $M$ and $N$ with (possibly empty) incompressible boundary, an isomorphism $f:\widehat{\pi_1M}\ttt \widehat{\pi_1N}$ {\em respects the peripheral structure} if there is a one-to-one correspondence between the boundary components of $M$ and $N$, simply denoted as $\partial_iM\leftrightarrow\partial_{i}N$, such that $f$ sends $\overline{\pi_1\partial_iM}$, the closure of a peripheral subgroup, to a conjugate of $\overline{\pi_1\partial_{i}N}$ in $\widehat{\pi_1N}$.
\end{definition}

We also agree that any profinite isomorphism between closed 3-manifolds automatically respects the peripehral structure. 
 
Indeed, as is shown in \autoref{THM: finite quotients with marked subgroups}, the profinite completion of a compact 3-manifold group $\widehat{\pi_1M}$ together with its peripheral structure encodes the same data as the ``peripheral pair quotients'' of $\pi_1M$, i.e. the finite quotients of $\pi_1M$ marked with the conjugacy classes of peripheral subgroups, denoted by $\mathcal{C} ( \pi_1 M; \pi_1\partial_1M,\cdots,\pi_1\partial_mM)$ (see \autoref{DEF: Finite quotient marked with subgroups}). 

Our first result can be viewed as a profinite version of \autoref{Wald} in the Seifert part of mixed 3-manifolds.  
 
%
In the following context, a {\em mixed 3-manifold} denotes a compact,  orientable and irreducible 3-manifold with empty or incompressible toral boundary, whose JSJ-decomposition is non-trivial and contains at least one   hyperbolic piece. 
The {\em Seifert part} of a mixed 3-manifold is the union of its maximal graph manifold components in its JSJ-decomposition. By convention,   the Seifert part of a purely-hyperbolic manifold is defined as an empty space.

\begin{mainthm}\label{Mainthm: SF rigid}
Suppose $M$ and $N$ are mixed 3-manifolds, with empty or toral boundary. If $\widehat{\pi_1M}\cong \widehat{\pi_1N}$ through an isomorphism respecting the peripheral structure, then the Seifert parts of $M$ and $N$ are homeomorphic.
\end{mainthm}

Note that any profinite isomorphism between closed manifolds does respect the peripheral structure by our definition. Hence, we have the following corollary.

\begin{maincor}\label{maincor: Seifert}
Among closed mixed 3-manifolds, the profinite completion of the fundamental group determines the homeomorphism type of the Seifert part. 
\end{maincor}

Any profinite isomorphism between finite-volume hyperbolic 3-manifolds respects the peripheral structure (\autoref{LEM: Hyp preserving peripheral}). However, this is not always true for Seifert fibered spaces, as for isomorphisms between their original fundamental groups. Indeed, flexibility may occur  when the profinite isomorphism does not respect the peripheral structure.

Despite \autoref{Mainthm: SF rigid}, from which $\mathcal{C}(\pi_1M;\pi_1\partial_1M,\cdots,\pi_1\partial_nM)$ determines the homeomorphism type of the Seifert part within mixed manifolds, $\mathcal{C}(\pi_1M)$ itself may not even determine the fundamental group of the Seifert part as shown by the following theorem. This shows sharp contrast with the classical case \cite{SS01}, where $\pi_1M$, without assigning the peripheral structure, is sufficient to determine the fundamental group of the Seifert part, through the group theoretical JSJ-decomposition.

\begin{mainthm}\label{Mainthm: SF non-rigid}
There exists a pair of irreducible, mixed 3-manifolds $M$, $N$ with incompressible toral boundary, such that $\widehat{\pi_1M}\cong \widehat{\pi_1N}$ by an isomorphism not respecting the peripheral structure, while the fundamental groups of the  Seifert part of $M$ and $N$ are not isomorphic.
\end{mainthm}

In combination with \cite{SS01}, this implies $\pi_1M\not\cong \pi_1N$, which disproves the group-level profinite rigidity in the fundamental group of mixed 3-manifolds.

Our next result is a profinite version of \autoref{Joh}. 
Though the absolute profinite rigidity does not hold for the fundamental group of 3-manifolds with  toral boundary by \autoref{Mainthm: SF non-rigid}, the profinite completion of fundamental group, or equivalently $\mathcal{C}(\pi_1M)$, still determines the homeomorphism type of such a 3-manifold to  finitely many possibilities. We remind the readers that the following result does not require the peripheral structure.

\begin{mainthm}\label{Mainthm: Almost rigid}
Any compact, orientable 3-manifold with empty or toral boundary is profinitely almost rigid in the class of all compact, orientable 3-manifolds.
\end{mainthm}

The following two tables conclude current  results for profinite rigidity and almost rigidity in 3-manifolds based on geometric decomposition.

\begin{table}[H]
\centering
\renewcommand*{\arraystretch}{2}
{
\resizebox{\columnwidth}{!}{%
\begin{tabular}{|cccc|}
\hline
\multicolumn{4}{|c|}{\textbf{\scalebox{1.1}{Closed orientable 3-manifolds}}}                                                                                                                                                                                                                                                   \\ \hline
\multicolumn{1}{|c|}{\textbf{Class of manifolds}}             & \multicolumn{1}{c|}{\textbf{Profinite almost rigidity}} & \multicolumn{2}{c|}{\textbf{Profinite rigidity}}                                                                                                                                     \\ \hline
\multicolumn{1}{|c|}{$Sol$-manifolds}                         & \multicolumn{1}{c|}{\scalebox{0.95}{True  \cite{GPS80}}}                               & \multicolumn{2}{c|}{False \cite{Stebe,Fun13}}                                                                                                                                                           \\ \hline
\multicolumn{1}{|c|}{\multirow{3}{*}{Seifert fibered spaces}} & \multicolumn{1}{c|}{\multirow{3}{*}{True \cite{Wil17}}}              & \multicolumn{1}{c|}{$\mathbb{H}^2\times \mathbb{E}^1$ geometry}                                                                                                            & False \cite{Hem14}  \\ \cline{3-4} 
\multicolumn{1}{|c|}{}                                        & \multicolumn{1}{c|}{}                                   & \multicolumn{1}{c|}{\scalebox{0.95}{\renewcommand*{\arraystretch}{1.3}\begin{tabular}[c]{@{}c@{}}$\mathbb{E}^3$, $Nil$, $\mathbb{S}^2\times \mathbb{E}^1$, \\ $\widetilde{\mathrm{SL}(2,\mathbb{R})}$ geometry\end{tabular}}} & True \cite{Wil17}    \\ \cline{3-4} 
\multicolumn{1}{|c|}{}                                        & \multicolumn{1}{c|}{}                                   & \multicolumn{1}{c|}{$\mathbb{S}^3$ geometry}                                                                                                                               & Trivial \\ \hline
\multicolumn{1}{|c|}{Hyperbolic manifolds}                    & \multicolumn{1}{c|}{True \cite{Liu23}}                               & \multicolumn{2}{c|}{{Unknown}}                                                                                                                                                         \\ \hline
\multicolumn{1}{|c|}{{Graph manifolds}}        & \multicolumn{1}{c|}{{True \cite{Wil18}}}              & \multicolumn{2}{c|}{False$\,+\,$complete classification \cite{Wil18}}                                                                                                                                                          \\ \hline
\multicolumn{1}{|c|}{\multirow{2}{*}{Mixed manifolds}}        & \multicolumn{1}{c|}{\multirow{2}{*}{ {True  (\autoref{Mainthm: Almost rigid})} }}              & \multicolumn{1}{c|}{Entire manifold}                                                                                                                                       & {Unknown} \\ \cline{3-4} 
\multicolumn{1}{|c|}{}                                        & \multicolumn{1}{c|}{}                                   & \multicolumn{1}{c|}{Seifert part}                                                                                                                                          & {True (\autoref{maincor: Seifert})  }   \\ \hline
\end{tabular}%
}
}
\end{table}

As for bounded 3-manifolds, in order to exclude the ambiguity that  non-homeomorphic bounded 3-manifolds may have isomorphic fundamental groups, we say a class of manifolds $\mathscr{M}$ has {\em group-level profinite rigidity} if for any $M,N\in \mathscr{M}$, $\widehat{\pi_1M}\cong \widehat{\pi_1N}$ implies $\pi_1M\cong \pi_1N$.

\begin{table}[H]
\centering
\renewcommand*{\arraystretch}{2.3}
{
\resizebox{\columnwidth}{!}{%
\begin{tabular}{|cccc|}
\hline
\multicolumn{4}{|c|}{\textbf{\scalebox{1.1}{Compact orientable 3-manifolds with incompressible toral boundary}}}                                                                                                                                                                                                               \\ \hline
\multicolumn{1}{|c|}{\textbf{Class of manifolds}}             & \multicolumn{1}{c|}{\textbf{Profinite almost rigidity}} & \multicolumn{2}{c|}{\textbf{Group-level profinite rigidity}}                                                                                                                         \\ \hline
\multicolumn{1}{|c|}{Seifert fibered spaces}                  & \multicolumn{1}{c|}{True (\autoref{PROP: Seifert almost rigid})}                               & \multicolumn{2}{c|}{True (\autoref{COR: Seifert rigid})}                                                                                                                                                            \\ \hline
\multicolumn{1}{|c|}{Finite-volume hyperbolic manifolds}      & \multicolumn{1}{c|}{True \cite{Liu23}}                               & \multicolumn{2}{c|}{ {Unknown}}                                                                                                                                                         \\ \hline
\multicolumn{1}{|c|}{Graph manifolds}                         & \multicolumn{1}{c|}{True (\autoref{THM: Graph mfd almost rigid})}                               & \multicolumn{2}{c|}{False \cite{Wil18}}                                                                                                                                                           \\ \hline
\multicolumn{1}{|c|}{Mixed manifolds}                         & \multicolumn{1}{c|}{ {True (\autoref{Mainthm: Almost rigid})}}                               & \multicolumn{2}{c|}{{False  (\autoref{Mainthm: SF non-rigid})}}                                                                                                                                                           \\ \hline
\end{tabular}%
}
}
\end{table}

\subsection{Ingredients of proof}
\subsubsection{Congruent isomorphism}

Wilton-Zalesskii \cite{WZ19} (see also Wilkes \cite{WilkesJSJ,Wil19} for the bounded case) proved the profinite detection of geometric decomposition in 3-manifolds. In this paper, we reformulate their results using the language of profinite Bass-Serre theory. 

\begin{definition}[Congruent isomorphism]
\label{indef: congruent}
A \textit{congruent isomorphism} $f_{\bullet}$ between two finite graphs of (profinite) groups $(\mathcal{G},\Gamma)$ and $(\mathcal{H},\Gamma')$ consists of:
\begin{enumerate}[leftmargin=*]
\item an isomorphism of oriented graphs $\F: \Gamma\stackrel{\cong}{\longrightarrow} \Gamma'$;
\item  an isomorphism of (profinite) groups $f_z: \mathcal{G}_z\stackrel{\cong}{\longrightarrow} \mathcal{H}_{\F(z)}$ for each $z\in \Gamma$;
\item  a coherence relation at each edge $e\in E(\Gamma)$, i.e. commutative by composing with a conjugation 
\begin{equation*}
\centering
\begin{tikzcd}[column sep=large]
\mathcal{G}_e \arrow[dd, "\cong","f_e"'] \arrow[rr, "\varphi_i^e"] &                                                  & \mathcal{G}_{d_i(e)} \arrow[dd, "\cong"',"f_{d_i(e)}"] \\
 & \CircleArrowright &\\
\mathcal{H}_{\F(e)} \arrow[r, "(\varphi')_i^{\F(e)} "]     & \mathcal{H}_{\F(d_i(e))} \arrow[r, "C_{h_i^e}", "\cong"'] & \mathcal{H}_{\F(d_i(e))}               
\end{tikzcd}
\end{equation*}
where $C_{h_i^e}(x)=h_i^e\cdot x\cdot (h_i^e)^{-1}$ denotes the conjugation by $h_i^e\in \mathcal{H}_{\F(d_i(e))}$, and $i=0,1$.
\end{enumerate}
\end{definition} 

\begin{theorem}[cf. \autoref{THM: Profinite Isom up to Conj}]\label{inthm: JSJ}
Suppose $M,N$ are two orientable, irreducible 3-manifolds with empty or incompressible toral boundary, which are not closed $Sol$-manifolds. Then, any isomorphism $\widehat{\pi_1M}\cong  \widehat{\pi_1N}$ induces a congruent isomorphism between their JSJ-graphs of profinite groups $(\widehat{\mathcal{G}_M},\Gamma_M)$ and $(\widehat{\mathcal{G}_N},\Gamma_N)$, after reassigning orientation of the JSJ-graphs. In addition, the congruent isomorphism matches up the hyperbolic/Seifert fibered type of the vertices.
\end{theorem}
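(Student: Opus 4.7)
The strategy is to exploit profinite Bass--Serre theory applied to the JSJ-decomposition. By Wilton--Zalesskii \cite{WZ17b,WZ19} and Wilkes \cite{WilkesJSJ,Wil19}, the JSJ-splitting of an irreducible $M$ with toral (or empty) boundary which is not a closed $Sol$-manifold is profinitely efficient: it makes $\widehat{\pi_1 M}$ act acylindrically on a profinite Bass--Serre tree $\widehat{T}_M$, with vertex stabilisers the profinite completions of the JSJ-piece groups, edge stabilisers the profinite completions of the JSJ-torus groups $\widehat{\mathbb{Z}}^{2}$, and quotient $\widehat{T}_M/\widehat{\pi_1 M}=\Gamma_M$. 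Moreover, the JSJ-tori are detected profinitely as (conjugacy classes of) maximal closed $\widehat{\mathbb{Z}}^{2}$-subgroups with a distinguished separability property. The plan is first to transport the action on $\widehat{T}_N$ along $f\colon\widehat{\pi_1 M}\stackrel{\cong}{\longrightarrow}\widehat{\pi_1 N}$ to obtain an acylindrical action of $\widehat{\pi_1 M}$ on $\widehat{T}_N$, and then to invoke a uniqueness statement for this profinite splitting to produce a $\widehat{\pi_1 M}$-equivariant isomorphism $\widehat{T}_M\to\widehat{T}_N$.

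Once the equivariant tree isomorphism is in hand, passing to the quotient yields the graph isomorphism $f^\dagger\colon\Gamma_M\to\Gamma_N$, possibly after reversing some edge orientations (a tree isomorphism respects adjacency but need not respect any prior orientation convention, hence the reorientation clause in the statement). Choosing lifts of each vertex and edge of $\Gamma_M$ to $\widehat{T}_M$ and composing with $f$ gives the vertex and edge group isomorphisms $f_z\colon \mathcal{G}_z\stackrel{\cong}{\longrightarrow}\mathcal{H}_{f^\dagger(z)}$. The coherence square in Definition \ref{indef: congruent} then commutes only up to conjugation because two different lifts of the same vertex of $\Gamma_N$ differ by an element of $\widehat{\pi_1 N}$; this ambiguity is exactly what is absorbed by the conjugation $C_{h_i^e}$, with $h_i^e\in\mathcal{H}_{f^\dagger(d_i(e))}$ recording the change of lift at each end of each edge.

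To match hyperbolic vertices with hyperbolic vertices and Seifert vertices with Seifert vertices, I would distinguish the two classes by a profinite invariant of the vertex group. A Seifert piece with non-empty toral boundary has fundamental group containing a normal infinite-cyclic subgroup generated by the regular fibre, so its profinite completion contains a non-trivial closed normal $\widehat{\mathbb{Z}}$-subgroup meeting every peripheral subgroup non-trivially; a finite-volume cusped hyperbolic 3-manifold group has trivial centre and, more strongly, its profinite completion contains no such normal closed subgroup (each peripheral subgroup is self-normalising in an appropriate profinite sense, cf.\ Wilton--Zalesskii's work). Since this property is invariant under group isomorphism, the $f_z$ must match Seifert vertices with Seifert vertices and hyperbolic vertices with hyperbolic vertices.

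The principal obstacle I anticipate is the uniqueness statement for the profinite JSJ-action: one must rule out the possibility that $f$ transports the JSJ-tree structure of $N$ to some non-equivalent acylindrical splitting of $\widehat{\pi_1 M}$, distinct from the JSJ-tree of $M$. This requires the full strength of the profinite characterisation of JSJ-tori (as maximal closed $\widehat{\mathbb{Z}}^{2}$-subgroups satisfying Wilton--Zalesskii's detection criteria) combined with careful profinite Bass--Serre bookkeeping, in particular ensuring that the induced equivariant tree map is genuinely an isomorphism of graphs of profinite groups and not merely a bijection of vertex sets; this is where the $Sol$-exclusion is essential, since closed $Sol$-manifolds admit a genuinely ambiguous JSJ-like splitting.
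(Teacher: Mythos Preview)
Your proposal is correct and follows essentially the same route as the paper: obtain an $f$-equivariant isomorphism between the profinite Bass--Serre trees, pass to the quotient graphs (reorienting as needed), and then read off the congruent isomorphism from stabilisers of chosen lifts, with the hyperbolic/Seifert dichotomy detected by a profinite invariant of the vertex group. The one point worth noting is that what you flag as the ``principal obstacle''---the existence of the equivariant tree isomorphism---is not derived from scratch in the paper but is quoted directly as \cite[Theorem~4.3]{WZ19} (extended to the bounded case via the efficiency and acylindricity results you mention); the paper's own contribution here is the general translation device (its Lemma~3.5) that converts any such equivariant isomorphism of Bass--Serre trees for injective graphs of profinite groups into a congruent isomorphism, which is precisely the bookkeeping argument you outline.
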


\subsubsection{Peripheral $\Zx$-regularity}

In light of \autoref{inthm: JSJ}, it is crucial to analyze the properties of a profinite isomorphism $f_v:\widehat{\pi_1M_v}\ttt\widehat{\pi_1N_v}$ between the JSJ-pieces when restricted to the closure of a peripheral subgroup $\overline{\pi_1\partial_iM_v}$. This implies information about the gluing map at this JSJ-torus, as shown by the following commutative diagram.
\begin{equation}\label{indig1}
    \begin{tikzcd}[column sep=tiny]
\widehat{\pi_1M_u} \arrow[r,symbol=\supseteq] \arrow[dd, "C_{h_0}\circ f_u"'] & \overline{\pi_1\partial_kM_u} \arrow[dd] & & & & \widehat{\pi_1T^2} \arrow[llll, "\varphi_0"'] \arrow[rrrr, "\varphi_1"]   & & & & \overline{\pi_1\partial_iM_v} \arrow[r,symbol=\subseteq] \arrow[dd] & \widehat{\pi_1M_v} \arrow[dd, "C_{h_1}\circ f_v"] \\
 & & & &  & & & & & &\\
\widehat{\pi_1N_u} \arrow[r,symbol=\supseteq ]                              & \overline{\pi_1\partial_lN_u}      & & &     & \widehat{\pi_1T^2} \arrow[llll, "\varphi_0'"'] \arrow[rrrr, "\varphi_1'"] & & & & \overline{\pi_1\partial_jN_v} \arrow[r,symbol=\subseteq]           & \widehat{\pi_1N_v}                              
\end{tikzcd}
\end{equation}

Note that the closure of a peripheral subgroup corresponding to an incompressible toral boundary component can be naturally identified as $\overline{\pi_1\partial_iM}\cong\widehat{\Z}\otimes_{\Z} \pi_1(\partial_iM) $ (cf. \autoref{COR: Peripheral group tensor}). 
We depict a special property of the map $\overline{\pi_1\partial_iM}\cong\widehat{\Z}\otimes_{\Z} \pi_1(\partial_iM)\to \overline{\pi_1\partial_jN}\cong\widehat{\Z}\otimes_{\Z} \pi_1(\partial_jN)$  called peripheral $\Zx$-regularity.

\begin{definition}[Peripheral $\Zx$-regularity]\label{DEF: Intro peripheral regular}
Let $M$ and $N$ be compact, orientable 3-manifolds with incompressible toral boundary. Let $\partial_i M$ be a boundary component of $M$, and let $\pi_1\partial_iM$ be a conjugacy representative of the corresponding peripheral subgroup. Suppose $f:\widehat{\pi_1M}\ttt\widehat{\pi_1N}$ is an isomorphism. For $\lambda\in \Zx$, we say that $f$ is \textit{peripheral $\lambda$-regular} at $\partial_iM$ if the following holds.
\begin{enumerate}[leftmargin=*]
\item There exists a boundary component $\partial_j N$ of $N$, a conjugacy representative of the peripheral subgroup $\pi_1\partial_jN$, and an element $g\in \widehat{\pi_1N}$ such that $f(\overline{\pi_1\partial_iM})=C_{g}^{-1}( \overline{\pi_1\partial_jN})$, where $C_g$ denotes the conjugation by $g$.
\item \label{indef: Zx-regular 2} 
There exists an isomorphism $\psi:\pi_1\partial_iM\ttt\pi_1\partial_jN $ such that the following diagram commutes
\begin{equation*}
\begin{tikzcd}[row sep=large, column sep=large]
\overline{\pi_1\partial_iM} \arrow[r, "C_{g}\circ f|_{\overline{\pi_1\partial_iM}}"] & \overline{\pi_1\partial_jN}                  \\
\tensor \pi_1\partial_iM \arrow[u, "\cong"] \arrow[r, "\lambda\otimes \psi"]         & \tensor \pi_1\partial_jN \arrow[u, "\cong"']
\end{tikzcd}
\end{equation*}
where $\lambda$ denotes the scalar multiplication by $\lambda$ in $\widehat{\Z}$.
\end{enumerate}
For simplicity, we say that $f$ is \textit{peripheral $\Zx$-regular} at $\partial_iM$ if it is peripheral $\lambda$-regular for some $\lambda\in \Zx$.
\end{definition}

\subsubsection{Profinite almost auto-symmetry}
When $f$ is peripheral $\Zx$-regular at $\partial_iM$, there are two main ingredients to be considered
in \autoref{DEF: Intro peripheral regular}~(\ref{indef: Zx-regular 2}), 
namely the coefficient $\lambda\in \Zx$ and the group isomorphism $\psi$. In proof of profinite (almost) rigidity, it is crucial to determine whether $\psi$ can be induced by an ambient homeomorphism $\Phi:M\ttt N$, for which we introduce an accurate definition as follows.

Let $M$ and $N$ be compact, orientable 3-manifolds with incompressible toral boundary. Let $\partial_iM$ be a boundary component of $M$, and let $\pi_1\partial_iM\subseteq \pi_1M$ be a conjugacy representative of the peripheral subgroup. 
Suppose $\Phi: M\to N$ is a homeomorphism, which sends $\partial_iM$ to a boundary component $\partial_jN$. Up to a choice of basepoints, let $\Phi_\ast: \pi_1M\ttt\pi_1N$ denote the induced isomorphism between the fundamental groups, and let $\pi_1\partial_jN=\Phi_\ast(\pi_1\partial_iM)$ be the  corresponding conjugacy representative of the peripheral subgroup.

\begin{definition}\label{inDef: induced by homeo}
Under the above assumptions, 
suppose  $f: \widehat{\pi_1M}\ttt\widehat{\pi_1N}$ is a profinite isomorphism which is peripheral $\Zx$-regular at $\partial_iM$. 
We say that {\em $f$ restricting on $\partial_iM$ is induced by a homeomorphism $\Phi$ with coefficient $\lambda\in \Zx$}, if there exists an element $g\in \widehat{\pi_1N}$ such that   $C_g\circ f$ sends $\overline{\pi_1\partial_iM}$ to $\overline{\pi_1\partial_jN}$, and the isomorphism
$$
C_g\circ f|_{\overline{\pi_1\partial_iM}}: \; \overline{\pi_1\partial_iM}\cong \tensor \pi_1\partial_iM\tto \tensor \pi_1\partial_jN \cong \overline{\pi_1\partial_jN}
$$
can be decomposed as $\lambda\otimes (\Phi_\ast|_{\pi_1\partial_iM})$.
\end{definition}

Let $M$ be a compact, orientable  3-manifold  with incompressible toral boundary, and let $\mathcal{P}=\{\partial_1M,\cdots,\partial_kM\}$ be a collection of some boundary components of $M$. 
We define two subgroups of $\Aut(\widehat{\pi_1M})$: 
{\small
\begin{equation*}
\begin{gathered}
\XX_{\Zx}(M,\mathcal{P}):=\left\{ f: \widehat{\pi_1M}\ttt\widehat{\pi_1M}\left|\;\begin{gathered}\text{for each } 1\le i\le k\text{,}\\ f(\overline{\pi_1\partial_iM})\text{ is a conjugate of } \overline{\pi_1\partial_iM},\\\text{ and }f\text{ is }   \text{peripheral }\Zx\text{-regular at }\partial_iM\end{gathered}\right.\;\right\},\\[1ex]
\XX_{h}(M,\mathcal{P}):=\left\{f\in \XX_{\Zx}(M,\mathcal{P})\left|\, \begin{gathered} \text{there is a homeomorphism }\Phi: M\to M, \text{ such}\\ \text{that }f \text{ restricting }  \text{on each }\partial_iM  \text { is induced by} \\\text{the homeomorphism } \Phi\text{ with coefficient }\lambda_i\in \Zx \end{gathered}\right.\right\}. 
\end{gathered}
\end{equation*}
}
 
We remind the readers that in the definition of $\XX_h(M,\mathcal{P})$, the ambient homeomorphism $\Phi$ is unified for all components $\partial_iM$, while the coefficients $\lambda_i$ might be different from each other.

\begin{definition}[Profinite almost auto-symmetry]\label{indef: PAA}
Under the above assumptions, we say that $M$ is {\em {\PAA} at $\mathcal{P}$} 
if $\XX_{h}(M,\mathcal{P})$ is a finite-index subgroup in $\XX_{\Zx}(M,\mathcal{P})$.
\end{definition}

\subsubsection{Gluing geometric pieces}

Based on \autoref{inthm: JSJ}, the proof for profinite almost rigidity in mixed 3-manifolds consists of two parts. 
Firstly, the profinite almost rigidity in the geometric case implies that 3-manifolds profinitely isomorphic to a given mixed 3-manifold can only be built up from a finite number of possible geometric pieces. Secondly, the peripheral $\Zx$-regularity and the profinite almost auto-symmetry ensure that the gluings between the geometric pieces also fall into finitely many possibilities. 

The key results within this process are listed as follows. 

\begin{theorem}\label{inthm: hyperbolic}
\begin{enumerate}[leftmargin=*]
\item (cf. \cite[Theorem 9.1]{Liu23}) The class of finite-volume hyperbolic 3-manifolds is profinitely almost rigid.
\item (cf. \autoref{THM: Hyperbolic peripheral regular}) Suppose $M$ and $N$ are finite-volume hyperbolic 3-manifolds with cusps. Then, any profinite isomorphism $f:\widehat{\pi_1M}\ttt\widehat{\pi_1N}$ is peripheral $\Zx$-regular at all boundary components of $M$. 
\item (cf. \autoref{COR: Hyperbolic PAA}) Any cusped finite-volume hyperbolic 3-manifold $M$ is {\PAA} at $\partial M$.
\end{enumerate}
\end{theorem}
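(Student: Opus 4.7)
The plan proceeds in three parts corresponding to the three assertions.

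Part (1) is Liu's theorem \cite[Theorem 9.1]{Liu23}, which I would cite directly; no additional argument is required.

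For Part (2), I would first invoke the peripheral preservation lemma (\autoref{LEM: Hyp preserving peripheral}) to conclude that $f$ sends each closure $\overline{\pi_1\partial_iM}\cong\widehat{\Z}^2$ to a conjugate of some $\overline{\pi_1\partial_jN}\cong\widehat{\Z}^2$. After composing with a suitable conjugation, the restriction becomes an automorphism of $\widehat{\Z}^2$, represented by a matrix $A\in\mathrm{GL}_2(\widehat{\Z})$. The core task is then to establish $A\in\Zx\cdot\mathrm{GL}_2(\Z)$, which is a very thin subgroup of $\mathrm{GL}_2(\widehat{\Z})$. To achieve this, I would appeal to profinite invariance of the cusp shape $\tau_i\in\mathbb{H}/\mathrm{PSL}_2(\Z)$ of the horospherical cross-section, an arithmetic invariant extractable from Liu's analysis (via trace fields and completed Alexander modules) underlying \cite[Theorem 9.1]{Liu23}. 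Preservation of $\tau_i$ forces $A$ to act on the peripheral $\Z^2$-lattice as multiplication by a complex scalar, which translates in profinite terms to the required decomposition $A=\lambda\otimes\psi$ with $\lambda\in\Zx$ and $\psi\in\mathrm{GL}_2(\Z)$.

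For Part (3), I would combine Parts (1)--(2) with Mostow rigidity. By Part (2), every $f\in\XX_{\Zx}(M,\partial M)$ restricts (after conjugation) on each boundary torus $\partial_iM$ as $\lambda_i\otimes\psi_i$ with $\lambda_i\in\Zx$ and $\psi_i\in\mathrm{GL}_2(\Z)$. Since the $\lambda_i$'s are unconstrained in the definition of $\XX_h$, the coset space $\XX_{\Zx}(M,\partial M)/\XX_h(M,\partial M)$ is controlled by the tuples $(\psi_i)_{i=1}^k$ modulo the action of $\MCG(M)$ on such tuples via $\Phi\mapsto(\Phi_\ast|_{\pi_1\partial_iM})_i$. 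Mostow rigidity ensures $\MCG(M)$ is finite, so it suffices to bound the set of realizable tuples; and each $\psi_i$ must preserve the cusp lattice of $\partial_iM$ up to scalar, hence lies in the finite automorphism group of the cusp shape. Consequently, the coset space is finite.

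The principal obstacle will be Part (2): the bare preservation of the peripheral subgroup as a subset is comparatively easy, but confining the induced $\mathrm{GL}_2(\widehat{\Z})$-matrix to the extremely thin subgroup $\Zx\cdot\mathrm{GL}_2(\Z)$ is a strong rigidity statement that essentially amounts to profinite detection of the cusp shape. Once Part (2) is established, the arguments for Part (3) become comparatively routine, using Mostow rigidity and the discreteness of cusp-shape symmetries.
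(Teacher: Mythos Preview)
Part~(1) is fine. For Part~(2), however, there is a genuine gap: you appeal to ``profinite invariance of the cusp shape $\tau_i$'' as something extractable from Liu's work, but this is not among the results Liu proves, and it is not clear how (or whether) it follows from the tools he does provide. Moreover, even granting that cusp shapes match, your deduction that $A\in\Zx\cdot\mathrm{GL}_2(\Z)$ is murky: the cusp shape is a complex-analytic invariant while $A$ is a $\widehat{\Z}$-matrix, so the sentence ``preservation of $\tau_i$ forces $A$ to act as multiplication by a complex scalar'' has no direct meaning and would need a real argument to bridge the profinite and geometric worlds. Your Part~(3) then inherits this gap, since your finiteness of the $\psi_i$'s is again deduced from cusp-shape symmetries.

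The paper's route is entirely different and avoids cusp shapes altogether. The decisive input from Liu is \emph{homological} $\Zx$-regularity (\autoref{PROP: Zx regular}): the induced map $f_\ast$ on $\tensor\hfree{M}$ decomposes globally as $\lambda\otimes\Phi$ with $\lambda\in\Zx$ and $\Phi$ an honest $\Z$-isomorphism. To transfer this to the peripheral subgroups, the paper passes to a finite regular cover $M^\star$ in which every boundary torus is $H_1$-injective; the existence of such a cover comes from virtual specialness and the virtual-retract property (\autoref{COR: Injective into homology of boundary}). On $M^\star$ the peripheral subgroup embeds in $\hfree{M^\star}$, so homological regularity immediately yields peripheral regularity (\autoref{LEM: Hyperbolic peripheral regular special case}); one then descends back to $M$ via \autoref{LEM: Linear map on finite index subgroup}. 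For Part~(3), the finiteness of the realizable $\psi$'s comes from Liu's second ingredient, preservation of the \emph{Thurston norm}: the dual map $\Phi^\ast$ must preserve the Thurston-norm unit ball, a compact polyhedron with finite linear symmetry group, so $B(M,M;P,P)$ is a finite subgroup of $\mathrm{GL}_2(\Z)$ and hence has order at most $12$. Mostow rigidity is not invoked.
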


\begin{theorem}\label{inthm: Graph new}
\begin{enumerate}[leftmargin=*]
\item (cf. \autoref{THM: Graph mfd almost rigid}, and \cite{Wil18} for the closed case) The class of graph manifolds, with empty or toral boundary, is profinitely almost rigid.
\item\label{di222} (cf. \autoref{THM: Graph Zx-regular implies homeo}) Let $M$ and $N$ be graph manifolds with non-empty boundary.  Suppose $f:\widehat{\pi_1M}\ttt\widehat{\pi_1N}$ is an isomorphism respecting the peripheral structure, and that $f$ is peripheral $\widehat{\Z}^{\times}$-regular at (at least) one boundary component of $M$. Then, $M$ and $N$ are homeomorphic.
\item\label{di333} (cf. \autoref{THM: Hgroup finite index}) Let $M$ be a graph manifold with boundary, and let $\mathcal{P}$ be a collection of some boundary components of $M$. 
Then, $M$ is {\PAA} at $\mathcal{P}$. 
\end{enumerate}
\end{theorem}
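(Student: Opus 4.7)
All three parts reduce to the JSJ structure via \autoref{inthm: JSJ}: the JSJ-graph of profinite groups of a graph manifold has Seifert-fibered vertex groups and profinite $\widehat{\Z}^2$ edge groups, and any profinite isomorphism is covered by a congruent isomorphism of such graphs of groups preserving vertex types, so any 3-manifold profinitely isomorphic to a graph manifold is itself a graph manifold.

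For (1), the profinite almost rigidity of Seifert fibered spaces with peripheral structure (Wilkes \cite{Wil17} in the closed case, \autoref{PROP: Seifert almost rigid} in the bounded case) leaves only finitely many choices for $N_{\F(v)}$ at each vertex. The congruence relation at each edge in \autoref{indef: congruent} determines the gluing element of $\mathrm{GL}(2,\widehat{\Z})$ up to the action of the profinite mapping class groups of the adjacent Seifert pieces; since the image of $\mathrm{MCG}(M_v,\partial M_v)$ in $\mathrm{GL}(2,\widehat{\Z})$ is of finite index on each boundary torus, only finitely many gluing classes occur, yielding finitely many $N$.

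For (2), I would propagate peripheral $\widehat{\Z}^\times$-regularity along the edges of $\Gamma_M$ starting from $\partial_{i_0}M$: the commutative square in \autoref{indef: congruent} at each JSJ-edge, combined with the identification $\overline{\pi_1 T^2}\cong\widehat{\Z}\otimes_{\Z}\Z^2$, forces that if the vertex restriction $f_v$ is peripheral $\lambda$-regular on one boundary torus of $M_v$, then the restriction $f_w$ on the neighbour is peripheral $\pm\lambda$-regular on the corresponding torus. A sign-absorption argument using orientation-reversing outer automorphisms of Seifert pieces (acting as $-1$ on every peripheral torus simultaneously) normalises the coefficient to $\lambda$ on every JSJ-torus; the assumption that $f$ respects the peripheral structure extends this to every boundary torus of $M$. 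Applying the single-Seifert-piece case of (3), a peripheral refinement of Wilkes' arithmetic description of Seifert automorphisms, one may, after a bounded-index correction at each vertex, realise $f_v$ on every peripheral torus of $M_v$ by a single homeomorphism $\Phi_v:M_v\ttt N_{\F(v)}$ with coefficient $\lambda$. The gluing compatibility built into \autoref{indef: congruent} then assembles the $\Phi_v$ into a homeomorphism $M\ttt N$.

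For (3), the single-Seifert-piece PAA combined with the edge-compatibility of \autoref{indef: congruent} gives the general statement. Any $f\in\XX_{\widehat{\Z}^\times}(M,\mathcal{P})$ produces peripherally $\widehat{\Z}^\times$-regular restrictions $f_v$ on each vertex piece, with coefficients matching up to sign across each edge; the single-piece PAA cuts each $f_v$ modulo finite index to a homeomorphism-induced automorphism, and the edge-wise matching of the resulting coefficients in $\widehat{\Z}^\times$ modulo those realisable by $\mathrm{MCG}(M_v,\partial M_v)$ is a cocycle condition in a finite quotient, contributing only a further finite-index refinement. The main obstacle is the propagation step in (2): tracking the coefficient $\lambda$ and sign ambiguity along $\Gamma_M$ and normalising them uniformly, which is precisely where the congruence relation and the peripheral-structure hypothesis do the essential work.
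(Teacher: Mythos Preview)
Your outline captures the broad architecture—reduce to Seifert pieces via the congruent isomorphism, then assemble—but part~(1) contains a genuine error and part~(2) invokes the wrong tool.

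In~(1), the claim that the image of $\mathrm{MCG}(M_v,\partial M_v)$ in $\mathrm{GL}(2,\widehat{\Z})$ has finite index is false: that image lies in $\mathrm{GL}(2,\Z)$, which is countable and hence of infinite index in the uncountable group $\mathrm{GL}(2,\widehat{\Z})$. So ``only finitely many gluing classes occur'' does not follow from your argument. The paper's route (\autoref{LEM: Graph mfd calculation}) is not an orbit-counting argument at all; it exploits the specific upper-triangular form of $f_v$ on each peripheral torus (coming from preservation of the fiber subgroup, \autoref{EX: Boundary map}) together with the integrality of the actual gluing matrices to force the arithmetic constraint $\gamma_e' = \pm\gamma_e$ and to pin down $\alpha_e',\delta_e'$ modulo $\gamma_e'$. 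A separate homological computation (\autoref{LEM: Sum slope}, \autoref{LEM: Total slope}) shows the total slope at each interior vertex is preserved up to sign. These specific arithmetic identities—ultimately resting on the fact that $\Zx\cap\mathbb{Q} = \{\pm 1\}$—are what reduce the gluing data to finitely many possibilities.

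In~(2), propagating $\widehat{\Z}^\times$-regularity along edges is correct (this is \autoref{LEM: Graph mfd calculation}~(\ref{9.2-2}),(\ref{9.2-4})), but your appeal to ``the single-Seifert-piece case of~(3)'' is both logically backward and too weak: PAA only gives a homeomorphism \emph{up to finite index}, whereas the statement asserts an actual homeomorphism. The paper instead proves a direct Seifert-piece result (\autoref{PROP: Seifert Zx-regular}): once $f_v$ respects the peripheral structure and is $\widehat{\Z}^\times$-regular, one obtains an honest pair of homeomorphisms $\Phi_v^\pm$ realizing $f_v$ with coefficients $\pm\lambda$ on all the regular boundary tori. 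The signs are not normalized away; rather the two families $\{\Phi_v^+\}$ and $\{\Phi_v^-\}$ are each glued consistently via the gluing lemma (\autoref{inLem1'}). Your sketch of~(3) is too vague to assess; the paper's proof is a two-case induction on $\#V(\Gamma)$, with the base case supplied by part~(2) and the inductive step carefully tracking the sign ambiguity $\lambda_{0,e}=\pm\lambda_{1,e}$ across edges.
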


\subsection{Structure of the paper}
\autoref{SEC: Profinite groups} and \autoref{SEC: Bass-Serre} serve as preliminary materials for this paper. 
\autoref{SEC: Prime decomposition} and \autoref{SEC: JSJ} are further expansions on the  profinite detection of 3-manifold decomposition. In particular, \autoref{inthm: JSJ} is introduced in \autoref{SEC: JSJ}.

The most important concept, peripheral $\Zx$-regularity, is introduced in \autoref{SEC: ZX-regular}, where we verify its ``well-definedness'',  and prove a {gluing lemma}.

In \autoref{SEC: Hyperbolic}, we characterize profinite isomorphisms between hyperbolic 3-manifolds based on the results of Liu \cite{Liu23}, and prove \autoref{inthm: hyperbolic}.

\autoref{SEC: Seifert} studies profinite isomorphisms between Seifert fibered spaces, mostly inspired by Wilkes \cite{Wil17,Wil18}.  
As the building blocks of a graph manifold, this serves as a preparation for \autoref{SEC: Graph Mfd}, where we investigate the profinite properties of graph manifolds possibly with boundary, and prove \autoref{inthm: Graph new}.

We prove the main conclusions in \autoref{SEC: Main}. \autoref{Mainthm: SF rigid} is proven in \autoref{subsec: Seifert rigid}, \autoref{Mainthm: SF non-rigid} is proven in \autoref{SEC: Counter-example}, and finally  \autoref{Mainthm: Almost rigid} is proven in \autoref{subsec: almost rigid}.



\subsection*{Acknowledgement}
 The author is greatly thankful to his advisor Yi Liu for his useful advice, especially in pointing out this topic; and the author also thanks Ruicen Qiu for plotting the figures in this paper.  
The author is grateful to the referees for careful proofreading and valuable comments that really improved the exposition of this paper. 

\section{Profinite groups}\label{SEC: Profinite groups}

A \textit{profinite space} $X=\limi X_i$ is an inverse limit of finite spaces equipped with discrete topology over a directed partially ordered set, and a \textit{profinite group} $G=\limi G_i$ is an inverse limit of finite groups equipped with discrete topology over a directed partially ordered set.

It is shown in \cite[Theorem 1.1.12]{RZ10} that $X$ is a profinite space if and only if $X$ is compact, Hausdorff and totally disconnected, and in \cite[Theorem 2.1.3]{RZ10} that $G$ is a profinite group if and only if $G$ is a topological group whose underlying space is a profinite space.

\subsection{Profinite completion}

Let $G$ be an abstract group. The \textit{profinite topology} on $G$ is generated by the sub-basis consisting of all the (left and right) cosets of   finite-index subgroups in $G$. 
 A subset $E\subseteq G$ is called \textit{separable} if $E$ is closed in the profinite topology. 
 $G$ is called \textit{residually finite} if $\{1\}$ is separable. 
$G$ is called \textit{LERF} if any finitely generated subgroup of $G$ is separable.


 Recall in \autoref{DEF: Profinite completion}, the profinite completion of an abstract group $G$ is defined as a profinite group $$\widehat{G}=\limi\limits_{N\lhd_f G}\quo{G}{N},$$ 
where the notation $\lhd_f$ denotes finite-index normal subgroup. 

There is a canonical homomorphism $\iota: G\to \widehat{G}$, $g\mapsto (gN)_{N\lhd_f G}$, so that $\iota(G)$ is dense in $\widehat{G}$. Since $\mathrm{ker}\,\iota=\bigcap_{N\lhd_f G} N$, $\iota$ is injective if and only if $G$ is residually finite. 
Moreover, the profinite completion is functorial. Any homomorphism of abstract groups $f:H\to G$ induces a homomorphism of profinite groups $\widehat{f}:\widehat{H}\to \widehat{G}$ so that the following diagram commutes.

\begin{equation*}
\begin{tikzcd}
H \arrow[r, "f"] \arrow[d, "\iota_H"'] & G \arrow[d, "\iota_G"] \\
\widehat{H} \arrow[r, "\widehat{f}"]   & \widehat{G}           
\end{tikzcd}
\end{equation*}


We list two useful properties of the completion functor. 

\begin{proposition}[{\cite[Proposition 3.2.5]{RZ10}}]\label{PROP: Completion Exact}
The profinite completion functor is right exact, i.e. for a short exact sequence of abstract groups
\begin{equation*}
\centering
\begin{tikzcd}
1 \arrow[r] & K \arrow[r, "\varphi"] & G \arrow[r, "\psi"] & L \arrow[r] & 1\;,
\end{tikzcd}
\end{equation*}
there is an exact sequence
\begin{equation*}
\centering
\begin{tikzcd}
\widehat K \arrow[r, "\widehat \varphi"] & \widehat G \arrow[r, "\widehat \psi"] & \widehat L \arrow[r] & 1.
\end{tikzcd}
\end{equation*}
\end{proposition}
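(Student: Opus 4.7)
The plan is to prove the two halves of right exactness separately: surjectivity of $\widehat{\psi}$, and then the equality $\ker(\widehat{\psi}) = \im(\widehat{\varphi})$. Surjectivity is a short argument using density and compactness: since $\widehat{\psi}\circ \iota_G = \iota_L \circ \psi$ and $\psi$ is surjective, the image of $\widehat{\psi}$ contains the dense subgroup $\iota_L(L) \subseteq \widehat L$, and is simultaneously closed in $\widehat L$ as the continuous image of the compact space $\widehat G$. The easy half of the kernel identity is immediate from functoriality: $\widehat{\psi} \circ \widehat{\varphi} = \widehat{\psi\circ\varphi} = \widehat{1} = 1$.

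The heart of the argument is the reverse inclusion $\ker(\widehat{\psi}) \subseteq \im(\widehat{\varphi})$. The key preliminary step is to identify $\im(\widehat{\varphi})$ with the closure $\overline{\iota_G(\varphi(K))}$ inside $\widehat G$. This is a general fact: because $\iota_K(K)$ is dense in $\widehat K$ and $\widehat\varphi$ is continuous, the image $\widehat{\varphi}(\widehat K)$ is compact and hence closed in the Hausdorff space $\widehat G$, and it contains $\widehat{\varphi}(\iota_K(K)) = \iota_G(\varphi(K))$; applying continuity in the reverse direction gives $\widehat{\varphi}(\widehat K) = \widehat{\varphi}(\overline{\iota_K(K)}) \subseteq \overline{\iota_G(\varphi(K))}$, and equality follows. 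Once this is in place, the problem reduces to showing that every $g \in \ker(\widehat\psi)$ lies in $\overline{\iota_G(\varphi(K))}$.

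For that, I will use the standard fact that the family $\{\overline N\}_{N \lhd_f G}$ is a neighborhood basis of the identity in $\widehat G$, with $\widehat G/\overline N \cong G/N$, so that $\overline{\iota_G(\varphi(K))} = \bigcap_{N\lhd_f G} \iota_G(\varphi(K))\cdot \overline N$. It therefore suffices to check, for each $N \lhd_f G$, that the image $g_N \in G/N$ lies in $\varphi(K)\,N/N$. Given such an $N$, I would set $M := \psi(N\cdot\varphi(K))$; since $\psi$ is surjective and $N\cdot\varphi(K)\lhd G$, this is a finite-index normal subgroup of $L$. The projection $G/N \twoheadrightarrow G/(N\cdot\varphi(K)) \cong L/M$ has kernel exactly $\varphi(K)\,N/N$, and the hypothesis $g \in \ker(\widehat\psi)$ forces the image of $g$ in $L/M$ to be trivial. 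Hence $g_N$ lies in $\varphi(K)\,N/N$, and intersecting over all $N$ gives $g \in \overline{\iota_G(\varphi(K))} = \im(\widehat{\varphi})$.

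The main obstacle is essentially bookkeeping: setting up the identification $\im(\widehat\varphi) = \overline{\iota_G(\varphi(K))}$ and organizing the basis-of-neighborhoods computation. No deep input is required beyond compactness of $\widehat K$, continuity of $\widehat\varphi$, and the tautological description of the topology on $\widehat G$. I would also emphasize what the argument does \emph{not} give: injectivity of $\widehat\varphi$ is not claimed, reflecting the fact that $\varphi(K)$ need not be separable in the profinite topology of $G$; this is why only right exactness, not full exactness, is available in general.
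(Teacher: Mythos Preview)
The paper does not supply its own proof of this proposition; it is quoted directly from \cite[Proposition~3.2.5]{RZ10} as a standard fact about profinite completions. Your argument is correct and is essentially the standard one: the only point worth making explicit is that the two maps $\widehat G \to L/M$ (one via $\widehat\psi$ and one via $G/N \twoheadrightarrow G/(N\varphi(K)) \cong L/M$) agree, since both are continuous and coincide on the dense subgroup $\iota_G(G)$.
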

\begin{proposition}[{\cite[Lemma 3.2.6]{RZ10}}]\label{PROP: Left exact}
If $f:H\to G$ is injective, then $\widehat{f}$ is injective if and only if \textit{the profinite topology on $G$ induces the full profinite topology of $H$} by $f$, i.e. for any finite-index subgroup $M\le H$, there exists a finite-index subgroup $N\le G$ such that $f^{-1}(N)\subseteq M$.
\end{proposition}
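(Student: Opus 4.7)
The plan is to identify $\ker\hat f$ explicitly inside $\hat H$ and then translate the two sides of the ``if and only if'' into statements about this kernel. I will write $\overline{M}$ for the closure of a subgroup $M\le H$ in $\hat H$. Since $\{\overline{M}\mid M\lhd_f H\}$ is a neighborhood basis of $1\in \hat H$ and $\hat H$ is Hausdorff, one has $\bigcap_{M\lhd_f H}\overline{M}=\{1\}$. For each $N\lhd_f G$, injectivity of $f$ gives an injection $H/f^{-1}(N)\hookrightarrow G/N$, so $f^{-1}(N)$ is finite-index normal in $H$. Composing $\hat f$ with the canonical projection $\hat G\to G/N$ produces a continuous homomorphism $\hat H\to G/N$ whose restriction to the dense image of $H$ is $h\mapsto f(h)N$, so its kernel must be $\overline{f^{-1}(N)}$. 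Combining this with $\hat G=\limi_{N} G/N$ should yield
\[
\ker\hat f \;=\; \bigcap_{N\lhd_f G}\overline{f^{-1}(N)}.
\]

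From this identification the easy direction will follow immediately: if the profinite topology of $G$ induces the full profinite topology on $H$, then for each $M\lhd_f H$ I can choose $N\lhd_f G$ with $f^{-1}(N)\subseteq M$, hence $\overline{f^{-1}(N)}\subseteq \overline{M}$. Intersecting over $N$ thus lands inside $\bigcap_M \overline{M}=\{1\}$, forcing $\ker\hat f=\{1\}$.

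For the converse I would assume $\hat f$ injective and fix any $M\lhd_f H$, aiming to produce a single $N\lhd_f G$ with $f^{-1}(N)\subseteq M$. The family $\{\overline{f^{-1}(N)}\}_{N\lhd_f G}$ is directed under reverse inclusion because $f^{-1}(N_1\cap N_2)=f^{-1}(N_1)\cap f^{-1}(N_2)$. I then plan to project this family into the finite quotient $\hat H/\overline{M}$; one obtains a directed family of finite subgroups whose intersection is the image of $\ker\hat f=\{1\}$, hence trivial. Since a directed family of subgroups of a finite group attains its infimum, some particular $\overline{f^{-1}(N)}$ must already map to the trivial subgroup of $\hat H/\overline{M}$, i.e.\ $\overline{f^{-1}(N)}\subseteq\overline{M}$. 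Intersecting with the dense copy of $H$ and using $\overline{M}\cap H=M$ (which holds because $M$, being finite-index, is closed in the profinite topology of $H$) will give $f^{-1}(N)\subseteq M$, as required.

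The main step to verify carefully is the kernel identification in the first paragraph; once that is in place, everything else is a soft topological or compactness manipulation. A secondary delicate point is the equality $\overline{f^{-1}(N)}\cap H=f^{-1}(N)$ used at the end, which relies on the standard fact that any finite-index subgroup of an abstract group is closed in its own profinite topology.
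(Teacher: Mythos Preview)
The paper does not supply a proof of this proposition: it is quoted directly from \cite[Lemma 3.2.6]{RZ10} and used as a black box. So there is nothing in the paper to compare your argument against.

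Your argument is essentially correct. The identification $\ker\hat f=\bigcap_{N\lhd_f G}\overline{f^{-1}(N)}$ follows exactly as you describe, using the correspondence between finite-index subgroups of $H$ and open subgroups of $\hat H$ (Proposition~\ref{THM: correspondence of subgroup} in the paper). The ``if'' direction is immediate. In the ``only if'' direction, the one step you should spell out more carefully is the claim that the intersection of the images $p(\overline{f^{-1}(N)})$ in $\hat H/\overline{M}$ equals $p(\ker\hat f)$: this needs a compactness argument (for each $y$ in the intersection of images, the sets $p^{-1}(y)\cap\overline{f^{-1}(N)}$ are nonempty closed subsets of the compact space $\hat H$ with the finite intersection property, so their intersection is nonempty). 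Alternatively, you can bypass the finite-quotient formulation entirely: $\hat H\setminus\overline{M}$ is compact and covered by the open sets $\hat H\setminus\overline{f^{-1}(N)}$, so a single $N$ suffices by directedness. Finally, note that the statement allows $M\le_f H$ not necessarily normal; you should remark that it suffices to treat normal $M$ by passing to the normal core.
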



For any subgroup $H\le G$, we denote $\overline{H}$ as the closure of $\iota(H)$ in $\widehat{G}$. Note that $H$ does not always inject into $\overline{H}$, for instance, when $G$ is not residually finite. In addition, the surjective homomorphism $\widehat{H}\to \overline{H}$ is an isomorphism if and only if the condition in \autoref{PROP: Left exact} holds.

\begin{proposition}[{\cite[Proposition 3.2.2]{RZ10}}]\label{THM: correspondence of subgroup}
Let $G$ be an abstract group. Then there is a one-to-one correspondence. 
\begin{equation*}
\begin{tikzcd}[row sep=0.2em]
\left\{\text{Finite-index subgroups of }G\right\} \arrow[leftrightarrow,"1:1"]{r} & \left\{\text{Open subgroups of }\widehat{G}\right\} \\
H  \arrow[r, maps to] &\overline{H}                            \cong \widehat{H}    \\
\iota^{-1}(U) & U \arrow[l, mapsto]
\end{tikzcd}
\end{equation*}
%
%
%
In addition, this correspondence sends normal subgroups to normal subgroups.
\end{proposition}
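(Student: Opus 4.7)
The plan is to verify the correspondence in both directions, then compute the composed operations, and finally handle normality. Throughout, the key structural fact is that an open subgroup of a profinite group is automatically closed and of finite index (since its cosets form a disjoint open cover of a compact space), so I may freely pass between ``open'' and ``closed of finite index'' on the profinite side.

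First I would go from $G$ to $\widehat G$. Given a finite-index subgroup $H\le G$, choose a finite-index normal subgroup $N\lhd_f G$ with $N\subseteq H$ (for instance the normal core of $H$). By the defining inverse system, the projection $\pi_N:\widehat G\tto G/N$ is continuous with open kernel $\overline N$, so $\overline N$ is an open subgroup of $\widehat G$. Since $H$ is a union of $N$-cosets, it is closed in the profinite topology of $G$, and $\overline H=\pi_N^{-1}(H/N)$ is an open subgroup of $\widehat G$ containing $\overline N$. The inclusion $H\hookrightarrow G$ satisfies the hypothesis of \autoref{PROP: Left exact} because $N\subseteq H$ witnesses that the profinite topology on $G$ induces the full profinite topology on $H$; hence the natural map $\widehat H\tto \overline H$ is an isomorphism.

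Next I would go from $\widehat G$ to $G$. Given an open subgroup $U\le\widehat G$, finiteness of the coset space $\widehat G/U$ combined with the density of $\iota(G)$ forces $\iota$ to hit every coset, so the natural injection $G/\iota^{-1}(U)\hookrightarrow \widehat G/U$ is a bijection and $\iota^{-1}(U)$ has finite index in $G$. To check the two constructions are mutually inverse, I would verify $\iota^{-1}(\overline H)=H$ by noting that $H$ is closed in the profinite topology (as observed above) and using the definition of the profinite topology via cosets of finite-index subgroups; and $\overline{\iota^{-1}(U)}=U$ by combining the density of $\iota(G)$ in $\widehat G$ with the fact that $U$ is closed, so $U=\overline{U\cap\iota(G)}=\overline{\iota(\iota^{-1}(U))}$.

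Finally, normality is transported automatically in both directions: conjugation by any $g\in\widehat G$ is a homeomorphism of $\widehat G$, so if $H\lhd G$ (hence $\iota(H)$ is stable under conjugation by the dense subgroup $\iota(G)$), then $\overline H$ is stable under conjugation by all of $\widehat G$; conversely, if $U\lhd \widehat G$, then $\iota^{-1}(U)$ is manifestly normal in $G$. I expect the only mild obstacle to be the verification that a finite-index subgroup is closed in the profinite topology and the clean application of \autoref{PROP: Left exact} to identify $\widehat H$ with $\overline H$; everything else follows from density of $\iota(G)$ and compactness of $\widehat G$.
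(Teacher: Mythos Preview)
The paper does not prove this proposition; it is quoted directly from \cite[Proposition~3.2.2]{RZ10} without argument. Your proof is correct and is essentially the standard one found in that reference. One minor point: your justification for applying \autoref{PROP: Left exact} is phrased loosely---the single normal subgroup $N\subseteq H$ does not by itself witness that $G$ induces the full profinite topology on $H$; rather, the reason is that any finite-index subgroup $M\le H$ is automatically finite-index in $G$ (since $[G:H]<\infty$), so one may take $M$ itself as the required subgroup of $G$. With that clarification the argument is complete.
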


\subsection{Profinite isomorphism and finite quotients}

The profinite completion encodes full data of finite quotients as stated in \autoref{Finite quotient}. In this subsection, we introduce a more general version.

\begin{definition}\label{DEF: Finite quotient marked with subgroups}
Let $G$ be a group and $H_1,\cdots, H_m$ be a collection of subgroups. Denote the set of 
finite quotients of $G$ marked with conjugacy classes of subgroups as 
\small
$$\mathcal{C}(G;H_1,\cdots, H_m):= \left\{\left.\left[\quo{G}{U};\,\quo{H_1}{(H_1\cap U)},\cdots,\quo{H_m}{(H_m\cap U)}\right]\,\right|\, U\lhd_f G\right\},$$
\normalsize
where $[A;B_1,\cdots, B_m]$ represents the isomorphism class of group marked with $m$ conjugacy classes of subgroups. To be precise,   
$[A;B_1,\cdots,B_m]=[A';B_1',\cdots,B_m']$ if and only if there exists an isomorphism $f:A\ttt A'$, such that $f(B_i)$ is a conjugate of $B_i'$ for each $1\le i\le m$.
\end{definition}
\begin{theorem}\label{THM: finite quotients with marked subgroups}
Let $G$ and $G'$ be finitely generated groups. Suppose $H_1,\cdots, H_m$ are subgroups of $G$, and $H_1',\cdots, H_m'$ are subgroups of $G'$.
Then the following are equivalent.
\begin{enumerate}[leftmargin=*]
\item $\mathcal{C} (G;H_1,\cdots, H_m)=\mathcal{C} (G';H_1',\cdots, H_m')$.
\item There exists an isomorphism as profinite groups $f:\widehat{G}\to \widehat{G'}$, such that $f(\overline{H_i})$ is a conjugate of $\overline{H_i'}$ in $\widehat{G'}$ for each $1\le i \le m$.
\item There exists an isomorphism as abstract groups $f:\widehat{G}\to \widehat{G'}$, such that $f(\overline{H_i})$ is a conjugate of $\overline{H_i'}$ in $\widehat{G'}$ for each $1\le i \le m$.
\end{enumerate}
\end{theorem}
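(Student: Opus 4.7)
The plan is to establish $(2)\Leftrightarrow(3)$ as a generality about topologically finitely generated profinite groups, $(2)\Rightarrow(1)$ as a direct descent through finite quotients, and $(1)\Rightarrow(2)$ by an inverse-limit argument tracking marked subgroup data, in parallel with (but more intricate than) the unmarked statement \autoref{Finite quotient}. The implication $(2)\Rightarrow(3)$ is immediate. For $(3)\Rightarrow(2)$ I would invoke the theorem of Nikolov--Segal that every finite-index subgroup of a topologically finitely generated profinite group is open: since $\widehat{G}$ and $\widehat{G'}$ are such, any abstract isomorphism $f$ pulls back open subgroups to finite-index, hence open, subgroups, so $f$ is continuous; the peripheral condition is purely algebraic and transfers at once.

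For $(2)\Rightarrow(1)$, fix such a topological $f$. Given any $U\lhd_f G$, \autoref{THM: correspondence of subgroup} identifies the open normal subgroup $f(\overline{U})\lhd \widehat{G'}$ with $\overline{U'}$ for a unique $U'\lhd_f G'$, and $f$ descends to an isomorphism $G/U\cong \widehat{G}/\overline{U}\ttt\widehat{G'}/\overline{U'}\cong G'/U'$. The hypothesis $f(\overline{H_i})=g_i\overline{H_i'}g_i^{-1}$ then projects to $H_iU/U\mapsto \bar g_i\,(H_i'U'/U')\,\bar g_i^{-1}$ in $G'/U'$, yielding the required marked isomorphism of finite quotients.

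For $(1)\Rightarrow(2)$, the substantive direction, I pass to characteristic finite-index witnesses. Since $G$ is finitely generated, $U_n:=\bigcap\{U\lhd_f G:[G:U]\le n\}$ is a characteristic finite-index subgroup, and the sequence $\{U_n\}$ is cofinal among finite-index normal subgroups, so $\widehat{G}=\limi G/U_n$; define $V_n\lhd_f G'$ analogously. The hypothesis, combined with a symmetric order-counting argument as in the unmarked case, should force the marked equality $[G/U_n;H_iU_n/U_n]=[G'/V_n;H_i'V_n/V_n]$ for every $n$. I would then form the finite nonempty set
$$X_n=\left\{(\phi;\bar g_1,\dots,\bar g_m)\,\middle|\,\phi:G/U_n\ttt G'/V_n\text{ a group iso},\;\phi(H_iU_n/U_n)=\bar g_i\,(H_i'V_n/V_n)\,\bar g_i^{-1}\right\},$$
with each $\bar g_i$ taken modulo the normalizer of $H_i'V_n/V_n$ to kill the obvious ambiguity. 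Because $U_{n-1}/U_n$ and $V_{n-1}/V_n$ are characteristic, each $\phi$ descends along the natural quotients to define a restriction $X_n\to X_{n-1}$ carrying conjugators to conjugators. Applying \cite[Proposition 1.1.4]{RZ10} to the resulting inverse system of nonempty finite sets, any element of $\limi X_n$ assembles into an isomorphism $f:\widehat{G}\ttt\widehat{G'}$ together with elements $g_i\in\widehat{G'}$ realising $f(\overline{H_i})=g_i\overline{H_i'}g_i^{-1}$.

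The main obstacle will be the marked bookkeeping in the last direction: concretely, promoting the abstract existence of a witness $K\lhd_f G'$ at each level to the canonical choice $K=V_n$ demands the characteristicity argument from the unmarked case sharpened to preserve subgroup markings, and the coherent choice of conjugators $g_i\in\widehat{G'}$ depends on the quotient-by-normalizer in the definition of $X_n$ to make the restriction maps well-defined.
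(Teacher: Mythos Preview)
Your approach is essentially the same as the paper's, with one packaging difference and one small slip. The paper also proves a cycle, invoking Nikolov--Segal for $(3)\Rightarrow(1)$ rather than for $(3)\Rightarrow(2)$ as you do (both routes are fine), and for $(1)\Rightarrow(2)$ it uses the same characteristic subgroups $U_n,V_n$, the same order-counting to force the marked identification $G/U_n\cong G'/V_n$, and the same inverse-limit-of-nonempty-finite-sets technique.

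The slip: quotienting the conjugators by normalizers is unnecessary---the restriction maps $X_n\to X_{n-1}$ are already well defined on honest tuples $(\phi;g_1,\dots,g_m)$ with $g_i\in G'/V_n$---and as written it prevents the limit from producing actual elements $g_i\in\widehat{G'}$: you would only obtain elements of $\varprojlim_n (G'/V_n)/N_{n,i}$, and a further lifting step is needed. The paper sidesteps this by running two inverse limits in sequence: first $X_n$ consists of the marked isomorphisms $\psi$ alone, yielding a compatible family $(\psi_n)$ and hence $f$; then, with $(\psi_n)$ fixed, for each $i$ the finite nonempty sets $Y_{n,i}=\{g\in G'/V_n:\psi_n(H_iU_n/U_n)=g(H_i'V_n/V_n)g^{-1}\}$ form an inverse system whose limit furnishes $g_i\in\widehat{G'}$. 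Your single-limit approach works equally well once you simply drop the normalizer quotient.
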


\begin{remark}
When $m=0$, $\mathcal{C}(G;)$ is simply $\mathcal{C}(G)$ defined as usual. Therefore, \autoref{Finite quotient} is the special case of \autoref{THM: finite quotients with marked subgroups}.
\end{remark}

The proof of this theorem is a standard generalization of \cite{DFPR82} (see also \cite[Theorem 3.2.7]{RZ10}), together with the theorem of Nikolov-Segal \cite{NS03}. As the proof is irrelevant with our mainline, we include the proof in \autoref{APPproof} for completeness.

\subsection{Profinite completion and abelianization}

\begin{definition}
\begin{enumerate}[leftmargin=*]
\item Let $H$ be an abstract group, the {\em abelianization} of $H$ is denoted as $\ab{H}=H/[H,H]$, where $[H,H]$ is the commutator subgroup.
\item Let $G$ be a profinite group, the {\em topological abelianization} of $G$ is denoted as $\Ab{G}=G/\overline{[G,G]}$, where $\overline{[G,G]}$ is the closure of the commutator subgroup.
\end{enumerate}
\end{definition}

\begin{lemma}\label{LEM: Abelianization}
Let $G$ be a group, there is a natural isomorphism $\Ab{\widehat{G}}\cong \widehat{\ab{G}}$.
\end{lemma}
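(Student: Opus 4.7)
The plan is to deduce this from the right-exactness of profinite completion (\autoref{PROP: Completion Exact}) applied to the standard short exact sequence defining the abelianization, and then identify the kernel of the resulting surjection with the closure of the commutator subgroup in $\widehat{G}$.

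First, I would apply \autoref{PROP: Completion Exact} to the short exact sequence
\begin{equation*}
\begin{tikzcd}
1 \arrow[r] & [G,G] \arrow[r] & G \arrow[r] & \ab{G} \arrow[r] & 1
\end{tikzcd}
\end{equation*}
to obtain an exact sequence of profinite groups
\begin{equation*}
\begin{tikzcd}
\widehat{[G,G]} \arrow[r, "\widehat{\varphi}"] & \widehat{G} \arrow[r] & \widehat{\ab{G}} \arrow[r] & 1.
\end{tikzcd}
\end{equation*}
This induces a natural topological isomorphism $\widehat{G}/\mathrm{im}(\widehat{\varphi}) \cong \widehat{\ab{G}}$. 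The image $\mathrm{im}(\widehat{\varphi})$ is a continuous image of a compact space inside a Hausdorff topological group, hence closed; since it contains $\iota([G,G])$ as a dense subset, I identify $\mathrm{im}(\widehat{\varphi}) = \overline{\iota([G,G])}$, i.e.\ the closure of $[G,G]$ inside $\widehat{G}$.

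Next I would prove the key identification
\begin{equation*}
\overline{\iota([G,G])} \;=\; \overline{[\widehat{G},\widehat{G}]}
\end{equation*}
as closed subsets of $\widehat{G}$. The inclusion $\subseteq$ is immediate from $\iota([G,G]) \subseteq [\widehat{G},\widehat{G}]$. For the reverse inclusion, I use that $\iota(G)$ is dense in $\widehat{G}$ together with the fact that the commutator map $\widehat{G}\times \widehat{G}\to \widehat{G}$, $(x,y)\mapsto xyx^{-1}y^{-1}$, is continuous: any commutator $[x,y]$ with $x,y\in \widehat{G}$ is a limit of commutators $[\iota(g_n),\iota(h_n)] \in \iota([G,G])$, so $[\widehat{G},\widehat{G}] \subseteq \overline{\iota([G,G])}$, and taking closures gives the desired equality.

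Combining these two steps yields natural isomorphisms
\begin{equation*}
\Ab{\widehat{G}} \;=\; \widehat{G}/\overline{[\widehat{G},\widehat{G}]} \;=\; \widehat{G}/\overline{\iota([G,G])} \;\cong\; \widehat{\ab{G}},
\end{equation*}
where naturality follows from the functoriality of profinite completion and of abelianization. I do not expect any serious obstacle: the only subtle point is verifying that $[\widehat{G},\widehat{G}]$ already lies in $\overline{\iota([G,G])}$, which is a density-plus-continuity argument, and checking that $\mathrm{im}(\widehat{\varphi})$ is closed, which is automatic from compactness.
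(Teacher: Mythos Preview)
Your proof is correct and follows essentially the same approach as the paper: apply right-exactness of profinite completion to $1\to[G,G]\to G\to\ab{G}\to 1$, then identify $\overline{\iota([G,G])}=\overline{[\widehat{G},\widehat{G}]}$ via the density of $\iota(G)$ in $\widehat{G}$. The paper's proof is slightly terser on the closure-of-image point and the density argument, but the structure is identical.
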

\begin{proof}
According to \autoref{PROP: Completion Exact}, the short exact sequence $1\to [G,G]\to G\to \ab{G}\to 1$ induces an exact sequence of profinite groups $\widehat{[G,G]}\to \widehat{G}\to \widehat{\ab{G}}\to 1$. Therefore, there is a natural isomorphism $\widehat{G}/\overline{[G,G]}\cong\widehat{\ab{G}}$. It is obvious that $\overline{[G,G]}\subseteq \overline{[\widehat{G},\widehat{G}]}$. On the other hand, since the image of $G$ is dense in $\widehat{G}$,  we have $\overline{[\widehat{G},\widehat{G}]}\subseteq \overline{[G,G]}$. Thus, $\overline{[G,G]}= \overline{[\widehat{G},\widehat{G}]}$, and $\widehat{G}/\overline{[G,G]}=\widehat{G}/\overline{[\widehat{G},\widehat{G}]}=\Ab{\widehat{G}}$.
\end{proof}

For any abelian group $G$, let $\Tor(G)$ denote its torsion subgroup. 
Let $\widehat{\Z}$ denote the profinite completion of the infinite cyclic group. Then, $\widehat{\Z}$ can be identified with the direct product of the  $p$-adic integer rings, i.e. $\widehat{\Z}=\prod_{p:\,\text{prime}}\Z_{p}$.

\begin{lemma}\label{LEM: Completion of abelian group}
Let $G$ be a finitely generated abelian group. Then $\widehat{G}$ is also abelian, and
\begin{enumerate}[leftmargin=*]
\item the inclusion $G\hookrightarrow \widehat{G}$ induces an isomorphism $\Tor(G)\cong \Tor(\widehat{G})$;
\item there is a natural isomorphism $\tensor G\ttt \widehat{G}$;
\item there is a natural isomorphism $\widehat{G}/\Tor(\widehat{G})\ttt \widehat{G/\Tor(G)} $.
\end{enumerate}
\end{lemma}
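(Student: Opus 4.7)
The plan is to reduce everything to the structure theorem for finitely generated abelian groups. Write $G \cong \Z^r \oplus T$ where $T = \Tor(G)$ is a finite abelian group. First I would establish that completion commutes with finite direct products: if $G_1, G_2$ are any groups, then $\widehat{G_1 \times G_2} \cong \widehat{G_1} \times \widehat{G_2}$, which follows by noting that finite-index normal subgroups of $G_1 \times G_2$ are cofinal with products of finite-index normal subgroups of the factors, together with the universal property of inverse limits. Combined with the observation that $\widehat{T} = T$ for a finite group $T$ (since $T$ is its own largest finite quotient), this yields $\widehat{G} \cong \widehat{\Z}^r \oplus T$. In particular, $\widehat{G}$ is abelian because it is an inverse limit of finite abelian quotients of $G$ (alternatively: $\widehat{\Z}^r \oplus T$ is manifestly abelian).

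For part (1), I would use that $\widehat{\Z} = \prod_p \Z_p$ is torsion-free, so $\Tor(\widehat{\Z}^r \oplus T) = 0 \oplus T = T$, and the identification with $\Tor(G) = T$ is precisely induced by the canonical map $G \hookrightarrow \widehat{G}$ (residual finiteness of $G$ giving injectivity). For part (2), I would compute
\[
\tensor G \;\cong\; (\tensor \Z^r) \oplus (\tensor T) \;\cong\; \widehat{\Z}^r \oplus T,
\]
where $\widehat{\Z}\otimes_\Z T \cong T$ because $T$ is annihilated by some $n$, so $\widehat{\Z}\otimes_\Z T \cong (\widehat{\Z}/n\widehat{\Z})\otimes_{\Z/n\Z} T \cong (\Z/n\Z)\otimes_{\Z/n\Z} T \cong T$. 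The resulting isomorphism $\tensor G \cong \widehat{G}$ is natural since it is induced by the $\Z$-bilinear pairing $\widehat{\Z}\times G \to \widehat{G}$, $(\lambda, g)\mapsto \lambda\cdot \iota(g)$, and naturality with respect to homomorphisms $G\to G'$ of finitely generated abelian groups is immediate. For part (3), note $G/\Tor(G)\cong \Z^r$, so $\widehat{G/\Tor(G)}\cong \widehat{\Z}^r$; on the other hand, $\widehat{G}/\Tor(\widehat{G})\cong \widehat{\Z}^r$ by (1) and the decomposition above. The natural map is the one induced by the quotient homomorphism $G\twoheadrightarrow G/\Tor(G)$ together with functoriality of completion.

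Since each step is a direct consequence of the structure theorem together with elementary behavior of completion on $\Z$ and on finite groups, there is no serious obstacle. The only point requiring minor care is checking that completion distributes over the direct sum (for which finite generation, and hence the coincidence of finite quotients of $G$ with the product of finite quotients of its factors, is essential) and verifying that the claimed isomorphisms are genuinely natural rather than merely existing; both are routine and I would present them as brief verifications rather than calculations.
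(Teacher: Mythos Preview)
Your proposal is correct and follows essentially the same approach as the paper: both decompose $G\cong\Z^r\oplus T$, verify that completion commutes with finite direct products, and use $\widehat{\Z}$ torsion-free to identify $\Tor(\widehat{G})$. The only notable difference is in part (3): the paper appeals to the short exact sequence $1\to\widehat{\Tor(G)}\to\widehat{G}\to\widehat{G/\Tor(G)}\to 1$ obtained from right-exactness of completion together with the fact that the full profinite topology is induced on the finite subgroup $\Tor(G)$, whereas you simply compute both sides via the structure decomposition; your route is arguably more direct here, while the paper's exact-sequence argument makes the naturality of the map transparent without unpacking the splitting.
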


\newsavebox{\abcdefq}
\begin{lrbox}{\abcdefq}
$
\widehat{H}=\limi\limits_{N_i\lhd_f H_i} \quo{H_1\times H_2}{N_1\times N_2}=\limi\limits_{N_i\lhd_f H_i}\quo{H_1}{N_1}\times \quo{H_2}{N_2}=\widehat{H_1}\times \widehat{H_2}.
$
\end{lrbox}

\begin{proof}
For a direct product of groups $H=H_1\times H_2$, it is easy to verify that $\{N_1\times N_2\mid N_1\lhd_f H_1,\, N_2\lhd_f H_2\}$ is a cofinal system of the finite index normal subgroups of $H$. Therefore, 
\begin{equation}\label{EQU: product group}
\scalebox{0.95}{\usebox{\abcdefq}}
\end{equation}

We decompose the finitely generated abelian group $G$ as $G=\mathbb{Z}^m\times \Tor(G)$, where $\Tor(G)$ is a finite group. The above conclusion shows that $\widehat{G}=\widehat{\Z}^m\times \Tor(G)$. 
Since $\widehat{\Z}$ is torsion-free, it follows that $\Tor(\widehat{G})=\Tor(G)$, finishing the proof of (1).

In addition, decomposing $\widehat{\Z}$ as $\prod \Z_p$, we have an isomorphism $\tensor \Z/p^n\Z\ttt \Z/p^n\Z$ for any prime power $p^n$. Any finite abelian group $T$ can be decomposed as a direct sum of subgroups in the form of $\Z/p^n\Z$, so there is a natural isomorphism $\tensor T\ttt T\ttt \widehat{T}$. Thus, the decomposition $G=\mathbb{Z}^m\times  \Tor(G)$ implies (2).

(3) follows from the short exact sequence
\begin{equation*}
\begin{tikzcd}
1 \arrow[r] & \widehat{\Tor(G)} \arrow[r] & \widehat{G} \arrow[r] & \widehat{G/\Tor(G)} \arrow[r] & 1
\end{tikzcd}
\end{equation*}
deduced from \autoref{PROP: Completion Exact} and \autoref{PROP: Left exact}.
\end{proof}

\def\free#1{#1_{\mathrm{free}}}
For an abelian group $G$, denote $\free{G}=G/\Tor(G)$. Then, 
\autoref{LEM: Abelianization} and \autoref{LEM: Completion of abelian group} together imply the following proposition.
\begin{proposition}\label{PROP: Induce linear}
Let $G,H$ be finitely generated groups, and $f:\widehat{G}\ttt\widehat{H}$ be an isomorphism of profinite groups. Then $f$ induces an isomorphism 
\begin{equation*}
\begin{tikzcd}
f_{\ast}:\;\tensor\free{\ab G} \arrow[r,"\cong"] &\tensor\free{\ab H}
\end{tikzcd}
\end{equation*}
so that the following diagram commutes.
{
\begin{equation*}
\begin{tikzcd}[column sep=large]
                                                    & \widehat{G} \arrow[d, two heads] \arrow[r, "\cong"',"f"]                               & \widehat{H} \arrow[d, two heads]                              &                                                       \\
G \arrow[ru] \arrow[d, two heads]                   & \Ab{\widehat{G}} \arrow[r, "\cong"] \arrow[d, "\cong"']                            & \Ab{\widehat{H}} \arrow[d, "\cong"]                           & H \arrow[lu] \arrow[d, two heads]                     \\
\ab{G} \arrow[r, hook] \arrow[dd, two heads]        & \widehat{\ab{G}} \arrow[d, two heads] \arrow[r, "\cong"]                           & \widehat{\ab{H}} \arrow[d, two heads]                         & \ab{H} \arrow[l, hook'] \arrow[dd, two heads]         \\
                                                    & \free{(\widehat{\ab G})} \arrow[d, "\cong"'] \arrow[r, "\cong"]      & \free{(\widehat{\ab H})} \arrow[d, "\cong"]     &                                                       \\
\free{\ab G} \arrow[rd, hook] \arrow[r, hook] & \widehat{\free{\ab G}} \arrow[r, "\cong"]                                    & \widehat{\free{\ab H}}                                & \free{\ab H} \arrow[ld, hook'] \arrow[l, hook'] \\
                                                    &\tensor\free{\ab G}\arrow[r, "\cong"',"f_\ast"] \arrow[u, "\cong"] & \tensor\free{\ab H} \arrow[u, "\cong"'] &                                                      
\end{tikzcd}
\end{equation*}
}
\normalsize
\end{proposition}

\section{Bass-Serre theory in the profinite setting}\label{SEC: Bass-Serre}

\subsection{Graph of groups}\label{SEC: Graph of Groups}

In this paper, we will only focus on finite graph of groups, and we will omit the notation ``finite'' if the finiteness is clear. 

A {\em finite oriented graph} $\Gamma$ consists of a finite set $\Gamma=V(\Gamma)\sqcup E(\Gamma)$, where $V(\Gamma)$ and $E(\Gamma)$ denote the set of vertices and edges respectively, and two {\em relation maps} $d_0,d_1:E(\Gamma)\to V(\Gamma)$, which assigns the two endpoints of each edge.

In the classical setting, a {\em finite graph of groups}, denoted by $(\mathcal{G},\Gamma)$, consists of a finite connected oriented graph $\Gamma$, a group $\mathcal{G}_x$ associated to each $x\in V(\Gamma)\cup E(\Gamma)$, and injective homomorphisms $\varphi_i^e: \mathcal{G}_e\to \mathcal{G}_{d_i(e)}$ for each $e\in E(\Gamma)$ and $i=0,1$.

\begin{definition}\label{DEF: Fundamental group of graph of group}
Let $(\mathcal{G},\Gamma)$ be a finite graph of groups and let $S$ be a  maximal subtree of $\Gamma$, the fundamental group with respect to $S$ is defined as
\begin{align*}
\pi_1(\mathcal{G},\Gamma,S)=\frac{\left(\ast_{v\in V(\Gamma)}\mathcal{G}_v\right )\ast F_{E(\Gamma)}}{\left\langle\!\left\langle t_e:e\in E(S),\,\varphi_1^e(g)^{-1}t_e\varphi_0^e(g)t_e^{-1}:e\in E(\Gamma)\text{ and }g\in \mathcal{G}_e\right\rangle\!\right\rangle}\:,
\end{align*}
where $F_{E({\Gamma})}$ is the free group over $E({\Gamma})$ with generators $t_e$, $e\in E(\Gamma)$.
\end{definition}


It follows from \cite[Proposition 20]{Ser80} that up to isomorphism, $\pi_1(\mathcal{G},\Gamma,S)$ is independent with the choice of the maximal subtree $S$. Thus, we may omit the notation $S$ and simply denote the fundamental group as $\pi_1(\mathcal{G},\Gamma)$ when there is no need for a precise maximal subtree.

A geometric interpretation of graph of groups turns out to be a graph of topological spaces. Indeed, let $X_v$ (resp. $X_e$) be CW-complexes so that $\pi_1X_v=\mathcal G_v$ (resp. $\pi_1X_e=\mathcal G_e$), and let $\phi_i^e: X_e\to X_{d_i(e)}$ be the maps (up to homotopy) so that $\phi_i^e$ induces $\varphi_i^e: \pi_1X_e=\mathcal G_e\to \pi_1X_{d_i(e)}=\mathcal G_{d_i(e)}$. 
We can construct a CW-complex according to this information.
\begin{align*}
X_{\Gamma}=\left(\bigsqcup_{e\in E(\Gamma)}X_e\times [0,1]\right)\bigcup_{\phi_i^e:\,X_e\times\{i\}\to X_{d_i(e)}}\left(\bigsqcup_{v\in V(\Gamma)}X_v\right )
\end{align*}

It turns out that $\pi_1(X_\Gamma)\cong \pi_1(\mathcal{G},\Gamma)$, see \cite{SW79}. 
In view of this method, the fundamental group of an irreducible 3-manifold splits as a graph of group through JSJ-decomposition.

The theory of graph of groups generalizes to  profinite groups.
\begin{definition}
A \textit{finite graph of profinite groups}, denoted by $(\mathcal{P},\Gamma)$, consists of a finite, connected, oriented graph $\Gamma$, a profinite group $\mathcal{P}_x$ for each $x\in V(\Gamma)\cup E(\Gamma)$, and a continuous injective homomorphism $\varphi_i^e: \mathcal{P}_e\to \mathcal{P}_{d_i(e)}$ for each $e\in E(\Gamma)$ and $i=0,1$.
\end{definition}

Analogous to \autoref{DEF: Fundamental group of graph of group}, we can define a profinite version of the fundamental group of a finite graph of profinite groups following 
\cite{Rib17}. 
Let $G\amalg H$ denote the free profinite product of two profinite groups $G$ and $H$.
\begin{definition}\label{DEF: Profinite fundamental group of graph of profinite group}
Let $(\mathcal{P},\Gamma)$ be a finite graph of profinite groups and let $S$ be a  maximal subtree of $\Gamma$. The \textit{profinite fundamental group} with respect to $S$ is defined as
\begin{align*}
\Pi_1(\mathcal{P},\Gamma,S)=\left.\left( (\amalg_{v\in V(\Gamma)}\mathcal{P}_v )\amalg \mathcal{F}_{E(\Gamma)}\right)\right /N,
\end{align*}
where $\mathcal{F}_{E({\Gamma})}$ is the free profinite group over the finite space $E({\Gamma})$ with generators denoted by $t_e$ ($e\in E(\Gamma)$), and $$N={\overline{\left\langle\!\left\langle t_e:e\in E(S),\,\varphi_1^e(g)^{-1}t_e\varphi_0^e(g)t_e^{-1}:e\in E(\Gamma)\text{ and }g\in \mathcal{P}_e\right\rangle\!\right\rangle}}$$ is the minimal closed normal subgroup containing the 
generators.
\end{definition}

$\Pi_1(\mathcal{P},\Gamma,S)$ is a profinite group, and is irrelevant, up to isomorphism, with the choice of the maximal subtree $S$ as shown by \cite[Theorem 6.2.4]{Rib17}. Similarly, we denote the profinite fundamental group by $\Pi_1(\mathcal{P},\Gamma)$ if the clarification of the maximal subtree is not essential.

\begin{definition}
A finite graph of profinite groups $(\mathcal{P},\Gamma)$ is called \textit{injective} if for any vertex $v\in \Gamma$, the homomorphism $\mathcal{P}_v\to (\amalg_{v\in V(\Gamma)}\mathcal{P}_v )\amalg \mathcal{F}_{E(\Gamma)}\to \Pi_1(\mathcal{P},\Gamma)$ is injective.
\end{definition}

Note that there exist non-injective examples in the profinite settings as constructed by \cite{Rib73}, though injectivity always holds 
in the classical settings, see 
\cite[Proposition 6.2.1]{Geo07}.

One may relate a graph of abstract groups with a graph of profinite groups as shown by the following construction. 
Let $(\mathcal{G},\Gamma)$ be a finite graph of groups, and suppose that the profinite topology of $\mathcal{G}_{d_i(e)}$ induces the full profinite topology on $\mathcal{G}_{e}$ through $\varphi_i^e$ for any $e\in E(\Gamma)$ and $i=0,1$. Then $(\widehat{\mathcal{G}},\Gamma)$ is a finite graph of profinite groups, where each vertex group or edge group ($\widehat{\mathcal G_v}$ or $\widehat{\mathcal G_e}$) is the profinite completion of the corresponding vertex or edge group. The continuous homomorphism $\widehat{\varphi_i^e}:\widehat{\mathcal G_e}\to \widehat{\mathcal G_{d_i(e)}}$ is indeed injective by \autoref{PROP: Left exact}. 

From \autoref{DEF: Fundamental group of graph of group} and \autoref{DEF: Profinite fundamental group of graph of profinite group}, one establishes a group homomorphism $\psi: \pi_1(\mathcal{G},\Gamma)\to \Pi_1(\widehat{\mathcal{G}},\Gamma)$ through the canonical homomorphisms $\mathcal{G}_v\to \widehat{\mathcal{G}_v}$ and $\mathcal{G}_e\to \widehat{\mathcal{G}_e}$. 
Clearly, the image of $\psi$ is dense in $\Pi_1(\widehat{\mathcal{G}},\Gamma)$. It follows that $\Pi_1(\widehat{\mathcal{G}},\Gamma)$ is indeed the profinite completion of $\pi_1(\mathcal{G},\Gamma)$ under suitable conditions.

\begin{definition}
A graph of groups $(\mathcal{G},\Gamma)$ is called \textit{efficient} if the following three properties hold.
\begin{enumerate}[leftmargin=*]
\item $\pi_1(\mathcal{G},\Gamma)$ is residually finite.
\item The profinite topology of $\pi_1(\mathcal{G},\Gamma)$ induces the full profinite topology on each vertex subgroup $\mathcal G_v$ and each edge subgroup $\mathcal G_e$.
\item All vertex subgroups and edge subgroups are separable in $\pi_1(\mathcal{G},\Gamma)$.
\end{enumerate}
\end{definition}

\begin{proposition}[{\cite[Theorem 6.5.3]{Rib17}} ]\label{THM: Efficient completion}
Let $(\mathcal{G},\Gamma)$ be a finite graph of group which is efficient.
\begin{enumerate}[leftmargin=*]
\item $\widehat{\pi_1(\mathcal{G},\Gamma)}$ and $ \Pi_1(\widehat{\mathcal{G}},\Gamma)$  are isomorphic as profinite groups. Indeed, there is an isomorphism such that the following diagram commutes.
\begin{equation*}
\centering
\begin{tikzcd}
{\pi_1(\mathcal{G},\Gamma)} \arrow[r, "\psi"] \arrow[d, hook] & {\Pi_1(\widehat{\mathcal{G}},\Gamma)} \\
{\widehat{\pi_1(\mathcal{G},\Gamma)}} \arrow[ru, "\cong"']    &                                      
\end{tikzcd}
\end{equation*}
\item\label{Eff 2} 
$(\widehat{\mathcal{G}},\Gamma)$ is injective.
\end{enumerate}
\end{proposition}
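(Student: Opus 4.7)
The strategy is to show that $\Pi_1(\widehat{\mathcal{G}},\Gamma)$ together with the canonical homomorphism $\psi$ enjoys the universal property that characterizes the profinite completion of $\pi_1(\mathcal{G},\Gamma)$. Since $\Pi_1(\widehat{\mathcal{G}},\Gamma)$ is a profinite group, the map $\psi$ induces a continuous homomorphism $\widehat{\psi}:\widehat{\pi_1(\mathcal{G},\Gamma)}\to \Pi_1(\widehat{\mathcal{G}},\Gamma)$. The image of $\psi$ is dense by construction, so the image of $\widehat{\psi}$ is dense; compactness of $\widehat{\pi_1(\mathcal{G},\Gamma)}$ then forces the image to be closed, hence all of $\Pi_1(\widehat{\mathcal{G}},\Gamma)$. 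Thus surjectivity of $\widehat{\psi}$ is essentially formal, and the heart of (1) is to construct an inverse.

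To build the inverse, I would establish a bijection between $\mathrm{Hom}(\pi_1(\mathcal{G},\Gamma),F)$ and $\mathrm{Hom}_{\text{cont}}(\Pi_1(\widehat{\mathcal{G}},\Gamma),F)$ for every finite discrete group $F$. In one direction, any continuous $\Phi:\Pi_1(\widehat{\mathcal{G}},\Gamma)\to F$ restricts through $\psi$ to a map on $\pi_1(\mathcal{G},\Gamma)$. In the opposite direction, given $\phi:\pi_1(\mathcal{G},\Gamma)\to F$, the universal property of profinite completion extends $\phi|_{\mathcal{G}_v}$ and $\phi|_{\mathcal{G}_e}$ uniquely to continuous maps $\widehat{\mathcal{G}_v}\to F$ and $\widehat{\mathcal{G}_e}\to F$; efficiency (specifically, the induced-topology condition) guarantees that these extensions preserve the coherence relations $\varphi_1^e(g)=t_e\varphi_0^e(g)t_e^{-1}$ used in \autoref{DEF: Profinite fundamental group of graph of profinite group}, so they assemble into a continuous homomorphism $\Pi_1(\widehat{\mathcal{G}},\Gamma)\to F$ on the free profinite product modulo the defining relations. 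The two constructions are mutually inverse, yielding the desired bijection and hence the topological isomorphism $\widehat{\pi_1(\mathcal{G},\Gamma)}\cong \Pi_1(\widehat{\mathcal{G}},\Gamma)$ commuting with $\psi$.

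For part (\ref{Eff 2}), I would deduce injectivity of each $\widehat{\mathcal{G}_v}\to \Pi_1(\widehat{\mathcal{G}},\Gamma)$ by combining the isomorphism from (1) with \autoref{PROP: Left exact}. Residual finiteness of $\pi_1(\mathcal{G},\Gamma)$ yields an embedding $\pi_1(\mathcal{G},\Gamma)\hookrightarrow \widehat{\pi_1(\mathcal{G},\Gamma)}$, and the separability assumption together with the induced-topology condition on $\mathcal{G}_v\hookrightarrow\pi_1(\mathcal{G},\Gamma)$ are exactly the hypotheses of \autoref{PROP: Left exact}, which then yields an injection $\widehat{\mathcal{G}_v}\hookrightarrow\widehat{\pi_1(\mathcal{G},\Gamma)}$. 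Transporting this injection across the isomorphism from (1) gives the required embedding into $\Pi_1(\widehat{\mathcal{G}},\Gamma)$.

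The main obstacle is verifying compatibility of the pointwise extensions of $\phi$ across all edges: one must check that the coherence relations are preserved after passing to the profinite completions of vertex and edge groups. This is where efficiency plays its indispensable role — without the induced-topology condition the extensions of $\phi$ on edge and vertex groups may fail to agree, and without residual finiteness the canonical comparison map $\psi$ need not even be injective on $\pi_1(\mathcal{G},\Gamma)$. Non-injective examples of $(\widehat{\mathcal{G}},\Gamma)$ in the absence of efficiency, such as those of Ribes, show that both parts of the statement genuinely require these hypotheses.
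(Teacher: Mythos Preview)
The paper does not give its own proof of this proposition: it is quoted verbatim as \cite[Theorem 6.5.3]{Rib17} and used as a black box. There is therefore no in-paper argument to compare against.

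Your outline is essentially the standard argument and is correct in spirit. Two small remarks. First, in part~(1) the coherence of the extended maps $\widehat{\mathcal{G}_e}\to F$ with the edge relations follows simply from continuity and density of $\mathcal{G}_e$ in $\widehat{\mathcal{G}_e}$; the induced-topology hypothesis is used earlier, to ensure that the edge maps $\widehat{\varphi_i^e}$ are injective so that $(\widehat{\mathcal{G}},\Gamma)$ is a graph of profinite groups at all. Second, in part~(\ref{Eff 2}) you invoke both separability and the induced-topology condition as hypotheses of \autoref{PROP: Left exact}, but that proposition only requires the induced-topology condition; separability is not needed for that step (it enters elsewhere, for instance in identifying $\overline{\mathcal{G}_v}$ with $\widehat{\mathcal{G}_v}$ inside $\widehat{\pi_1(\mathcal{G},\Gamma)}$). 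Neither point affects the validity of your argument.
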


In view of (\ref{Eff 2}), we usually identify $\widehat{\mathcal{G}_v}$ with its image in $\Pi_1(\widehat{\mathcal{G}},\Gamma)$.

\subsection{Profinite Bass-Serre tree}

The classical construction of a Bass-Serre tree turns out to be a profinite tree in the profinite settings. 
We briefly introduce the definitions in this subsection, and refer the readers to \cite{Rib17} for more details.

\begin{definition}\label{Def: prof tree}
\begin{enumerate}[leftmargin=*]
\item
A \textit{profinite graph} consists of a profinite space $\Gamma$, a closed subspace $V(\Gamma)\subset \Gamma$, and two continuous maps $d_0,d_1:\Gamma\to V(\Gamma)$ such that $d_i|_{V(\Gamma)}=id$. 
We denote $E(\Gamma):=\Gamma \setminus V(\Gamma)$. An element in $V(\Gamma)$ is called a \textit{vertex} and an element in $E(\Gamma)$ is called an \textit{edge}. 

By an \textit{abstract graph} we mean a combinatorial graph  not assigning a specific topology. The notation distinguishes from a profinite graph which is equipped with a compact, Hausdorff, totally disconnected topology.
\item
Let $\Gamma$ and $\Gamma'$ be two profinite graphs. An \textit{isomorphism} $\Phi:\Gamma\to \Gamma'$ is a homeomorphism which is meanwhile an isomorphism of abstract oriented graphs, i.e. $\Phi(d_i(x))=d_i'(\Phi(x))$ for any $x\in \Gamma$ and $i=0,1$.

\item\label{333333}
A profinite graph $\Gamma$ is a \textit{profinite tree} if the following chain complex is exact. \begin{equation*}
\centering
\begin{tikzcd}
0 \arrow[r] & {\widehat{\Z}\[{(E(\Gamma),\ast)}} \arrow[r,"\partial"] & \widehat{\Z}\[{V(\Gamma)} \arrow[r,"\epsilon"] & \widehat{\Z} \arrow[r] & 0 
\end{tikzcd}
\end{equation*} 
\item
We say that \textit{a profinite group $G$ acts on a profinite graph $\Gamma$} if there is a combinatorial group action $G\curvearrowright \Gamma$ so that the map $G\times \Gamma\to \Gamma$, $(g,x)\mapsto g\cdot x$ is continuous.
\item 
A \textit{path} of length $n$ in a profinite graph $\Gamma$ consists of $n+1$ vertices $v_0,\cdots, v_n\in V(\Gamma)$, and $n$ edges $e_1,\cdots, e_n\in E(\Gamma)$, such that $e_i$ adjoins $v_{i-1}$ and $v_i$. The path is called \textit{injective} if $v_0,\cdots, v_n$ are $n+1$ distinct vertices.

\item 
The action by a profinite group $G$ on a profinite tree $T$ is called \textit{$n$-acylindrical} for some $n\in \mathbb{N}$ if the stabilizer of any injective path of length greater than $n$ is trivial.

\end{enumerate}
\end{definition}

\begin{definition}\label{def: BStree}
Let $(\mathcal{P},\Gamma)$ be a finite graph of profinite groups, and denote   $\Pi=\Pi_1(\mathcal{P},\Gamma)$. For each $v\in V(\Gamma)$, let $\Pi_v$ denote the image of $\mathcal{P}_v$ in $\Pi$ through the map $\mathcal{P}_v\to (\amalg_{v\in V(\Gamma)}\mathcal{P}_v )\amalg \mathcal{F}_{E(\Gamma)}\to \Pi$, and for each $e\in E(\Gamma)$, let $\Pi_e$ denote the image of $\mathcal{P}_e$ in $\Pi$ through the map $\varphi_1^e:\mathcal{P}_e\to \mathcal{P}_{d_1(e)}$. Then, for each $x\in V(\Gamma)\cup E(\Gamma)$, $\Pi_x$ is a closed subgroup in $\Pi$, so the coset space $\Pi/\Pi_v$ is a profinite space.
Let $$V(T)=\bigsqcup_{v\in V(\Gamma)}\Pi/\Pi_v,\quad E(T)=\bigsqcup_{e\in E(\Gamma)}\Pi/\Pi_e,\quad \text{and }T=V(T)\bigsqcup E(T)$$ be equipped with disjoint-union topology. The relation map is given by $$d_0(g\Pi_e)=gt_e\Pi_{d_0(e)},\quad d_1(g\Pi_e)=g\Pi_{d_1(e)},\quad \forall e\in E(\Gamma), g\in \Pi.$$ Then $T$ is called the \textit{profinite Bass-Serre tree} associated to $(\mathcal{P},\Gamma)$.
\end{definition}

Indeed, $T$ is a disjoint union of finitely many profinite spaces, and hence $T$ is a profinite space. 
It is easy to verify that the relation maps $d_0,d_1$ are continuous.
Moreover, \cite[Corollary 6.3.6]{Rib17}  shows that $T$ is indeed a profinite tree. In fact, omitting the topology and the orientation, $T$ is combinatorially a disjoint union of abstract trees, see \cite[Exercise 2.4.4]{Rib17}. 

We conclude the profinite version of Bass-Serre theory as the following proposition.

\begin{proposition}\label{PROP: Bass-Serre}
Let $(\mathcal{P},\Gamma)$ be a finite graph of profinite groups, and let $T$ be the associated profinite Bass-Serre tree. Then, there is a natural action by $\Pi_1(\mathcal{P},\Gamma)$ on $T$ through left multiplication, which is obviously continuous. In addition, $T/\Pi_1(\mathcal{P},\Gamma)=\Gamma$, and the stabilizer of each vertex or edge in $T$ are conjugates of the image of $\mathcal{P}_v$ or $\mathcal{P}_e$ in $\Pi_1(\mathcal{P},\Gamma)$. 
\end{proposition}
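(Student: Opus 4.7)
The plan is to verify each assertion directly from the construction in \autoref{def: BStree}, drawing on the background developed in \cite{Rib17}. First I would define the action: $\Pi:=\Pi_1(\mathcal{P},\Gamma)$ acts on each coset space $\Pi/\Pi_x$ by left multiplication, so this assembles componentwise into a $\Pi$-action on $V(T)=\bigsqcup_{v}\Pi/\Pi_v$ and $E(T)=\bigsqcup_{e}\Pi/\Pi_e$. Continuity is immediate: multiplication $\Pi\times \Pi\to\Pi$ is continuous, each quotient map $\Pi\to \Pi/\Pi_x$ is continuous, and the disjoint-union topology on $T$ makes the resulting map $\Pi\times T\to T$ continuous.

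Next, I would check that this action preserves the graph structure, i.e.\ $g\cdot d_i(x)=d_i(g\cdot x)$ for every $x\in T$, $g\in \Pi$, $i=0,1$. This follows immediately from the defining formulae $d_0(h\Pi_e)=ht_e\Pi_{d_0(e)}$ and $d_1(h\Pi_e)=h\Pi_{d_1(e)}$, since left multiplication by $g$ commutes with left multiplication by $h$ or $ht_e$. Combining this with continuity establishes a continuous action on the profinite graph $T$.

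For the computation of the quotient, I would observe that the $\Pi$-orbit of any coset $h\Pi_v\in \Pi/\Pi_v$ is all of $\Pi/\Pi_v$, and similarly for edges; hence the orbit space of $V(T)$ (resp.\ $E(T)$) under $\Pi$ is naturally in bijection with $V(\Gamma)$ (resp.\ $E(\Gamma)$). The induced relation maps on the quotient agree with those of $\Gamma$, since for $e\in E(\Gamma)$ the $\Pi$-orbit of $\Pi_e$ is sent under $d_0$ to the $\Pi$-orbit of $t_e\Pi_{d_0(e)}$, which is the vertex $d_0(e)\in V(\Gamma)$, and likewise for $d_1$. Thus $T/\Pi=\Gamma$ as oriented graphs.

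Finally, the stabilizer computation is the standard coset-stabilizer identity: for any $h\in \Pi$ and $x\in V(\Gamma)\cup E(\Gamma)$, the stabilizer of $h\Pi_x\in \Pi/\Pi_x$ under left multiplication is precisely $h\Pi_x h^{-1}$, which is a conjugate of the image of $\mathcal{P}_x$ in $\Pi$, as claimed. The only mildly technical point is to know that $\Pi_x$ is a \emph{closed} subgroup of $\Pi$ so that $\Pi/\Pi_x$ is a genuine profinite space, but this is exactly what the cited results \cite[Chapter 6]{Rib17} guarantee, and once it is granted, all the verifications above are formal. I do not expect a substantial obstacle here; the main content of the proposition lies in the prior construction of $T$ as a profinite tree (\autoref{def: BStree}\,(\ref{333333})), which is imported from \cite[Corollary 6.3.6]{Rib17}.
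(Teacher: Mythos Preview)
Your verification is correct and essentially complete; each step follows directly from the definition of $T$ in \autoref{def: BStree} and the standard coset-stabilizer identity. The paper itself does not supply a proof of this proposition: it is stated as a summary of the profinite Bass--Serre theory imported from \cite[Chapter~6]{Rib17}, so there is nothing to compare against beyond noting that your argument is precisely the routine unpacking the paper omits.
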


\subsection{Congruent isomorphism}

In \autoref{indef: congruent} we have defined the notion of a congruent isomorphism between graph of groups.  Let us recall its definition. 

A \textit{congruent isomorphism} $f_{\bullet}$ between two finite graphs of abstract/profinite groups $(\mathcal{G},\Gamma)$ and $(\mathcal{H},\Gamma')$ consists of the following ingredients.
\begin{enumerate}[leftmargin=*]
\item There is an isomorphism of oriented graphs $\F: \Gamma\stackrel{\cong}{\longrightarrow} \Gamma'$;
\item For each $x\in V(\Gamma)\cup E(\Gamma)$, there is an isomorphism of abstract/profinite groups $f_x: \mathcal{G}_x\stackrel{\cong}{\longrightarrow} \mathcal{H}_{\F(x)}$;
\item 
For each $e\in E(\Gamma)$ and $i=0,1$, there exists an element $h_{i}^e\in \mathcal{H}_{\F(d_i(e))}$ such that $$h_i^e\cdot (\varphi')_i^{\F(e)} \left(f_e(g)\right )\cdot (h_i^e)^{-1}=f_{d_i(e)}\left (\varphi_i^e(g) \right ),\quad \forall g\in \mathcal{G}_e.$$ In other words, the following diagram commutes.
\begin{equation}\label{Diagram Isom Conj}
\centering
\begin{tikzcd}[column sep=1.3cm]
\mathcal{G}_e \arrow[dd, "\cong","f_e"'] \arrow[rr, "\varphi_i^e"] &                                                  & \mathcal{G}_{d_i(e)} \arrow[dd, "\cong"',"f_{d_i(e)}"] \\
 & &\\
\mathcal{H}_{\F(e)} \arrow[r, "(\varphi')_i^{\F(e)} "]     & \mathcal{H}_{\F(d_i(e))} \arrow[r, "C_{h_i^e}", "\cong"'] & \mathcal{H}_{\F(d_i(e))}               
\end{tikzcd}
\end{equation}
\end{enumerate}

The following two results are direct generalizations from the classical Bass-Serre theory into the profinite version.

First, being congruently isomorphic is a sufficient condition for having isomorphic profinite fundamental groups. 

\begin{proposition}\label{LEM: congruent isom induce isom}
Suppose $f_{\bullet}$ is a congruent isomorphism between two finite graphs of profinite groups $(\mathcal{G},\Gamma)$ and $(\mathcal{H},\Gamma')$. Then $f_{\bullet}$ induces an isomorphism $f: \Pi_1(\mathcal{G},\Gamma)\ttt \Pi_1(\mathcal{H},\Gamma')$, such that for each $v\in V(\Gamma)$, there exists $x_v\in \Pi_1(\mathcal{H},\Gamma')$ so that the following diagram commutes.
\begin{equation*}
\begin{tikzcd}[row sep=0.6cm]
\mathcal{G}_v \arrow[dd] \arrow[rr, "f_v","\cong"'] &                                                   & \mathcal{H}_{\F(v)} \arrow[dd] \\
                                           &                                                   &                                \\
{\Pi_1(\mathcal{G},\Gamma)} \arrow[r, "f","\cong"'] & {\Pi_1(\mathcal{H},\Gamma')} \arrow[r, "C_{x_v}"] & {\Pi_1(\mathcal{H},\Gamma')}  
\end{tikzcd}
\end{equation*}
As a corollary, for any connected subgraph $\Delta\subseteq \Gamma$, denote $\Delta'=\F(\Delta)$. Then $f_{\bullet}$ induces an isomorphism $f_{\Delta}: \Pi_1(\mathcal{G},\Delta)\ttt \Pi_1(\mathcal{H},\Delta')$. 
\end{proposition}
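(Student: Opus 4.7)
The plan is to construct $f$ explicitly from the presentation of $\Pi_1(\mathcal{G},\Gamma,S)$ given in \autoref{DEF: Profinite fundamental group of graph of profinite group}. First I fix a maximal subtree $S\subseteq\Gamma$, set $S':=\F(S)$ (a maximal subtree of $\Gamma'$), and pick a basepoint $v_0\in V(S)$. I assign elements $y_v\in\Pi_1(\mathcal{H},\Gamma',S')$ to the vertices inductively along $S$: set $y_{v_0}:=1$, and for an edge $e\in E(S)$ with $d_0(e)=u$, $d_1(e)=v$ and $y_u$ already assigned, set
\[
y_v\;:=\;y_u\cdot h_0^e\cdot (h_1^e)^{-1}.
\]
Since $S$ is a tree, the assignment $v\mapsto y_v$ is well-defined and independent of any auxiliary choices.

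Second, I define a continuous homomorphism $\tilde f:(\amalg_{v\in V(\Gamma)}\mathcal{G}_v)\amalg \mathcal{F}_{E(\Gamma)}\longrightarrow\Pi_1(\mathcal{H},\Gamma',S')$ on each factor by
\[
\tilde f|_{\mathcal{G}_v}\;:=\;C_{y_v}\circ f_v,\qquad \tilde f(t_e)\;:=\;y_{d_1(e)}\cdot h_1^e\cdot t_{\F(e)}\cdot (h_0^e)^{-1}\cdot y_{d_0(e)}^{-1},
\]
with the convention $t_{\F(e)}=1$ whenever $\F(e)\in E(S')$. Existence of $\tilde f$ is guaranteed by the universal properties of free profinite products and of free profinite groups. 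I then check that $\tilde f$ annihilates every defining relation of $\Pi_1(\mathcal{G},\Gamma,S)$. For $e\in E(S)$, the relation $t_e=1$ translates into $y_{d_1(e)}h_1^e(h_0^e)^{-1}y_{d_0(e)}^{-1}=1$, which is precisely the recursive choice of the $y$'s. For an arbitrary $e\in E(\Gamma)$ and $g\in\mathcal{G}_e$, substituting (\ref{Diagram Isom Conj}) into $\tilde f(\varphi_1^e(g))$ and into $\tilde f(t_e)\tilde f(\varphi_0^e(g))\tilde f(t_e)^{-1}$ reduces the requisite equality to
\[
(\varphi')_1^{\F(e)}(f_e(g))\;=\;t_{\F(e)}\,(\varphi')_0^{\F(e)}(f_e(g))\,t_{\F(e)}^{-1},
\]
which already holds in $\Pi_1(\mathcal{H},\Gamma',S')$ by definition. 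Hence $\tilde f$ descends to a continuous homomorphism $f:\Pi_1(\mathcal{G},\Gamma)\to\Pi_1(\mathcal{H},\Gamma')$. Setting $x_v:=y_v^{-1}$, the identity $\tilde f|_{\mathcal{G}_v}=C_{y_v}\circ f_v$ is exactly the commuting diagram claimed in the proposition.

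Third, the same construction applied to the inverse congruent isomorphism $f_{\bullet}^{-1}$ (with $S'$ as the chosen maximal subtree of $\Gamma'$) yields a continuous homomorphism $g:\Pi_1(\mathcal{H},\Gamma')\to\Pi_1(\mathcal{G},\Gamma)$. The composites $g\circ f$ and $f\circ g$ restrict to inner automorphisms on every vertex subgroup and send every stable letter to a conjugate of itself; since the vertex subgroups together with the stable letters topologically generate the profinite fundamental group, $g\circ f$ and $f\circ g$ are themselves inner automorphisms, so $f$ is an isomorphism. The corollary for a connected subgraph $\Delta\subseteq\Gamma$ follows immediately by applying the construction above to the restriction $f_{\bullet}|_{\Delta}$, which is itself a congruent isomorphism between $(\mathcal{G}|_{\Delta},\Delta)$ and $(\mathcal{H}|_{\Delta'},\Delta')$.

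The main obstacle is the verification of the edge relation for non-tree edges, where the stable letter $t_{\F(e)}$, the gluing conjugators $h_0^e,h_1^e$ and the vertex conjugators $y_{d_0(e)},y_{d_1(e)}$ must cancel correctly against the Bass-Serre relation in $\Pi_1(\mathcal{H},\Gamma',S')$; once this explicit cancellation is carried out, the remainder of the argument is a routine application of the universal property of the profinite fundamental group together with the fact that a homomorphism inverted on a generating set is an isomorphism.
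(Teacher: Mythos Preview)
Your construction of $f$ via conjugators $y_v$ along a maximal subtree, and the verification that $\tilde f$ annihilates the defining relations, are correct; this is the direct approach the paper alludes to but does not carry out (the paper simply cites the classical analogue in Cohen's book and suggests, as an alternative, induction on $\#E(\Gamma)$). The gap is in your third paragraph: the inference ``$g\circ f$ restricts to an inner automorphism on each vertex subgroup and sends each stable letter to a conjugate of itself, hence $g\circ f$ is inner'' is invalid. The conjugator $z_v$ realising $(g\circ f)|_{\mathcal G_v}=C_{z_v}$ depends on $v$, and there is no reason these should coincide---already for a free profinite product $A\amalg B$, the endomorphism acting as $C_a$ on $A$ and as $C_b$ on $B$ with $a\ne b$ is not inner. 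Nor can you fall back on Hopficity to upgrade ``surjective'' to ``bijective'', since $\Pi_1(\mathcal G,\Gamma)$ is not assumed topologically finitely generated.

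The fix is to finish the calculation you stopped short of. Taking the natural conjugators for $f_\bullet^{-1}$, namely $(h')_i^{\F(e)}=\big(f_{d_i(e)}^{-1}(h_i^e)\big)^{-1}$, and basepoint $\F(v_0)$ for $S'$, an induction along $S$ shows $g(y_v)\,y'_{\F(v)}=1$ for every $v$, and then direct substitution gives $g(f(t_e))=t_e$ for every non-tree edge $e$. Hence $g\circ f$ is the identity on generators and therefore everywhere; symmetrically $f\circ g=\mathrm{id}$. (A minor separate point: your recursion for $y_v$ only handles tree edges traversed from $d_0(e)$ to $d_1(e)$; you also need the symmetric formula $y_{d_0(e)}=y_{d_1(e)}\,h_1^e\,(h_0^e)^{-1}$ when walking against the orientation.)
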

\begin{proof}
This follows from the same proof of \cite[Chapter 8, Lemma 21]{Coh89} in the classical Bass-Serre theory. One can also prove this proposition directly from \autoref{DEF: Profinite fundamental group of graph of profinite group}, by induction on $\#E(\Gamma)$ starting from the simplest cases of HNN-extensions and amalgamations. 
\end{proof}

Conversely, a profinite group may split as a graph of profinite groups in many ways. 
The following lemma provides a method based on Bass-Serre theory to show when two splittings are essentially the same, i.e. when two graphs of profinite groups are congruently isomorphic.

\begin{lemma}\label{LEM: Equivariant isomorphism of BS tree induce congruent isomorphism}
Let $(\mathcal{G},\Gamma)$ and $(\mathcal{H},\Gamma')$ be two injective finite graphs of profinite groups, and let $T$ and $T'$ denote their profinite Bass-Serre trees. Suppose there exist an isomorphism of profinite fundamental groups $f:\Pi_1(\mathcal{G},\Gamma)\ttt \Pi_1(\mathcal{H},\Gamma')$ and a $f$-equivariant isomorphism of oriented abstract graphs $\Phi: T\ttt T'$ on which the profinite fundamental groups act. Then $f$ induces a congruent isomorphism $f_\bullet$ between  $(\mathcal{G},\Gamma)$ and $(\mathcal{H},\Gamma')$.
\end{lemma}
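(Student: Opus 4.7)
The plan is to extract the data of a congruent isomorphism directly from the action-theoretic information carried by $f$ and $\Phi$. First, since $T/\Pi_1(\mathcal{G},\Gamma)=\Gamma$ and $T'/\Pi_1(\mathcal{H},\Gamma')=\Gamma'$ by \autoref{PROP: Bass-Serre}, and since $\Phi$ is $f$-equivariant, $\Phi$ descends to an isomorphism of finite oriented graphs $\F: \Gamma\ttt \Gamma'$. This supplies the first piece of data in \autoref{indef: congruent}.

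Next, for each $x\in V(\Gamma)\cup E(\Gamma)$, fix the ``standard'' lift $\tilde x := 1\cdot \Pi_x \in T$, whose stabilizer is precisely the image of $\mathcal{G}_x$ in $\Pi_1(\mathcal{G},\Gamma)$; injectivity of $(\mathcal{G},\Gamma)$ lets us identify this stabilizer with $\mathcal{G}_x$ itself, and similarly for $(\mathcal{H},\Gamma')$. The vertex $\Phi(\tilde x)$ lies over $\F(x)\in \Gamma'$, hence has the form $g_x\cdot \Pi_{\F(x)}$ for some $g_x\in \Pi_1(\mathcal{H},\Gamma')$. Equivariance of $\Phi$ gives
\[
f(\mathcal{G}_x)=f(\Stab(\tilde x))=\Stab(\Phi(\tilde x))=g_x\mathcal{H}_{\F(x)}g_x^{-1},
\]
so defining $f_x:=C_{g_x}^{-1}\circ f|_{\mathcal{G}_x}$ yields an isomorphism $\mathcal{G}_x\ttt \mathcal{H}_{\F(x)}$, which supplies the second piece of data.

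Finally, for each $e\in E(\Gamma)$ and $i=0,1$, I need to verify the coherence diagram (\ref{Diagram Isom Conj}). By the definition of the Bass-Serre tree, $d_1(\tilde e)=\widetilde{d_1(e)}$, whereas $d_0(\tilde e)=t_e\cdot \widetilde{d_0(e)}$; the injection $\varphi_i^e:\mathcal{G}_e\to\mathcal{G}_{d_i(e)}$ is then the stabilizer inclusion $\Stab(\tilde e)\hookrightarrow \Stab(d_i(\tilde e))$, possibly post-composed with conjugation by $t_e^{\pm 1}$ on the $i=0$ side; the same description holds downstairs for $(\mathcal{H},\Gamma')$. Applying $f$ and $\Phi$-equivariance, the endpoints $d_i(\Phi(\tilde e))$ and $\Phi(d_i(\tilde e))$ coincide, so the coset representatives $g_e,g_{d_i(e)}$ and $h_i^e$ (and the stable letters $t_e,t'_{\F(e)}$) differ by an element of $\mathcal{H}_{\F(d_i(e))}$, and absorbing that element into a conjugation $C_{h_i^e}$ makes the square commute. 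This is the only delicate point in the argument; once the bookkeeping of coset representatives is handled correctly, the rest is formal, and we obtain the required congruent isomorphism $f_\bullet$.
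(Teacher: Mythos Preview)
Your proposal is correct and follows essentially the same approach as the paper: descend $\Phi$ to get $\F$, use injectivity to identify vertex/edge groups with stabilizers of chosen lifts, define $f_x$ by conjugating $f$ so that it sends $\Stab(\tilde x)$ to $\mathcal{H}_{\F(x)}$, and then verify the coherence square by tracking how the relation maps $d_0,d_1$ interact with the chosen coset representatives. The paper carries out your ``bookkeeping of coset representatives'' explicitly, exhibiting the element $h_i^e=(h^{-1}h_e^{-1}f(g)h_v)^{-1}\in\mathcal{H}_{\F(d_i(e))}$ and checking it lies in the vertex group via the chain of maps $\widetilde{\F(v)}\mapsto\Phi(\widetilde v)\mapsto\Phi(d_i(\tilde e))=d_i'(\Phi(\tilde e))$, but the underlying idea is identical to yours.
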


\newsavebox{\eqquyi}
\begin{lrbox}{\eqquyi}
$
\begin{tikzcd}[column sep=2em]
f_x: \mathcal{G}_x=\Stab(\widetilde{x}) \arrow[r, "f","\cong"'] & \Stab(\Phi(\widetilde{x}))=\Stab(h_x{\tiny \cdot} \widetilde{\F(x)}) \arrow[r, "C_{h_x}^{-1}","\cong"'] & \Stab(\widetilde{\F(x)})= \mathcal{H}_{\F(x)}.
\end{tikzcd}
$
\end{lrbox}

\newsavebox{\eqquer}
\begin{lrbox}{\eqquer}
$
\begin{tikzcd}
\varphi^i_e:\;\mathcal{G}_e=\Stab(\widetilde{e}) \arrow[r, hook] & \Stab(d_i(\widetilde{e}))=\Stab(g\cdot \widetilde{v}) \arrow[r, "C^{-1}_{g}","\cong"'] & \Stab(\widetilde{v})=\mathcal{G}_{v}.
\end{tikzcd}
$
\end{lrbox}

\newsavebox{\eqqusan}
\begin{lrbox}{\eqqusan}
$
\begin{tikzcd}[row sep=tiny, column sep=2em]
(\varphi')^i_{\F(e)}:\mathcal{H}_{\F(e)}=\Stab(\widetilde{\F(e)}) 
\arrow[r, hook] &
\Stab(d_i'(\widetilde{\F(e)})) \arrow[d, equal] & \\
 & \Stab(h\cdot \widetilde{\F(v)}) \arrow[r, "C^{-1}_{h}","\cong"'] & \Stab(\widetilde{\F(v)})=\mathcal{H}_{\F(v)}.
\end{tikzcd}
$
\end{lrbox}
\begin{proof}
%


We  denote the quotient maps as $$p: T\to T/\Pi_1(\mathcal G,\Gamma)=\Gamma\quad \text{and} \quad q: T'\to T'/\Pi_1(\mathcal H,\Gamma')=\Gamma'.$$ Then, the $f$-equivariant isomorphism $\Phi$ induces a combinatorial isomorphism between the oriented quotient graphs, which we denote as  $\F: \Gamma=T/\Pi_1(\mathcal G,\Gamma)\ttt T'/\Pi_1(\mathcal H,\Gamma')=\Gamma'$. 
%

Since $(\mathcal{G},\Gamma)$ is injective, we can identify $\mathcal{G}_v$ with its image in $\Pi_1(\mathcal{G},\Gamma)$ for each $v\in V(\Gamma)$, and identify $\mathcal{G}_e$ with its image in $ \Pi_1(\mathcal{G},\Gamma)$ through the map $\varphi_1^e:\mathcal{G}_e\to \mathcal{G}_{d_1(e)}$. The similar notation follows for $(\mathcal{H},\Gamma')$.
Then, 
for each $x\in \Gamma$, $\mathcal{G}_x\le \Pi_1(\mathcal{G},\Gamma)$ is the stabilizer of a pre-image $\widetilde{x}\in p^{-1}(x)\subseteq T$ that corresponds to the coset $\mathcal{G}_x$ in $\Pi_1(\mathcal{G},\Gamma)/\mathcal{G}_x$. Likewise, for each $y\in \Gamma'$, $\mathcal{H}_y\le \Pi_1(\mathcal{H},\Gamma')$ is the stabilizer of a pre-image $\widetilde{y}\in q^{-1}(y)\subseteq T'$ that corresponds to the coset $\mathcal{H}_y$ in $\Pi_1(\mathcal{H},\Gamma')/\mathcal{H}_y$. 

For each $x\in \Gamma$, $\Phi(p^{-1}(x))=q^{-1}(\F(x))$, so $\Phi(\widetilde{x})$ and $\widetilde{\F(x)}$ belong to the same orbit of $\Pi_1(\mathcal{H},\Gamma')$. To be explicit, there exists $h_x\in \Pi_1(\mathcal{H},\Gamma')$ such that $\Phi(\widetilde{x})= h_x\cdot\widetilde{\F(x)}$. As a result, $f$ induces an isomorphism of profinite groups 
\begin{equation*}
\scalebox{0.92}{\usebox{\eqquyi}}
\end{equation*}

Given $e\in E(\Gamma)$ and $i=0,1$, denote $v=d_i(e)$ and the relation map $\varphi_i^e$ can be determined as follows. $d_i(\widetilde{e})\in p^{-1}(v)$ belongs to the same orbit of $\widetilde{v}$, and there exists a specific element $g\in \Pi_1(\mathcal{G},\Gamma)$ such that $d_i(\widetilde{e})=g\cdot \widetilde{d_i(e)}=g\cdot \widetilde{v}$ and
\begin{equation*}
\scalebox{0.92}{\usebox{\eqquer}}
\end{equation*}

Similarly, there exists $h\in \Pi_1(\mathcal{H},\Gamma')$ such that $d_i'(\widetilde{\F(e)})=h\cdot \widetilde{d_i'(\F(e))}=h\cdot \widetilde{\F(v)}$, and the relation map $(\varphi')^i_{\F(e)}$ in $(\mathcal{H},\Gamma')$ is decomposed as
\begin{equation*}
\scalebox{0.92}{\usebox{\eqqusan}}
\end{equation*}

Thus,  the following diagram commutes.
\begin{equation*}
\begin{tikzcd}
\mathcal{G}_e \arrow[dd, "C_{h_e}^{-1}\circ f","f_e"'] \arrow[rrr, "\varphi_e^i","C_{g^{-1}}"'] &  &  & \mathcal{G}_v \arrow[d, "C_{h_v}^{-1}\circ f"',"f_v"] \\
                                                                           &  &  & \mathcal{H}_{\F(v)} \arrow[d, "C_{h^{-1}h_e^{-1}f(g)h_v}"]     \\
\mathcal{H}_{\F(e)} \arrow[rrr, "\varphi_{\F(e)}^{\prime i}"',"C_{h^{-1}}"]             &  &  & \mathcal{H}_{\F(v)}                            
\end{tikzcd}
\end{equation*}

In fact, $h^{-1}h_e^{-1}f(g)h_v\in \Stab(\widetilde{\F(v)})=\mathcal{H}_{\F(v)}$ as shown by the following relations. 
{\small 
\begin{equation*}
    \begin{tikzcd}[column sep=2em]
\widetilde{\F(v)} \arrow[r, "h_v", maps to] & \Phi(\widetilde{v}) \arrow[r, "f(g)", maps to] & \Phi(g\cdot\widetilde{v})=\Phi(d_i(\widetilde{e}))=d_i(\Phi(\widetilde{e})) & d_i'(\widetilde{\F(e)}) \arrow[l, "h_e"', maps to] & \widetilde{\F(v)} \arrow[l, "h"', maps to]
\end{tikzcd}
\end{equation*}
}
This  implies the commutative diagram (\ref{Diagram Isom Conj}) by taking $${h_i^e=(h^{-1}h_e^{-1}f(g)h_v)^{-1}}.$$ Therefore, $f_\bullet$ is indeed a congruent isomorphism.
\end{proof}

\begin{remark}
\begin{enumerate}[leftmargin=*]
\item It is important  that the graphs of profinite groups are injective, so that we can identify the profinite groups $\mathcal{G}_v$ with the vertex stabilizers in $T$. The remaining proof is in fact the same as in classical Bass-Serre theory, where all graphs of (abstract) groups are automatically injective according to \cite[Proposition 6.2.1]{Geo07}.
\item The  isomorphism of abstract graphs $\Phi:T\ttt T'$ is indeed an isomorphism of profinite graphs. In fact, for each $x\in \Gamma$, we can choose a preimage $\tilde{x}\in T$. Then $\Phi(\tilde{x})$ is also a preimage of $\F(x)\in \Gamma'$. By construction of the profinite Bass-Serre trees, $$T=\bigsqcup_{x\in \Gamma }\quotient{\Pi_1(\mathcal{G},\Gamma)}{\Stab({\tilde{x}})}\,,\; T'=\bigsqcup_{x\in \Gamma}\quotient{\Pi_1(\mathcal{H},\Gamma')}{\Stab ({\Phi(\tilde{x})})}$$
are equipped with disjoint union topology of coset spaces. The $f$-equivariance of $\Phi$ implies that $f$ restricts to an isomorphism $\Stab ({\tilde{x}})\ttt{\Stab ({\Phi(\tilde{x})})}$, which is an isomorphism as profinite groups. Thus, $\Phi$ is a homeomorphism on each coset space, and is thus a homeomorphism on the disjoint union.  
\item 
Moreover, $f_\bullet$ again induces an isomorphism $f': \Pi_1(\mathcal{G},\Gamma)\ttt \Pi_1(\mathcal{H},\Gamma')$ by \autoref{LEM: congruent isom induce isom}. It follows that $f$ and $f'$ only differ by choices of sections of $T\to \Gamma$ and $T'\to \Gamma'$ in defining the profinite fundamental groups, resulting possible conjugations. This ambiguity can be eliminated as shown by \cite[Theorem 6.2.4]{Rib17}.
\end{enumerate}
\end{remark}

\section{Profinite detection of prime decomposition}\label{SEC: Prime decomposition}


Let $M$ be a compact orientable 3-manifold. Then $M$ can be decomposed as a connect sum
$M \cong M_1\#\cdots \# M_m\# (S^2\times S^1)^{\# r}$ via the {\em Kneser-Milnor prime decomposition}, 
where each $M_i$ is irreducible and $M_i\not\cong S^3$. In addition, the numbers $m$ and $r$ are unique, and 
the prime factors $M_i$ are uniquely determined up to homeomorphism and reordering.

In fact, when $M$ is  boundary-incompressible, the Kneser-Milnor prime decomposition is exactly correspondent to the prime decomposition of its fundamental group as a free product \cite{Hei72}. Thus, the numbers $m$ and $r$ as well as the fundamental groups $\pi_1(M_i)$ can be uniquely determined by $\pi_1(M)$, through a group theoretical method. 

The similar conclusion holds in the profinite settings. 

\begin{theorem}[{\cite[Theorem 6.22]{Wil19}}]\label{THM: Detect Kneser-Milnor}
Let $M$ and $N$ be two compact, orientable, boundary-incompressible 3-manifolds, with prime decomposition $M\cong M_1\#\cdots\# M_m\#(S^2\times S^1)^{\#r}$ and $N \cong N_1\#\cdots \# N_n\#(S^2\times S^1)^{\#s}$, where   $M_i$ and $N_j$ are irreducible. Suppose $ \widehat{\pi_1M}\cong \widehat{\pi_1N}$. Then $m=n$, $r=s$, and up to reordering, $\widehat{\pi_1M_i}\cong \widehat{\pi_1N_i}$.
\end{theorem}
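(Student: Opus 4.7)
The plan is to reduce the statement to a unique decomposition theorem for profinite free products. By the resolution of Kneser's conjecture \cite{Hei72}, the prime decomposition of $M$ translates into a free product decomposition
\[
\pi_1 M \;\cong\; \pi_1 M_1 * \cdots * \pi_1 M_m * F_r,
\]
with $F_r$ the free group of rank $r$ coming from the $S^2\times S^1$ summands, and analogously for $N$. So the task is purely group-theoretic: given an isomorphism $\widehat{\pi_1 M}\cong\widehat{\pi_1 N}$, recover the number and profinite isomorphism types of the free factors.

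The next step is to upgrade the abstract free product to a profinite free product. Using the convention of \autoref{SEC: Graph of Groups}, one views each $\pi_1 M$ as the fundamental group of a graph of groups with trivial edge groups and vertex groups $\pi_1 M_i$ (and loops accounting for $F_r$). The classical theorem of Hempel, together with separability results for 3-manifold groups (after the resolution of the virtually Haken/geometrization program; see e.g. the application of these in Wilton-Zalesskii), shows that this graph of groups is \emph{efficient}. By \autoref{THM: Efficient completion}, one therefore obtains
\[
\widehat{\pi_1 M} \;\cong\; \widehat{\pi_1 M_1} \amalg \cdots \amalg \widehat{\pi_1 M_m} \amalg \widehat{F_r},
\]
a genuine profinite free product, and similarly for $N$.

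The crux is then a profinite Kurosh/Grushko-type uniqueness statement: a profinite free product of freely indecomposable factors is unique up to reordering and conjugation. I would invoke this via Ribes--Zalesskii's version of the Kurosh subgroup theorem for profinite free products. To apply it, one must check that each $\widehat{\pi_1 M_i}$ is freely indecomposable as a profinite group. When $\pi_1 M_i$ is finite this is automatic. When $\pi_1 M_i$ is infinite, $M_i$ is aspherical, so $\pi_1 M_i$ is a one-ended, torsion-free group; one then translates this into the profinite setting using that $M_i$ has (profinite) cohomological dimension behaving well—concretely, a non-trivial profinite free splitting would contradict either the $\mathrm{PD}^3$-character of $\widehat{\pi_1 M_i}$ (in the closed case) or the peripheral structure of $\widehat{\pi_1 M_i}$ (in the bounded incompressible case), through arguments that act on the associated profinite Bass-Serre tree as in \autoref{PROP: Bass-Serre}. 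Finally, the factor $\widehat{F_r}$ is distinguished from the $\widehat{\pi_1 M_i}$'s because $\widehat{F_r}$ is a free profinite group, so its abelianization is a free $\widehat{\Z}$-module of rank $r$; this lets one recover $r$ and hence $m=n$, $s=r$, plus the matching of profinite isomorphism types.

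The main obstacle is the third paragraph: profinite free indecomposability of the $\widehat{\pi_1 M_i}$ is not a formal consequence of abstract free indecomposability, and ruling out exotic profinite splittings requires either a cohomological/duality argument (for closed irreducible pieces) or an action on a profinite tree combined with the acylindrical behaviour and peripheral malnormality results that will be developed later in \autoref{SEC: JSJ}. Once that indecomposability is in hand, the uniqueness of the profinite free product decomposition finishes the proof mechanically.
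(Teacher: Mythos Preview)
The paper does not prove this theorem; it is quoted from \cite[Theorem 6.22]{Wil19}. Your outline is essentially the approach taken there: pass from the free product to a profinite free product via efficiency (this is \cite[Chapter~9.1]{RZ10}, and does not need any 3-manifold input beyond residual finiteness), and then argue that the decomposition into indecomposables is unique. Wilkes phrases the key step not as ``profinite free indecomposability'' but as the equivalent fixed-point property: for each irreducible, boundary-incompressible $M_i$, the group $\widehat{\pi_1 M_i}$ fixes a vertex whenever it acts on a profinite tree with trivial edge stabilizers. This is established via cohomological goodness of $\pi_1 M_i$ and the finiteness of its (profinite) cohomological dimension, which is close to what you sketch; your appeals to ``peripheral structure'' or to acylindricity from \autoref{SEC: JSJ} are not what is actually used and would be circular here.

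One loose end in your write-up: you treat $\widehat{F_r}$ as a single factor to be recognised by its abelianization, but $\widehat{F_r}$ is itself the profinite free product of $r$ copies of $\widehat{\Z}$, so for the uniqueness argument you must decompose all the way down to indecomposable factors. The integer $r$ is then recovered by counting the infinite procyclic factors, which is legitimate because no irreducible, orientable, boundary-incompressible 3-manifold (with no sphere boundary components) has fundamental group isomorphic to $\Z$.
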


In combination with the cohomological goodness of 3-manifold groups \cite{Cav12}, \autoref{THM: Detect Kneser-Milnor} implies the following corollary which might be familiar to experts.
\begin{corollary}\label{COR: Determine boundary}
Suppose $M$ and $N$ are two compact, orientable, boundary-incompressible 3-manifolds, and $\widehat{\pi_1M}\cong \widehat{\pi_1N}$. 
\begin{enumerate}[leftmargin=*]
\item\label{4.3-1} If $M$ is irreducible, then so is $N$.
\item\label{4.3-2} If $M$ is closed, then so is $N$.
\item\label{4.3-3} If $\partial M$ only consists of tori, then so does $\partial N$.
\end{enumerate}
\end{corollary}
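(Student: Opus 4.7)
The plan is to reduce each of (\ref{4.3-1}), (\ref{4.3-2}), (\ref{4.3-3}) to a statement about irreducible prime factors via \autoref{THM: Detect Kneser-Milnor}, then extract the needed topological invariants from profinite completions using cohomological goodness of 3-manifold groups \cite{Cav12}. I will use freely that 3-manifold groups are residually finite and that an irreducible 3-manifold with infinite fundamental group is aspherical (by the sphere theorem).

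Part (\ref{4.3-1}) is bookkeeping: an irreducible $M$ has Kneser-Milnor invariants $(m,r)\in\{(1,0),(0,0)\}$, so \autoref{THM: Detect Kneser-Milnor} gives the same for $N$. If $M=S^3$, then $\widehat{\pi_1 N}=1$, and residual finiteness forces $\pi_1 N=1$, hence $N=S^3$.

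For (\ref{4.3-2}), it suffices to show that for each irreducible factor $M_i$ closed implies $N_i$ closed. If $\pi_1 M_i$ is finite, then $\widehat{\pi_1 N_i}$ is finite and residual finiteness gives $\pi_1 N_i$ finite, so $N_i$ is a spherical space form and closed. Otherwise $\pi_1 M_i$, and hence (by residual finiteness again) $\pi_1 N_i$, is infinite, making both manifolds aspherical. Goodness and asphericity then give
\[
\dim_{\Fp} H^3(\widehat{\pi_1 M_i};\Fp)=\dim_{\Fp} H^3(M_i;\Fp),
\]
which equals $1$ for closed orientable $M_i$ by Poincar\'e duality and $0$ when $\partial M_i\neq\emptyset$ (since $M_i$ then admits a $2$-complex spine); the same equality for $N_i$ closes the argument.

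For (\ref{4.3-3}), pass to each irreducible factor; each $M_i$ is either closed (handled by (\ref{4.3-2})) or aspherical with non-empty toral boundary. The identity $\chi(M_i)=\tfrac12\chi(\partial M_i)$ for compact $3$-manifolds yields $\chi(M_i)=0$ iff every boundary component of $M_i$ is a torus, and $\chi(M_i)<0$ otherwise. Since
\[
\chi(M_i)=\sum_{k\ge 0}(-1)^k\dim_{\Fp} H^k(\pi_1 M_i;\Fp)
\]
is a finite sum (as $\mathrm{cd}(\pi_1 M_i)\le 3$) with each term a profinite invariant by goodness, $\chi(M_i)=\chi(N_i)$. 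Part (\ref{4.3-2}) ensures $\partial N_i\neq\emptyset$, so $\chi(N_i)=0$ forces every component of $\partial N_i$ to be a torus. The main non-formal step throughout is the conversion of profinite information into the $\Fp$-cohomology of $\pi_1 M_i$ via goodness; the rest is prime-factor bookkeeping.
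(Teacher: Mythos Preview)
Your proof is correct and follows essentially the same approach as the paper's: reduce to irreducible prime factors via \autoref{THM: Detect Kneser-Milnor}, then use asphericity together with cohomological goodness \cite{Cav12} to identify $H^\ast(M_i;\Fp)\cong H^\ast(N_i;\Fp)$, detecting closedness via $H^3$ and toral boundary via the Euler characteristic relation $\chi(\partial M_i)=2\chi(M_i)$. Your treatment of part~(\ref{4.3-1}) is in fact slightly more careful than the paper's, since you explicitly handle the $S^3$ case $(m,r)=(0,0)$.
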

\begin{proof}
Let $M\cong M_1\#\cdots\# M_m\#(S^2\times S^1)^{\#r}$ and $N \cong N_1\#\cdots \# N_n\#(S^2\times S^1)^{\#s}$ be the Kneser-Milnor prime decompositions of $M$ and $N$. According to \autoref{THM: Detect Kneser-Milnor}, $m=n$, $r=s$, and up to a reordering, $\widehat{\pi_1M_i}\cong \widehat{\pi_1N_i}$. 

This directly implies  (\ref{4.3-1}), which is equivalent to $m=1$ and $r=0$. For (\ref{4.3-2}) and (\ref{4.3-3}), it suffices to prove the conclusion for each pair of irreducible factors $M_i$ and $N_i$.

If $M_i$ has finite fundamental group, then $M_i$ is closed and $\widehat{\pi_1N_i}\cong \widehat{\pi_1M_i}\cong \pi_1M_i$ is also finite. Since $\pi_1N_i$ is residually finite \cite{Hem87}, it follows that $\pi_1N_i$ is also finite and $N_i$ is also closed.

If $M_i$ has infinite fundamental group, then $\widehat{\pi_1M_i}\cong\widehat{\pi_1N_i}$ is also infinite, and so is $\pi_1N_i$.  The standard application of sphere theorem implies that $M_i$ and $N_i$ are aspherical. \cite[Theorem 3.5.1 $\&$ Lemma 3.7.1]{Cav12} shows that $\pi_1M_i$ and $\pi_1N_i$ are cohomologically good in the sense of Serre \cite[pp. 15-16]{Ser01}. In particular, for any prime number $p$, there is an isomorphism  $H^{k}(\widehat{\pi_1M_i},\Fp)\cong H^{k}(\pi_1M_i,\Fp)$ induced by inclusion, and similarly, $H^{k}(\widehat{\pi_1N_i},\Fp)\cong H^{k}(\pi_1N_i,\Fp)$ for each $k\in \mathbb{N}$. Since $M_i$ and $N_i$ are aspherical, we obtain 
\begin{align*}
&H^{k}(M_i;\Fp)\cong H^{k}(\pi_1M_i,\Fp)\cong H^{k}(\widehat{\pi_1M_i},\Fp) & \\
\cong & H^{k}(\widehat{\pi_1N_i},\Fp)\cong H^{k}(\pi_1N_i,\Fp)\cong H^{k}(N_i;\Fp),&\forall k\in \mathbb{N}.
\end{align*}
Consequently, $M_i$ is closed if and only if $H^3(M_i;\Fp)\cong H^3(N_i;\Fp)\cong \Fp$ for all prime numbers $p$, which is equivalent to $N_i$ being closed. This proves (\ref{4.3-2}).

In addition, the Poincar\'e duality implies $\chi(\partial M_i)=2\chi (M_i)$. Thus, when $M_i$ and $N_i$ have infinite fundamental groups,  
\begin{align*}
\chi(\partial M_i)=2\chi (M_i)=2\chi (N_i)=\chi (\partial N_i).
\end{align*}
Note that $\partial M_i$ consists of tori if and only if $M_i$ is not closed and $\chi(\partial M_i)=0$, and likewise for $N_i$. This proves (\ref{4.3-3}).
\end{proof}

\section{Profinite detection of JSJ-decomposition}\label{SEC: JSJ}


Based on \autoref{THM: Detect Kneser-Milnor}, we may restrict ourselves to the profinite properties of irreducible 3-manifolds. The technique of this paper mainly adapts to   irreducible 3-manifolds with empty or incompressible toral boundary, as they  admit JSJ-decomposition  by merely tori. 
In this section, we shall briefly review the ingredients of \cite[Theorem B]{WZ19} which encodes the information of JSJ-decomposition from the profinite completion, and obtain a new description of this result (\autoref{THM: Profinite Isom up to Conj}) which better fits in the technique of this paper.

\subsection{JSJ-decomposition as a graph of groups}\label{SEC: JSJ graph}

The JSJ-decomposition together with the hyperbolization theorem yields the modern version as follows, and we refer the readers to \cite[Theorem 1.7.6]{AFW15} for general references.

\begin{theorem}[JSJ-Decomposition]
Let $M$ be a compact, orientable, irreducible 3-manifold with empty or incompressible toral boundary. Then there exists a minimal collection $\mathcal{T}$ of disjoint embedded incompressible tori, such that each connected component of $M\setminus \mathcal{T}$ is either a Seifert fibered space or a finite-volume hyperbolic manifold. In addition, $\mathcal{T}$ is unique up to isotopy.
\end{theorem}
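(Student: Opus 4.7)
The plan is to combine the classical Jaco--Shalen--Johannson decomposition with the geometrization theorem. First, I would invoke the original JSJ decomposition: by Kneser--Haken finiteness, the number of disjoint, pairwise non-parallel incompressible tori embedded in $M$ is bounded, so one can extract a maximal disjoint collection $\mathcal{T}_0$ of incompressible tori no two of which are parallel. After further reducing by removing tori that sit inside a Seifert fibered submanifold on both sides (and amalgamating the adjacent pieces), one obtains a minimal collection $\mathcal{T}$ whose complementary pieces $M_v$ are either Seifert fibered or atoroidal, in the sense that every incompressible torus in $M_v$ is boundary-parallel. Uniqueness of $\mathcal{T}$ up to isotopy is then a consequence of the isotopy-uniqueness of the characteristic Seifert submanifold established by Jaco--Shalen and Johannson.

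Next, I would promote the atoroidal pieces to hyperbolic ones. If $M_v$ is atoroidal and not Seifert fibered, its interior admits a complete hyperbolic metric of finite volume: for bounded pieces this is Thurston's hyperbolization theorem for Haken manifolds with nonempty boundary, and for the closed case (i.e.\ $M$ itself closed with $\mathcal{T}=\emptyset$) this is Perelman's geometrization theorem applied to closed, orientable, irreducible, atoroidal 3-manifolds with infinite fundamental group. The small exceptional cases (atoroidal manifolds with finite or virtually abelian $\pi_1$) fall naturally into the Seifert side of the dichotomy, so the stated alternative covers all pieces.

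Combining these two inputs yields the decomposition and uniqueness statement. The main obstacle, unsurprisingly, is the invocation of the hyperbolization and geometrization theorems, which are among the deepest results in 3-manifold topology; for the purposes of this paper they enter only as a black box, while the combinatorial JSJ half is relatively routine given Kneser--Haken finiteness and characteristic submanifold theory.
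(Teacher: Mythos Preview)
The paper does not prove this theorem; it states it as a known result and refers the reader to \cite[Theorem 1.7.6]{AFW15} for general references. Your outline is the standard route to the ``modern'' version of JSJ---classical Jaco--Shalen/Johannson characteristic submanifold theory for existence and uniqueness of the toral decomposition into Seifert and atoroidal pieces, followed by Thurston/Perelman hyperbolization for the atoroidal pieces---and is entirely in line with how the cited reference presents it, so there is nothing to correct.
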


We say that the JSJ-decomposition is \textit{non-trivial}, if $\mathcal{T}$ is non-empty. The JSJ-decomposition identifies $\pi_1M$ as the fundamental group of a graph of groups, based on the geometric construction introduced in \autoref{SEC: Graph of Groups}. We denote this graph of group $(\mathcal{G}_M,\Gamma_M)$ as the \textit{JSJ-graph of groups} of $M$. Indeed, the underlying graph $\Gamma_M$ is a finite graph, the edge groups are $\pi_1(T^2)\cong \mathbb{Z}\times \mathbb{Z}$, and the vertex groups are the fundamental groups of the Seifert fibered or hyperbolic JSJ-pieces.

Wilton-Zalesskii \cite[Theorem A]{WZ10} showed that the JSJ-graph of groups is efficient when $M$ is closed, based on the finite covers of the geometric pieces constructed by Hamilton \cite{Ham01}, and the criteria for residual  finiteness by Hempel \cite{Hem87}. The proof works exactly the same when $M$ has incompressible toral boundary, see also  \cite{WilkesJSJ}.

\begin{proposition}[{\cite[Theorem A]{WZ10}}]\label{PROP: JSJ Efficient}
Let $M$ be a compact, orientable, irreducible 3-manifold with empty or incompressible toral boundary. Then the JSJ-graph of groups of $M$ is efficient.
\end{proposition}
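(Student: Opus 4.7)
The plan is to verify the three defining conditions of efficiency in turn: residual finiteness of $\pi_1 M$; separability of every vertex subgroup $\pi_1 M_v$ and edge subgroup $\pi_1 T_e$ inside $\pi_1 M$; and the fact that the profinite topology of $\pi_1 M$ restricts to the full profinite topology on each such subgroup. Residual finiteness of $\pi_1 M$ is Hempel's theorem \cite{Hem87}, which is a consequence of geometrization together with the residual finiteness of geometric 3-manifold groups. Thus the real content is the second and third conditions, and the strategy is to reduce them to statements about regular finite covers of a single geometric JSJ-piece $M_v$ whose restriction to $\partial M_v$ is prescribed.

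The key technical input is Hamilton's lemma \cite{Ham01}: for every geometric JSJ-piece $M_v$ (either Seifert fibered with toral boundary or finite-volume hyperbolic with cusps) and every finite-index subgroup $K \le \pi_1(\partial_j M_v) \cong \Z \times \Z$, there exists a finite-index normal subgroup $N \lhd_f \pi_1 M_v$ such that $N \cap \pi_1(\partial_j M_v) \subseteq K$. For a Seifert piece this is an explicit construction using the regular fiber and a finite cover of the base orbifold; for a cusped hyperbolic piece it follows from LERF for cusped hyperbolic 3-manifold groups (Agol, Wise). Applied at a single vertex, Hamilton's lemma already shows both that each peripheral subgroup is separable in $\pi_1 M_v$ and that the profinite topology on $\pi_1 M_v$ induces the full profinite topology on each $\pi_1(\partial_j M_v)$.

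To promote these local statements to $\pi_1 M$, one performs a gluing construction along the JSJ-tori. Given an arbitrary finite-index subgroup $K$ of a vertex group $\pi_1 M_v$ (or of an edge group $\pi_1 T_e$), I would produce, for each JSJ-piece $M_u$ of $M$, a finite regular cover $\widetilde{M_u} \to M_u$ whose restriction to every boundary torus of $M_u$ factors through a pre-chosen finite quotient of that torus; this uses Hamilton's lemma at each peripheral subgroup simultaneously. The crucial point is to arrange that on the two sides of each JSJ-torus $T_e$ the two restrictions induce the same quotient of $\pi_1 T_e$, so that the local covers can be glued along matched boundary components to yield a finite regular cover $\widetilde{M} \to M$ with $\pi_1 \widetilde{M} \cap \pi_1 M_v \subseteq K$. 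This simultaneously delivers both separability of $\pi_1 M_v$ and $\pi_1 T_e$ in $\pi_1 M$ and the induced-topology condition. Separability of the edge subgroups then follows from separability of $\Z \times \Z$ in each adjacent vertex group combined with the gluing.

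The main obstacle is the compatibility of the local covers across each JSJ-torus: a priori the finite quotients of $\pi_1 M_u$ and $\pi_1 M_{u'}$ chosen on the two sides of $T_e$ restrict to unrelated finite quotients of $\pi_1 T_e$. The resolution is to first pick a single finite-index subgroup of $\pi_1 T_e$ small enough to lie inside both prescribed boundary behaviors (this exists since $\pi_1 T_e \cong \Z \times \Z$ is abelian and both constraints have finite index), and then apply Hamilton's lemma to the two adjacent pieces with this common subgroup as target. Once this torus-by-torus matching is arranged, the standard gluing of regular covers along matched boundary tori produces the desired finite cover of $M$, and the three conditions of efficiency follow.
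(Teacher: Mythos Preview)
Your outline is correct and matches the approach the paper attributes to Wilton--Zalesskii \cite{WZ10}: the paper does not give its own proof but simply records that the argument uses Hempel's residual finiteness \cite{Hem87} together with Hamilton's construction of finite covers with prescribed peripheral behavior \cite{Ham01}, and that the closed case extends verbatim to the case with incompressible toral boundary. One small correction: you invoke Agol--Wise LERF for the hyperbolic pieces, but this is unnecessary and anachronistic---Hamilton's result \cite{Ham01} on abelian subgroup separability already covers both the Seifert and the cusped hyperbolic pieces directly, and this is what Wilton--Zalesskii use.
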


Let $(\widehat{\mathcal{G}_M},\Gamma_M)$ be the profinite completion of the JSJ-graph of groups, which we denote as the \textit{JSJ-graph of profinite groups}. Then \autoref{PROP: JSJ Efficient} together with \autoref{THM: Efficient completion}  implies that $(\widehat{\mathcal{G}_M},\Gamma_M)$ is injective, and  
$$\widehat{\pi_1M} \cong \Pi_1(\widehat{\mathcal{G}_M},\Gamma_M).$$

\subsection{Standard profinite tree}

%
As we have established an isomorphism $\widehat{\pi_1M}\cong \Pi_1(\widehat{\mathcal{G}_M},\Gamma_M)$, 
by \autoref{PROP: Bass-Serre}, $\widehat{\pi_1M}$ acts on the corresponding profinite Bass-Serre tree, which we refer to as the \textit{standard profinite tree} $\T_M$.

\begin{proposition}[{\cite[Proposition 6.8]{Wil18}}]\label{PROP: 4-acylindrical}
Let $M$ be a compact, orientable, irreducible 3-manifold with empty or incompressible toral boundary. Suppose $M$ is not a closed 3-manifold admitting $Sol$-geometry. Then the action by $\widehat{\pi_1M}$ on the standard profinite tree $\T_M$ is 4-acylindrical.
\end{proposition}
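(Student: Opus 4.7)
The plan is to adapt the argument of \cite[Proposition 6.8]{Wil18} from closed graph manifolds to the present more general setting by incorporating the hyperbolic case. I would consider an injective path $v_0\to v_1\to\cdots\to v_5$ of length $5$ in $\T_M$ with successive edges $e_1,\ldots,e_5$, and aim to show its pointwise stabilizer $K=\bigcap_{i=1}^{5}\Stab(e_i)$ is trivial (this suffices, since a longer injective path contains a length-$5$ subpath with the same or smaller stabilizer). By \autoref{PROP: Bass-Serre}, each $\Stab(e_i)$ is a conjugate of $\overline{\pi_1 T^2}\cong\widehat{\Z}\times\widehat{\Z}$, and each $\Stab(v_i)$ is a conjugate of $\widehat{\pi_1 M_{v_i}}$, where $M_{v_i}$ is either Seifert fibered or finite-volume hyperbolic; edge stabilizers sit inside adjacent vertex stabilizers as closures of peripheral subgroups.

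The local step is to analyze, at each interior vertex $v_i$ (for $1\le i\le 4$), the intersection $\Stab(e_i)\cap\Stab(e_{i+1})$ inside $\widehat{\pi_1 M_{v_i}}$. Since the path is injective, $e_i$ and $e_{i+1}$ correspond to two distinct cosets of peripheral subgroups in $\widehat{\pi_1 M_{v_i}}$, so this intersection is the intersection of two distinct peripheral conjugates. If $M_{v_i}$ is finite-volume hyperbolic, I would invoke the profinite malnormality of its peripheral subgroups (which are abstractly malnormal as maximal parabolics, and whose malnormality upgrades to $\widehat{\pi_1 M_{v_i}}$ via LERF together with the cohomological goodness of cusped hyperbolic $3$-manifold groups, cf.\ \cite{WZ17,Cav12}) to conclude that the intersection is already trivial. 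Hence if any interior vertex is hyperbolic, $K=\{1\}$ immediately.

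It remains to handle the case in which all four interior vertices $v_1,\ldots,v_4$ are Seifert fibered. Two ingredients would be used here. First, within a Seifert-fibered vertex group $\widehat{\pi_1 M_{v_i}}$ whose base orbifold has negative Euler characteristic, every peripheral subgroup contains the fiber subgroup $\mathbb{Z}_{\phi_{v_i}}$, and the intersection of any two distinct peripheral conjugates equals exactly $\overline{\mathbb{Z}_{\phi_{v_i}}}\cong\widehat{\Z}$. Second, at each JSJ torus separating two Seifert pieces $M_{v_i}$ and $M_{v_{i+1}}$, the fibers $\phi_{v_i}$ and $\phi_{v_{i+1}}$ project to linearly independent primitive classes in $\pi_1 T^2\cong\Z\times\Z$; otherwise JSJ-minimality would force the two pieces to amalgamate into a single Seifert piece. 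Consequently $\overline{\mathbb{Z}_{\phi_{v_i}}}\cap\overline{\mathbb{Z}_{\phi_{v_{i+1}}}}=\{1\}$ inside $\widehat{\pi_1 T^2}$, and combining these at two adjacent interior Seifert vertices yields $K=\{1\}$.

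The main obstacle is the exceptional configuration where JSJ-minimality alone does not force the fibers across a JSJ torus to be linearly independent; this case coincides precisely with $M$ being a closed $Sol$-manifold (a torus bundle with Anosov monodromy), which is excluded by hypothesis. A secondary technical point is the treatment of Seifert pieces whose base orbifolds have non-negative Euler characteristic: these can only appear as terminal pieces along $\partial M$ (and not as interior JSJ pieces of an irreducible $3$-manifold with toral boundary that is not a $Sol$-manifold), so they never appear at the interior vertices $v_1,\ldots,v_4$ of the path and do not disturb the fiber-direction analysis above.
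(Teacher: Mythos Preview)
The gap is in your treatment of the minor Seifert piece $S^1\ttimes S^1\ttimes I$ (the twisted $I$-bundle over the Klein bottle). You claim that Seifert pieces with base orbifold of non-negative Euler characteristic ``can only appear as terminal pieces along $\partial M$'' and hence ``never appear at the interior vertices $v_1,\ldots,v_4$''. This conflates two different graphs. The minor piece is indeed a leaf (degree-$1$ vertex) in the JSJ graph $\Gamma_M$, since it has a single boundary torus which is a JSJ torus, not a component of $\partial M$. But in the profinite Bass-Serre tree $\T_M$, a vertex lying over the minor piece has degree $2$: by \autoref{LEM: minor peripheral} the peripheral subgroup has index $2$ in $\widehat{\pi_1(S^1\ttimes S^1\ttimes I)}$, so there are exactly two cosets of the edge group, hence two incident edges. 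An injective path in $\T_M$ can therefore pass \emph{through} a minor vertex as an interior vertex.

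At such a vertex the two incident edge stabilizers coincide (the peripheral subgroup is normal), so $\Stab(e_i)\cap\Stab(e_{i+1})\cong\widehat{\Z}\times\widehat{\Z}$ and your ``drop to the fiber direction'' does not occur there. This is precisely why the bound is $4$-acylindrical rather than $2$-acylindrical: a path of the shape $(\text{major }M_w)\to(\text{minor})\to(\text{major }M_w)\to\cdots$ in $\T_M$ picks up only the fiber constraint of $M_w$ at the first and third vertices, with nothing new at the minor vertex, and one must proceed to a further major (or hyperbolic) vertex to obtain a transverse fiber direction. Wilkes's argument in \cite[Proposition~6.8]{Wil18} handles this by tracking how the involution of \autoref{LEM: minor peripheral} interacts with the fiber coming from the neighbouring piece; you will need to reproduce that analysis. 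Your treatment of the hyperbolic and major Seifert cases is otherwise sound.
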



In fact, \cite[Proposition 6.8]{Wil18} was only stated for closed manifolds, while  the case-by-case proof  adapts with no alteration to the setting with toral boundary.


The following trailblazing  result was first proven by Wilton-Zalesskii \cite{WZ19}. 

\begin{proposition}[{\cite[Theorem 4.3]{WZ19}}]\label{PROP: equivariant isomorphism of abstract graph}
Let $M$ and $N$ be compact, orientable, irreducible 3-manifolds with empty or incompressible toral boundary, which are not closed 3-manifolds admitting $Sol$-geometry. Suppose $f:\widehat{\pi_1M}\ttt\widehat{\pi_1N}$ is an isomorphism of profinite groups. We may forget the topology and the orientation of the standard profinite trees $\T_M$ and $\T_N$, and view them as abstract combinatorial graphs. Then there exists an $f$-equivariant isomorphism of abstract unoriented graphs $\Phi: \T_M \to \T_N$.
\end{proposition}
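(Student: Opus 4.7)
The plan is to recover the combinatorial structure of $\T_M$ intrinsically from $\widehat{\pi_1M}$ as a profinite group, so that under any abstract isomorphism $f$ this structure must transport faithfully to $\T_N$. Since the JSJ-graph of groups $(\mathcal{G}_M,\Gamma_M)$ is efficient (\autoref{PROP: JSJ Efficient}) and its profinite completion is injective (\autoref{THM: Efficient completion}), the vertex stabilizers of $\T_M$ are conjugates of $\widehat{\pi_1M_v}$ and the edge stabilizers are conjugates of $\widehat{\Z}\times\widehat{\Z}$. Thus $\T_M$ is determined up to $\widehat{\pi_1M}$-equivariant combinatorial isomorphism by the conjugacy classes of these stabilizer subgroups together with their inclusion pattern, and analogously for $\T_N$.

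The first main step is to give an intrinsic group-theoretic characterization of the family of subgroups that occur as vertex and edge stabilizers. I would distinguish Seifert-type vertex groups from hyperbolic-type vertex groups by centralizer conditions: a Seifert vertex group contains a nontrivial central fibre element whose centralizer is all of the vertex group, whereas a cusped hyperbolic vertex group is malnormal with respect to each of its rank-two abelian peripheral subgroups. Edge stabilizers are then characterized as the maximal closed subgroups isomorphic to $\widehat{\Z}\times\widehat{\Z}$ that sit compatibly inside two (possibly coinciding) vertex stabilizers in the sense dictated by the tree, which is forced by the $4$-acylindricity of the action from \autoref{PROP: 4-acylindrical}.

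The second step is the transfer itself. Having an intrinsic group-theoretic characterization, $f$ must send vertex stabilizers of $\T_M$ bijectively, up to conjugation, to vertex stabilizers of $\T_N$, and the same for edge stabilizers. Define $\Phi$ on $V(\T_M)\cup E(\T_M)$ by declaring $\Phi(\widetilde{x})$ to be the unique element of $\T_N$ whose stabilizer equals $f(\Stab(\widetilde{x}))$. Incidence of an edge $\widetilde{e}$ with a vertex $\widetilde{v}$ is equivalent to $\Stab(\widetilde{e})\subseteq \Stab(\widetilde{v})$, and since $f$ preserves subgroup inclusion, $\Phi$ is a combinatorial isomorphism of abstract unoriented graphs. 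Equivariance is automatic because $\Stab(g\cdot \widetilde{x})=g\,\Stab(\widetilde{x})\,g^{-1}$, so $\Phi(g\cdot\widetilde{x})=f(g)\cdot\Phi(\widetilde{x})$ for every $g\in\widehat{\pi_1M}$.

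The principal obstacle will be the first step, namely the intrinsic characterization of edge stabilizers. A priori, rank-two abelian subgroups of $\widehat{\pi_1M}$ arise not only from JSJ-tori but also from tori inside Seifert pieces, so one must rule out the spurious candidates. This is precisely where $4$-acylindricity (\autoref{PROP: 4-acylindrical}) is doing essential work: a genuine JSJ-edge stabilizer fixes exactly one edge of $\T_M$ while spurious abelian subgroups cannot satisfy the requisite tree-theoretic behaviour. The hypothesis that $M,N$ are not closed $Sol$-manifolds is also crucial, because in that case the JSJ-tree has a single edge and the action fails to be acylindrical, opening the door to non-standard equivariant splittings.
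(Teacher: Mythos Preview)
Your outline has a genuine circularity that the paper (via \cite{WZ19}) avoids by a different mechanism.

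The gap is in your first step. You propose to characterize vertex stabilizers by their \emph{internal} structure (centralizer conditions for Seifert pieces, malnormality of peripheral subgroups for hyperbolic pieces), and edge stabilizers as maximal $\widehat{\Z}\times\widehat{\Z}$ subgroups ``sitting compatibly inside two vertex stabilizers in the sense dictated by the tree''. But the latter clause already presupposes the tree, and the former does not actually pick out vertex stabilizers among all closed subgroups of $\widehat{\pi_1M}$: many closed subgroups that are \emph{not} vertex stabilizers (e.g.\ proper closed subgroups of a vertex group, or subgroups generated by pieces of several vertex groups) could in principle satisfy your centralizer or malnormality tests. You acknowledge in your last paragraph that spurious $\widehat{\Z}\times\widehat{\Z}$ subgroups arise inside Seifert pieces, and you appeal to $4$-acylindricity by saying a genuine edge stabilizer ``fixes exactly one edge of $\T_M$''; but that criterion is again tree-dependent, so you have not broken the circle.

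The argument in \cite{WZ19} does \emph{not} attempt an intrinsic characterization. Instead, it works asymmetrically: take a vertex $v\in V(\T_M)$ with stabilizer $\widehat{\pi_1M_v}$, push it forward to the subgroup $f(\widehat{\pi_1M_v})\le \widehat{\pi_1N}$, and let this subgroup act on the \emph{other} tree $\T_N$. The substantive content is then the fixed-point lemma: a closed subgroup isomorphic to the profinite completion of a Seifert or cusped-hyperbolic $3$-manifold group, acting on a profinite tree with abelian edge stabilizers and with the action $4$-acylindrical (\autoref{PROP: 4-acylindrical}), must fix a vertex, and that fixed vertex is unique. Running the same argument for $f^{-1}$ and comparing gives a bijection $V(\T_M)\to V(\T_N)$; adjacency is then checked separately. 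Acylindricity is used here to force uniqueness of the fixed vertex and to prevent the image of a vertex group from fixing an entire edge-path, not to give an intrinsic description of stabilizers. This is the missing idea in your plan.
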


In fact, \cite[Theorem 4.3]{WZ19} proved this proposition in the closed case.  
In general,  the two essential steps, the efficiency of the JSJ-graph of groups and the acylindricity of $\widehat{\pi_1M}\curvearrowright \T_M$, also work for the bounded case as shown by \autoref{PROP: JSJ Efficient} and \autoref{PROP: 4-acylindrical}. The remaining proof follows from \cite[Theorem 4.3]{WZ19} with no alteration.

\subsection{Congruent isomorphism for JSJ-decomposition}

Combining \autoref{PROP: equivariant isomorphism of abstract graph} with \autoref{LEM: Equivariant isomorphism of BS tree induce congruent isomorphism}, we can transform the result of Wilton-Zalesskii into the formulation of congruent isomorphisms between JSJ-graphs of profinite groups.

\begin{theorem}\label{THM: Profinite Isom up to Conj}
Suppose $M$ and $N$ are two orientable, irreducible 3-manifolds with empty or incompressible toral boundary, which are not closed 3-manifolds admitting $Sol$-geometry. If $f:\widehat{\pi_1M}\ttt \widehat{\pi_1N}$ is an isomorphism of profinite groups, then $f$ induces a congruent isomorphism $f_{\bullet}$ between their JSJ-graph of profinite groups $(\widehat{\mathcal{G}_M},\Gamma_M)$ and $(\widehat{\mathcal{G}_N},\Gamma_N)$ after reassigning orientations of the JSJ-graphs. In addition,  $\F$ sends   hyperbolic vertices to  hyperbolic vertices, and sends Seifert fibered vertices to Seifert fibered vertices. 
\end{theorem}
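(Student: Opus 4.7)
The plan is to reformulate the Wilton--Zalesskii rigidity of the standard profinite tree in the language of congruent isomorphisms. By \autoref{PROP: JSJ Efficient} and \autoref{THM: Efficient completion}, the JSJ-graphs of profinite groups $(\widehat{\mathcal{G}_M},\Gamma_M)$ and $(\widehat{\mathcal{G}_N},\Gamma_N)$ are injective, and their profinite fundamental groups are canonically identified with $\widehat{\pi_1M}$ and $\widehat{\pi_1N}$ respectively; so the profinite Bass--Serre machinery applies.

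The strategy then proceeds in four steps. First, apply \autoref{PROP: equivariant isomorphism of abstract graph} to extract from $f$ an $f$-equivariant isomorphism $\Phi\colon\T_M\to\T_N$ of abstract unoriented graphs. Second, $\Phi$ descends to an isomorphism $\bar\Phi\colon\Gamma_M\to\Gamma_N$ of unoriented quotient graphs; by reversing the orientation on those edges of $\Gamma_N$ where it is needed, $\bar\Phi$ is upgraded to an isomorphism $\F$ of oriented graphs, and the orientation on $\T_N$ is relabeled accordingly so that $\Phi$ itself becomes orientation-preserving. Third, since $\T_M$ carries the disjoint-union topology on coset spaces $\widehat{\pi_1M}/\Stab(\tilde x)$ indexed by a choice of preimages $\tilde x$ over $\Gamma_M$, and the $f$-equivariance of $\Phi$ identifies each $\Stab(\tilde x)$ with $\Stab(\Phi(\tilde x))$ through the profinite isomorphism $f$, the bijection $\Phi$ restricts to a homeomorphism on every coset-space component and is therefore an isomorphism of profinite graphs. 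Fourth, feed this oriented profinite-graph isomorphism into \autoref{LEM: Equivariant isomorphism of BS tree induce congruent isomorphism} to obtain the desired congruent isomorphism $f_\bullet$, complete with the vertex-group isomorphisms $f_v$ and the edge coherence diagrams \eqref{Diagram Isom Conj}.

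For the assertion that $\F$ preserves geometric type, each induced $f_v\colon \widehat{\pi_1M_v}\ttt\widehat{\pi_1 N_{\F(v)}}$ is a profinite isomorphism between completions of two JSJ-pieces, each either finite-volume hyperbolic or Seifert fibered. The results of Wilton--Zalesskii \cite{WZ17,WZ17b} show that the profinite completion cannot confuse a finite-volume hyperbolic $3$-manifold group with a Seifert fibered space group: on the Seifert side there is a canonical normal procyclic subgroup arising as the closure of a regular fiber, whose quotient is the profinite completion of an orbifold group, while on the hyperbolic side no such normal non-trivial abelian subgroup exists (closed hyperbolic profinite completions contain no $\widehat{\Z}\times\widehat{\Z}$ subgroup, and in the cusped case any such subgroup is peripheral and malnormal). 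This forces $\F$ to match the geometric type at every vertex. The main obstacle is not internal to this reformulation but rather \autoref{PROP: equivariant isomorphism of abstract graph} itself, whose proof in \cite{WZ19} hinges on the $4$-acylindricity recorded in \autoref{PROP: 4-acylindrical} that requires excluding closed $Sol$-manifolds; after that input, all remaining work is bookkeeping of orientations, coset-space topologies, and the commutative diagram packaged in \autoref{LEM: Equivariant isomorphism of BS tree induce congruent isomorphism}.
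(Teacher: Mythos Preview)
Your proposal is correct and follows essentially the same route as the paper: invoke \autoref{PROP: equivariant isomorphism of abstract graph} for the $f$-equivariant graph isomorphism $\Phi$, reassign orientations on the quotient graphs so that $\F$ and hence $\Phi$ become orientation-preserving, then apply \autoref{LEM: Equivariant isomorphism of BS tree induce congruent isomorphism} (using injectivity from \autoref{PROP: JSJ Efficient} and \autoref{THM: Efficient completion}), and finally cite \cite{WZ17,WZ17b} for the geometric-type preservation. Your Step~3 (upgrading $\Phi$ to a homeomorphism of profinite graphs) is correct but superfluous here, since \autoref{LEM: Equivariant isomorphism of BS tree induce congruent isomorphism} only requires an isomorphism of oriented abstract graphs; the paper records that observation separately in the remark following that lemma.
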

\begin{proof}
Let us first explain the assignment of orientations. 
By \autoref{PROP: equivariant isomorphism of abstract graph}, there is an $f$-equivariant isomorphism $\Phi: \T_M \to \T_N$ as abstract unoriented graphs. The $f$-equivariance implies  an isomorphism between the unoriented quotient graphs $\F:\Gamma_M=\T_M/ \widehat{\pi_1M}\to \T_N/\widehat{\pi_1N}= \Gamma_N$. 
We can choose orientations on $\Gamma_M$ and $\Gamma_N$, so that the induced $\F:\Gamma_M\to \Gamma_N$ respects orientation. The orientations lift to $\T_M$ and $\T_{N}$ so that the combinatorial isomorphism $\Phi:\T_M\to \T_{N}$ also respects the orientation.

Recall that the JSJ-graphs of profinite groups $(\widehat{\mathcal{G}_M},\Gamma_M)$ and $(\widehat{\mathcal{G}_N},\Gamma_N)$ are injective, according to \autoref{PROP: JSJ Efficient} and \autoref{THM: Efficient completion}. As a result, $f$ induces a congruent isomorphism between the JSJ-graph of profinite groups as shown by \autoref{LEM: Equivariant isomorphism of BS tree induce congruent isomorphism}.

The detection of hyperbolic or Seifert type of the vertices follows from the works of Wilton-Zalesskii \cite{WZ17,WZ17b} (see also \cite[Theorem 4.18 and 4.20]{Reid:2018}), where they showed that a finite-volume hyperbolic 3-manifold and a compact Seifert fibered space do not have profinitely isomorphic fundamental groups. 
\end{proof}


\section{Peripheral $\Zx$-regularity}\label{SEC: ZX-regular}



In \autoref{DEF: Intro peripheral regular}, we introduced the definition of peripheral $\Zx$-regularity. In this section, we shall present a detailed explanation on this concept. 
Let us first describe the peripheral subgroups in the profinite completion of a 3-manifold group.

\begin{lemma}\label{COR: Peripheral group tensor}
Suppose $M$ is a compact, orientable 3-manifold with incompressible toral boundary. For each peripheral subgroup $\pi_1(\partial_iM)\cong \Z\times\Z$, there is a natural isomorphism $$\tensor \pi_1(\partial_iM)\cong \widehat{\pi_1(\partial_iM)}\cong \overline{\pi_1(\partial_iM)}.$$
 In particular,  $\overline{\pi_1(\partial_iM)}\cong \widehat{\Z}\times \widehat{\Z}$.
\end{lemma}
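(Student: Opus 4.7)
The plan is to assemble the isomorphism from two separate pieces: the algebraic identification $\widehat{\pi_1\partial_iM}\cong \tensor \pi_1\partial_iM$, which is purely about finitely generated abelian groups, and the topological identification $\widehat{\pi_1\partial_iM}\cong \overline{\pi_1\partial_iM}$, which encodes the fact that the profinite topology of $\pi_1M$ restricts to the full profinite topology on $\pi_1\partial_iM$.

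First I would dispose of the algebraic half. Since $\pi_1\partial_iM\cong \Z\times \Z$ is a finitely generated abelian group, \autoref{LEM: Completion of abelian group}(2) gives a natural isomorphism $\tensor \pi_1\partial_iM \ttt \widehat{\pi_1\partial_iM}$, and the product formula (\ref{EQU: product group}) in the proof of that lemma yields $\widehat{\pi_1\partial_iM}\cong \widehat{\Z}\times\widehat{\Z}$, which will give the ``in particular'' statement once the main isomorphism chain is established.

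Next I would handle the topological half, i.e.\ the identification $\widehat{\pi_1\partial_iM}\cong \overline{\pi_1\partial_iM}$. By \autoref{PROP: Left exact}, this is equivalent to showing that $\pi_1M$ induces the full profinite topology on the peripheral subgroup $\pi_1\partial_iM$. To reduce to a setting where known results apply, I would use \autoref{THM: Detect Kneser-Milnor} (or simply the Kneser--Milnor prime decomposition together with Kneser's conjecture) to write $M\cong M_1\#\cdots\# M_m \#(S^2\times S^1)^{\#r}$ with each $M_j$ irreducible. Since $\partial_iM$ is toral and incompressible, it lies in a unique irreducible factor $M_j$, and the inclusion $\pi_1M_j\hookrightarrow \pi_1M$ induces the full profinite topology (as $\pi_1M$ is a free product of the $\pi_1M_j$ with $\Z$'s, and free products of residually finite groups are efficient). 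Then full profinite topology for $\pi_1\partial_iM\hookrightarrow\pi_1M_j\hookrightarrow \pi_1M$ follows by transitivity, once we know it for $\pi_1\partial_iM\hookrightarrow \pi_1M_j$ in the irreducible, boundary-incompressible case — this is the content of \cite[Theorem 6.19]{Wil19} (ultimately resting on Przytycki--Wise and Liu's work on separability of surface subgroups in 3-manifolds).

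Putting the two halves together yields the natural chain
\begin{equation*}
\tensor \pi_1\partial_iM \;\stackrel{\cong}{\longrightarrow}\; \widehat{\pi_1\partial_iM}\;\stackrel{\cong}{\longrightarrow}\; \overline{\pi_1\partial_iM},
\end{equation*}
where the first map is the one from \autoref{LEM: Completion of abelian group} and the second is the canonical map sending a compatible sequence to its limit inside $\widehat{\pi_1M}$. Combined with $\widehat{\pi_1\partial_iM}\cong \widehat{\Z}\times\widehat{\Z}$, this finishes the proof. The main (non-routine) obstacle is the separability/induced-topology input on peripheral subgroups, but this is cited directly from the literature rather than reproved here.
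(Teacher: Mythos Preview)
Your proof is correct, but the paper takes a shorter and more elementary route for the topological half. Rather than reducing to the irreducible case and invoking \cite[Theorem 6.19]{Wil19} (which ultimately rests on the Przytycki--Wise/Liu virtual specialness machinery for surface subgroups), the paper simply cites Hamilton \cite[Theorem 1]{Ham01}: every abelian subgroup of $\pi_1M$ is separable. Since every finite-index subgroup of $\pi_1\partial_iM\cong\Z\times\Z$ is itself abelian, all such subgroups are separable in $\pi_1M$, which is exactly the statement that $\pi_1M$ induces the full profinite topology on $\pi_1\partial_iM$; then \autoref{PROP: Left exact} and \autoref{LEM: Completion of abelian group} finish the argument as you do. Your detour through the prime decomposition and Wilkes's theorem is valid but unnecessary here---Hamilton's abelian separability predates the virtual specialness programme and applies without any irreducibility reduction.
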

\begin{proof}
\cite[Theorem 1]{Ham01} shows that any abelian subgroup of $\pi_1M$ is separable in $\pi_1M$. 
Thus, all finite-index subgroups of $\pi_1(\partial_iM)\cong \Z\times\Z$ are separable in $\pi_1M$, and consequently, 
the profinite topology of $\pi_1M$ induces the full profinite topology on $\pi_1(\partial_iM)$. 
This lemma then follows \autoref{PROP: Left exact} which implies $\widehat{\pi_1(\partial_iM)}\cong \overline{\pi_1(\partial_iM)}$, and  \autoref{LEM: Completion of abelian group}  which implies  $\tensor \pi_1(\partial_iM)\cong \widehat{\pi_1(\partial_iM)}$.
\end{proof}

\subsection{Peripheral structure  in the profinite completion}

\subsubsection{The hyperbolic case}

For a cusped hyperbolic 3-manifold, the peripheral subgroups form a malnormal family in its fundamental group. The same conclusion holds in the profinite completion.

\begin{lemma}[{\cite[Lemma 4.5]{WZ17}}]\label{LEM: Malnormal}
Let $M$ be a finite-volume hyperbolic 3-manifold with cusps. Let $\left\{\pi_1\partial_iM\right\}$ be the conjugacy representatives of the peripheral subgroups. Then their closures $\{\overline{\pi_1\partial_iM}\}$ in the profinite completion $\widehat{\pi_1M}$ form a malnormal family, i.e.  $\overline{\pi_1\partial_iM}\cap g\overline{\pi_1\partial_jM}g^{-1}$ is trivial unless $i=j$ and $g\in \overline{\pi_1\partial_iM}$.
\end{lemma}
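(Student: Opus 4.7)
My plan is to reduce the profinite malnormality statement to the abstract malnormality of peripheral subgroups in $\pi_1 M$, combined with strong separability properties of $\pi_1 M$. Classically, in a cusped finite-volume hyperbolic manifold, each peripheral subgroup $\pi_1\partial_i M$ is a maximal parabolic subgroup in the Kleinian group $\pi_1 M \le \mathrm{PSL}(2,\mathbb{C})$, and it is a standard hyperbolic-geometric fact that the collection of conjugates of these parabolic subgroups forms a malnormal family in $\pi_1 M$. So the task is to lift this abstract malnormality to the profinite closure.

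The first step is to record the ambient separability tools. By Agol's theorem, $\pi_1 M$ is LERF (and even virtually compact special), and by \autoref{COR: Peripheral group tensor} the profinite topology of $\pi_1 M$ induces the full profinite topology on each $\pi_1\partial_i M$, with $\overline{\pi_1\partial_i M}\cong \widehat{\pi_1\partial_i M}\cong \widehat{\mathbb{Z}}^2$. Second, I would prove (or quote) the relevant double-coset separability: for all indices $i,j$ and all $g\in \pi_1 M$ with $g\notin \pi_1\partial_i M$ when $i=j$, the product $\pi_1\partial_i M\cdot g\cdot \pi_1\partial_j M$ is closed in the profinite topology of $\pi_1 M$. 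This is the standard consequence of LERF plus malnormality for virtually compact special groups.

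The third and central step is to apply a general profinite principle: if $\{H_i\}$ is a family of separable subgroups in a residually finite group $G$ such that the profinite topology of $G$ induces the full topology on each $H_i$, such that every double coset $H_i g H_j$ ($(i,g,j)$ not forcing triviality) is separable, and such that $\{H_i\}$ is malnormal in $G$, then $\{\overline{H_i}\}$ is malnormal in $\widehat{G}$. The proof of this principle goes by contradiction: suppose $1\neq x\in \overline{H_i}\cap \hat g\overline{H_j}\hat g^{-1}$ with $(i,\hat g,j)$ an off-diagonal pair. Choose a finite quotient $q:\widehat{G}\twoheadrightarrow Q$ detecting $x$, approximate $\hat g$ by an element $g\in G$ with the same image in $Q$, and show that the intersection $H_i\cap g H_j g^{-1}$ must have non-trivial image in $Q$; then use double-coset separability to replace $g$ by an element of the relevant double coset, concluding $H_i\cap g H_j g^{-1}\neq 1$ in $G$, contradicting abstract malnormality.

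The main obstacle is verifying the double-coset separability in the required generality; everything else is formal once this input is in hand. For cusped hyperbolic 3-manifolds, this separability is known via the virtual specialness of $\pi_1 M$ (Wise, Agol, Haglund--Wise), which ensures that convex cocompact subgroups and their double cosets with cusp subgroups behave well under the profinite topology. An alternative route, which is the one used in \cite{WZ17}, is to exploit cohomological goodness of $\pi_1 M$ together with the action on its relative Bass--Serre / truncated hyperbolic structure to control intersections of closures of peripheral subgroups directly, but the end result is the same malnormality statement.
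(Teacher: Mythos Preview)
The paper does not prove this lemma; it cites it directly as \cite[Lemma~4.5]{WZ17}. Your overall strategy---transfer the classical malnormality of cusp subgroups in $\pi_1 M$ to their closures in $\widehat{\pi_1 M}$ using separability---is natural, and the inputs you list (LERF, full induced profinite topology on each $\pi_1\partial_i M$, virtual specialness, double-coset separability) are all genuinely available. The problem is in your third step: the ``general profinite principle'' is not established by the argument you sketch. From $1\neq x\in \overline{H_i}\cap \hat g\,\overline{H_j}\,\hat g^{-1}$ and a single finite quotient $q$ detecting $x$, approximating $\hat g$ by $g\in G$ only gives $q(x)\in q(H_i)\cap q(g)\,q(H_j)\,q(g)^{-1}$, i.e.\ the \emph{images} meet non-trivially in $Q$. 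That says nothing about $H_i\cap gH_jg^{-1}$ in $G$, because taking images under a quotient does not commute with intersection; the promised contradiction with discrete malnormality never materialises. Double-coset separability guarantees that distinct discrete double cosets $H_igH_j$ have disjoint closures $\overline{H_i}\,g\,\overline{H_j}$ in $\widehat G$, but it does \emph{not} guarantee that every $\hat g\in\widehat G$ lies in one of these closures, so you cannot in general reduce to a discrete conjugator either. Thus ``everything else is formal once this input is in hand'' is exactly where the proposal breaks.

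What the argument in \cite{WZ17} actually uses is more specific to hyperbolic $3$--manifolds: the relatively hyperbolic structure of $\pi_1 M$ with respect to the cusp subgroups, combined with hyperbolic Dehn filling and residual finiteness of the filled quotients (equivalently, cohomological goodness and the profinite action coming from doubling $M$), to produce a cofinal family of finite quotients in which the \emph{images} of the cusp subgroups themselves remain a malnormal family. That is strictly stronger control than double-coset separability alone provides, and it is the missing ingredient in your sketch. Your closing remark about the alternative route via goodness and the relatively hyperbolic structure is therefore much closer to the actual proof than the double-coset line you emphasise.
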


In particular, the closures of distinct peripheral subgroups are not conjugate, and $N_{\widehat{\pi_1M}}(\overline{\pi_1\partial_iM})=\overline{\pi_1\partial_iM}$.

\begin{lemma}\label{LEM: Hyp preserving peripheral}
Suppose $M$ and $N$ are finite-volume hyperbolic 3-manifolds, and $f:\widehat{\pi_1M}\ttt\widehat{\pi_1N}$ is an isomorphism. 
Then $f$ respects the peripheral structure. 
\end{lemma}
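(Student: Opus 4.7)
The plan is to first reduce to the cusped-versus-cusped case, then to use the fact that closed non-procyclic abelian subgroups of $\widehat{\pi_1 M}$ conjugate into a unique peripheral subgroup to match up boundary components.

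First, if $M$ is closed, then \autoref{COR: Determine boundary}~(\ref{4.3-2}) forces $N$ to be closed as well, and by the convention stated immediately after the definition of respecting the peripheral structure, $f$ automatically respects it. So we may assume $M$ (and hence $N$, by \autoref{COR: Determine boundary}~(\ref{4.3-3})) has non-empty toral boundary. For each boundary torus $\partial_i M$, \autoref{COR: Peripheral group tensor} identifies $\overline{\pi_1 \partial_i M}$ with $\widehat{\Z} \times \widehat{\Z}$, so its image $f(\overline{\pi_1 \partial_i M})$ is a closed non-procyclic abelian subgroup of $\widehat{\pi_1 N}$.

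Next, I would invoke the result that any closed non-procyclic abelian subgroup of $\widehat{\pi_1 N}$ conjugates into the closure of a unique peripheral subgroup (this is Proposition 3.1 of \cite{WZ19}, and underlies the argument in the geometric-type detection referenced in the proof of \autoref{THM: Profinite Isom up to Conj}). Thus there exists a unique index $j = j(i)$ and some $g \in \widehat{\pi_1 N}$ such that $f(\overline{\pi_1 \partial_i M}) \subseteq g\,\overline{\pi_1 \partial_{j} N}\,g^{-1}$. Since both groups are isomorphic to $\widehat{\Z}\times\widehat{\Z}$ and the former is a closed subgroup of the latter, maximality (which follows from the malnormality of the family in \autoref{LEM: Malnormal}, since a strictly larger $\widehat{\Z}\times\widehat{\Z}$ sitting inside would violate self-normalization) forces $f(\overline{\pi_1 \partial_i M}) = g\,\overline{\pi_1 \partial_{j} N}\,g^{-1}$.

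To finish, I would apply the symmetric argument to $f^{-1}$ to produce a map $j \mapsto i'(j)$, and verify by uniqueness of the target peripheral subgroup in the containment statement that $i'(j(i)) = i$ and $j(i'(j)) = j$. This shows $i \leftrightarrow j$ is a bijection between boundary components with the required property, so $f$ respects the peripheral structure. The only non-routine step is the invocation of the uniqueness property for non-procyclic abelian subgroups in $\widehat{\pi_1 M}$; everything else is bookkeeping.
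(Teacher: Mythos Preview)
Your proof is correct and uses essentially the same ingredients as the paper: \cite[Proposition 3.1]{WZ19} to conjugate each $\widehat{\Z}\times\widehat{\Z}$ into a unique peripheral closure, and malnormality (\autoref{LEM: Malnormal}) to upgrade containment to equality. The paper packages this slightly more cleanly by characterizing the peripheral closures intrinsically as the maximal closed $\widehat{\Z}\times\widehat{\Z}$ subgroups of $\widehat{\pi_1M}$ (with the closed case falling out automatically since there are none), which makes the bijection immediate without the explicit $f^{-1}$ step; one small warning on your phrasing is that ``both isomorphic to $\widehat{\Z}\times\widehat{\Z}$'' alone does not force equality (there are proper closed copies of $\widehat{\Z}^2$ inside $\widehat{\Z}^2$), so the malnormality/self-normalizing argument in your parenthetical is doing all the work there.
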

\begin{proof}

\cite[Proposition 3.1]{WZ19} shows that any closed subgroup of $\widehat{\pi_1M}$ isomorphic to $\widehat{\Z}\times \widehat{\Z}$ conjugates into the closure of a unique peripheral subgroup. In particular, when $M$ is closed, $\widehat{\pi_1M}$ does not contain any closed subgroup isomorphic to $\widehat{\Z}\times \widehat{\Z}$ by \cite[Theorem A]{WZ17}.

According to \autoref{LEM: Malnormal}, the closure of any peripheral subgroup in $\widehat{\pi_1M}$ is malnormal. Thus, it cannot be properly contained in any abelian subgroup. Therefore, the closure of the peripheral subgroups in $\widehat{\pi_1M}$ are exactly the distinct conjugacy representatives of the maximal closed subgroups in $\widehat{\pi_1M}$ isomorphic to $\widehat{\Z}\times\widehat{\Z}$.

This holds likewise for $\widehat{\pi_1N}$. Thus, the peripheral structure of a finite-volume hyperbolic 3-manifold is given by the conjugacy classes of the maximal closed $\widehat{\Z}\times\widehat{\Z}$ subgroups, and is preserved by the isomorphism $f:\widehat{\pi_1M}\ttt\widehat{\pi_1N}$.
\end{proof}

\subsubsection{The Seifert fibered case}

Any compact orientable Seifert fibered space $M$ with non-empty incompressible toral boundary falls into exactly one of the following three cases.
\begin{enumerate}[leftmargin=*]
\item $M\cong S^1\times S^1\times I$. In this case, the two boundary components are parallel.
\item $M\cong S^1\ttimes S^1\ttimes I$ is the twisted $I$-bundle over Klein bottle, which is also referred to as the \textit{minor Seifert manifold} following the notation of Wilkes \cite{Wil17}.
\item $M$ fibers over a base orbifold $\O$ with negative Euler characteristic. This is referred to as a \textit{major Seifert manifold} following \cite{Wil17}.
\end{enumerate}

\begin{lemma}[{\cite[Lemma 6.4]{Wil18}}]\label{LEM: major peripheral}
Suppose $M$ is a major Seifert fibered manifold. Let $\{\pi_1\partial_iM\}$ be the conjugacy representatives of the peripheral subgroups, and let $K$ be the infinite cyclic subgroup of $\pi_1M$ generated by a  regular fiber. Then for $g\in \widehat{\pi_1M}$,  $g\overline{\pi_1\partial_iM}g^{-1}\cap\overline{\pi_1\partial_jM}=\overline{K}$ unless $i=j$ and $g\in \overline{\pi_1\partial_iM}$.
\end{lemma}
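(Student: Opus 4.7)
The plan is to reduce to the base orbifold via the Seifert fibration and invoke malnormality of peripheral subgroups in the profinite completion of a hyperbolic 2-orbifold group. Since $M$ is major, there is a short exact sequence
\begin{equation*}
1 \longrightarrow K \longrightarrow \pi_1 M \stackrel{p}{\longrightarrow} \pi_1^{\mathrm{orb}}(\mathcal{O}) \longrightarrow 1,
\end{equation*}
where $\mathcal{O}$ is a 2-orbifold of negative Euler characteristic with non-empty boundary. Each peripheral torus contains a regular fiber, so $K \subseteq \pi_1\partial_iM$ for every $i$, and $p(\pi_1\partial_iM)$ is the (infinite) cyclic peripheral subgroup of $\pi_1^{\mathrm{orb}}(\mathcal{O})$ corresponding to $\partial_i\mathcal{O}$. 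Since $K$ is normal in $\pi_1M$, its closure $\overline{K}$ is normal in $\widehat{\pi_1M}$, and hence the containment $\overline{K} \subseteq g\overline{\pi_1\partial_iM}g^{-1}\cap \overline{\pi_1\partial_jM}$ always holds; the content of the lemma is the reverse inclusion.

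The first step is to pass the short exact sequence to profinite completions. Since abelian subgroups of $\pi_1M$ are separable by Hamilton's theorem, $\pi_1M$ induces the full profinite topology on $K$, and $\pi_1^{\mathrm{orb}}(\mathcal{O})$ is residually finite (in fact LERF by Scott). Thus by \autoref{PROP: Completion Exact} and \autoref{PROP: Left exact}, we obtain an exact sequence
\begin{equation*}
1 \longrightarrow \overline{K} \longrightarrow \widehat{\pi_1 M} \stackrel{\hat p}{\longrightarrow} \widehat{\pi_1^{\mathrm{orb}}(\mathcal{O})} \longrightarrow 1.
\end{equation*}
Moreover, the same separability of cyclic subgroups of $\pi_1^{\mathrm{orb}}(\mathcal{O})$ yields that $\hat p (\overline{\pi_1\partial_iM}) = \overline{p(\pi_1\partial_iM)}$ is the closure of the peripheral cyclic subgroup, and $\overline{\pi_1\partial_iM} = \hat p^{-1}\bigl(\overline{p(\pi_1\partial_iM)}\bigr)$.

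Next I would establish the key malnormality statement in the base: for the cyclic peripheral subgroups $\{\overline{p(\pi_1\partial_iM)}\}$ in $\widehat{\pi_1^{\mathrm{orb}}(\mathcal{O})}$, a conjugate $\bar g\,\overline{p(\pi_1\partial_iM)}\,\bar g^{-1}$ meets $\overline{p(\pi_1\partial_jM)}$ non-trivially only if $i = j$ and $\bar g \in \overline{p(\pi_1\partial_iM)}$. Once this is in place, the lemma follows by lifting: if $g\overline{\pi_1\partial_iM}g^{-1}\cap \overline{\pi_1\partial_jM}$ strictly contains $\overline{K}$, its image under $\hat p$ is a non-trivial intersection of the form above, forcing $i = j$ and $\hat p(g) \in \overline{p(\pi_1\partial_iM)}$, hence $g \in \hat p^{-1}(\overline{p(\pi_1\partial_iM)}) = \overline{\pi_1\partial_iM}$.

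The main obstacle is therefore the malnormality claim for the hyperbolic 2-orbifold group $\pi_1^{\mathrm{orb}}(\mathcal{O})$ in the profinite setting. The approach is to use that this group is virtually free (as $\mathcal{O}$ has boundary and negative Euler characteristic), and its peripheral cyclic subgroups correspond to boundary components of a compact surface group splitting. Passing to the virtually free splitting as a finite graph of (finite) groups, one realises $\widehat{\pi_1^{\mathrm{orb}}(\mathcal{O})}$ as a profinite fundamental group acting on a profinite tree (in the sense of \autoref{Def: prof tree}~(\ref{333333})) on which the closures of peripheral subgroups are stabilisers of ends/half-lines with trivial edge stabilisers further out. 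The malnormality of these stabilisers then follows by a direct tree-theoretic argument in the profinite Bass-Serre setting, analogous to \autoref{LEM: Malnormal} in the hyperbolic case. Once the base-orbifold malnormality is secured, the rest of the proof is bookkeeping with the exact sequence above.
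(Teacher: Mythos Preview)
Your approach is essentially the same as the paper's: both reduce via the Seifert short exact sequence to the profinite completion of the base orbifold group, identify $\overline{\pi_1\partial_iM}=\hat p^{-1}(\overline{\pi_1\partial_i\mathcal{O}})$, and then invoke malnormality of the peripheral family in $\widehat{\pi_1^{\mathrm{orb}}(\mathcal{O})}$. The paper simply cites \cite[Lemma~6.3]{Wil18} for this last step, whereas you attempt to sketch it.

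One comment on your sketch of the base-orbifold malnormality: the language of ``stabilisers of ends/half-lines with trivial edge stabilisers further out'' is not the right picture in the profinite setting, where ends of profinite trees are not handled the same way as for abstract trees. The cleaner route (and the one Wilkes uses) is to realise $\pi_1^{\mathrm{orb}}(\mathcal{O})$ as the fundamental group of a finite graph of groups in which the peripheral cyclic subgroups appear as \emph{edge} groups; the graph of groups is efficient, so one gets an action of $\widehat{\pi_1^{\mathrm{orb}}(\mathcal{O})}$ on the associated profinite Bass--Serre tree with the closures of peripheral subgroups as edge stabilisers, and malnormality then follows from acylindricity-type arguments on that tree. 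With that adjustment, your outline is complete and matches the paper.
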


In particular, the closures of distinct peripheral subgroups are not conjugate, and $N_{\widehat{\pi_1M}}(\overline{\pi_1\partial_iM})=\overline{\pi_1\partial_iM}$.

\begin{lemma}\label{LEM: minor peripheral}
Let $M$ be the minor Seifert fibered manifold, and let $\pi_1\partial M$ be the peripheral subgroup of the unique boundary component. Then, $\overline{\pi_1\partial M}$ is an index-2 normal subgroup in $\widehat{\pi_1M}$, and each $g\in  \widehat{\pi_1M}\setminus \overline{\pi_1\partial M}$ acts by conjugation on $\overline{\pi_1\partial M}$ as the profinite completion of an involution in $\Aut(\pi_1\partial M)$.
\end{lemma}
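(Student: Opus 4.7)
The plan is to use the explicit description of $M$ as the twisted $I$-bundle over the Klein bottle, which deformation retracts onto the zero-section Klein bottle. This gives the presentation $\pi_1 M = \langle a, b \mid aba^{-1}b \rangle$, and the unique boundary torus has peripheral subgroup $\pi_1\partial M = \langle a^2, b\rangle$, which is abelian (since $a$ inverts $b$, we have $a^2$ commutes with $b$) and of index $2$ in $\pi_1 M$. Thus $\pi_1\partial M \cong \Z\times\Z$, and $\pi_1\partial M$ is normal in $\pi_1 M$ with quotient $\Z/2\Z$ generated by the class of $a$.

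For the first assertion, I would invoke \autoref{THM: correspondence of subgroup}: since $\pi_1\partial M$ is a finite-index subgroup, $\overline{\pi_1\partial M}$ is an open subgroup of $\widehat{\pi_1 M}$ and the inclusion induces an isomorphism $\pi_1 M/\pi_1\partial M \cong \widehat{\pi_1 M}/\overline{\pi_1\partial M}$. In particular the quotient still has order $2$, which forces $\overline{\pi_1\partial M}$ to be an index-$2$ normal subgroup of $\widehat{\pi_1 M}$.

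For the second assertion, fix $g \in \widehat{\pi_1 M}\setminus \overline{\pi_1\partial M}$. Because the quotient $\Z/2\Z$ is generated by the image of $a$, we can write $g = \iota(a)\cdot h$ with $h \in \overline{\pi_1\partial M}$, where $\iota: \pi_1 M\to\widehat{\pi_1 M}$ is the canonical map. By \autoref{COR: Peripheral group tensor}, $\overline{\pi_1\partial M}\cong \widehat{\Z}\times\widehat{\Z}$ is abelian, so conjugation by $h$ acts trivially on $\overline{\pi_1\partial M}$. Hence $C_g|_{\overline{\pi_1\partial M}} = C_{\iota(a)}|_{\overline{\pi_1\partial M}}$, and this is the (unique) continuous extension of $C_a|_{\pi_1\partial M}$, i.e.\ the profinite completion of the automorphism $\psi \in \Aut(\pi_1\partial M)$ determined by $a^2\mapsto a^2$, $b\mapsto b^{-1}$. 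Since $\psi^2 = \mathrm{id}$ and $\psi\ne \mathrm{id}$, $\psi$ is indeed an involution in $\Aut(\pi_1\partial M)$.

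There is no serious obstacle here; the content is essentially bookkeeping around an explicit presentation, combined with the standard separability input already recorded in \autoref{COR: Peripheral group tensor} and the subgroup correspondence of \autoref{THM: correspondence of subgroup}. The only mildly delicate point worth stating carefully is that conjugation by the ``edge'' element $h \in \overline{\pi_1\partial M}$ vanishes in the automorphism of $\overline{\pi_1\partial M}$, so the coset of $g$ in the quotient $\Z/2\Z$ — rather than $g$ itself — determines the induced outer action, and this outer action is precisely the profinite completion of the involution $\psi$.
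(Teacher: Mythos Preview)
Your proof is correct and follows essentially the same approach as the paper: an explicit Klein bottle presentation, identification of the peripheral subgroup as an index-$2$ abelian normal subgroup, the subgroup correspondence \autoref{THM: correspondence of subgroup} for the index, and then writing any element outside as a fixed coset representative times an element of the abelian closure to reduce to the explicit involution. The only cosmetic difference is that the paper uses the presentation $\langle a,b\mid bab^{-1}=a^{-1}\rangle$ with peripheral subgroup $\langle a,b^2\rangle$, i.e.\ your roles of $a$ and $b$ are swapped.
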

\begin{proof}
It is known that $\pi_1M\cong \Z\rtimes \Z=\left\langle a\right\rangle \rtimes\left\langle b\right\rangle $, where $bab^{-1}=a^{-1}$. 
\autoref{PROP: Completion Exact} and \autoref{PROP: Left exact} then imply that $\widehat{\pi_1M}\cong \widehat{\Z}\rtimes \widehat{\Z}=\widehat{\langle a\rangle}\rtimes\widehat{\langle b\rangle}$. 
For any $\lambda\in \widehat{\Z}$, $b^{\lambda}$ acts on $\widehat{\Z}=\widehat{\left\langle a\right\rangle }$ as scalar multiplication by $(-1)^{\lambda\pmod 2}$.

Note that the peripheral subgroup $\pi_1\partial M=\left\langle a\right\rangle \times \left\langle b^2\right\rangle$ is an index-2 normal subgroup in $\pi_1M$. Thus by \autoref{THM: correspondence of subgroup}, $\overline{\pi_1\partial M}=\widehat{\left\langle a\right\rangle }\times \widehat{\left\langle b^2\right\rangle }\cong \widehat{\Z}\times\widehat{\Z}$ is also an index-2 normal subgroup in $\widehat{\pi_1M}\cong \widehat{\Z}\rtimes\widehat{\Z}$.

Any element $g\in \widehat{\pi_1M}\setminus \overline{\pi_1\partial M}$ can be expressed as $g=bh$ for some $h\in \overline{\pi_1\partial M}$, and $b$ acts by conjugation on $\overline{\pi_1\partial M}$ as the profinite completion of the involution $\sigma\in \Aut(\pi_1\partial M)$ defined by $\sigma(a)=a^{-1}$ and $\sigma(b^2)=b^2$. Then, $g$ also acts by  conjugation on $\overline{\pi_1\partial M}$ as~$\widehat{\sigma}$.
\end{proof}

\subsubsection{The general case}

\begin{lemma}\label{LEM: peripheral subgroup fix unique vertex}
Suppose $M$ is a compact, orientable, irreducible 3-manifold with incompressible toral boundary, and $\widehat{\pi_1M}$ acts on the standard profinite tree $\T_M$. Let $\partial_iM$ be a boundary component of $M$, and let $\pi_1\partial_iM$ be a conjugacy representative of the peripheral subgroup.   Then the subgroup $\overline{\pi_1\partial_iM}\subseteq \widehat{\pi_1M}$ fixes a unique vertex in $\T_M$.
\end{lemma}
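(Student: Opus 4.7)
The plan is to establish existence and uniqueness separately, using the injectivity of the JSJ-graph of profinite groups together with the (mal)normality properties of peripheral subgroups in individual JSJ-pieces.

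First I would dispense with the trivial cases: if $M$ admits a trivial JSJ-decomposition (i.e., $M$ is itself Seifert fibered or $M\cong T^2\times I$), then $\T_M$ consists of a single vertex and the statement is immediate. So assume henceforth that the JSJ-decomposition is non-trivial. Then $\partial_iM$ is a boundary component of a unique JSJ-piece $M_u$, and $M_u$ has at least two boundary tori (namely $\partial_iM$ together with at least one JSJ-torus or another component of $\partial M$); in particular $M_u$ is either finite-volume hyperbolic or a \emph{major} Seifert piece, never a minor Seifert piece (which has only one boundary). After choosing suitable conjugacy representatives, $\pi_1\partial_iM\subseteq \pi_1M_u$. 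By \autoref{PROP: JSJ Efficient} and \autoref{THM: Efficient completion}(\ref{Eff 2}), the JSJ-graph of profinite groups is injective, so $\overline{\pi_1M_u}\cong \widehat{\pi_1M_u}$ is exactly the stabilizer of a specific vertex $\tilde v\in \T_M$. Since $\Stab(\tilde v)$ is closed in $\widehat{\pi_1M}$ and contains $\pi_1\partial_iM$, it contains $\overline{\pi_1\partial_iM}$, establishing existence.

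For uniqueness I would argue by contradiction. Suppose $\overline{\pi_1\partial_iM}$ fixes a second vertex $\tilde v'\neq \tilde v$. It is a standard feature of profinite trees that the fixed-point set of a subgroup is itself a subtree (cf.\ \cite[Ch.~2--4]{Rib17}), so $\overline{\pi_1\partial_iM}$ fixes every vertex and edge on the unique finite path joining $\tilde v$ to $\tilde v'$; in particular it fixes the initial edge $e$ of that path. By the construction of $\T_M$ in \autoref{def: BStree}, $\Stab(e)\subseteq \Stab(\tilde v)=\overline{\pi_1M_u}$ is the closure of the peripheral subgroup of $\pi_1M_u$ corresponding to some boundary torus $T\subset\partial M_u$ adjacent to the edge $e$ of $\Gamma_M$, necessarily a JSJ-torus and therefore distinct from $\partial_iM$. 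Thus we would obtain a non-trivial containment $\overline{\pi_1\partial_iM}\subseteq \Stab(e)$ between closures of two distinct peripheral subgroups of $\pi_1M_u$.

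The heart of the proof, and the main obstacle, is to contradict this containment using the internal structure of $M_u$. When $M_u$ is hyperbolic, the malnormality in \autoref{LEM: Malnormal} forces the intersection of two distinct peripheral closures to be trivial, which is incompatible with $\overline{\pi_1\partial_iM}\cong \widehat{\Z}\times\widehat{\Z}$ (\autoref{COR: Peripheral group tensor}). When $M_u$ is a major Seifert piece, \autoref{LEM: major peripheral} forces the intersection to lie in the procyclic fiber subgroup $\overline{K}\cong \widehat{\Z}$, again impossible for a rank-two subgroup. Either case yields a contradiction, completing the proof of uniqueness.
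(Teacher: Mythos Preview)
Your overall strategy matches the paper's proof exactly: establish existence via the vertex stabilized by $\widehat{\pi_1M_u}$, then for uniqueness find an edge adjacent to $\tilde v$ in the fixed set and derive a contradiction from \autoref{LEM: Malnormal} or \autoref{LEM: major peripheral}. The observation that $M_u$ must be hyperbolic or major Seifert (never minor) is also handled the same way.

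There is, however, a genuine technical gap in your uniqueness argument. You write that $\overline{\pi_1\partial_iM}$ ``fixes every vertex and edge on the unique finite path joining $\tilde v$ to $\tilde v'$''. In a profinite tree there need not be any finite edge-path between two given vertices: as noted after \autoref{def: BStree}, $\T_M$ is only a \emph{disjoint union} of abstract trees when one forgets the topology, so $\tilde v$ and $\tilde v'$ may lie in different abstract components. The statement that the fixed-point set is a profinite subtree is correct, but it does not by itself produce an edge incident to $\tilde v$. The paper closes this gap by invoking the minimal profinite subtree $\Delta$ spanned by $\tilde v$ and $\tilde v'$ (which is fixed pointwise by \cite[Proposition~2.4.9]{Rib17}), observing that $E(\Delta)=E(\T_M)\cap\Delta$ is closed in $\Delta$, and then applying \cite[Proposition~2.1.6(c)]{Rib17} to extract an edge of $\Delta$ adjacent to $\tilde v$. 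Once you have that edge, your argument proceeds exactly as you wrote it.
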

\begin{proof}
It suffices to assume $M$ has non-trivial JSJ-decomposition. Suppose $\partial_iM$ belongs to a JSJ-piece $M_x$.  Up to conjugation, we may assume $\pi_1\partial_iM \subseteq \pi_1M_x\subseteq \pi_1M$. According to \autoref{PROP: Bass-Serre}, $\widehat{\pi_1M_x}$ fixes a vertex $v\in V(\T_M)$, so that $v$ belongs to the pre-image of $x$ under  the quotient map $\T_M\to \Gamma_M$. Thus, $\overline{\pi_1\partial_iM}$ also fixes $v$. 

Suppose $\overline{\pi_1\partial_iM}$ fixes another vertex $w\neq v\in V(\T_M)$. Then, $\overline{\pi_1 \partial_iM}$ fixes pointwise the minimal profinite subtree $\Delta$ containing $v$ and $w$ (see \cite[Proposition 2.4.9]{Rib17}). By \autoref{def: BStree}, $E(\T_M)$ is closed in the disjoint union topology $\T_M=V(\T_M)\bigsqcup E(\T_M)$, so $E(\Delta)=E(\T_M)\cap \Delta$ is also closed in $\Delta$. Then, according to \cite[Proposition 2.1.6 (c)]{Rib17}, $\Delta$ contains an edge $e$ adjoint  to the vertex $v$. 
Let $\partial_kM_x$ denote the boundary component of $M_x$ given by the JSJ-torus corresponding to the image of $e$ in $\Gamma_M$. Then, $\partial_iM\neq \partial_kM_x$. In particular, $\#\partial M_x\ge 2$, so $M_x$ is either a hyperbolic or a major Seifert piece. 

In addition, according to \autoref{PROP: Bass-Serre}, $\Stab(e) $ is conjugate to $\overline{\pi_1\partial_k M_x}$ within $\widehat{\pi_1M_x}=\Stab(v)$. Thus, $\overline{\pi_1\partial_iM}\subseteq h\overline{\pi_1\partial_k M_x}h^{-1}$ for some $h\in \widehat{\pi_1M_x}$. However, this yields a contradiction with \autoref{LEM: Malnormal} if $M_x$ is hyperbolic, or with \autoref{LEM: major peripheral} if $M_x$ is a major Seifert piece, finishing the proof. 
\end{proof}

\begin{proposition}\label{PROP: peripheral structure}
Let $M$ be a compact, orientable 3-manifold with incompressible toral boundary. For each boundary component $\partial_iM$, let  $\pi_1\partial_iM$ denote a conjugacy representative of peripheral subgroup in $\pi_1M$.
\begin{enumerate}[leftmargin=*]
\item \label{6.2-1}
Let $\partial_iM$ and $\partial_jM$ be two different boundary components of $M$. 
\\
(i) If $\partial_iM$ and $\partial_jM$  belong to the same prime factor homeomorphic to $S^1\times S^1\times I$, then $\pi_1\partial_iM$ and $\pi_1\partial_jM$ are conjugate in $\pi_1M$.
\\
(ii) Otherwise, $\overline{\pi_1\partial_iM}$ and $\overline{\pi_1\partial_jM}$ are not conjugate in $\widehat{\pi_1M}$.
\item\label{6.2-2} Let $N_{\widehat{\pi_1M}}\left(\overline{\pi_1\partial_iM}\right)$ denote the normalizer of $\overline{\pi_1\partial_iM}$ in $\widehat{\pi_1M}$.
\\
(i) If the prime factor of $M$ containing $\partial_iM$ is homeomorphic to the orientable $I$-bundle over Klein bottle, then 
$N_{\widehat{\pi_1M}}\left(\overline{\pi_1\partial_iM}\right)$ contains $\overline{\pi_1\partial_iM}$ as an index-2 subgroup, and any element $g\in N_{\widehat{\pi_1M}}\left(\overline{\pi_1\partial_iM}\right)\setminus \overline{\pi_1\partial_iM}$ acts by conjugation on $\overline{\pi_1\partial_iM}$ as the profinite completion of an involution in $\Aut(\pi_1\partial_iM)\cong \mathrm{GL}_2(\Z)$.
\\
(ii) Otherwise, $N_{\widehat{\pi_1M}}\left(\overline{\pi_1\partial_iM}\right)=\overline{\pi_1\partial_iM}$, and acts trivially by conjugation on $\overline{\pi_1\partial_iM}$.
\end{enumerate}
\end{proposition}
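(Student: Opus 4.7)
The plan is to reduce both statements to the irreducible case and then exploit the JSJ-decomposition through the standard profinite tree $\T_M$ of Section 5, combining the uniqueness of the vertex fixed by a peripheral subgroup with the piece-level malnormality statements already established.

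First I would reduce to irreducible $M$. By the Kneser--Milnor decomposition $M \cong M_1 \# \cdots \# M_m \# (S^2\times S^1)^{\#r}$, the group $\pi_1M$ splits as a free product, which is efficient by \cite[Chapter 9.1]{RZ10}. The associated profinite Bass--Serre tree has trivial edge stabilizers and vertex stabilizers conjugate to the $\widehat{\pi_1M_i}$. Each peripheral subgroup $\overline{\pi_1\partial_iM}\cong \widehat{\Z}\times\widehat{\Z}$ lies in a unique vertex stabilizer, namely the one associated with the prime factor containing $\partial_iM$, and by triviality of edge stabilizers it fixes that vertex uniquely. Consequently, any element of $\widehat{\pi_1M}$ conjugating $\overline{\pi_1\partial_iM}$ to $\overline{\pi_1\partial_jM}$, or normalizing $\overline{\pi_1\partial_iM}$, must preserve this vertex, reducing both (1) and (2) to the corresponding assertions inside $\widehat{\pi_1 M_{k(i)}}$, where $M_{k(i)}$ is the prime factor containing $\partial_iM$.

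Assume now that $M$ is irreducible. By \autoref{LEM: peripheral subgroup fix unique vertex}, $\overline{\pi_1\partial_iM}$ fixes a unique vertex $v_i\in V(\T_M)$ corresponding, up to conjugation, to the JSJ-piece $M_x$ containing $\partial_iM$. For (1), if $g\overline{\pi_1\partial_jM}g^{-1}=\overline{\pi_1\partial_iM}$ then $v_i = g\cdot v_j$, so $\partial_jM$ lies in the same JSJ-piece $M_x$, and after adjusting $g$ we may assume both closures sit inside $\widehat{\pi_1M_x}$ and are conjugate there. The piece-level malnormality statements then rule out non-trivial conjugacy: \autoref{LEM: Malnormal} handles the hyperbolic case, \autoref{LEM: major peripheral} handles the major Seifert case, and the minor Seifert piece has only one boundary component. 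The only remaining possibility is $M \cong S^1\times S^1\times I$, where the two peripheral subgroups both equal the whole $\pi_1M$; this gives (1)(i), and all other configurations are excluded, proving (1)(ii).

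For (2), any $g\in N_{\widehat{\pi_1M}}(\overline{\pi_1\partial_iM})$ preserves $v_i$ and so lies in $\widehat{\pi_1M_x}$, reducing the normalizer computation to the interior of a single JSJ-piece. In the hyperbolic and major Seifert cases, \autoref{LEM: Malnormal} and \autoref{LEM: major peripheral} force $N_{\widehat{\pi_1M_x}}(\overline{\pi_1\partial_iM}) = \overline{\pi_1\partial_iM}$, whose self-conjugation is trivial by abelianity; the $S^1\times S^1\times I$ case produces the trivial action on the abelian $\widehat{\pi_1M}$; together these yield (2)(ii). The minor Seifert case is then exactly the content of \autoref{LEM: minor peripheral}, giving (2)(i). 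The main bookkeeping subtlety is the degenerate case $M \cong S^1\times S^1\times I$, where the peripheral subgroups coincide with the entire vertex group and the malnormality dichotomy of the other pieces no longer discriminates anything; it must be separated cleanly from the minor Seifert case (which is the other single-boundary configuration) before the global assertions follow.
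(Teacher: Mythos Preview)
Your proposal is correct and follows essentially the same approach as the paper: reduce to the irreducible case via the free-product Bass--Serre tree with trivial edge stabilizers, then in the irreducible case use \autoref{LEM: peripheral subgroup fix unique vertex} to force any conjugating or normalizing element into a single JSJ-piece $\widehat{\pi_1M_x}$, and finish with the piece-level malnormality statements (\autoref{LEM: Malnormal}, \autoref{LEM: major peripheral}, \autoref{LEM: minor peripheral}). The paper merely reverses the order (irreducible first, then reducible) and treats parts (1) and (2) in a single stroke by showing $\partial_iM=\partial_jM$ and $g\in\overline{\pi_1\partial_iM}$ simultaneously, but the substance is identical; your phrase ``after adjusting $g$'' could be sharpened to the paper's observation that once $v_i=g\cdot v_j$ lies in the same orbit one gets $v_i=v_j$ and hence $g\in\Stab(v_i)=\widehat{\pi_1M_x}$ directly, with no adjustment needed.
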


\begin{proof}
We first show that this proposition holds when $M$ is irreducible. 
When $M$ has trivial JSJ-decomposition, the conclusion follows from 
\autoref{LEM: Malnormal}, \autoref{LEM: major peripheral} and \autoref{LEM: minor peripheral}. 
Now we assume that $M$ has non-trivial JSJ-decomposition. 

Let $\partial_i M$ and $\partial_jM$ be two possibly identical boundary components, which belong to the JSJ-pieces $M_x$ and $M_y$ respectively. For each $v\in V(\Gamma_M)$, fix a conjugacy representative $\pi_1M_v\le \pi_1M$.  Up to conjugation, we may assume $\pi_1\partial_iM \subseteq \pi_1M_x\subseteq \pi_1M$ and $\pi_1\partial_jM\subseteq \pi_1M_y\subseteq \pi_1M$. Suppose $\overline{\pi_1 \partial_iM}=g\overline{\pi_1\partial_jM}g^{-1}$ for some $g\in \widehat{\pi_1M}$. It suffices to show that $M_x=M_y$, $\partial_iM=\partial_jM$, and $g\in \overline{\pi_1 \partial_iM}$, so that we deduce the conclusion from the proven case for the JSJ-piece.


We consider the action by $\widehat{\pi_1M}$ on the standard profinite tree $\T_M$, and let $q: \T_M\to \T_M/\widehat{\pi_1M}=\Gamma_M$ denote the quotient map. Then $\widehat{\pi_1M_x}$ fixes a unique vertex $v\in q^{-1}(x)$, and $\widehat{\pi_1M_y}$ fixes a  unique vertex $w\in q^{-1}(y)$, since the edge stabilizers are abelian. Thus, $\overline{\pi_1\partial_iM}\subseteq \widehat{\pi_1M_x}$ fixes $v$, and $g\overline{\pi_1\partial_jM}g^{-1}\subseteq g\widehat{\pi_1M_y}g^{-1}$ fixes $g\cdot w$. As a result, $\overline{\pi_1 \partial_iM}$ fixes  both $v$ and $g\cdot w$ in $V(\T_M)$. 

According to \autoref{LEM: peripheral subgroup fix unique vertex}, $v=g\cdot w$. 
In particular, $v$ and $w$ belong to the same orbit; thus, $M_x=M_y$, which in turn  implies that $v=w$ and $g\in \Stab (v)=\widehat{\pi_1M_x}$. Therefore, $\overline{\pi_1 \partial_iM}=g\overline{\pi_1\partial_jM}g^{-1}$ within $\widehat{\pi_1M_x}$, so $\partial_iM=\partial_jM$ and $g\in \overline{\pi_1 \partial_iM}$ according to \autoref{LEM: Malnormal} and \autoref{LEM: major peripheral}.


Now we move on to the case when $M$ is reducible, with prime decomposition $M=M_1\#\cdots\#M_m\#(S^2\times S^1)^r$. Then, fixing the conjugacy representatives, we have
$$
\widehat{\pi_1M}\cong\widehat{\pi_1M_1}\amalg \cdots \amalg\widehat{\pi_1M_m}\amalg \widehat{F_r}.
$$
Given two peripheral subgroups $\pi_1\partial_iM$ and $\pi_1\partial_jM$, up to conjugation, we may as well assume $\pi_1\partial_iM\subseteq \pi_1M_k\subseteq \pi_1M$ and $\pi_1\partial_jM\subseteq \pi_1M_l \subset\pi_1M$. In this case if $\overline{\pi_1\partial_iM}=g\cdot\overline{\pi_1\partial_jM}\cdot g^{-1}$ for  some $g\in\widehat{\pi_1M}$, then $k=l$ and $g\in \widehat{\pi_1M_k}$. Now the conclusion follows from the irreducible case proved for $M_k$.
\end{proof}

 \subsection{Well-definedness of peripheral $\Zx$-regularity}\label{subsec: Well-def}


Recall in \autoref{DEF: Intro peripheral regular}, we have defined the notion of peripheral $\Zx$-regularity. 
%
Let $M$ and $N$ be compact, orientable 3-manifolds with incompressible toral boundary. Let $\partial_i M$ be a boundary component of $M$ and let $\pi_1\partial_iM$ be a conjugacy representative of the corresponding peripheral subgroup. Let $f:\widehat{\pi_1M}\ttt\widehat{\pi_1N}$ be a profinite isomorphism. For $\lambda\in \Zx$, $f$ is \textit{peripheral $\lambda$-regular} at $\partial_iM$ if the following two conditions hold.
\begin{enumerate}[leftmargin=*]
\item\label{zxregulardef1} There exists a boundary component $\partial_j N$ of $N$, a conjugacy representative of the peripheral subgroup $\pi_1\partial_jN$, and an element $g\in \widehat{\pi_1N}$ such that $f(\overline{\pi_1\partial_iM})=C_g^{-1} (\overline{\pi_1\partial_jN})$.
\item\label{zxregulardef2} 
There exists an isomorphism $\psi:\pi_1\partial_iM\ttt\pi_1\partial_jN$ such that 
\begin{equation*}
C_{g}\circ f=\lambda\otimes \psi:\, \overline{\pi_1\partial_iM}\cong \tensor \pi_1\partial_iM \tto \tensor \pi_1\partial_jN\cong \overline{\pi_1\partial_jN}.
\end{equation*}
\end{enumerate}
And $f$ is  called \textit{peripheral $\Zx$-regular} at $\partial_iM$ if it is peripheral $\lambda$-regular for some $\lambda\in \Zx$.

\autoref{PROP: peripheral structure} guarantees the ``well-definedness'' of peripheral $\Zx$-regularity. 
In fact, \autoref{PROP: peripheral structure}~(\ref{6.2-1}) implies that the boundary component $\partial_jN$ in condition (\ref{zxregulardef1}) is unique, except for the case of parallel boundary components in  $S^1\times S^1\times I$. In addition, \autoref{PROP: peripheral structure}~(\ref{6.2-2}) further implies that the existence of decomposition as $\lambda\otimes \psi$ in condition (\ref{zxregulardef2}) does not depend on the choice of the   conjugacy representative $\pi_1\partial_jN$, nor does it depend on the choice of the  element $g$ which induces the conjugation. In particular, the coefficient $\lambda\in \Zx$ is unique up to a $\pm$-sign. 


\subsection{Geometric meaning of peripheral $\Zx$-regularity}
The geometric explanation of peripheral $\Zx$-regularity is that the given profinite isomorphism carries  profinite isomorphisms between the Dehn fillings at the corresponding boundary component. 

To be explicit, suppose $f:\widehat{\pi_1M}\ttt\widehat{\pi_1N}$ is peripheral $\Zx$-regular at $\partial_iM$, and $C_{g}\circ f|_{\overline{\pi_1\partial_iM}}=\lambda\otimes \psi$. For each boundary slope $c\in \pi_1\partial_iM$,  $\psi\in \mathrm{Hom}(\pi_1\partial_iM,\pi_1\partial_jN)$ sends $c$ to a boundary slope $\psi(c)\in \pi_1\partial_jN$. Let $M_c$ be the Dehn filling of $M$ at $\partial_iM$ along the slope $c$, and let $N_{\psi(c)}$ be the Dehn filling of $N$ at $\partial_jN$ along the slope $\psi(c)$. Then $\widehat{\pi_1M_c}\cong \widehat{\pi_1N_{\psi(c)}}$. 

Indeed, $\pi_1M_c\cong \pi_1M/\langle \!\langle c\rangle\!\rangle$, and \autoref{PROP: Completion Exact} implies $\widehat{\pi_1M_c}\cong \widehat{\pi_1M}/ \overline{\langle\!\langle c\rangle\!\rangle}$. Then the profinite isomorphism $f$ descends to an isomorphism between the quotients
\begin{equation*}
\begin{tikzcd}
\widehat{\pi_1M_c}\cong \quo{\widehat{\pi_1M}}{\overline{\left\langle\!\langle c\right\rangle\!\rangle}} \arrow[r,"\cong"] &  \quo{\widehat{\pi_1N}}{\overline{\left\langle\!\langle \psi(c)\right\rangle\!\rangle}}\cong \widehat{\pi_1N_{\psi(c)}}.
\end{tikzcd}
\end{equation*}

This idea is further expanded in \cite{Xu24b}.

\subsection{Passing peripheral $\Zx$-regularity to JSJ-pieces}

In this subsection, we consider the following setting. 
Suppose $M$ and $N$ are compact, orientable, irreducible 3-manifolds with empty or  incompressible toral boundary which are not closed $Sol$-manifolds. Suppose $f:\widehat{\pi_1M}\ttt \widehat{\pi_1N}$ is a profinite isomorphism, and let  $f_{\bullet}$ be the congruent isomorphism between the JSJ-graphs of profinite groups $(\mathcal{G}_M,\Gamma_M)$ and $(\mathcal{G}_N,\Gamma_N)$ induced by $f$ according to \autoref{THM: Profinite Isom up to Conj}.

\begin{lemma}\label{lem: 111111}
Let $\partial_iM$ be a boundary component of $M$ which belongs to a JSJ-piece $M_v$. Suppose $f(\overline{\pi_1\partial_iM})$ is conjugate to $\overline{\pi_1\partial_jN}$ in $\widehat{\pi_1N}$ for some boundary component $\partial_jN$. Then $\partial_jN\subseteq N_{\F(v)}$, and $f_v(\overline{\pi_1\partial_iM})$ is conjugate to $\overline{\pi_1\partial_jN}$ within $\widehat{\pi_1N_{\F(v)}}$. 
\end{lemma}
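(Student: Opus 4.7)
The plan is to play off the two peripheral subgroups against their unique fixed vertices in the standard profinite tree of $N$, using the equivariant isomorphism of profinite Bass--Serre trees $\Phi:\T_M\to\T_N$ and the identification of $f_v$ with $f$ up to a specific conjugation that arose in the proof of \autoref{THM: Profinite Isom up to Conj} (which in turn rests on \autoref{LEM: Equivariant isomorphism of BS tree induce congruent isomorphism}). The essential tool is \autoref{LEM: peripheral subgroup fix unique vertex}, which guarantees that the closure of a peripheral subgroup coming from a genuine boundary component fixes exactly one vertex in the ambient standard profinite tree.

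First I would fix compatible vertex representatives. Choose the preimage $\widetilde{v}\in\T_M$ of $v\in\Gamma_M$ so that $\widehat{\pi_1 M_v}=\Stab_{\widehat{\pi_1M}}(\widetilde{v})$ and arrange $\pi_1\partial_iM\subseteq\pi_1 M_v$, so $\overline{\pi_1\partial_iM}\subseteq\widehat{\pi_1M_v}$ fixes $\widetilde v$. By \autoref{LEM: peripheral subgroup fix unique vertex} this is the unique fixed vertex. Next, choose $\widetilde{\F(v)}\in\T_N$ so that $\widehat{\pi_1N_{\F(v)}}=\Stab_{\widehat{\pi_1N}}(\widetilde{\F(v)})$, so that $\Phi(\widetilde v)=h_v\cdot\widetilde{\F(v)}$ for some $h_v\in\widehat{\pi_1N}$; the recipe in \autoref{LEM: Equivariant isomorphism of BS tree induce congruent isomorphism} gives $f_v=C_{h_v^{-1}}\circ f$ on $\widehat{\pi_1M_v}$. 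Finally, pick a conjugacy representative $\pi_1\partial_jN$ and a preimage $\widetilde{y}\in\T_N$ projecting to some $y\in\Gamma_N$, with $\overline{\pi_1\partial_jN}\subseteq\Stab(\widetilde y)$ being exactly its unique fixed vertex.

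With this set-up, $f(\overline{\pi_1\partial_iM})$ fixes $\Phi(\widetilde v)$, which is its unique fixed vertex by $f$-equivariance and uniqueness. By hypothesis $f(\overline{\pi_1\partial_iM})=g\,\overline{\pi_1\partial_jN}\,g^{-1}$ for some $g\in\widehat{\pi_1N}$, so $f(\overline{\pi_1\partial_iM})$ also fixes $g\cdot\widetilde y$. Uniqueness forces $\Phi(\widetilde v)=g\cdot\widetilde y$; projecting to $\Gamma_N$ gives $\F(v)=y$, i.e.\ $\partial_jN\subseteq N_{\F(v)}$. We may then replace our representative $\pi_1\partial_jN$ by a conjugate inside $\widehat{\pi_1N_{\F(v)}}$, so that $\overline{\pi_1\partial_jN}\subseteq\Stab(\widetilde{\F(v)})$ and in fact $\widetilde y=\widetilde{\F(v)}$.

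Now I conjugate by $h_v^{-1}$ to pass to $f_v$: writing $g'=h_v^{-1}g$, one computes
\[
f_v(\overline{\pi_1\partial_iM})=h_v^{-1}f(\overline{\pi_1\partial_iM})h_v=g'\,\overline{\pi_1\partial_jN}\,(g')^{-1},
\]
and $f_v(\overline{\pi_1\partial_iM})\subseteq\widehat{\pi_1N_{\F(v)}}=\Stab(\widetilde{\F(v)})$. Since $\overline{\pi_1\partial_jN}$ fixes $\widetilde{\F(v)}$ uniquely, its conjugate $g'\,\overline{\pi_1\partial_jN}\,(g')^{-1}$ fixes $g'\cdot\widetilde{\F(v)}$ uniquely; matching with $\widetilde{\F(v)}$ gives $g'\cdot\widetilde{\F(v)}=\widetilde{\F(v)}$, hence $g'\in\Stab(\widetilde{\F(v)})=\widehat{\pi_1N_{\F(v)}}$, which proves the second claim. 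The main obstacle in the write-up is really only bookkeeping: keeping the three conjugation ambiguities (choice of conjugacy representatives of $\pi_1\partial_iM$, $\pi_1\partial_jN$, and the conjugation $C_{h_v}$ built into $f_v$) aligned with the chosen base vertices $\widetilde v$ and $\widetilde{\F(v)}$, so that the uniqueness statement in \autoref{LEM: peripheral subgroup fix unique vertex} can be applied cleanly.
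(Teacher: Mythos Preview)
Your proof is correct and follows essentially the same strategy as the paper: identify $f_v$ with $C_{h_v^{-1}}\circ f$ via the construction in \autoref{LEM: Equivariant isomorphism of BS tree induce congruent isomorphism}, and use the unique-fixed-vertex property (\autoref{LEM: peripheral subgroup fix unique vertex}) to pin down both the JSJ-piece containing $\partial_jN$ and the conjugating element. The only cosmetic difference is that you transport uniqueness from $\T_M$ to $\T_N$ via the equivariant isomorphism $\Phi$ and apply it to $f(\overline{\pi_1\partial_iM})$, whereas the paper works entirely in $\T_N$ and applies uniqueness directly to $\overline{\pi_1\partial_jN}$; since these two subgroups are conjugate, the arguments are interchangeable.
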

\begin{proof}
For each $u\in V(\Gamma_M)$, fix a conjugacy representative $\pi_1M_u\le \pi_1M$.  
Suppose $\partial_jN$ belongs to the JSJ-piece $N_w$. Up to conjugation,  we may assume $\pi_1\partial_iM\subseteq \pi_1M_v\subseteq \pi_1M$, and $\pi_1\partial_j N\subseteq \pi_1N_w\subseteq \pi_1N$. Based on this assumption, suppose $f(\overline{\pi_1\partial_iM})=C_{h}(\overline{\pi_1\partial_jN})$ for some $h\in \widehat{\pi_1N}$. 

By construction of \autoref{THM: Profinite Isom up to Conj}, there exists an element $g\in \widehat{\pi_1N}$ such that the following diagram commutes.
\begin{equation}\label{6.10dig}
\begin{tikzcd}
\widehat{\pi_1M_v} \arrow[rr, "f_v"] \arrow[d,hook] &                                   & \widehat{\pi_1N_{\F(v)}} \arrow[d,hook] \\
\widehat{\pi_1M} \arrow[r, "f"]                                            & \widehat{\pi_1N} \arrow[r, "C_{g}^{-1}"] & \widehat{\pi_1N}                                                 
\end{tikzcd}
\end{equation}
In particular, $f(\overline{\pi_1\partial_iM})\subseteq C_{g}(\widehat{\pi_1N_{\F(v)}})$, so
\begin{equation}\label{equfixboth}
\overline{\pi_1\partial_jN}\subseteq C_{h^{-1}g}(\widehat{\pi_1N_{\F(v)}}).
\end{equation}

Consider the action by $\widehat{\pi_1N}$ on the standard profinite tree $\T_N$. Denote the quotient map as $q: \T_N\to \T_N/\widehat{\pi_1N}=\Gamma_N$. Since the edge stabilizers are abelian, $\widehat{\pi_1N_{\F(v)}}$ fixes a unique vertex $x\in q^{-1}(\F(v))$, and  $\widehat{\pi_1N_w}$ fixes a unique vertex $y\in q^{-1}(w)$. Then by (\ref{equfixboth}), $\overline{\pi_1\partial_jN}\subseteq \widehat{\pi_1N_w}$ fixes both $y$ and $h^{-1}g\cdot x$.  According to \autoref{LEM: peripheral subgroup fix unique vertex}, $y=h^{-1}g\cdot x$. In particular, $x$ and $y$ belong to the same $\widehat{\pi_1N}$-orbit, so $w=\F(v)$ and in turn, $x=y$. 

Therefore, $\partial_jN\subseteq N_{\F(v)}$, and $h^{-1}g\in \Stab(x)=\widehat{\pi_1N_{\F(v)}}$. Then, according to diagram~(\ref{6.10dig}), $f_v(\overline{\pi_1\partial_iM})=C_{g^{-1}h}(\overline{\pi_1\partial_jN})$, where $g^{-1}h\in \widehat{\pi_1N_{\F(v)}}$. 
\end{proof}

\begin{corollary}\label{COR: peripheral preserving}
$f$ respects the peripheral structure if and only if for each $v\in V(\Gamma_M)$, $f_v$ respects the peripheral structure. 
\end{corollary}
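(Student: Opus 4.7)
The plan is to separate the peripheral subgroups of each JSJ-piece $M_v$ into two types: those arising from JSJ-tori (i.e., from edges $e$ of $\Gamma_M$ adjacent to $v$) and those arising from actual boundary components of $M$ that lie inside $M_v$. The key observation is that the first type is already handled by the very definition of a congruent isomorphism: the commutative diagram \eqref{Diagram Isom Conj} ensures that for each edge $e$ adjacent to $v$, $f_v$ sends the image of $\widehat{\mathcal{G}_e}$ in $\widehat{\pi_1M_v}$ to a conjugate of the image of $\widehat{\mathcal{H}_{\F(e)}}$ in $\widehat{\pi_1N_{\F(v)}}$. So both directions reduce to a statement about the actual boundary components.

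For the forward direction, I assume $f$ respects the peripheral structure of $M, N$. Fix $v \in V(\Gamma_M)$ and a boundary component $\partial_iM \subseteq M_v$. By hypothesis there is a corresponding boundary component $\partial_jN$ of $N$ with $f(\overline{\pi_1\partial_iM})$ conjugate to $\overline{\pi_1\partial_jN}$ in $\widehat{\pi_1N}$. Now \autoref{lem: 111111} gives that $\partial_jN \subseteq N_{\F(v)}$ and that the conjugacy already takes place within $\widehat{\pi_1N_{\F(v)}}$ via $f_v$. Combined with the automatic matching of JSJ-torus peripheral subgroups noted above, this says $f_v$ respects the peripheral structure of $M_v$ and $N_{\F(v)}$.

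For the backward direction, I assume each $f_v$ respects the peripheral structure. Since each actual boundary component $\partial_iM$ of $M$ lies in a unique JSJ-piece $M_v$, the peripheral subgroup $\pi_1\partial_iM$ is one of the peripheral subgroups of $M_v$. By hypothesis, $f_v$ sends $\overline{\pi_1\partial_iM}$ to a conjugate of $\overline{\pi_1\partial_kN_{\F(v)}}$ within $\widehat{\pi_1N_{\F(v)}}$ for some boundary component $\partial_kN_{\F(v)}$ of $N_{\F(v)}$. I claim $\partial_kN_{\F(v)}$ must be an actual boundary component of $N$ (not a JSJ-torus of $N_{\F(v)}$). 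Indeed, the JSJ-torus peripheral subgroups of $N_{\F(v)}$ are already accounted for under $f_v^{-1}$ by the JSJ-torus peripheral subgroups of $M_v$ (via edges of $\Gamma_M$ adjacent to $v$ and $\F$), by the congruent isomorphism structure; since $f_v$ is a bijective correspondence on peripheral subgroups and $\pi_1\partial_iM$ is of actual-boundary type, its image must likewise be of actual-boundary type. Composing with the inclusion $\widehat{\pi_1N_{\F(v)}} \hookrightarrow \widehat{\pi_1N}$ gives the desired conjugacy within $\widehat{\pi_1N}$, and ranging over all $v$ assembles a bijection $\partial M \leftrightarrow \partial N$ compatible with $f$.

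The only delicate point, which I would identify as the main thing to verify carefully, is the distinction between JSJ-torus peripheral subgroups and actual-boundary peripheral subgroups inside $\widehat{\pi_1M_v}$: we need these two types not to be mixed up under $f_v$. This is precisely guaranteed by the congruent isomorphism diagram \eqref{Diagram Isom Conj} on edges adjacent to $v$, together with \autoref{PROP: peripheral structure}, which says (for an irreducible piece) that the closures of distinct peripheral subgroups of $M_v$ are not conjugate in $\widehat{\pi_1M_v}$; the same applies to $N_{\F(v)}$. Hence the two types are intrinsically distinguishable in the profinite completion, and the bookkeeping goes through.
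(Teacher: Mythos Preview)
Your proposal is correct and follows essentially the same approach as the paper: separate the peripheral subgroups of each $M_v$ into JSJ-torus type (handled automatically by the congruent isomorphism diagram~\eqref{Diagram Isom Conj}) and actual-boundary type, use \autoref{lem: 111111} for the forward direction, and for the backward direction argue that the two types cannot mix under $f_v$. One small point of precision: in the backward direction, ``composing with the inclusion $\widehat{\pi_1N_{\F(v)}} \hookrightarrow \widehat{\pi_1N}$'' is not quite enough on its own, since $f_v$ is not literally the restriction of $f$ but differs from it by a conjugation in $\widehat{\pi_1N}$; the paper handles this by appealing to diagram~(\ref{6.10dig}), which you should cite explicitly at that step.
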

\begin{proof}
Let us first prove the ``only if'' direction. In fact, according to the coherence relation (\ref{Diagram Isom Conj}) in the congruent isomorphism, each $f_v$ automatically preserves the peripheral subgroups given by the JSJ-tori, up to conjugacy. On the other hand, according to \autoref{lem: 111111}, the components of $\partial M\cap M_v$ and $\partial N\cap N_{\F(v)}$ are in one-to-one correspondence, and their corresponding conjugacy classes of peripheral subgroups  are also preserved by $f_v$.

The ``if'' direction is easier. For $\partial_iM\subseteq M_v\cap \partial M$, by assumption, $f_v(\overline{\pi_1\partial_iM})$ is conjugate to a peripheral subgroup $\overline{\pi_1\partial_jN_{\F(v)}}$. Then, $\partial_jN_{\F(v)}\subseteq N_{\F(v)}\cap \partial N$, since the peripheral structures of $M_v$ and $N_{\F(v)}$ given by the JSJ-tori are preserved by $f_v$ due to the same reason as above. This gives a one-to-one correspondence between the components of $M_v\cap \partial M$ and $N_{\F(v)}\cap \partial N$, which can be  combined into a one-to-one correspondence between the components of $\partial M$ and $\partial N$. In addition, $f(\overline{\pi_1\partial_iM})$ is conjugate to $\overline{\pi_1\partial_jN_{\F(v)}}$ in $\widehat{\pi_1N}$ according to the commutative diagram (\ref{6.10dig}), so $f$ also respects the peripheral structure. 
\end{proof}

When considering 3-manifolds admitting JSJ-decomposition, it is a crucial step to switch from the peripheral $\Zx$-regularity of  a profinite isomorphism between two  irreducible 3-manifolds to the peripheral $\Zx$-regularity of the induced  profinite isomorphism between  the JSJ-pieces. 

\begin{lemma}\label{EX: peripheral regular}
Let $\partial_iM$ be a boundary component of $M$ which belongs to a JSJ-piece $M_v$. Then $f$ is peripheral $\Zx$-regular at $\partial_iM$ if and only if $f_v:\widehat{\pi_1M_v}\ttt \widehat{\pi_1N_{\F(v)}}$ is peripheral $\Zx$-regular at $\partial_iM$.
\end{lemma}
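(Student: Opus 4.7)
The plan is to base both implications on the commutative diagram~(\ref{6.10dig}) from the proof of \autoref{lem: 111111}, which identifies the restriction of $f$ to $\widehat{\pi_1M_v}$ with $f_v$ up to conjugation by some $g\in\widehat{\pi_1N}$. Preliminarily, one notes that the two identifications $\overline{\pi_1\partial_iM}\cong\tensor\pi_1\partial_iM$---made inside $\widehat{\pi_1M_v}$ and inside $\widehat{\pi_1M}$---coincide, because \autoref{COR: Peripheral group tensor} applies to both $M_v$ and $M$; the same comment holds on the $N$-side. The trivial case $M\cong T^2\times I$, in which the JSJ-decomposition is empty and $f=f_v$, may be set aside at once.

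For the ``only if'' direction, suppose $f$ is peripheral $\lambda$-regular at $\partial_iM$ with witnesses $\partial_jN$, $h\in\widehat{\pi_1N}$ and $\psi:\pi_1\partial_iM\ttt\pi_1\partial_jN$ satisfying $C_h\circ f|_{\overline{\pi_1\partial_iM}}=\lambda\otimes\psi$. \autoref{lem: 111111} places $\partial_jN$ inside $N_{\F(v)}$ and supplies $g'\in\widehat{\pi_1N_{\F(v)}}$ with $C_{g'}(f_v(\overline{\pi_1\partial_iM}))=\overline{\pi_1\partial_jN}$. Substituting diagram~(\ref{6.10dig}) converts the hypothesis into $C_{hg}\circ f_v|_{\overline{\pi_1\partial_iM}}=\lambda\otimes\psi$, so the element $hg(g')^{-1}$ normalises $\overline{\pi_1\partial_jN}$ inside $\widehat{\pi_1N}$. \autoref{PROP: peripheral structure}(\ref{6.2-2}) then leaves two options: either this normaliser coincides with $\overline{\pi_1\partial_jN}$ (and acts trivially because abelian), or we are in the minor Seifert exception---in which case $N$ itself is the twisted $I$-bundle over the Klein bottle, forcing $N=N_{\F(v)}$ and $f=f_v$, and the normaliser acts as the profinite completion of an involution $\sigma\in\Aut(\pi_1\partial_jN)$ commuting with the scalar $\lambda$. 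In either case $C_{g'}\circ f_v|_{\overline{\pi_1\partial_iM}}$ takes the form $\lambda\otimes\psi'$ for a suitable isomorphism $\psi'$ (namely $\psi$ or $\sigma\circ\psi$), yielding peripheral $\lambda$-regularity of $f_v$.

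For the ``if'' direction, suppose $f_v$ is peripheral $\lambda$-regular at $\partial_iM$ with witness data $(\partial_jN_{\F(v)}, g', \psi)$, where a priori $\partial_jN_{\F(v)}$ is merely some boundary component of $N_{\F(v)}$. The crux is to verify that $\partial_jN_{\F(v)}$ is in fact a boundary of $N$ rather than a JSJ-torus. Were $\partial_jN_{\F(v)}$ a JSJ-torus attached to an edge $e'\in E(\Gamma_N)$ incident to $\F(v)$, the coherence relation~(\ref{Diagram Isom Conj}) of the congruent isomorphism $f_\bullet$ would force $f_v$ to also send the JSJ-torus peripheral subgroup $\overline{\pi_1\partial_kM_v}$ coming from $\F^{-1}(e')$ to a conjugate of $\overline{\pi_1\partial_jN_{\F(v)}}$ in $\widehat{\pi_1N_{\F(v)}}$. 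Combined with the hypothesis, this would make two distinct boundary components of $M_v$---the JSJ-torus $\partial_kM_v$ and the true boundary $\partial_iM$ of $M$---have conjugate peripheral closures in $\widehat{\pi_1M_v}$, contradicting \autoref{PROP: peripheral structure}(\ref{6.2-1}); the apparent exception $M_v\cong T^2\times I$ is ruled out by minimality of the JSJ-decomposition outside of the already-discarded case $M\cong T^2\times I$. Once $\partial_jN_{\F(v)}$ is confirmed as a boundary of $N$, transporting $C_{g'}\circ f_v|_{\overline{\pi_1\partial_iM}}=\lambda\otimes\psi$ through diagram~(\ref{6.10dig}) inside $\widehat{\pi_1N}$ gives $C_{g'g^{-1}}\circ f|_{\overline{\pi_1\partial_iM}}=\lambda\otimes\psi$, proving peripheral $\lambda$-regularity of $f$.

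The principal obstacle is the ``if'' direction, specifically excluding the JSJ-torus alternative for the witness $\partial_jN_{\F(v)}$; since every peripheral closure is abstractly $\widehat{\Z}\times\widehat{\Z}$, nothing at the level of an arbitrary profinite isomorphism $\widehat{\pi_1M_v}\cong\widehat{\pi_1N_{\F(v)}}$ prevents a boundary-of-$N$ subgroup from being swapped with a JSJ-torus subgroup. It is precisely the coherence provided by \autoref{THM: Profinite Isom up to Conj} together with the sharp non-conjugacy of distinct peripheral subgroups in \autoref{PROP: peripheral structure}(\ref{6.2-1}) that pins down the correct witness. The ``only if'' direction's sole subtlety, the minor Seifert exception in \autoref{PROP: peripheral structure}(\ref{6.2-2}), dissolves upon observing that it forces $N=N_{\F(v)}$ and hence collapses the distinction between $f$ and $f_v$ altogether.
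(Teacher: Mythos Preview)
Your proof is correct and follows the same route as the paper (diagram~(\ref{6.10dig}) together with \autoref{lem: 111111} and \autoref{PROP: peripheral structure}). In the ``only if'' direction the paper is slightly more economical: rather than case-splitting on the minor Seifert exception, it invokes the well-definedness discussion of \autoref{subsec: Well-def}, which has already absorbed that case into the statement that the decomposition $\lambda\otimes\psi$ is independent of the choice of conjugator. In the ``if'' direction you are more careful than the paper, which simply asserts it is ``clear from the commutative diagram~(\ref{6.10dig})''; your argument that the witness $\partial_jN_{\F(v)}$ cannot be a JSJ-torus (via the coherence relation of $f_\bullet$ and \autoref{PROP: peripheral structure}(\ref{6.2-1})) is exactly the reasoning the paper spells out in the neighbouring proof of \autoref{COR: peripheral preserving} but does not repeat here.
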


\begin{proof}
The ``if'' direction is clear from the commutative diagram (\ref{6.10dig}). Now we expand on  the ``only if'' direction. 

Indeed, $f(\overline{\pi_1\partial_iM})$ is conjugate to  a peripheral subgroup $\overline{\pi_1\partial_jN}$ in $\widehat{\pi_1N}$.  By \autoref{lem: 111111}, $\partial_jN\subseteq N_{\F(v)}$. Up to taking suitable conjugacy representatives, we assume $\pi_1\partial_iM\subseteq \pi_1M_v\subseteq \pi_1M$ and $\pi_1\partial_jN\subseteq \pi_1N_{\F(v)}\subseteq \pi_1N$, and there exists $k\in \widehat{\pi_1N_{\F(v)}}$ such that $f_v(\overline{\pi_1\partial_iM})=C_{k}^{-1}(\overline{\pi_1\partial_jN})$. Then, by the commutative diagram (\ref{6.10dig}), there exists $g\in \widehat{\pi_1N}$ such that $f_v=C_{g}^{-1}\circ f: \widehat{\pi_1M_v}\to \widehat{\pi_1N_{\F(v)}}$ and $f(\overline{\pi_1\partial_iM})=C_{gk^{-1}}(\overline{\pi_1\partial_jN})$.

According to the well-definedness of peripheral $\Zx$-regularity (\autoref{PROP: peripheral structure}), the map $$C_{kg^{-1}}\circ f|_{\overline{\pi_1\partial_iM}}: \overline{\pi_1\partial_iM}\cong \tensor \pi_1\partial_iM \tto \tensor \pi_1\partial_jN \cong \overline{\pi_1\partial_j N}$$ can be decomposed as $\lambda\otimes \psi$, where $\lambda\in \Zx$ and $\psi: \pi_1\partial_iM\ttt\pi_1\partial_jN$, regardless of the choice of the conjugator $kg^{-1}$ and the conjugacy representatives of peripheral subgroups. 
 This implies that $$C_{k}\circ f_v|_{\overline{\pi_1\partial_iM}}=\lambda\otimes \psi: \overline{\pi_1\partial_iM}\subseteq  \widehat{\pi_1M_x}\tto \overline{\pi_1\partial_jN}\subseteq \widehat{\pi_1N_{\F(x)}} ,$$ where $k\in \widehat{\pi_1N_{\F(v)}}$, which is to say that  $f_v$ is peripheral $\Zx$-regular at $\partial_iM$.
 %
%
\end{proof}

\subsection{The gluing lemma}\label{subsec: Gluing}

The most essential application of peripheral $\Zx$-regularity is displayed in   the commutative diagram (\ref{indig1}). 
In fact, 
under the condition of peripheral $\Zx$-regularity, we can directly compare the gluing maps between corresponding JSJ-pieces in a profinitely isomorphic pair of irreducible 3-manifolds. 
%
Namely, at each edge of the JSJ-graph, one derives a gluing map $\psi_e=\varphi_1\circ\varphi_0^{-1}: \pi_1\partial_kM_u\to \pi_1\partial_iM_v$ as is witnessed in the JSJ-graph of groups
\begin{equation*}
\begin{tikzcd}
\pi_1M_u\supseteq \pi_1\partial_kM_u & \pi_1T^2_e \arrow[l, "\varphi_0"',"\cong"] \arrow[r, "\varphi_1","\cong"'] & \pi_1\partial_iM_v\subseteq \pi_1M_v.
\end{tikzcd}
\end{equation*}


\begin{lemma}[Gluing lemma]\label{inLem1'}
Suppose $M$ and $N$ are compact, orientable, irreducible 3-manifolds with empty or  incompressible toral boundary which are not closed $Sol$-manifolds.     Let $f_\bullet$ be a congruent isomorphism between the JSJ-graphs of profinite groups $(\widehat{\mathcal{G}_M},\Gamma_M)$ and $(\widehat{\mathcal{G}_N},\Gamma_N)$. 
Suppose $u,v\in V(\Gamma_M)$ are two possibly identical vertices connected by an edge $e\in E(\Gamma_M)$. Following diagram (\ref{indig1}), denote the gluings at the  JSJ-tori corresponding to $e$ and $\F(e)$ as
\begin{equation*}
M_u\supseteq \partial_kM_u \longleftrightarrow \partial_iM_v\subseteq M_v \text{ and }  N_{\F(u)}\supseteq \partial_lN_{\F(u)} \longleftrightarrow \partial_jN_{\F(v)}\subseteq N_{\F(v)}.
\end{equation*}
    \begin{enumerate}[leftmargin=*]
        \item\label{lemmain-1'} The profinite isomorphism $f_u:\widehat{\pi_1M_u}\ttt \widehat{\pi_1N_{\F(u)}}$ is peripheral $\Zx$-regular at $\partial_kM_u$ if and only if $f_v:\widehat{\pi_1M_v}\ttt \widehat{\pi_1N_{\F(v)}}$ is peripheral $\Zx$-regular at $\partial_iM_v$. In addition, when peripheral $\Zx$-regularity holds, the coefficients in $\Zx$ are identical up to a $\pm$-sign. 
        \item\label{lemmain-2'} 
Assume peripheral $\Zx$-regularity in (\ref{lemmain-1'}) holds. Suppose $f_u$ restricting on $\partial_kM_u$ is induced by a homeomorphism $\Phi_u: M_u\ttt N_{\F(u)}$ with coefficient $\lambda$, and  $f_v$ restricting on $\partial_kM_v$ is induced by a homeomorphism $\Phi_v:M_v\ttt N_{\F(v)}$ with the same coefficient $\lambda$  (see \autoref{inDef: induced by homeo}). 
 Then, 
$\Phi_u$ and $\Phi_v$ are compatible, up to isotopy, with the gluing maps at the JSJ-tori. 
In particular, \\
$\bullet$ 
if $u\neq v$, then $\Phi=(\Phi_u,\Phi_v)$ yields a homeomorphism $M_u\cup_{T^2}M_v\ttt N_{\F(u)}\cup_{T^2}N_{\F(v)}$; \\
$\bullet$ 
if $u=v$ and $\Phi_u=\Phi_v$, then $\Phi_u$ induces a homeomorphism $M_u\cup_{T^2}\ttt N_{\F(u)}\cup_{T^2}$. 
    \end{enumerate}
\end{lemma}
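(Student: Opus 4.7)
The plan is to exploit the coherence relation at the edge $e$ in the congruent isomorphism, which provides a ``bridge'' $f_e: \widehat{\pi_1 T^2_e}\ttt\widehat{\pi_1 T^2_{\F(e)}}$ between the two profinite torus groups. The maps $\varphi_i^e$ and $(\varphi')_i^{\F(e)}$ are all isomorphisms of $\Z^2$ identifying the edge group with each adjacent peripheral subgroup, and the commutative square (\ref{Diagram Isom Conj}) then expresses $f_e$ in two equivalent ways:
\begin{align*}
f_e = (\varphi_0^{\F(e)})^{-1}\circ C_{(h_0^e)^{-1}}\circ f_u|_{\overline{\pi_1\partial_kM_u}}\circ \varphi_0^e = (\varphi_1^{\F(e)})^{-1}\circ C_{(h_1^e)^{-1}}\circ f_v|_{\overline{\pi_1\partial_iM_v}}\circ \varphi_1^e.
\end{align*}

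For (\ref{lemmain-1'}), assuming $f_u$ is peripheral $\lambda$-regular at $\partial_kM_u$, the first expression rewrites as $f_e=\lambda\otimes\theta_u$ for a $\Z$-isomorphism $\theta_u:\pi_1T^2_e\ttt\pi_1T^2_{\F(e)}$ obtained by pre- and post-composing $\psi_u$ with $\varphi_0^e$ and $(\varphi_0^{\F(e)})^{-1}$. Equating with the second expression and peeling off the $\Z$-linear identifications at $v$ yields $C_{(h_1^e)^{-1}}\circ f_v|_{\overline{\pi_1\partial_iM_v}}=\lambda\otimes\theta_v$ for the corresponding $\theta_v$, so $f_v$ is peripheral $\lambda$-regular at $\partial_iM_v$ with the same coefficient; symmetry gives the converse. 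The $\pm$-sign ambiguity in the statement arises from the intrinsic indeterminacy of $\lambda$ discussed in \autoref{subsec: Well-def}: by \autoref{PROP: peripheral structure}~(\ref{6.2-2})(i), if an adjacent JSJ piece is the minor Seifert manifold, then any element in the nontrivial coset of $N_{\widehat{\pi_1M_{u}}}(\overline{\pi_1\partial_kM_u})/\overline{\pi_1\partial_kM_u}$ conjugates $\lambda\otimes\psi$ to $(-\lambda)\otimes(\sigma\psi)$ for an involution $\sigma$, so the two chosen representatives of $\lambda_u$ and $\lambda_v$ may differ by a sign.

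For (\ref{lemmain-2'}), substituting the hypothesis $C_{(h_0^e)^{-1}}\circ f_u|_{\overline{\pi_1\partial_kM_u}}=\lambda\otimes \Phi_{u\ast}|_{\pi_1\partial_kM_u}$ and the analogous expression for $v$ into the two formulas for $f_e$ and equating yields
\begin{equation*}
\lambda\otimes\bigl((\varphi_0^{\F(e)})^{-1}\circ\Phi_{u\ast}\circ\varphi_0^e\bigr)=\lambda\otimes\bigl((\varphi_1^{\F(e)})^{-1}\circ\Phi_{v\ast}\circ\varphi_1^e\bigr)
\end{equation*}
as maps $\widehat{\pi_1T^2_e}\to\widehat{\pi_1T^2_{\F(e)}}$. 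Since $\lambda\in\Zx$ is a unit in $\widehat{\Z}$, multiplication by $\lambda$ is an automorphism of $\tensor\Z^2\cong\widehat{\Z}^2$, so we may cancel it; because $\Z^2\hookrightarrow\widehat{\Z}^2$ is injective, the equality descends to the underlying $\Z$-isomorphisms. Rearranging gives $\Phi_{v\ast}|_{\pi_1\partial_iM_v}\circ(\varphi_1^e\circ(\varphi_0^e)^{-1})=(\varphi_1^{\F(e)}\circ(\varphi_0^{\F(e)})^{-1})\circ\Phi_{u\ast}|_{\pi_1\partial_kM_u}$, which is precisely the algebraic compatibility of $\Phi_u$ and $\Phi_v$ with the two JSJ-gluing maps.

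The remaining step is to promote this algebraic compatibility to a topological one. Since $\mathrm{MCG}(T^2)\cong\mathrm{GL}_2(\Z)$ acts faithfully on $\pi_1(T^2)$, the restrictions $\Phi_u|_{\partial_kM_u}$ and $\Phi_v|_{\partial_iM_v}$ are determined up to isotopy by the induced maps on peripheral $\pi_1$. Hence, working in collar neighborhoods of the JSJ torus, we may isotope $\Phi_u$ (or $\Phi_v$) so that the two boundary maps coincide under the gluing identifications, and then $(\Phi_u,\Phi_v)$ assembles into a homeomorphism $M_u\cup_{T^2}M_v\ttt N_{\F(u)}\cup_{T^2}N_{\F(v)}$; when $u=v$ and $\Phi_u=\Phi_v$, the same reasoning applied simultaneously at the two glued boundary components of $M_u$ yields the self-gluing conclusion. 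The main subtlety in the proof will be careful bookkeeping of conjugators and basepoint choices to ensure that the hypothesis ``same coefficient $\lambda$'' in (\ref{lemmain-2'}) is used consistently --- in particular, that the sign ambiguity left open by (\ref{lemmain-1'}) is genuinely eliminated by the homeomorphism hypothesis before the cancellation of $\lambda$ is performed.
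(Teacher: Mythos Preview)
Your overall strategy matches the paper's: exploit the coherence relation at $e$ to obtain a commutative square of peripheral maps (the paper's diagram~(\ref{lem'dig2})), then for (\ref{lemmain-1'}) transfer the $\lambda\otimes\psi$ decomposition across the square, and for (\ref{lemmain-2'}) cancel $\lambda$ and descend to the $\Z$-lattice, invoking $\mathrm{MCG}(T^2)\cong\Aut(\pi_1T^2)$ at the end. Part~(\ref{lemmain-1'}) is fine.

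The gap is in your treatment of (\ref{lemmain-2'}). You write ``substituting the hypothesis $C_{(h_0^e)^{-1}}\circ f_u|_{\overline{\pi_1\partial_kM_u}}=\lambda\otimes \Phi_{u\ast}$'', but this is not what the hypothesis says. \autoref{inDef: induced by homeo} only gives you \emph{some} $g_0\in\widehat{\pi_1N_{\F(u)}}$ with $C_{g_0}\circ f_u=\lambda\otimes\Phi_{u\ast}$, and moreover $\Phi_{u\ast}(\pi_1\partial_kM_u)$ may be a \emph{different} conjugacy representative of the peripheral subgroup than the $\pi_1\partial_lN_{\F(u)}$ singled out by the congruent isomorphism. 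So the two conjugators $g_0$ and $(h_0^e)^{-1}$ differ by an element of the normaliser $N_{\widehat{\pi_1N_{\F(u)}}}(\overline{\pi_1\partial_lN_{\F(u)}})$ together with a further conjugation moving one conjugacy representative to the other. The bulk of the paper's proof of (\ref{lemmain-2'}) is precisely this reconciliation: using \autoref{PROP: peripheral structure}~(\ref{6.2-2}) one shows the discrepancy can be absorbed by conjugation by some $r_0$ lying in the \emph{abstract} group $\pi_1N_{\F(u)}$ (not merely in its profinite completion), yielding $C_{h_0}\circ f_u=\lambda\otimes(C_{r_0}\circ\Phi_{u\ast})$. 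Only then does the cancellation of $\lambda$ produce an equation of genuine $\Z$-linear maps, so that the compatibility diagram lives in $\pi_1$ rather than $\widehat{\pi_1}$. Your closing paragraph flags ``bookkeeping of conjugators'' but locates the difficulty in the $\pm$-sign; the real issue is ensuring the adjusting conjugator can be taken in $\pi_1N_{\F(u)}$, and this is where \autoref{PROP: peripheral structure}~(\ref{6.2-2}) (including the minor Seifert case) is used.
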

\begin{proof}
(\ref{lemmain-1'}) For brevity, we  identify $\Gamma_M$ and $\Gamma_N$ with the same graph $\Gamma$ through the isomorphism $\F$. Then, the congruent isomorphism $f_\bullet$ implies the following commutative diagram 
  \begin{equation}\label{lem'dig1}
     \begin{tikzcd}[column sep=tiny]
\widehat{\pi_1M_u} \arrow[r,symbol=\supseteq] \arrow[dd, "C_{h_0}\circ f_u"'] & \overline{\pi_1\partial_kM_u} \arrow[dd] & & & & \widehat{\pi_1T^2} \arrow[llll, "\varphi_0"'] \arrow[rrrr, "\varphi_1"] \arrow[dd,"f_e"]   & & & & \overline{\pi_1\partial_iM_v} \arrow[r,symbol=\subseteq] \arrow[dd] & \widehat{\pi_1M_v} \arrow[dd, "C_{h_1}\circ f_v"] \\
 & & & &  & & & & & &\\
\widehat{\pi_1N_u} \arrow[r,symbol=\supseteq ]                              & \overline{\pi_1\partial_lN_u}      & & &     & \widehat{\pi_1T^2} \arrow[llll, "\varphi_0'"'] \arrow[rrrr, "\varphi_1'"] & & & & \overline{\pi_1\partial_jN_v} \arrow[r,symbol=\subseteq]           & \widehat{\pi_1N_v}                              
\end{tikzcd}
\end{equation}
where $h_0\in \widehat{\pi_1N_u}$ and $h_1\in \widehat{\pi_1N_v}$.

By \autoref{COR: Peripheral group tensor}, we further obtain
\begin{equation}\label{lem'dig2}
\begin{tikzcd}[column sep=large, row sep=large]
\tensor \pi_1\partial_kM_u \arrow[r, "1\otimes \psi_e","\cong"'] \arrow[d, "C_{h_0}\circ f_u|_{{\overline{\pi_1\partial_kM_u}}}"',"\cong"] & \tensor \pi_1\partial_iM_v \arrow[d, "C_{h_1}\circ f_v|_{\overline{\pi_1\partial_iM_v}}","\cong"'] \\
\tensor \pi_1\partial_lN_u \arrow[r, "1\otimes \psi_e'","\cong"']                                                                & \tensor \pi_1\partial_jN_v                                                               
\end{tikzcd}
\end{equation}
where $\psi_e$ and $\psi_e'$ are respectively gluing maps of $M$ and $N$ at the JSJ-torus corresponding to $e$.

According to \autoref{PROP: peripheral structure}, regardless of the choice of $h_0\in \widehat{\pi_1N_u}$, if $f_u$ is peripheral $\Zx$-regular at $\partial_kM_u$, then $C_{h_0}\circ f_u|_{{\overline{\pi_1\partial_kM_u}}}$ admits a decomposition as $\lambda\otimes \varphi_u$.  In this case, the commutative diagram (\ref{lem'dig2}) implies that $C_{h_1}\circ f_v|_{{\overline{\pi_1\partial_iM_v}}}$ admits a decomposition as $\lambda\otimes (\psi_e'\circ \varphi_u\circ \psi_e^{-1})$, which is to say that $f_v$ is  peripheral $\lambda$-regular at $\partial_iM_v$. The converse holds similarly by symmetry, which implies (\ref{lemmain-1'}).


\newsavebox{\bbbaaa}
\begin{lrbox}{\bbbaaa}
$
C_{g_0}\circ f_u|_{\overline{\pi_1\partial_kM_u}}:\; \overline{\pi_1\partial_kM_u}\cong \tensor \pi_1\partial_kM_u\xrightarrow{\lambda\otimes \Phi_{u\ast}} \tensor C_{s_0}(\pi_1\partial_lN_u)\cong \overline{C_{s_0}(\pi_1\partial_lN_u)}.
$
\end{lrbox}

(\ref{lemmain-2'}) Let us first consider $f_u$. The commutative diagram (\ref{lem'dig1}) implies $f_u(\overline{\pi_1\partial_k M_u})=C_{h_0}^{-1}(\overline{\pi_1\partial_lN_u})$. Consequently, the closure of a peripheral subgroup corresponding to $\Phi_u(\partial_kM_u)$ is a conjugate of $\overline{\pi_1\partial_lN_u}$ in $\widehat{\pi_1N_u}$. 
Since $N$ is not a closed $Sol$-manifold, $N_u$ is not homeomorphic to $S^1\times S^1\times I$, and by \autoref{PROP: peripheral structure}~(\ref{6.2-1}), $\Phi_u(\partial_kM_u)=\partial_lN_u$. 
In particular, there exists $s_0\in \pi_1N$ such that $\Phi_{u\ast}(\pi_1\partial_kM_u)=C_{s_0}(\pi_1\partial_lN_u)$. 
By  \autoref{inDef: induced by homeo}, there exists an element $g_0\in \widehat{\pi_1N_u}$ such that 
\begin{equation*}
\scalebox{0.88}{\usebox{\bbbaaa}}
\end{equation*}

Since $\pi_1N_u$ is residually finite,  we identify $\pi_1N_u$ as a subgroup in $\widehat{\pi_1N_u}$ through the canonical homomorphism. Then $s_0$ is also viewed as an element in $\widehat{\pi_1N_u}$. Consequently, we derive that $f_u(\overline{\pi_1\partial_k M_u})=C_{g_0}^{-1}(C_{s_0}(\overline{\pi_1\partial_lN_u}))$. Recall that $f_u(\overline{\pi_1\partial_k M_u})=C_{h_0}^{-1}(\overline{\pi_1\partial_lN_u})$. Thus, $$s_0^{-1}g_0h_0^{-1}\in N_{\widehat{\pi_1N_u}}(\overline{\pi_1\partial_lN_u}).$$

\newsavebox{\cccddd}
\begin{lrbox}{\cccddd}
$
C_{h_0}\circ f|_{\overline{\pi_1\partial_kM_u}}=C_{s_0^{-1}g_0h_0^{-1}}\circ C_{h_0}\circ f|_{\overline{\pi_1\partial_kM_u}}=\lambda\otimes (C_{s_0^{-1}}\circ \Phi_{u\ast}): \overline{\pi_1\partial_kM_u}   \to    \overline{ \pi_1\partial_lN_u }.
$
\end{lrbox}

If $N_u$ is not   the orientable $I$-bundle over Klein bottle,  \autoref{PROP: peripheral structure}~(\ref{6.2-2}) implies that $s_0^{-1}g_0h_0^{-1}\in \overline{\pi_1\partial_lN_u}$, so it acts trivially by conjugation on $\overline{\pi_1\partial_lN_u}$. In particular,
\begin{equation*}
\scalebox{0.88}{\usebox{\cccddd}}
\end{equation*}

\newsavebox{\eeefff}
\begin{lrbox}{\eeefff}
$
C_{h_0}\raisebox{1pt}{\scalebox{.8}{$\circ$}} f|_{\overline{\pi_1\partial_kM_u}}=C_{t_0s_0^{-1}g_0h_0^{-1}}\raisebox{1pt}{\scalebox{.8}{$\circ$}} C_{h_0}\raisebox{1pt}{\scalebox{.8}{$\circ$}} f|_{\overline{\pi_1\partial_kM_u}}=\lambda\otimes (C_{t_0s_0^{-1}}\raisebox{1pt}{\scalebox{.8}{$\circ$}}\Phi_{u\ast}): \overline{\pi_1\partial_kM_u}   \to    \overline{ \pi_1\partial_lN_u }.
$
\end{lrbox}

If $N_u$ is  the orientable $I$-bundle over Klein bottle, since $\widehat{\pi_1N_u}$ contains $\overline{\pi_1\partial_lN_u}$ as an index-2 open subgroup and $\pi_1N_u$ is dense in $\widehat{\pi_1N_u}$, we can find an element $t_0\in \pi_1N_u$, such that $t_0s_0^{-1}g_0h_0^{-1}\in \overline{\pi_1\partial_lN_u}$. In this case, we have
\begin{equation*}
\scalebox{0.88}{\usebox{\eeefff}}
\end{equation*}

Let $r_0=s_0^{-1}$ in the former case, and $r_0=t_0s_0^{-1}$ in the latter case. In both cases, we have 
$
r_0\in \pi_1N_u$, and $$C_{h_0}\circ f|_{\overline{\pi_1\partial_kM_u}}=\lambda\otimes (C_{r_0}\circ \Phi_{u\ast}): \overline{\pi_1\partial_kM_u}\to \overline{\pi_1\partial_l N_u}.
$$

\newsavebox{\jjjkkk}
\begin{lrbox}{\jjjkkk}
$
C_{h_1}\circ f|_{\overline{\pi_1\partial_iM_v}}=\lambda\otimes (C_{r_1}\circ \Phi_{v\ast}): \overline{\pi_1\partial_iM_v}\cong \tensor\pi_1\partial_iM_v \to \tensor\pi_1\partial_j N_v \cong \overline{\pi_1\partial_j N_v}.
$
\end{lrbox}

Similarly, for $f_v$, there exists $r_1\in \pi_1N_v$  such that 
\begin{equation*}
\scalebox{0.88}{\usebox{\jjjkkk}}
\end{equation*}

Now we can cancel the coefficient $\lambda$ in the commutative diagram (\ref{lem'dig2}), so that we obtain the following diagram.
\begin{equation*}
\begin{tikzcd}[row sep=large]
\tensor \pi_1\partial_kM_u \arrow[r, "1\otimes \psi_e"] \arrow[d, "1\otimes (C_{r_0}\circ \Phi_{u\ast})"'] & \tensor \pi_1\partial_iM_v \arrow[d, "1\otimes (C_{r_1}\circ \Phi_{v\ast})"] \\
\tensor \pi_1\partial_lN_u \arrow[r, "1\otimes \psi_e'"]                              & \tensor \pi_1\partial_jN_v                             
\end{tikzcd}
\end{equation*}

Restricting  to the $\Z$-submodules $\Z^2\cong \Z\otimes \pi_1\partial_kM_u\subseteq \widehat{\Z}\otimes \pi_1\partial_kM_u\cong \widehat{\Z}^2$ etc., we recover a commutative diagram between the original peripheral subgroups 
\begin{equation*}
\begin{tikzcd}[row sep= large, column sep=tiny]
\pi_1M_u \arrow[d, " C_{r_0}\circ \Phi_{u_\ast}"] \arrow[r, symbol=\supseteq] & \pi_1\partial_kM_u \arrow[rrrr, "\psi_e"] \arrow[d] & & & &  \pi_1\partial_iM_v \arrow[d] \arrow[r, symbol=\subseteq] & \pi_1M_v \arrow[d, " C_{r_1}\circ \Phi_{v_\ast}"'] \\
\pi_1N_u   \arrow[r, symbol=\supseteq] & \pi_1\partial_lN_u \arrow[rrrr, "\psi_e'"]                                       & & & &\pi_1\partial_jN_v     \arrow[r, symbol=\subseteq] & \pi_1N_v                                 
\end{tikzcd}
\end{equation*}
where $r_0\in \pi_1N_u$ and $r_1\in \pi_1N_v$. 

Since $\MCG(T^2)\cong \Aut(\pi_1T^2)$, this implies that $\Phi_u$ and $\Phi_v$ are compatible, up to isotopy, with the gluing maps at the JSJ-tori. Therefore, when $u\neq v$, $\Phi=(\Phi_u,\Phi_v)$ yields a homeomorphism between  $M_u\cup_{T^2}M_v$ and $ N_u\cup_{T^2}N_v$; and when $u=v$, if $\Phi_u$ is isotopic to $\Phi_v$, then $\Phi_u$ directly induces a homeomorphism between $M_u\cup_{T^2}$ and $N_u\cup_{T^2}$.
\end{proof}

\begin{remark}
In practice, one might encounter the case where $f_u$ restricting on $\partial_kM_u$ is induced by a homeomorphism $\Phi_u$ with coefficient $\lambda$, while $f_v$ restricting on $\partial_kM_v$ is induced by a homeomorphism $\Phi_v$ with   coefficient $-\lambda$. In this case, the conclusions of \autoref{inLem1'} (\ref{lemmain-2'}) do not hold.
\end{remark}

The gluing lemma can be easily generalized, with the vertices $u$ and $v$ replaced by subgraphs $\Lambda$ and  $\Upsilon$.
\begin{corollary}\label{Cor: gluing}
Suppose $M$ and $N$ are compact, orientable, irreducible 3-manifolds with empty or  incompressible toral boundary which are not closed $Sol$-manifolds.    
Let $f_\bullet$ be a congruent isomorphism between the JSJ-graphs of profinite groups $(\widehat{\mathcal{G}_M},\Gamma_M)$ and $(\widehat{\mathcal{G}_N},\Gamma_N)$.  
 Suppose $\Lambda$ and  $\Upsilon$ are either disjoint or identical connected subgraphs of $\Gamma_M$  connected by an edge $e$. Let $M_\Lambda$, $N_{\F(\Lambda)}$, $M_\Upsilon$, $N_{\F(\Upsilon)}$ be the submanifolds corresponding to these subgraphs. According to \autoref{LEM: congruent isom induce isom} and \autoref{PROP: JSJ Efficient}, the congruent isomorphism $f_\bullet$ restricting on the subgraphs induces two isomorphisms
\begin{equation*}
\begin{aligned}
&f_{\Lambda}: \widehat{\pi_1M_\Lambda}\cong \Pi_1(\widehat{\mathcal{G}_M},\Lambda)\ttt \Pi_1(\widehat{\mathcal{G}_N},\F(\Lambda))\cong \widehat{\pi_1N_{\F(\Lambda)}},\\
&f_{\Upsilon}: \widehat{\pi_1M_\Upsilon}\cong \Pi_1(\widehat{\mathcal{G}_M},\Upsilon)\ttt \Pi_1(\widehat{\mathcal{G}_N},\F(\Upsilon))\cong \widehat{\pi_1N_{\F(\Upsilon)}}.
\end{aligned}
\end{equation*}
Denote the boundary gluings at the JSJ-tori corresponding to $e$ and $\F(e)$ as 
{\small
$$
M_\Lambda\supseteq \partial_kM_\Lambda \longleftrightarrow \partial_iM_\Upsilon\subseteq M_\Upsilon\text{ and }N_{\F(\Lambda)}\supseteq \partial_lN_{\F(\Lambda)} \longleftrightarrow \partial_jN_{\F(\Upsilon)}\subseteq N_{\F(\Upsilon)}.
$$
}
    \begin{enumerate}[leftmargin=*]
        \item\label{lemmain-1''} $f_\Lambda$ is peripheral $\Zx$-regular at $\partial_kM_\Lambda$ if and only if $f_\Upsilon$ is peripheral $\Zx$-regular at $\partial_iM_\Upsilon$. In addition, when peripheral $\Zx$-regularity is satisfied, the coefficients in $\Zx$ are identical up to a $\pm$-sign. 
        \item \label{lemmain-2''}
Assume peripheral $\Zx$-regularity in (\ref{lemmain-1''}) holds. Suppose $f_\Lambda$ restricting on $\partial_kM_\Lambda$ is induced by a homeomorphism $\Phi_\Lambda: M_{\Lambda}\ttt N_{\F(\Lambda)}$ with coefficient $\lambda$, and  $f_\Upsilon$ restricting on $\partial_iM_\Upsilon$ is induced by a homeomorphism $\Phi_\Upsilon: M_{\Upsilon}\ttt N_{\F(\Upsilon)}$ with the same coefficient $\lambda$. 
 Then, 
$\Phi_\Lambda$ and $\Phi_\Upsilon$ are compatible, up to isotopy, with the gluing maps at the JSJ-tori. 
In particular, \\
$\bullet$ 
if $\Lambda$ is disjoint with $\Upsilon$, then $\Phi=(\Phi_\Lambda,\Phi_\Upsilon)$ yields a homeomorphism $M_\Lambda\cup_{T^2}M_\Upsilon\ttt N_{\F(\Lambda)}\cup_{T^2}N_{\F(\Upsilon)}$; \\
$\bullet$ if $\Lambda=\Upsilon$ and $\Phi_\Lambda=\Phi_\Upsilon$, then $\Phi_\Upsilon$ directly induces a homeomorphism $M_\Upsilon\cup_{T^2}\ttt N_{\F(\Upsilon)}\cup_{T^2}$. 
    \end{enumerate}
\end{corollary}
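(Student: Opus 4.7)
The plan is to reduce \autoref{Cor: gluing} to the vertex-level Gluing Lemma \autoref{inLem1'} by exploiting the compatibility between the congruent isomorphism $f_\bullet$ and the induced isomorphisms on subgraphs. Let $u\in V(\Lambda)$ and $v\in V(\Upsilon)$ be the endpoints of the edge $e$. The key observation is that the boundary component $\partial_k M_\Lambda$ of $M_\Lambda$ is precisely the JSJ-torus $T_e$, which also appears as a boundary component $\partial_{k'}M_u$ of the JSJ-piece $M_u$ itself. After choosing appropriate conjugacy representatives, $\pi_1\partial_kM_\Lambda=\pi_1\partial_{k'}M_u$ inside $\pi_1M_u\subseteq \pi_1M_\Lambda$, and likewise $\pi_1\partial_iM_\Upsilon=\pi_1\partial_{i'}M_v$ at the other endpoint.

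By \autoref{LEM: congruent isom induce isom}, the restriction of $f_\bullet$ to $\Lambda$ produces $f_\Lambda$ in a way that agrees with the vertex-level isomorphism $f_u$: restricting $f_\Lambda$ to $\widehat{\pi_1M_u}\subseteq \widehat{\pi_1M_\Lambda}$ recovers $f_u$ up to a conjugation inside $\widehat{\pi_1N_{\F(u)}}\subseteq \widehat{\pi_1 N_{\F(\Lambda)}}$. Consequently, $f_\Lambda|_{\overline{\pi_1\partial_kM_\Lambda}}$ and $f_u|_{\overline{\pi_1\partial_{k'}M_u}}$ differ only by conjugation in $\widehat{\pi_1N_{\F(\Lambda)}}$. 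Invoking the well-definedness of peripheral $\Zx$-regularity (\autoref{subsec: Well-def}), $f_\Lambda$ is peripheral $\lambda$-regular at $\partial_kM_\Lambda$ if and only if $f_u$ is peripheral $\lambda$-regular at $\partial_{k'}M_u$, with the same coefficient $\lambda$ and the same isomorphism $\psi\colon \pi_1T_e\to\pi_1T_{\F(e)}$ in the decomposition. The analogous equivalence holds for $f_\Upsilon$ and $f_v$. Part~(\ref{lemmain-1''}) then follows directly from \autoref{inLem1'}~(\ref{lemmain-1'}) applied to the edge $e$ between $u$ and $v$.

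For part~(\ref{lemmain-2''}), the homeomorphism $\Phi_\Lambda\colon M_\Lambda\to N_{\F(\Lambda)}$ inducing $f_\Lambda$ at $\partial_kM_\Lambda$ with coefficient $\lambda$ necessarily sends $T_e=\partial_kM_\Lambda$ onto $T_{\F(e)}=\partial_lN_{\F(\Lambda)}$, and the induced map $\Phi_{\Lambda*}$ on fundamental groups of these tori coincides with the isomorphism $\psi$ extracted from $f_u$ at $\partial_{k'}M_u$ with coefficient $\lambda$. The analogous statement holds for $\Phi_\Upsilon$ and $f_v$. Replaying the final gluing argument in the proof of \autoref{inLem1'}~(\ref{lemmain-2'}) with this restricted data yields, up to isotopy, compatibility of $\Phi_\Lambda$ and $\Phi_\Upsilon$ with the gluing maps at $T_e$. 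The two cases then follow: when $\Lambda\cap\Upsilon=\emptyset$, the pair $(\Phi_\Lambda,\Phi_\Upsilon)$ assembles into the claimed homeomorphism $M_\Lambda\cup_{T^2}M_\Upsilon\to N_{\F(\Lambda)}\cup_{T^2}N_{\F(\Upsilon)}$; when $\Lambda=\Upsilon$ and $\Phi_\Lambda=\Phi_\Upsilon$, the single map $\Phi_\Upsilon$ descends consistently to a homeomorphism of the self-glued manifold.

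The main obstacle is purely bookkeeping: identifying the peripheral subgroups across the ambient, subgraph, and vertex-piece levels, and verifying that the various incarnations of $f_\bullet$ act consistently through these identifications. Once this setup is in place, the reduction to the vertex-level Gluing Lemma is mechanical, and no new algebraic or geometric input beyond that of \autoref{inLem1'} is required.
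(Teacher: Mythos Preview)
Your approach is correct but takes a detour that the paper avoids. The paper's proof is a single sentence: one observes that the commutative diagram~(\ref{lem'dig1}) holds verbatim with $M_\Lambda, M_\Upsilon$ in place of $M_u, M_v$ (this is immediate from the way $f_\Lambda, f_\Upsilon$ are built in \autoref{LEM: congruent isom induce isom}), and then the entire proof of \autoref{inLem1'} carries over word for word at the subgraph level. Your route instead passes through the endpoint vertices $u,v$ and invokes the vertex-level \autoref{inLem1'} as a black box; this works but requires the extra transfer step between $f_\Lambda$ and $f_u$.

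One point to tighten: the equivalence ``$f_\Lambda$ is peripheral $\lambda$-regular at $\partial_kM_\Lambda$ iff $f_u$ is peripheral $\lambda$-regular at $\partial_{k'}M_u$'' is not quite a consequence of the well-definedness discussion in \autoref{subsec: Well-def}. Well-definedness only tells you the decomposition $\lambda\otimes\psi$ is independent of the conjugator \emph{within a fixed ambient group}; here the two notions of regularity live in different ambient groups ($\widehat{\pi_1N_{\F(\Lambda)}}$ versus $\widehat{\pi_1N_{\F(u)}}$), and the ``only if'' direction needs the conjugator to be pulled back into the smaller group. This is exactly the content of \autoref{EX: peripheral regular}, applied with $M_\Lambda$ playing the role of the ambient manifold and $M_u$ one of its JSJ-pieces. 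Citing that lemma (or its proof via \autoref{lem: 111111}) makes your reduction airtight.
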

\begin{proof}
In fact, one can obtain a similar commutative diagram as (\ref{lem'dig1}), and the remaining proof is exactly the same as \autoref{inLem1'}.
\end{proof}

\section{Finite-volume hyperbolic 3-manifolds}\label{SEC: Hyperbolic}
By a finite-volume hyperbolic 3-manifold, we mean a compact orientable
3-manifold with empty or incompressible toral boundary, whose interior can be equipped with a complete Riemannian 
metric of constant sectional curvature $-1$ which has finite volume.


Recently, Liu \cite{Liu23} proved the following significant result.
\begin{theorem}[{\cite{Liu23}}]\label{Liu1}
The class of finite-volume hyperbolic 3-manifolds is profinitely almost rigid. 
\end{theorem}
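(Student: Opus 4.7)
The plan is to establish profinite almost rigidity by combining three distinct ingredients: a profinite detection of the hyperbolic class, a profinite invariance of the hyperbolic volume, and the J{\o}rgensen--Thurston finiteness theorem.

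First, I would verify that for any finite-volume hyperbolic 3-manifold $M$, every compact orientable 3-manifold $N$ with $\widehat{\pi_1 N}\cong \widehat{\pi_1 M}$ is itself finite-volume hyperbolic. This follows from the profinite detection of the prime decomposition (\autoref{THM: Detect Kneser-Milnor}) together with \autoref{COR: Determine boundary} ruling out reducibility and identifying the boundary type, and from the profinite detection of the JSJ-decomposition (\autoref{THM: Profinite Isom up to Conj}) combined with the classical distinction of geometric types due to Wilton--Zalesskii \cite{WZ17,WZ17b}. In particular such $N$ must be irreducible, atoroidal, and with empty or incompressible toral boundary, hence hyperbolic by geometrization.

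The central and deepest step is to show that the hyperbolic volume is a profinite invariant, i.e.\ $\mathrm{vol}(M)=\mathrm{vol}(N)$ whenever $\widehat{\pi_1 M}\cong\widehat{\pi_1 N}$. The approach passes through representation theory: the discrete faithful representation $\rho\colon \pi_1 M\to \mathrm{PSL}_2(\mathbb{C})$ has a character defined over a number field, and its volume is recovered as the Bloch--Wigner regulator of the associated Bloch invariant. The idea is to argue that a profinite isomorphism transports the Galois orbit of the discrete faithful character of $\pi_1 M$ to that of $\pi_1 N$; since Galois conjugation affects the regulator only by a bounded finite set of values (and indeed preserves the distinguished volume entry under orientation control), one concludes $\mathrm{vol}(M)=\mathrm{vol}(N)$. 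This identification of volumes constitutes the main content of \cite{Liu23}, and is by far the chief obstacle: it requires going beyond purely group-theoretic or Bass--Serre arguments into arithmetic invariants of character varieties.

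Finally, I would invoke the J{\o}rgensen--Thurston theorem: for any $V>0$ there are only finitely many homeomorphism types of finite-volume hyperbolic 3-manifolds with $\mathrm{vol}=V$. Combining this with the preceding two steps, the set $\Delta_{\mathscr{M}}(M)=\{N:\widehat{\pi_1 N}\cong \widehat{\pi_1 M}\}$ is contained in the finite collection of hyperbolic 3-manifolds of volume $\mathrm{vol}(M)$, which yields profinite almost rigidity. The remaining work lies in steps one and three, which are routine given the cited tools; the hard technical core is the volume invariance in step two.
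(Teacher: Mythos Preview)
The paper does not supply its own proof of this theorem: it is quoted verbatim from \cite{Liu23} and used as a black box, so there is no in-paper argument to compare against. That said, your outline does not match Liu's actual route, and step~2 as you describe it is not an established result.

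Liu's proof does not proceed by showing that hyperbolic volume is a profinite invariant via Bloch invariants or Galois orbits of characters. Instead, Liu works through profinite Reidemeister torsion and twisted Alexander polynomials to obtain homological $\widehat{\mathbb{Z}}^{\times}$-regularity and invariance of the Thurston norm (these are exactly the facts recorded in \autoref{PROP: Zx regular}); from there he shows that fibered cohomology classes correspond and that the pseudo-Anosov stretch factor of a fibered class is a profinite invariant. Finiteness then comes from the fact that only finitely many hyperbolic mapping tori share a given fiber complexity and stretch factor, not from J{\o}rgensen--Thurston.

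Your step~2 is the real gap. Transporting the Galois orbit of the discrete faithful character along a profinite isomorphism, and then extracting the volume from the regulator in a way that survives this transport, is not something available in the literature; profinite isomorphisms do not a priori interact well with $\mathrm{SL}_2(\mathbb{C})$-character varieties or with the specific complex place singled out by the geometric representation. The Bridson--McReynolds--Reid--Spitler arguments you may have in mind work only for very special arithmetic lattices where one has Galois rigidity, not for arbitrary finite-volume hyperbolic 3-manifolds. If you could make your step~2 work in general it would be a theorem of independent interest, but as written it is a wish rather than an argument.
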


However, the profinite automorphism group of  a finite-volume hyperbolic manifold has not been fully understood. In this section, we characterize some features of profinite isomorphisms between hyperbolic 3-manifolds when restricted on the peripheral subgroups.  
This section is mainly devoted to the proof of the following theorem.

\begin{theorem}\label{THM: Hyperbolic peripheral regular}
Let $M$ and $N$ be cusped finite-volume hyperbolic 3-manifolds, so that $\widehat{\pi_1M}\cong \widehat{\pi_1N}$. 
\begin{enumerate}[leftmargin=*]
\item\label{THM: Hyperbolic peripheral regular (1)} For any isomorphism $f:\widehat{\pi_1M}\ttt\widehat{\pi_1N}$, there exists  $\lambda\in \widehat{\Z}^{\times}$ such that $f$ is peripheral $\lambda$-regular at all boundary components of $M$. 
\item\label{THM: Hyperbolic peripheral regular (2)} For any chosen peripheral subgroups $P=\pi_1\partial_iM$ and $Q=\pi_1\partial_jN$, denote 
{\small 
$$B(M,N;P,Q)= \left\{\psi:P\ttt Q\left|\;\begin{aligned} &\text{there exists } f:\widehat{\pi_1M}\ttt\widehat{\pi_1N}\text{ and }\eta\in \widehat{\mathbb{Z}}^{\times}\text{,}\\& \text{such that }f(\overline{P})=\overline{Q}\text{ and }f|_{\overline{P}}=\eta\otimes {\psi}\end{aligned}\right.\right\}.$$}
Then $\#B(M,N;P,Q)\le 12$.
\end{enumerate}
\end{theorem}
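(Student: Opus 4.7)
The plan is to reduce, via \autoref{LEM: Hyp preserving peripheral}, to analyzing how $f$ restricts to each peripheral subgroup. That lemma already ensures $f$ sends every $\overline{\pi_1\partial_iM}$ to a conjugate of some unique $\overline{\pi_1\partial_jN}$, so condition~(1) in \autoref{DEF: Intro peripheral regular} is automatic; what remains for part~(1) of the theorem is to produce a decomposition $C_g\circ f|_{\overline{\pi_1\partial_iM}}=\lambda\otimes\psi_i$ with a \emph{common} $\lambda\in\Zx$.

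To construct such a decomposition cusp by cusp, I would appeal to the machinery developed by Liu in \cite{Liu23} for the proof of \autoref{Liu1}. Liu attaches to the profinite isomorphism $f$ a Galois-type parameter $\lambda\in\Zx$, arising from the identification of profinite completions of the invariant trace rings of $M$ and $N$, and shows that after $\lambda$-rescaling the holonomy representation of $N$ matches with that of $M$ in a way compatible with $f$. Restricting this matching to a parabolic subgroup specializes to a $\Z$-linear isomorphism $\psi_i:\pi_1\partial_iM\to\pi_1\partial_jN$ whose $\lambda$-scaled $\widehat{\Z}$-linear extension agrees, after conjugation, with $f|_{\overline{\pi_1\partial_iM}}$, yielding peripheral $\lambda$-regularity at the given cusp.

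The core difficulty, and what I view as the main obstacle, is showing that the \emph{same} $\lambda$ governs every cusp simultaneously. I would argue that Liu's parameter is genuinely a global invariant of $(f,M,N)$ --- read off from the integral trace field of the whole manifold and an associated adjoint cohomology class --- rather than a cusp-local datum. Its rescaling action therefore descends coherently to every peripheral subgroup at once, promoting peripheral $\Zx$-regularity from a cusp-by-cusp phenomenon to the uniform statement claimed in~(1).

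For part~(2), set
\[\mathcal{A}(N,Q)=\{\alpha\in\Aut(Q):\exists\,\widetilde{f}\in\Aut(\widehat{\pi_1N}),\,\mu\in\Zx\text{ with }\widetilde{f}(\overline{Q})=\overline{Q}\text{ and }\widetilde{f}|_{\overline{Q}}=\mu\otimes\alpha\}\subseteq\mathrm{GL}_2(\Z).\]
This is easily seen to be a subgroup of $\mathrm{GL}_2(\Z)$, and when $B(M,N;P,Q)$ is non-empty it is a coset of $\mathcal{A}(N,Q)$, giving $\#B(M,N;P,Q)\le\#\mathcal{A}(N,Q)$. Combining \autoref{Liu1} with Mostow rigidity (finiteness of $\mathrm{Out}(\pi_1N)$) shows that $\mathcal{A}(N,Q)$ is finite, and the classical classification of finite subgroups of $\mathrm{GL}_2(\Z)$ --- whose orders lie in $\{1,2,3,4,6,8,12\}$, the maximum $12$ being realized by the dihedral symmetry of the hexagonal lattice --- yields the bound $\#B(M,N;P,Q)\le 12$.
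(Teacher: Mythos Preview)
Your proposal has a genuine gap in both parts, stemming from the same underlying issue: you are treating Liu's results as giving direct control over peripheral subgroups, when in fact they give control over $H_1^{\mathrm{free}}$.

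For part~(1), the input from \cite{Liu23} used in this paper is \autoref{PROP: Zx regular}: the induced map $f_\ast$ on $\tensor\hfree{M}$ decomposes as $\lambda\otimes\Phi$ for a single $\lambda\in\Zx$ and a $\Z$-linear $\Phi$. This is a statement about abelianizations, not about cusps. Your sentence ``Restricting this matching to a parabolic subgroup specializes to a $\Z$-linear isomorphism $\psi_i$'' is precisely the step that requires justification, and it fails in general because the peripheral subgroup $\pi_1\partial_iM$ need not inject into $\hfree{M}$. The paper handles this by passing to a finite regular cover $M^\star$ in which every cusp \emph{is} $H_1$-injective (\autoref{COR: Injective into homology of boundary}, proved via virtual specialness and virtual retracts), applying the homological $\lambda$-regularity there, and then pushing the decomposition back down to $M$ using \autoref{LEM: Linear map on finite index subgroup}. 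Your sketch omits this entire mechanism.

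For part~(2), your finiteness argument for $\mathcal{A}(N,Q)$ does not work. Mostow rigidity gives finiteness of $\mathrm{Out}(\pi_1N)$, but elements of $\mathcal{A}(N,Q)$ arise from automorphisms of $\widehat{\pi_1N}$, which form a vastly larger group not controlled by $\mathrm{Out}(\pi_1N)$; likewise \autoref{Liu1} bounds the number of \emph{manifolds} with a given profinite completion, not the number of profinite self-isomorphisms. The paper instead shows that each $\psi\in B(M,N;P,Q)$ is the restriction of some $\Phi\in A(M,N)$, where $A(M,N)$ consists of $\Z$-linear isomorphisms $\hfree{M}\to\hfree{N}$ whose duals preserve the Thurston norm. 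Finiteness then comes from the fact that the Thurston norm ball is a compact convex polyhedron with finite linear symmetry group. Your reduction to finite subgroups of $\mathrm{GL}_2(\Z)$ in the final step is correct and matches the paper's \autoref{LEM: finite 12}, but the finiteness input feeding into it needs the Thurston norm, not Mostow rigidity.
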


\begin{corollary}\label{COR: Hyperbolic PAA}
Let $M$ be a cusped finite-volume hyperbolic 3-manifold, and let $\mathcal{P}$ be a collection of some  boundary components of $M$. 
Then $M$ is {\PAA} at $\mathcal{P}$ (see \autoref{indef: PAA}).
\end{corollary}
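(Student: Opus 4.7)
The plan is to produce a homomorphism from $\XX_{\Zx}(M,\mathcal{P})$ to a finite group whose kernel is contained in $\XX_h(M,\mathcal{P})$, from which the finite index statement immediately follows.

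First, I would use \autoref{LEM: Hyp preserving peripheral} and \autoref{THM: Hyperbolic peripheral regular}~(\ref{THM: Hyperbolic peripheral regular (1)}): every automorphism $f\in \Aut(\widehat{\pi_1M})$ already respects the peripheral structure and is peripheral $\lambda(f)$-regular at every boundary component of $M$ simultaneously, with a single uniform coefficient $\lambda(f)\in \Zx$. Hence $\XX_{\Zx}(M,\mathcal{P})$ is exactly the subgroup of $\Aut(\widehat{\pi_1M})$ fixing each $\partial_iM\in \mathcal{P}$ pointwise under the permutation action on $\pi_0(\partial M)$, and no peripheral $\Zx$-regularity condition need be separately imposed. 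For each $\partial_iM\in \mathcal{P}$, fix a conjugacy representative $P_i=\pi_1\partial_iM$. Applying the regular decomposition $C_{g_i}\circ f|_{\overline{P_i}}=\lambda(f)\otimes \psi_i(f)$ yields isomorphisms $\psi_i(f)\in \Aut(P_i)$, jointly well-defined up to the diagonal $\{\pm 1\}$-ambiguity $(\lambda,\psi_1,\cdots,\psi_k)\sim (-\lambda,-\psi_1,\cdots,-\psi_k)$ guaranteed by \autoref{PROP: peripheral structure}.

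Next, I would define
\[
\Psi:\; \XX_{\Zx}(M,\mathcal{P})\;\longrightarrow\;\bigg(\prod_{\partial_iM\in \mathcal{P}}\Aut(P_i)\bigg)\bigg/\{\pm 1\}_{\mathrm{diag}},\qquad f\longmapsto [(\psi_i(f))_i].
\]
A direct composition calculation on the peripheral subgroups shows $\Psi(f_1\circ f_2)=\Psi(f_1)\cdot \Psi(f_2)$ (the coefficients multiply as $\lambda(f_1)\lambda(f_2)$ and the $\psi_i$'s compose), so $\Psi$ is a group homomorphism. The image of $\Psi$ lies in the finite subset $\prod_iB(M,M;P_i,P_i)$ modulo $\{\pm 1\}_{\mathrm{diag}}$, which by \autoref{THM: Hyperbolic peripheral regular}~(\ref{THM: Hyperbolic peripheral regular (2)}) has cardinality bounded by $12^{\#\mathcal{P}}$. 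Hence $\ker\Psi$ is a finite-index subgroup of $\XX_{\Zx}(M,\mathcal{P})$.

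Finally, I would verify that $\ker\Psi\subseteq \XX_{h}(M,\mathcal{P})$. If $f\in \ker\Psi$, then $(\psi_i(f))_i=(\pm\mathrm{id},\cdots,\pm\mathrm{id})$ with a simultaneous sign choice; absorbing the sign into the coefficient, we may write $C_{g_i}\circ f|_{\overline{P_i}}=\lambda_i\otimes \mathrm{id}$ for $\lambda_i=\pm\lambda(f)\in \Zx$. Taking $\Phi=\mathrm{id}_M$ in \autoref{inDef: induced by homeo} shows $f\in \XX_{h}(M,\mathcal{P})$. Combining this containment with the finite index of $\ker \Psi$ gives
\[
[\XX_{\Zx}(M,\mathcal{P}):\XX_{h}(M,\mathcal{P})]\;\le\; [\XX_{\Zx}(M,\mathcal{P}):\ker\Psi]\;<\;\infty,
\]
as required. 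The principal subtlety that I expect to verify carefully is the joint well-definedness of the map $\Psi$ under the diagonal sign ambiguity across the components of $\mathcal{P}$, which relies crucially on the uniformity of the coefficient $\lambda(f)$ provided by \autoref{THM: Hyperbolic peripheral regular}~(\ref{THM: Hyperbolic peripheral regular (1)}); without this uniformity the signs at distinct boundary components could vary independently and the identity homeomorphism argument above would fail.
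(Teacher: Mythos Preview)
Your proof is correct and follows essentially the same approach as the paper: define a homomorphism from $\XX_{\Zx}(M,\mathcal{P})$ into a finite group built out of the sets $B(M,M;P_i,P_i)$, and show that its kernel is contained in $\XX_h(M,\mathcal{P})$ by realizing each kernel element via the identity homeomorphism. The only cosmetic difference is that you quotient by the diagonal $\{\pm 1\}$ (using the uniformity of $\lambda(f)$ from \autoref{THM: Hyperbolic peripheral regular}~(\ref{THM: Hyperbolic peripheral regular (1)})), whereas the paper quotients each factor $B(M,M;P_i,P_i)$ by $\{\pm id\}$ separately, yielding an index bound of $6^k$ rather than $12^k/2$; both choices produce a finite target and the argument is otherwise identical.
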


\begin{proof}
Suppose $\mathcal{P}=\{\partial_1M,\cdots,\partial_kM\}$, and choose conjugacy representatives of peripheral subgroups $P_1=\pi_1\partial_1M,\cdots ,P_k=\pi_1\partial_kM$. Then $B(M,M;P_i,P_i)$ can be viewed as a subgroup of $\Aut(P_i)\cong \mathrm{GL}_2(\Z)$. 

According to \autoref{PROP: peripheral structure}, there is a well-defined group homomorphism
\begin{equation*}
\begin{tikzcd}[row sep=0.08cm]
\mathcal{F}:\; \XX_{\Zx}(M,\mathcal{P}) \arrow[r] & \prod\limits_{i=1}^{k} {B(M,M;P_i,P_i)}/{\{\pm id\}}\\
f \arrow[r, maps to] & ([\psi_1],\cdots,[\psi_k])
\end{tikzcd}
\end{equation*}
such that for each $1\le i \le k$, $C_{g_i}\circ f|_{\overline{P_i}}=\lambda_i\otimes \psi_i$ for some $g_i\in \widehat{\pi_1M}$ and $\lambda_i\in \Zx$. Indeed \autoref{THM: Hyperbolic peripheral regular}~(\ref{THM: Hyperbolic peripheral regular (1)}) implies that the coefficients $\lambda_i$ are unified up to an ambiguity of $\pm$-sign.

Then, any $f\in \ker(\mathcal{F})$ restricting on each $\partial_iM$ is induced by the identity homeomorphism $Id_M$ with a coefficient in $\Zx$. In other words, $\ker(\mathcal{F})\subseteq \XX_{h}(M,\mathcal{P})$. 
Moreover, according to \autoref{THM: Hyperbolic peripheral regular}~(\ref{THM: Hyperbolic peripheral regular (2)}), $\# B(M,M;P_i,P_i)\le 12$. Thus, $\ker(\mathcal{F})$ has index at most $6^k$ in $\XX_{\Zx}(M,\mathcal{P})$, which is to say $\XX_{h}(M,\mathcal{P})$ has finite index in $\XX_{\Zx}(M,\mathcal{P})$. 
%
\end{proof}

\subsection{Homological $\Zx$-regularity and Thurston norm}

For a  topological space $X$, we denote $\hfree{X}$ as $H_1(X;\mathbb{Z})$ modulo its torsion subgroup. Then, there is a natural isomorphism $H^1(X;\R)\cong \Hom_{\Z}(\hfree{X},\R)$.

Suppose $M$ and $N$ are 3-manifolds,  
and $f:\widehat{\pi_1M}\ttt \widehat{\pi_1N}$ is  an isomorphism. Note that $H_1(M;\Z)=\ab{\pi_1(M)}$ and $H_1(N;\Z)= \ab{\pi_1(N)}$. According to \autoref{PROP: Induce linear}, $f$ induces an isomorphism $f_{\ast}: \tensor\hfree{M}\ttt \tensor\hfree{N}$. In particular, $b_1(M)= b_1(N)$.

When $M$ and $N$ are finite-volume hyperbolic 3-manifolds, Liu \cite{Liu23} proved two essential features of this map $f_{\ast}$, namely homological $\Zx$-regularity and the preservation of Thurston norm.

\begin{proposition}[{\cite[Theorem 6.1 $\&$ Corollary 6.3]{Liu23}}]\label{PROP: Zx regular}
Let $M$ and $N$ be finite-volume hyperbolic 3-manifolds with positive first Betti number. Suppose there is an isomorphism $f:\widehat{\pi_1M}\ttt\widehat{\pi_1N}$, and denote $f_{\ast}: \tensor \hfree{M}\ttt \tensor\hfree{N}$ as the induced isomorphism.
\begin{enumerate}[leftmargin=*]
\item\label{7.3-1} There exists an element $\lambda\in \Zx$ and an isomorphism $\Phi:\hfree{M}\ttt\hfree{N}$ such that $f_\ast=\lambda\otimes \Phi$. 
\item\label{7.3-2} 
The dual map 
$$\scalebox{.95}{$\Phi^\ast:H^1(N;\R)\cong \Hom(\hfree{N},\R)\xrightarrow{\,\cong\,} \Hom(\hfree{M},\R) \cong H^1(M;\R)$}$$
induced by $\Phi$ 
preserves the Thurston norm $ \parallel \cdot \parallel_{\mathrm{Th}}$ (see \cite{Thu86} for definition); in other words, ${{\parallel \alpha\parallel}_{\mathrm{Th}}}={{\parallel \Psi^\ast(\alpha)\parallel}_{\mathrm{Th}}}$  for all $\alpha \in H^1(N;\mathbb{R})$. 
\end{enumerate}
\end{proposition}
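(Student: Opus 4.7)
My plan follows Liu's strategy in \cite{Liu23}, in which parts (1) and (2) are established in tandem via the profinite detection of twisted Alexander-type invariants and the rigidity of the Thurston norm polytope. By \autoref{PROP: Induce linear}, $f$ already produces the $\Zx$-linear isomorphism $f_\ast:\tensor\hfree M \ttt \tensor\hfree N$; what part (1) adds is the structural claim that $f_\ast$ lies in $\Zx\cdot \mathrm{Iso}_{\Z}(\hfree M,\hfree N)$, i.e., it factors through a $\Z$-linear isomorphism after a single $\Zx$-scaling. Dually, I would work with integer characters: each $\alpha\in \Hom(\hfree N,\Z)\cong H^1(N;\Z)/\mathrm{tor}$ pulls back under $f_\ast$ to a $\Zx$-valued character $f^*(\alpha)$ on $\hfree M$, and the task is to show that a common scalar $\lambda\in\Zx$ makes $\lambda\cdot f^*(\alpha)$ integer-valued for every such $\alpha$.

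The first step is to set up the profinite invariance of twisted Alexander polynomials. For each pair $(\bar\alpha,\rho)$ consisting of a finite cyclic character $\bar\alpha:\pi_1 M\to\Z/n$ and a finite linear representation $\rho:\pi_1 M\to\mathrm{GL}_k(\Fp)$, the twisted Alexander polynomial $\Delta_{\bar\alpha,\rho}(t)\in\Fp[t^{\pm 1}]$ depends only on the finite quotient of $\pi_1 M$ through which the data factor, hence is determined by $\widehat{\pi_1 M}$ together with the continuous extensions of $\bar\alpha$ and $\rho$. Under the isomorphism $f$, these data on $M$ and on $N$ match bijectively, modulo a uniform $\Zx^\times$-ambiguity that records how $f$ identifies cyclic quotients. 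Applying the Friedl--Vidussi theorem, which expresses the Thurston norm of a primitive integer class $\alpha$ as a supremum over $(\bar\alpha_n,\rho)$ of the normalized degrees of $\Delta_{\bar\alpha_n,\rho}$, one deduces that for each integer character $\alpha$ on either side, the Thurston norm of $\alpha$ equals that of its profinite counterpart. In particular, Thurston-norm equality along all matched character rays is obtained before any rigidity on $\Z$-structure is proven.

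The main obstacle, and the crucial step, is to upgrade this character-by-character Thurston-norm matching into the uniform $\Zx$-regularity statement with a single scalar $\lambda$. For this I would exploit the fact that the Thurston norm ball $B_{\mathrm{Th}}(N)\subseteq H^1(N;\R)$ is a finite rational polytope whose vertex rays span $H^1(N;\Q)$. The $\Zx$-linear map $f^*$ must send integer vertices of $B_{\mathrm{Th}}(N)$ to $\Zx^\times$-multiples of classes in $B_{\mathrm{Th}}(M)$ of matching Thurston norm, so the $\Zx$-scalar at each vertex is constrained to a finite set. A priori these scalars may depend on the vertex, but $\Zx$-linearity of $f^*$ together with the fact that a $\Q$-basis of $H^1(N;\Q)$ can be chosen within the vertex set forces the scalars to coincide along a basis; comparing pairs of characters via $\Z$-linear combinations then propagates a single $\lambda\in\Zx$ to all of $H^1(N;\Q)$. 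Once this uniform $\lambda$ is isolated, $\lambda\cdot f^*$ restricts to a $\Q$-linear isomorphism $H^1(N;\Q)\ttt H^1(M;\Q)$ that preserves the Thurston norm on the dense set of rational rays, hence on all of $H^1(N;\R)$; its inverse transpose provides the desired $\Z$-linear $\Phi:\hfree M\ttt\hfree N$, giving (1), and Thurston-norm preservation of $\Phi^*$ in (2) is the identity we have just verified.
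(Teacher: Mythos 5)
First, a caveat: the paper does not prove this proposition at all — it is quoted verbatim from \cite[Theorem 6.1 and Corollary 6.3]{Liu23} and used as a black box — so there is no in-paper argument to compare against. Judged on its own terms, your proposal correctly identifies the toolbox Liu uses (profinite invariance of twisted Alexander polynomials, the Friedl--Vidussi characterization of the Thurston norm, and the polytope structure of the norm ball), but the crucial step is circular as written.

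The gap is in the passage from character-by-character norm matching to the existence of the scalars. For an integral class $\alpha\in H^1(N;\Z)$, the pullback $f^*(\alpha)$ is a priori just an element of $H^1(M;\widehat{\Z})\cong\widehat{\Z}^{\,b}$; there is no reason, at that stage, for it to be of the form $\lambda_\alpha\cdot\alpha'$ with $\alpha'$ integral (or even rational). Consequently ``the Thurston norm of its profinite counterpart'' is not yet defined, and the assertion that ``the $\Zx$-scalar at each vertex is constrained to a finite set'' presupposes exactly the per-class decomposition you are trying to prove. This is the hard content of Liu's Theorem 6.1: one must first show that for each primitive $\alpha$ the composite $\pi_1M\to\widehat{\pi_1N}\xrightarrow{\widehat{\alpha}}\widehat{\Z}$ has image a rank-one (not merely finitely generated) subgroup of $\widehat{\Z}$, and this uses the profinite detection of fiberedness and of the Thurston norm applied to the whole family of finite cyclic quotients, not just a matching of norms. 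By contrast, your final step is fine once the per-class scalars exist: if $f^*(\alpha)=\lambda_\alpha\alpha'$ and $f^*(\beta)=\lambda_\beta\beta'$ with $\alpha',\beta'$ integral and linearly independent, then applying $f^*$ to $\alpha+\beta$ forces $\lambda_\alpha/\lambda_\beta\in\mathbb{Q}^\times\cap\Zx=\{\pm1\}$, which propagates a single $\lambda$ across a basis and then to all of $H^1(N;\mathbb{Q})$; and part (2) is indeed immediate from the norm equalities once (1) is in place. So the architecture is right, but the proposal as it stands does not close the central implication and should either supply that argument or cite it from \cite{Liu23}.
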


\subsection{Finite cover and peripheral subgroups}

In this subsection, we prove  the following proposition as a preparation for the proof of \autoref{THM: Hyperbolic peripheral regular}. The proof is largely based on the virtual specialization theorem for finite-volume hyperbolic 3-manifolds \cite{Wise}.

\begin{proposition}\label{COR: Injective into homology of boundary}
Let $M$ be a cusped finite-volume hyperbolic 3-manifold. Then, there exists a finite regular cover $M^\star$ of $M$ such that every boundary component of ${M}^\star$ is $H_1$-injective, i.e.  $H_1(\partial_jM^\star;\Z)\rightarrow H_1(M^\star;\Z)$ is injective for each boundary component $\partial_jM^\star$. 
As a consequence, the map $\pi_1(\partial_jM^\star)\to \pi_1(M^\star)\to \hfree{M^\star}$ is injective. 
\end{proposition}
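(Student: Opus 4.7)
My plan is to invoke the virtual compact specialness of $\pi_1 M$, established by Wise for fundamental groups of cusped finite-volume hyperbolic $3$-manifolds \cite{Wise}, together with the Haglund--Wise virtual retraction theorem for (relatively) quasi-convex subgroups. Fix conjugacy representatives $P_1,\ldots,P_k$ of the peripheral subgroups of $\pi_1 M$, each isomorphic to $\Z^2$. As maximal parabolic subgroups of the relatively hyperbolic group $\pi_1 M$, each $P_j$ is relatively quasi-convex, so the virtual retraction theorem provides a finite-index subgroup $\Gamma_j \le \pi_1 M$ containing $P_j$ together with a retraction $r_j : \Gamma_j \twoheadrightarrow P_j$ satisfying $r_j|_{P_j} = \mathrm{id}_{P_j}$.

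Next I would pass to the normal core
\begin{equation*}
\Gamma^\star \;=\; \bigcap_{j=1}^{k} \bigcap_{g \in \pi_1 M/\Gamma_j} g\, \Gamma_j\, g^{-1},
\end{equation*}
a finite-index normal subgroup of $\pi_1 M$, and let $M^\star \to M$ denote the corresponding finite regular cover. Every peripheral subgroup of $M^\star$ takes the form $Q = g P_j g^{-1} \cap \Gamma^\star$ for some $g \in \pi_1 M$ and some $j$. Since $\Gamma^\star \subseteq g \Gamma_j g^{-1}$, and the conjugate retraction $g r_j g^{-1}$ retracts $g \Gamma_j g^{-1}$ onto $g P_j g^{-1}$, its restriction gives a homomorphism $\rho : \Gamma^\star \to g P_j g^{-1}$ with $\rho|_Q = \mathrm{id}_Q$. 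Because $g P_j g^{-1} \cong \Z^2$ is torsion-free abelian, $\rho$ factors through $\hfree{M^\star}$, and the composition $Q \hookrightarrow \hfree{M^\star} \xrightarrow{\bar{\rho}} g P_j g^{-1}$ agrees with the inclusion $Q \hookrightarrow g P_j g^{-1}$. Hence $Q$ injects into $\hfree{M^\star}$, which simultaneously yields the $H_1$-injectivity of each boundary component of $M^\star$ and the stated consequence about mapping into $\hfree{M^\star}$.

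The substantive obstacle I anticipate is to ensure that the virtual retraction theorem applies in the relatively hyperbolic setting of cusped hyperbolic 3-manifolds: Wise's theorem gives virtual compact specialness of $\pi_1 M$ as a relatively hyperbolic group, so what is needed is the relatively quasi-convex version of Haglund--Wise (as developed, for instance, by Hruska--Wise). An alternative strategy that avoids this subtle point would invoke Agol's virtual RFRS theorem and descend sufficiently far down an RFRS tower, exploiting that every non-trivial peripheral element has non-trivial image in $\hfree{\Gamma_i}$ at some level; handling both generators of each $\Z^2$ simultaneously requires a bit more care but is essentially equivalent.
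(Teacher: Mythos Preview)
Your proposal is correct and follows essentially the same strategy as the paper: obtain a virtual retraction onto each peripheral subgroup, pass to a common normal core, and deduce $H_1$-injectivity from the retraction. The one noteworthy difference is that the obstacle you anticipate—needing a relatively hyperbolic version of Haglund--Wise—is sidestepped in the paper by citing Minasyan \cite{Min19}, who shows directly that any finitely generated virtually abelian subgroup of a virtually compact special group is a virtual retract; since the peripheral $\Z^2$'s are abelian, this applies immediately without invoking relative quasi-convexity. Your passage from the intermediate covers to the regular cover via conjugate retractions is in fact slightly more streamlined than the paper's argument, which instead pushes $H_1$-injectivity down through a covering diagram and then uses the deck-transformation action to propagate it to all boundary components.
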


This allows us to virtually  embed the peripheral subgroup into the homology group of the entire manifold, providing a method to deduce peripheral $\Zx$-regularity (\autoref{THM: Hyperbolic peripheral regular}) from the homological $\Zx$-regularity (\autoref{PROP: Zx regular}). 

Before the proof of \autoref{COR: Injective into homology of boundary}, let us introduce a notion.

A subgroup $H\le G$ is called  {\em a retract of $G$} if there exists a group homomorphism $\varphi: G\to H$ such that $\varphi|_{H}=id$, and $H\le G$ is called {\em a virtual retract of $G$} if there exists a finite-index subgroup $K\le G$, such that $K\supseteq H$ and $H$ is a retract of $K$. 

\begin{lemma}\label{VRinj}
    Suppose $H$ is a retract of $G$. Then, the homomorphism $H^{ab}=H_1(H;\Z)\to H_1(G;\Z)=G^{ab}$ is injective.
\end{lemma}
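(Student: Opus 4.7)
The plan is to deduce the result by a direct application of functoriality of the abelianization. Since $H$ is a retract of $G$, by definition there exists a homomorphism $\varphi: G \to H$ with $\varphi|_H = \mathrm{id}_H$. Equivalently, writing $i: H \hookrightarrow G$ for the inclusion, we have $\varphi \circ i = \mathrm{id}_H$.

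Next, I would apply the abelianization functor $(-)^{\mathrm{ab}}$, which sends every group homomorphism $f: A \to B$ to a homomorphism $f^{\mathrm{ab}}: A^{\mathrm{ab}} \to B^{\mathrm{ab}}$ and respects composition and identities. From $\varphi \circ i = \mathrm{id}_H$ one obtains
\begin{equation*}
\varphi^{\mathrm{ab}} \circ i^{\mathrm{ab}} = (\varphi \circ i)^{\mathrm{ab}} = (\mathrm{id}_H)^{\mathrm{ab}} = \mathrm{id}_{H^{\mathrm{ab}}}.
\end{equation*}
In particular, the map $i^{\mathrm{ab}}: H^{\mathrm{ab}} \to G^{\mathrm{ab}}$ admits a left inverse, and hence is injective. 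Since $i^{\mathrm{ab}}$ is precisely the map $H_1(H;\mathbb{Z}) \to H_1(G;\mathbb{Z})$ induced by inclusion, this completes the proof.

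There is essentially no obstacle; the lemma is a formal consequence of the retract structure and the functoriality of abelianization, and the proof fits in a few lines. The statement is recorded only as a preparatory tool so that, together with virtual specialization, one can promote peripheral subgroups to virtual retracts and then inject them into the first homology of a suitable finite cover, as needed in the proof of \autoref{COR: Injective into homology of boundary}.
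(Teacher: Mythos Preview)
Your proof is correct and essentially identical to the paper's: both use the retraction $\varphi:G\to H$ with $\varphi\circ i=\mathrm{id}_H$, apply abelianization functorially, and conclude that $i^{\mathrm{ab}}$ has a left inverse and is therefore injective. The paper simply packages this as a commutative diagram rather than writing out the functoriality in words.
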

\begin{proof}
    Let $\varphi:G\to H$ be the homomorphism so that $\varphi|_H=id$. Then the conclusion follows directly from the diagram.
    \begin{equation*}
    \begin{tikzcd}
H \arrow[r, "\iota", hook] \arrow[d] \arrow[rr, "id", bend left=25] & G \arrow[r, "\varphi"] \arrow[d]   & H \arrow[d] \\
H^{ab} \arrow[r, "\iota_{\ast}"] \arrow[rr, "id"', bend right=25]   & G^{ab} \arrow[r, "\varphi_{\ast}"] & H^{ab}     
\end{tikzcd}
\end{equation*}

\end{proof}
%

The next lemma is a combination of \cite[Proposition 1.5 and Corollary 1.6]{Min19}.
\begin{lemma}[{\cite{Min19}}]\label{LEM: Virtual retract}
If a group $G$ is virtually compact special, then any finitely generated, virtually abelian subgroup of $G$ is a virtual retract of $G$. 
\end{lemma}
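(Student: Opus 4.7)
The plan is to deduce the statement from the Haglund–Wise machinery of special cube complexes, following the structure of Minasyan \cite{Min19}. First I would reduce to the case where $G$ itself is the fundamental group of a compact special cube complex. Indeed, given $G_0 \le_f G$ which is compact special, and $H \le G$ finitely generated virtually abelian, the intersection $H_0 = H \cap G_0$ is finite-index in $H$ and is itself finitely generated virtually abelian inside $G_0$. Since virtual retracts are preserved under finite-index overgroups (if $H_0$ is a virtual retract of $G_0$ and $G_0$ is finite-index in $G$, then $H_0$ is a virtual retract of $G$), it suffices to handle $G_0$; a final step upgrading $H_0 \le H$ to a virtual retraction onto $H$ will be done using that $H$ is separable in $G$ (which holds since virtually compact special groups are LERF on abelian subgroups by Haglund–Wise).

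Second, I would establish the abelian case, which is the heart of the argument. Assume $G$ is compact special and $A \le G$ is a finitely generated abelian subgroup. By the Haglund–Wise embedding theorem, $G$ embeds as a convex-cocompact subgroup of a right-angled Artin group $A_\Gamma$, and convex-cocompact subgroups of $A_\Gamma$ are virtual retracts of $A_\Gamma$. Finitely generated abelian subgroups of RAAGs are convex-cocompact: using Servatius's centralizer theorem, after conjugation $A$ sits inside a ``clique subgroup'' $A_\Delta \cong \mathbb{Z}^k$ where $\Delta$ is a complete subgraph of $\Gamma$, and $A_\Delta$ is itself a retract of $A_\Gamma$ via the canonical projection that kills generators outside $\Delta$; then $A \le A_\Delta \cong \mathbb{Z}^k$ is a direct summand after passing to a finite-index overgroup. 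Composing these retractions and intersecting with $G$ produces a finite-index subgroup $K \le G$ containing $A$ that retracts onto $A$.

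Third, I would extend from abelian to virtually abelian. Let $A \trianglelefteq H$ be a finite-index abelian normal subgroup of $H$. Applying the previous step to $A$, one obtains a retraction $\rho \colon K \to A$ on some finite-index $K \le G$ with $K \supseteq A$. Replacing $K$ by $K \cap N_G(A)$ and then by a further finite-index subgroup that also contains $H$ (which exists because $H$ is separable and $[H : A] < \infty$), the retraction can be modified, using the finite group $H/A$ acting on $A$, to a retraction onto $H$: the standard construction averages $\rho$ over coset representatives of $H/A$ inside the finite quotient, producing a homomorphism $K' \to H$ that restricts to the identity on $H$.

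The principal obstacle is the final virtually-abelian-to-abelian step, where one must arrange that the virtual retraction onto $A$ extends coherently across all cosets of $A$ in $H$ inside a single finite-index subgroup of $G$. This requires simultaneous control of separability of $H$, normality of $A$ in a suitable subgroup, and invariance of the retraction under the $H/A$-action, which is precisely the technical content of \cite[\S{}3-4]{Min19}.
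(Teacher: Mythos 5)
This lemma is not proved in the paper: it is quoted as a combination of \cite[Proposition 1.5 and Corollary 1.6]{Min19}, whose decomposition is different from yours. Minasyan first shows that virtually compact special groups have property (VRC) --- every \emph{cyclic} subgroup is a virtual retract --- via the Haglund--Wise embedding into a RAAG and convex cocompactness of cyclic subgroups (Corollary 1.6), and then proves by a purely group-theoretic argument that (VRC) already forces every finitely generated virtually abelian subgroup to be a virtual retract (Proposition 1.5). Your plan instead handles the whole abelian subgroup inside the RAAG at once and then bootstraps to the virtually abelian case, and two of its steps fail as stated.

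First, it is not true that a finitely generated abelian subgroup of a RAAG is conjugate into a clique subgroup $A_\Delta$: in $A_\Gamma\cong F(a,c)\times F(b,d)$ (with $\Gamma$ a $4$-cycle) the subgroup $\left\langle (ac,1)\right\rangle\times\left\langle (1,bd)\right\rangle\cong \mathbb{Z}^2$ is not conjugate into any edge subgroup, since $ac$ is not conjugate to a power of a generator. Servatius' centralizer theorem gives a direct product of cyclic groups generated by pure factors times a link subgroup, not a clique subgroup; the abelian case of your step two is still true, but only via convex cocompactness of abelian subgroups together with Haglund--Wise canonical completion/retraction, not by the projection you describe. Second, and more seriously, the ``averaging'' in your third step does not produce a homomorphism: averaging a retraction $\rho\colon K\to A$ over coset representatives of $A$ in $H$ restricts on $A$ to the map $a\mapsto a^{m}$ (with $m=[H:A]$) rather than the identity, and no averaging procedure can land in the nonabelian group $H$. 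The construction that does work is to produce a retraction $\rho\colon K\to A$ that is equivariant for the conjugation action of $H$ (so $\rho(hkh^{-1})=h\rho(k)h^{-1}$), with $H$ normalizing $K$ and $H\cap K=A$; then $hk\mapsto h\rho(k)$ is a well-defined retraction of the finite-index subgroup $HK$ onto $H$. Arranging equivariance, the normalization, and the intersection condition simultaneously (using separability and a careful recombination of retractions onto cyclic subgroups) is exactly the content of \cite[Proposition 1.5]{Min19}, which your sketch defers to rather than supplies. The same difficulty is hidden in your first step, where you pass from a virtual retraction onto $H\cap G_0$ to one onto $H$.
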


We are now ready to prove \autoref{COR: Injective into homology of boundary}.
\begin{proof}[Proof of \autoref{COR: Injective into homology of boundary}]
Suppose $M$ has $n$ boundary components $\partial_1M,\cdots, \partial_nM$. 
For each boundary component $\partial_iM$, fix a conjugacy representative of the peripheral subgroup $P_i=\pi_1(\partial_iM)\cong \Z\times \Z$.  \cite[Theorem 14.29]{Wise}  implies that $\pi_1M$ is virtually compact special. Thus, \autoref{LEM: Virtual retract} implies that $P_i$ is a virtual retract of $\pi_1M$, i.e. there exists a finite index subgroup $K_i\le \pi_1M$ containing $P_i$ so that $P_i$ is a retract of $K_i$. 

Let $N_i\to M$ denote the finite cover corresponding to the subgroup $K_i$. Note that $P_i\subseteq K_i$, so $\partial_iM$ lifts to $N_i$. In other words, there is a component $\widetilde{\partial_iM}$ in the  pre-image of $\partial_iM$ in $N_i$, which maps homeomorphically to $\partial_iM$ such that $P_i\subseteq K_i=\pi_1(N_i)$ is exactly the corresponding peripheral subgroup of $\widetilde{\partial_iM}$. Then, according to \autoref{VRinj}, the homomorphism $P_i=P^{ab}_i=H_1(\widetilde{\partial_iM};\Z)\to H_1(N_i;\Z)=K_i^{ab}$ induced by inclusion is injective. 

Let $H$ be a finite-index normal subgroup of $\pi_1M$ contained in $\cap_{i=1}^{n}K_i$, and let $M^\star$ be the finite regular cover of $M$ corresponding to the subgroup $H$. We show that $M^\star$ satisfies our requirement. 

In fact, the covering map $p: M^\star\to M$ factors through each $N_i$, which we denote as  $M^\star\xrightarrow{p_i} N_i\to M$. We claim that any component $S\subseteq p_i^{-1}(\widetilde{\partial_iM})$ is $H_1$-injective in $M^\star$. In fact, the covering map $T^2\cong S\to \widetilde{\partial_iM}\cong T^2$ is $\pi_1$-injective, and is hence $H_1$-injective since the fundamental group of a torus is abelian. Thus, we have the following commutative diagram, implying that $H_1(S;\Z)\to H_1(M^\star;\Z)$ is injective. 
\begin{equation*}
    \begin{tikzcd}[row sep=0.4cm, column sep=small]
H_1(S;\Z) \arrow[d, hook] \arrow[r]             & H_1(M^\star;\Z) \arrow[d, "{p_i}_\ast"] \\
H_1(\widetilde{\partial_iM};\Z) \arrow[r, hook] & H_1(N_i;\Z)                    
\end{tikzcd}
\end{equation*}

Note that any component in $p_i^{-1}(\widetilde{\partial_iM})$ is indeed a component of $p^{-1}(\partial_iM)$. In other words, for each boundary component $\partial_iM\subseteq \partial M$, there exists an $H_1$-injective component in $p^{-1}(\partial_iM)$. Recall that $M^\star$ is a regular covering of $M$, so the $H_1$-injective component in $p^{-1}(\partial_iM)$ can be carried to any component of $p^{-1}(\partial_iM)$ by a deck transformation, which is an automorphism on $M^\star$. Thus, for each $i$, all components of $p^{-1}(\partial_iM)$ are $H_1$-injective in $M^\star$. Therefore, $M^\star$ satisfies our requirement.

The injectivity of $\pi_1(\partial_jM^\star)\to \hfree{M^\star}$ follows from the following commutative diagram 
\begin{equation*}
    \begin{tikzcd}
P=\pi_1\partial_iM^\star \arrow[d, "\cong"'] \arrow[r] & \pi_1M^\star \arrow[d]    &           \\
H_1(\partial_jM^\star;\Z) \arrow[r, hook]              & H_1(M^\star;\Z) \arrow[r] & \hfree{M^\star}
\end{tikzcd}
\end{equation*}%
since $\pi_1(\partial_j\widetilde{M})\cong H_1(\partial_j\widetilde{M};\Z)\cong \Z\oplus\Z$ is torsion-free.
\end{proof}

\subsection{Peripheral $\Zx$-regularity in the hyperbolic case}

We first point out that in order to prove (\ref{THM: Hyperbolic peripheral regular (2)}) of \autoref{THM: Hyperbolic peripheral regular}, it suffices to prove a weaker conclusion.

\begin{lemma}\label{LEM: finite 12}
$\#B(M,N;P,Q)<+\infty$ implies $\#B(M,N;P,Q)\leq 12$.
\end{lemma}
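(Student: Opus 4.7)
The plan is to reduce the statement to a classical fact about $\mathrm{GL}_2(\mathbb{Z})$: every finite subgroup of $\mathrm{GL}_2(\mathbb{Z})$ has order at most $12$ (the maximal finite subgroups being the dihedral group of order $8$, stabilising the square lattice, and the dihedral group of order $12$, stabilising the hexagonal lattice).

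First, I would observe that $B(M,M;P,P)$ is actually a subgroup of $\mathrm{Aut}(P)\cong\mathrm{GL}_2(\mathbb{Z})$. Indeed, given two elements $\psi_1,\psi_2\in B(M,M;P,P)$ realised respectively by profinite automorphisms $f_1,f_2$ of $\widehat{\pi_1M}$ with $f_i(\overline{P})=\overline{P}$ and $f_i|_{\overline{P}}=\eta_i\otimes\psi_i$, the composition $f_1\circ f_2$ again fixes $\overline{P}$ setwise and its restriction decomposes as $(\eta_1\eta_2)\otimes(\psi_1\circ\psi_2)$, so $\psi_1\circ\psi_2\in B(M,M;P,P)$. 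Similarly, $f_1^{-1}$ witnesses $\psi_1^{-1}\in B(M,M;P,P)$, making it a subgroup of $\mathrm{Aut}(P)$.

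Next, I would show that $B(M,N;P,Q)$, when non-empty, is a coset of $B(N,N;Q,Q)$ and in particular has the same cardinality. Fix any $\psi_0\in B(M,N;P,Q)$, realised by $f_0:\widehat{\pi_1M}\overset{\cong}{\to}\widehat{\pi_1N}$ with $f_0(\overline{P})=\overline{Q}$ and $f_0|_{\overline{P}}=\eta_0\otimes\psi_0$. For any other $\psi\in B(M,N;P,Q)$, realised by $f$ with coefficient $\eta\in\widehat{\mathbb{Z}}^\times$, the profinite automorphism $f\circ f_0^{-1}:\widehat{\pi_1N}\overset{\cong}{\to}\widehat{\pi_1N}$ satisfies $f\circ f_0^{-1}(\overline{Q})=\overline{Q}$ and its restriction equals $(\eta\eta_0^{-1})\otimes(\psi\circ\psi_0^{-1})$, so $\psi\circ\psi_0^{-1}\in B(N,N;Q,Q)$. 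Conversely, any element of $B(N,N;Q,Q)\cdot\psi_0$ arises in this way. Hence
\[
\#B(M,N;P,Q)=\#B(N,N;Q,Q).
\]

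Finally, $B(N,N;Q,Q)$ is a subgroup of $\mathrm{Aut}(Q)\cong\mathrm{GL}_2(\mathbb{Z})$, and by hypothesis it is finite (being in bijection with $B(M,N;P,Q)$). Invoking the classification of finite subgroups of $\mathrm{GL}_2(\mathbb{Z})$ (every such subgroup is conjugate into the symmetry group of either the square or the hexagonal lattice, so has order dividing $8$ or $12$), we conclude $\#B(N,N;Q,Q)\le 12$, hence $\#B(M,N;P,Q)\le 12$. There is no real obstacle here: the only subtlety is verifying carefully that the composition/inversion argument in Step~1 preserves the setwise equality $f(\overline{P})=\overline{Q}$ required in the definition of $B$, which is automatic once one checks that the coefficient $\eta\in\widehat{\mathbb{Z}}^\times$ behaves multiplicatively under composition.
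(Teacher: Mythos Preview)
Your proof is correct and follows essentially the same approach as the paper: reduce to the case $M=N$, $P=Q$ by a bijection argument, observe that $B(M,M;P,P)$ (or in your version $B(N,N;Q,Q)$) is a subgroup of $\mathrm{GL}_2(\mathbb{Z})$, and invoke the classical bound of $12$ on the order of finite subgroups of $\mathrm{GL}_2(\mathbb{Z})$. The only cosmetic difference is that the paper bijects $B(M,N;P,Q)$ with $B(M,M;P,P)$ via $\psi\mapsto\phi^{-1}\circ\psi$, whereas you biject with $B(N,N;Q,Q)$ via $\psi\mapsto\psi\circ\psi_0^{-1}$; both are equally valid.
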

\begin{proof}
This follows from two direct observations.

First, if $B(M,N;P,Q)$ is non-empty,  by choosing an element $\phi\in B(M,N;P,Q)$,  we establish a one-to-one correspondence between $B(M,N;P,Q)$ and $B(M,M;P,P)$, through  sending $\psi \in B(M,N;P,Q)$ to $\phi^{-1}\circ \psi \in B(M,M;P,P)$.

Second, $B(M,M;P,P)$ is a subgroup of $\Aut(P)\cong \mathrm{GL}(2,\Z)$.

The conclusion then follows from the fact that any finite subgroup of $\mathrm{GL}(2,\Z)$ contains at most 12 elements, see \cite{Vo68}.
\end{proof}

The proof of \autoref{THM: Hyperbolic peripheral regular} consists of two steps. We first prove a special case of the theorem as shown by the following lemma, and then relate the general case with this special case through the method of finite coverings.

\begin{lemma}\label{LEM: Hyperbolic peripheral regular special case}
\autoref{THM: Hyperbolic peripheral regular} holds if for each peripheral subgroup $\pi_1\partial_iM$, the map $\pi_1\partial_iM\to \pi_1M\to \hfree{M}$ is injective.
\end{lemma}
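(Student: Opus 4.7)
The plan is to derive both conclusions of \autoref{THM: Hyperbolic peripheral regular} from the homological $\widehat{\Z}^\times$-regularity and Thurston norm preservation supplied by \autoref{PROP: Zx regular}, exploiting the hypothesis that every peripheral subgroup injects into $\hfree{M}$ to promote information from the abelianization to the actual peripheral subgroups.

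For part (\ref{THM: Hyperbolic peripheral regular (1)}), I would proceed as follows. Given $f:\widehat{\pi_1M}\ttt \widehat{\pi_1N}$, \autoref{PROP: Zx regular} yields some $\lambda\in\Zx$ and an isomorphism $\Phi:\hfree{M}\ttt\hfree{N}$ with $f_\ast=\lambda\otimes\Phi$; this $\lambda$ is uniform across all boundary components because it comes from the single map $f_\ast$. Fix a boundary component $\partial_iM$ with peripheral subgroup $P=\pi_1\partial_iM$. By \autoref{LEM: Hyp preserving peripheral}, there is a peripheral subgroup $Q=\pi_1\partial_jN$ of $N$ and some $g\in\widehat{\pi_1N}$ with $f'(\overline{P})=\overline{Q}$ where $f'=C_g\circ f$; note $f'$ induces the same $f_\ast$ on abelianizations. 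The injection $P\hookrightarrow \hfree{M}$ combined with flatness of $\widehat{\Z}$ over $\Z$ gives an injection $\tensor P\hookrightarrow \tensor\hfree{M}$, so the composite $\overline{P}\cong \tensor P\xrightarrow{\lambda\otimes\Phi|_P}\tensor\hfree{N}$ is also injective. Using \autoref{COR: Peripheral group tensor} to identify $\overline{Q}\cong \tensor Q$, the commutativity of the diagram relating $f'|_{\overline P}$ to $f_\ast$ shows that this composite has image equal to the image of $\overline{Q}$; hence $\tensor Q\to\tensor\hfree{N}$ is injective with image $\tensor \Phi(P)$, which forces $\Phi(P)=Q$ inside $\hfree{N}$. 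Consequently $\Phi|_P:P\ttt Q$ is an isomorphism and $f'|_{\overline P}=\lambda\otimes \Phi|_P$, establishing peripheral $\lambda$-regularity at $\partial_iM$ with the same $\lambda$ for every $i$.

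For part (\ref{THM: Hyperbolic peripheral regular (2)}), I would bound the size of $B(M,N;P,Q)$ using Thurston norm preservation. Set
\[
A^\ast(M,N)=\left\{\Psi:H^1(N;\R)\ttt H^1(M;\R)\,\middle|\,\Psi(H^1(N;\Z))=H^1(M;\Z),\ \|\Psi(\alpha)\|_{\mathrm{Th}}=\|\alpha\|_{\mathrm{Th}}\right\},
\]
with dual $A(M,N)\subseteq \Hom_\Z(\hfree{M},\hfree{N})$. \autoref{PROP: Zx regular}~(\ref{7.3-2}) places each $\Phi$ arising from some profinite isomorphism in $A(M,N)$, and the argument of part (\ref{THM: Hyperbolic peripheral regular (1)}) shows every $\psi\in B(M,N;P,Q)$ is the restriction $\Phi|_P$ of some such $\Phi$. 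Any fixed $\Phi_0\in A(M,N)$ (if non-empty) gives a bijection $A(M,M)\ttt A(M,N)$, so it suffices to bound $\#A^\ast(M,M)$. Since $M$ is cusped finite-volume hyperbolic, the Thurston norm is a genuine norm (\cite{Kap01}) and its unit ball $\mathcal{B}_{\mathrm{Th}}(M)$ is a centrally symmetric convex polytope in $H^1(M;\R)$ of full dimension (\cite{Thu86}). Linear automorphisms of a finite-dimensional real vector space preserving such a polytope form a finite group, so $\#A^\ast(M,M)<\infty$, hence $\#B(M,N;P,Q)<\infty$, and \autoref{LEM: finite 12} upgrades this to the bound $12$.

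No step here presents a serious conceptual obstacle; the only delicate point is the bookkeeping in step~(1) to verify that $\Phi$ actually sends $P$ onto $Q$ (rather than merely into $\hfree{N}$), which requires carefully threading the identifications $\tensor P\cong \overline P$ and $\tensor Q\cong \overline Q$ together with flatness of $\widehat{\Z}$ to transfer the equality of images from the profinite level back to the integral level. Once this identification is clean, both parts follow directly from the results of Liu quoted in \autoref{PROP: Zx regular}.
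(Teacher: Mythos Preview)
Your proposal is correct and follows essentially the same approach as the paper: both parts rely on \autoref{PROP: Zx regular}, use flatness of $\widehat{\Z}$ to lift the injection $P\hookrightarrow\hfree{M}$ to the profinite level and read off $\Phi(P)=Q$ from the abelianization diagram, then bound $\#B(M,N;P,Q)$ via the finiteness of the linear symmetry group of the Thurston norm ball before invoking \autoref{LEM: finite 12}. The only point you leave slightly implicit is that the $\psi$ appearing in the definition of $B(M,N;P,Q)$ may differ from $\Phi|_P$ by a sign (since the $\eta$ there need not equal the $\lambda$ from $f_\ast$), but this is absorbed by the $\pm$-invariance of $A(M,N)$, exactly as the paper notes.
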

\begin{proof}
(\ref{THM: Hyperbolic peripheral regular (1)}) 
Let $f:\widehat{\pi_1M}\ttt\widehat{\pi_1N}$ be an isomorphism. 
%
For any peripheral subgroup $P=\pi_1\partial_iM$, \autoref{LEM: Hyp preserving peripheral}  implies that there is a peripheral subgroup $Q=\pi_1\partial_jN$ such that $f(\overline{P})=C_{g}^{-1}(\overline{Q})$ for some $g\in \widehat{\pi_1N}$. Denote $f'=C_{g}\circ f$, so that 
$f'(\overline{P})=\overline{Q}$. 

Let $f_{\ast}:\tensor\hfree{M}\ttt \tensor\hfree{N}$ be the induced isomorphism. Since $f_{\ast}$ is defined on the abelianization level, it is not affected by the conjugation of $g$. Thus, $f_\ast=f'_\ast$. In this case, $b_1(N)=b_1(M)\ge 2$, so according to \autoref{PROP: Zx regular} (\ref{7.3-1}), there exists $\lambda\in \Zx$ and 
$\Phi: \hfree{M}\ttt \hfree{N}$ 
such that $f'_\ast=f_{\ast}=\lambda\otimes  \Phi$. 

Consider the following commutative diagram.
\begin{equation*}
\begin{tikzcd}[column sep=1.5em]
\overline{P} \arrow[rd, hook] \arrow[rrrr, " f'|_{\overline{P}}","\cong"']                                    &                                                                                &                              &          & \overline{Q} \arrow[ld, hook']                                       \\
                                                                                                             & \widehat{\pi_1M} \arrow[d, two heads] \arrow[rr, "f'","\cong"']      &         & \widehat{\pi_1N} \arrow[d, two heads]  &                                                               \\
                                                                                                             & \widehat{\hfree{M}} \arrow[d, "\cong"'] \arrow[rr, "f'_\ast=f_\ast","\cong"']&  & \widehat{\hfree{N}} \arrow[d, "\cong"] &                                                               \\
                                                                                                             & \tensor \hfree{M} \arrow[rr, "f'_\ast=f_\ast","\lambda\otimes \Phi"']       & & \tensor\hfree{N}      &                                                               \\
\tensor P \arrow[uuuu, "\cong"] \arrow[ru] \arrow[rrrr, "f'|_{\overline{P}}","\cong"'] &                                                                                &                                  &      & \tensor {Q} \arrow[uuuu, "\cong"'] \arrow[lu]
\end{tikzcd}
\end{equation*}
\normalsize

Note that $\widehat{\Z}$ is torsion-free, so $\widehat{\Z}$ is  a flat $\Z$-module. Thus, the injectivity of $P\to \hfree{M}$  implies the injectivity of $\tensor P\to \tensor \hfree{M}$. 
It follows from the commutative diagram that  the map $\tensor{Q}\to \tensor \hfree{N}$ is also injective. Consequently, $Q\to \hfree{N}$ is also injective. Thus, we can canonically identify $P$ as a subgroup of $\hfree{M}$, and identify $Q$ as a subgroup of $\hfree{N}$. Since $f'_\ast=f_\ast=\lambda\otimes \Phi$, the commutative diagram implies that $\Phi(P)=Q$. Therefore,    $f'|_{\overline{P}}=\lambda\otimes \Phi|_{P}$, where $\Phi|_{P}:P\ttt Q\subseteq \hfree{N}$ is an isomorphism. Thus, $f$ is peripheral $\lambda$-regular at $\partial_iM$, and the coefficient $\lambda$ is unified for all boundary components $\partial_iM$. This proves (\ref{THM: Hyperbolic peripheral regular (1)}) of \autoref{THM: Hyperbolic peripheral regular}.

(\ref{THM: Hyperbolic peripheral regular (2)}) Let $\Phi: \hfree{M}\ttt\hfree{N}$ be as in part (1). Note that the dual isomorphism $\Phi^\ast:H^1(N;\mathbb{R})\ttt H^1(M;\mathbb{R})$ preserves the integral classes. In addition, \autoref{PROP: Zx regular} (\ref{7.3-2}) implies that $\Phi^\ast$ also preserves the Thurston norm. 
Denote 
\newsavebox{\equtempa}
\begin{lrbox}{\equtempa}
$
A^{\ast}(M,N):=\left\{\Psi^\ast: H^1(N;\mathbb{R})\ttt H^1(M;\mathbb{R})\, \left|\; \begin{gathered} 
\Psi^\ast \text{ preserves the integral lattice}\\ \text{and the Thurston norm}  
%
\end{gathered}\right.\right\}.
$
\end{lrbox}
\begin{equation*}
\scalebox{0.94}{\usebox{\equtempa}}
\end{equation*}
Then $\Phi^\ast\in A^\ast(M,N)$. 

We denote the dual of $A^\ast(M,N)$ as $$A(M,N)=\left\{\Psi: \hfree{M}\ttt\hfree{N} \mid \Psi^{\ast}\in A^{\ast}(M,N)\right\}.$$ Then, $\Phi\in A(M,N)$. 

There is an apparent bijection between $A(M,N)$ and $A^{\ast}(M,N)$. In addition, $\Phi$ establishes a bijection between $A^\ast(M,M)$ and $A^\ast(M,N)$, by sending $\Psi^\ast\in A^\ast(M,M)$ to $\Psi^\ast\circ \Phi^\ast \in A^\ast(M,N)$. Thus, $\#A(M,N)=\#A^\ast(M,N)=\#A^\ast(M,M)$.

Note that 
both $B(M,N;P,Q)$ and $A(M,N)$ are invariant under scalar multiplication by $\pm 1$. 
Therefore, part (1) shows  that any element $\psi\in B(M,N;P,Q)$ is a restriction of some element $\Phi\in A(M,N)$ on $P$, i.e. $\psi=\Phi|_{P}$. Thus, $$\#B(M,N;P,Q)\le \# A(M,N)=\#A^{\ast}(M,M).$$

Since $M$ is a finite-volume hyperbolic 3-manifold, the Thurston norm is indeed a norm on the real vector space $H^1(M;\R)$,  according to \cite[Theorem 2.11]{Kap01}. Furthermore, according to \cite[Theorem 2]{Thu86}, the unit ball $\mathcal{B}_{\mathrm{Th}}(M)$ of the Thurston norm for $M$ is a convex polyhedron. Indeed,  $\mathcal{B}_{\mathrm{Th}}(M)$ is the convex hull of finitely many points in $H^1(M;\R)$ symmetric about the origin. Note that there are only finitely many linear automorphisms of a finite-dimensional real vector space preserving a finite, compact, symmetric,  codimension-0 polyhedron. Since any element in $A^{\ast}(M,M)$ preserves $\mathcal{B}_{\mathrm{Th}}(M)$, it follows that $\#A^{\ast}(M,M)<\infty$.

Thus, $\#B(M,N;P,Q)<\infty$. Combining with \autoref{LEM: finite 12}, this proves (\ref{THM: Hyperbolic peripheral regular (2)}) of  \autoref{THM: Hyperbolic peripheral regular}.
\end{proof}

Before completing the proof of \autoref{THM: Hyperbolic peripheral regular}, we need a last group theoretical preparation. 

\begin{lemma}\label{LEM: Linear map on finite index subgroup}
Let $P$ and $Q$ be finitely generated free abelian groups. Suppose $P_0$ is a finite-index subgroup of $P$, and identify $\overline{P_0}$ with $\widehat{P_0}$.
\begin{enumerate}[leftmargin=*]
\item\label{7.8-1} For every homomorphism $\varphi_0: P_0\to Q$, there exists at most one homomorphism $\varphi: P\to Q$ such that $\varphi|_{P_0}=\varphi_0$.
\item\label{7.8-2} For every continuous homomorphism $\Phi_0:\widehat{P_0}\to \widehat{Q}$, there exists at most one continuous homomorphism $\Phi:\widehat{P}\to \widehat{Q}$ such that $\Phi|_{\overline{P_0}}=\Phi_0$.
\item\label{7.8-3} Based on (\ref{7.8-2}), suppose there exists $\lambda\in \Zx$ and $\varphi_0\in \Hom_{\Z}(P_0,Q)$ such that $\Phi_0=\lambda\otimes {\varphi_0}$. If the continuous homomorphism $\Phi$ exists, then $\Phi$ can be decomposed as $\lambda\otimes {\varphi}$, where $\varphi\in \Hom_{\Z}(P,Q)$ and $\varphi|_{P_0}=\varphi_0$.
\end{enumerate}
\end{lemma}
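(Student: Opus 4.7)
The plan is to exploit the fact that $P_0$ has finite index in $P$ (say index $n=[P:P_0]$), so every element of $P$, $\widehat{P}$ respectively, becomes an element of $P_0$, $\widehat{P_0}$ after multiplication by $n$. Combined with torsion-freeness of the relevant targets, this forces uniqueness of extensions.

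For part (\ref{7.8-1}), given $x\in P$, one has $nx\in P_0$, so any extension $\varphi$ must satisfy $n\varphi(x)=\varphi_0(nx)$. Since $Q$ is a free abelian group, hence torsion-free, the value of $\varphi(x)$ is uniquely determined. For part (\ref{7.8-2}), the same argument applies at the profinite level: for $x\in\widehat{P}$, $nx\in \overline{P_0}=\widehat{P_0}$, so $n\Phi(x)=\Phi_0(nx)$, and $\widehat{Q}\cong\widehat{\Z}^{\mathrm{rank}(Q)}$ is torsion-free because $\widehat{\Z}=\prod_{p}\Z_p$ is torsion-free. Hence $\Phi(x)$ is uniquely determined.

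For part (\ref{7.8-3}), assuming the extension $\Phi$ exists, the natural candidate is $\varphi(x):=\lambda^{-1}\Phi(x)\in\widehat{Q}$. The main step will be to show that this element actually lies in $Q\subseteq\widehat{Q}$. For each $x\in P$, with $nx\in P_0$, one computes
\begin{equation*}
n\bigl(\lambda^{-1}\Phi(x)\bigr)=\lambda^{-1}\Phi_0(nx)=\lambda^{-1}\cdot\lambda\cdot\varphi_0(nx)=\varphi_0(nx)\in Q.
\end{equation*}
Thus it suffices to verify that $\widehat{Q}/Q$ is torsion-free, i.e.\ that $Q$ is a pure subgroup of $\widehat{Q}$. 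Since $\widehat{Q}\cong\widehat{\Z}^r$ and $Q\cong\Z^r$ embed diagonally, it reduces to checking that if $y\in\widehat{\Z}=\prod_p\Z_p$ satisfies $ny=m\in\Z$, then $n\mid m$ in $\Z$ and hence $y=m/n\in\Z$. This is a direct valuation check: for each prime $p\mid n$, the equation $ny_p=m$ in $\Z_p$ forces $v_p(m)\geq v_p(n)$, giving $n\mid m$.

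Once $\varphi(x)\in Q$ is established, the verifications that $\varphi$ extends $\varphi_0$, is a homomorphism, and gives $\Phi=\lambda\otimes\varphi$ all follow from the torsion-freeness of $Q$ (to cancel $n$) and from continuity with density of $P$ in $\widehat{P}$. The purity argument for $Q\subseteq\widehat{Q}$ is the only nontrivial point; the rest is routine bookkeeping.
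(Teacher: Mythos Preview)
Your proof is correct and follows essentially the same approach as the paper: parts (\ref{7.8-1}) and (\ref{7.8-2}) are identical, and for part (\ref{7.8-3}) both arguments reduce to the purity statement $Q\cap n\widehat{Q}=nQ$, which the paper cites as \cite[Lemma 2.2]{Wil18} while you prove it directly via $p$-adic valuations.
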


\begin{proof}
(\ref{7.8-1}) This follows from a general conclusion for torsion-free abelian groups. In fact, since $P_0$ is a finite-index subgroup, there exists a positive integer $n$ such that $n\cdot P\subseteq P_0$. Suppose $\varphi$ and $\varphi'$ are homomorphisms so that $\varphi|_{P_0}=\varphi_0=\varphi'|_{P_0}$. Then, for any element $x\in P$, $n\cdot x\in P_0$ and $n\cdot \varphi(x)=\varphi(n\cdot x)= \varphi'(n\cdot x)=n\cdot \varphi'(x)$. Thus, $n\cdot(\varphi(x)-\varphi'(x))=0$. Since $Q$ is torsion-free, we derive that $\varphi(x)=\varphi'(x)$ for all elements $x\in P$.

(\ref{7.8-2}) Note that the profinite completion of a finitely generated free abelian group is a finitely generated free $\widehat{\Z}$-module, while $\widehat{\Z}\cong \prod \Z_p$ is torsion-free. Thus, $\widehat{P}$ and $\widehat{Q}$ are torsion-free. Besides, $\widehat{P_0}$ is a finite-index subgroup of $\widehat{P}$ by \autoref{THM: correspondence of subgroup}. Hence, (\ref{7.8-2}) follows from the same proof as (\ref{7.8-1}). 

(\ref{7.8-3}) Now we suppose $\Phi_0=\lambda\otimes {\varphi_0}$. 
We identify $P$ and $Q$ with their images in $\widehat{P}$ and $\widehat{Q}$ through the canonical homomorphisms. In order to show that the  homomorphism $\Phi$ can be decomposed as $\Phi=\lambda\otimes{\varphi}$, it suffices to prove that $\varphi(P)\subseteq \lambda \cdot Q$. We still denote $n$ as the positive integer so that $n\cdot P\subseteq P_0$. Then for any $x\in P$, $n\cdot x\in P_0\subseteq \overline{P_0}$ and $n\cdot \Phi(x)=\Phi_0(nx)\in \lambda Q$. Thus, $n\cdot \Phi(x)\in \lambda \cdot Q\cap n\cdot \widehat{Q}=\lambda(Q\cap n\cdot \widehat{Q})$, where the last equality holds since $\lambda\in\Zx$. By \cite[Lemma 2.2]{Wil18}, $Q\cap n\cdot \widehat{Q}=n\cdot Q$, so $n\cdot \Phi(x)\in n\lambda\cdot  Q$. Again, since $\widehat{Q}$ is torsion-free, we derive that $\Phi(x)\in \lambda Q$, finishing the proof of (\ref{7.8-3}).
\end{proof}

We are now ready to prove the general case of \autoref{THM: Hyperbolic peripheral regular}.
\begin{proof}[Proof of \autoref{THM: Hyperbolic peripheral regular}]

(\ref{THM: Hyperbolic peripheral regular (1)}) 
According to \autoref{COR: Injective into homology of boundary}, there exists a $m$-fold regular cover $M^\star\to M$ so that the map $\pi_1\partial_iM^\star\to \hfree{M^\star}$ is injective for each peripheral subgroup $\pi_1\partial_iM^\star$. We identify $\pi_1M^\star$ as an index-$m$ normal subgroup of $\pi_1M$. Then $\overline{\pi_1M^\star}$ is an open normal subgroup of $\widehat{\pi_1M}$ with index $m$, according to \autoref{THM: correspondence of subgroup}.

For any isomorphism $f:\widehat{\pi_1M}\ttt\widehat{\pi_1N}$, $f$ sends $\overline{\pi_1M^\star}$ to an index-$m$ open normal subgroup of $\widehat{\pi_1N}$. Again, by \autoref{THM: correspondence of subgroup}, $f (\overline{\pi_1M^\star})$ is the closure of an index-$m$ normal subgroup of $\pi_1N$. Since $\pi_1N$ is finitely generated, $\pi_1N$ contains only finitely many index-$m$ normal subgroups. Each of them corresponds to a $m$-fold regular cover of $N$, denoted by $N^\star_1,\cdots, N^\star_n$, where each $\pi_1N^\star_k$ is an index-$m$ normal subgroup of $\pi_1N$.
%
Thus, there exists $1\le k\le n$ such that $f(\overline{\pi_1M^\star})=\overline{\pi_1N^\star_k}$, and we denote the  restriction of $f$ as   $f_0:\widehat{\pi_1M^\star}\ttt\widehat{\pi_1N^\star_k}$.

For each peripheral subgroup $P=\pi_1\partial_iM$, $P_0:=P\cap \pi_1M^\star$ can be identified as a peripheral subgroup $\pi_1\widetilde{\partial_iM}\subseteq \pi_1M^\star$, where $\widetilde{\partial_i{M}}$ is one component of the pre-image of $\partial_iM$ in $M^\ast$. In particular, $P_0$ is a finite-index subgroup of $P$.

Note that $M^\star$ and $N^\star_k$ are again finite-volume hyperbolic 3-manifolds. According to \autoref{LEM: Hyp preserving peripheral}, $f_0$ respects the peripheral structure, so there exists a peripheral subgroup $Q_0=\pi_1 \partial_l N^\star_k $ and an element $g\in \widehat{\pi_1N_k^\star}$ such that $f_0(\overline{P_0})=C_{g}^{-1}(\overline{Q_0})$. Let $\partial _j N$ be the image of $\partial _j {N_k^\star}$ in $N$, and let  $Q=\pi_1\partial_j N$ be the conjugacy representative of the peripheral subgroup which contains $Q_0$. Then, $f(\overline{P})$ contains $f_0(\overline{P_0})$ and intersects non-trivially with $C_g^{-1}(\overline{Q})$. Since $f$ also respects the peripheral structure by  \autoref{LEM: Hyp preserving peripheral}, $f(\overline{P})$ is also a conjugate of the closure of  a peripheral subgroup in $N$. Then the malnormality of peripheral subgroups (\autoref{LEM: Malnormal}) implies that $f(\overline{P})$ is exactly $C_g^{-1}(\overline{Q})$.

Denote $f'=C_{g}\circ f: \widehat{\pi_1M}\ttt\widehat{\pi_1N}$. Note that $\widehat{\pi_1N^\star_k}$ is a normal subgroup of $\widehat{\pi_1N}$, so we still have $f'(\widehat{\pi_1M^\star})=\widehat{\pi_1N_k^\star}$. In addition, $f'(\overline{P_0})=\overline{Q_0}$, and $f'(\overline{P})=\overline{Q}$. 

Recall that all boundary components of $M^\star$ are $H_1$-injective. According to \autoref{LEM: Hyperbolic peripheral regular special case}, there exists a unified $\lambda\in \Zx$ such that 
$f_0$ is peripheral $\lambda$-regular at all boundary components of $M^\star$. 
In other words, $f'|_{\overline{P_0}}=C_{g}\circ f_0|_{\overline{P_0}}$ can be decomposed as $\lambda\otimes \psi_0$, where $\psi_0 \in B(M^\star,N_k^\star;P_0,Q_0)\subseteq \mathrm{Isom}_{\Z} (P_0,Q_0)$. It then follows from \autoref{LEM: Linear map on finite index subgroup} (\ref{7.8-3}) that ${f'}|_{\overline{P}}=\lambda\otimes \psi$, where $\psi:P\to Q$ satisfies $\psi|_{P_0}=\psi_0$, and $\psi$ must be an isomorphism since $f'(\overline{P})=\overline{Q}$. This implies that $f$ is peripheral $\lambda$-regular at each $\partial_iM$, which finishes the proof of \autoref{THM: Hyperbolic peripheral regular} (\ref{THM: Hyperbolic peripheral regular (1)}).

(\ref{THM: Hyperbolic peripheral regular (2)}) 
In addition, given any peripheral subgroups $P=\pi_1\partial_iM$ and $Q=\pi_1\partial_jN$, for each $1\le k\le n$, denote 
\newsavebox{\equtempb}
\begin{lrbox}{\equtempb}
{ 
$B_k(M,N;P,Q)= \left\{\psi:P\ttt Q\left|\,\begin{gathered} \text{there exist } f:\widehat{\pi_1M}\ttt\widehat{\pi_1N}\text{ and }\eta\in \widehat{\mathbb{Z}}^{\times},\\ \text{such that }  f\big(\overline{\pi_1M^\star}\big)=\overline{\pi_1N_k^\star},\\ f(\overline{P})=\overline{Q}\text{, and }f|_{\overline{P}}=\eta\otimes {\psi}\end{gathered}\right.\right\}.$}
\end{lrbox}
\begin{equation*}
\scalebox{0.98}{\usebox{\equtempb}}
\end{equation*}

By the reasoning in part (1), $B(M,N;P,Q)=\bigcup_{k=1}^{n}B_k(M,N;P,Q)$. Denote $P_0=P\cap \pi_1M^\star$, and  $Q_0^{k}=Q\cap \pi_1N_k^\star$ for each $1\le k\le n$, which are finite-index subgroups of $P$ and $Q$ respectively.  $P_0$ and $Q_0^k$ are in fact peripheral subgroups in $\pi_1M^{\star}$ and $\pi_1 N^\star_k$.   In addition, any element $\psi\in B_k(M,N;P,Q)$ restricts to an element $\psi_0\in B(M^\star,{N_k^\star};P_0,Q_0^{k})$. Thus, by \autoref{LEM: Linear map on finite index subgroup} (\ref{7.8-1}), $\#B_k(M,N;P,Q)\le \#B(M^\star,{N_k^\star};P_0,Q_0^{k})$.

As a consequence,
\newsavebox{\equtempc}
\begin{lrbox}{\equtempc}
$
\#B(M,N;P,Q)\le \sum\limits_{k=1}^{n}\#B_k(M,N;P;Q)
\le \sum\limits_{k=1}^{n} \#B(M^\star,{N_k^\star};P_0,Q_0^{k})\le12n<\infty,
$
\end{lrbox}
\begin{equation*}
\scalebox{0.94}{\usebox{\equtempc}}
\end{equation*}
where the final inequality follows from \autoref{LEM: Hyperbolic peripheral regular special case}.  
The proof of \autoref{THM: Hyperbolic peripheral regular} (\ref{THM: Hyperbolic peripheral regular (2)}) then follows from \autoref{LEM: finite 12}. 
\end{proof}

\section{Seifert fibered spaces}\label{SEC: Seifert}

In our context, all Seifert fibered 3-manifolds are compact and orientable.

\subsection{Profinite properties of Seifert fibered spaces}\label{subSEC: SFS}

Recall that Seifert fibered 3-manifolds with non-empty incompressible boundary consist of three cases, following the notation of \cite{Wil17}:
\begin{itemize}[leftmargin=*]
\item the thickened torus $S^1\times S^1\times I$; 
\item the minor Seifert manifold, namely the twisted $I$-bundle over Klein bottle $S^1\ttimes S^1\ttimes I$;
\item major Seifert manifolds, i.e. those  admitting a Seifert fibration   over a base orbifold $\O$ with negative Euler characteristic and non-empty boundary.
\end{itemize}

According to \cite[Lemma 3.2]{Sco83}, from a Seifert fibration $M\to \O$ one derives a short exact sequence $$1\tto K \tto \pi_1M\tto \pi_1\O\tto 1,$$ where $K$ denotes the cyclic subgroup generated by a regular fiber, and $\pi_1\O$ denotes the orbifold fundamental group. When $M$ has non-empty boundary, $K$ is infinite cyclic. In particular, when $M$ is a major Seifert fibered manifold, $M$ admits a unique Seifert fibration and $K$ is the unique maximal virtually central subgroup of $\pi_1M$.

In addition, the LERFness of $\pi_1M$ \cite{Sco78} together with \autoref{PROP: Completion Exact} and \autoref{PROP: Left exact} 
implies a short exact sequence
\begin{equation}\label{EQU: Seifert exact sequence}
\begin{tikzcd}
1 \arrow[r] & \widehat{K}\cong \overline{K} \arrow[r] & \widehat{\pi_1M} \arrow[r] & \widehat{\pi_1\O} \arrow[r] & 1.
\end{tikzcd}
\end{equation}

\begin{proposition}[\cite{Wil17}]\label{PROP: Virtually central}
Let $M$ be a Seifert fibered manifold with non-empty incompressible boundary.
Then, $\widehat{\pi_1M}$ determines whether $M$ is major, minor, or the thickened torus. In addition, when $M$ is a  major Seifert manifold,
\begin{enumerate}[leftmargin=*]
\item\label{PROP 8.1-1}  $\overline K$ is the unique maximal virtually central normal procyclic subgroup of $\widehat{\pi_1M}$;
\item  $\widehat{\pi_1M}$ determines whether the base orbifold $\O$ is orientable;
\item\label{PROP 8.1-3} $\widehat{\pi_1M}$ determines the isomorphism type of the orbifold fundamental group $\pi_1\O$.
\end{enumerate}
\end{proposition}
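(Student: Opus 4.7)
The plan is to isolate the three topological types by abelianness properties of $\widehat{\pi_1M}$, and then for major Seifert manifolds to identify $\overline{K}$ intrinsically as the unique maximal virtually central normal procyclic subgroup, so that the base orbifold data is recovered from the quotient. First I would distinguish the cases: when $M\cong S^1\times S^1\times I$, $\widehat{\pi_1M}\cong \widehat{\Z}^2$ is abelian; when $M$ is minor, $\widehat{\pi_1M}$ is virtually abelian but not abelian, since by \autoref{LEM: minor peripheral} it contains $\overline{\pi_1\partial M}\cong \widehat{\Z}^2$ as an index-2 open subgroup; and when $M$ is major, $\widehat{\pi_1M}$ is not virtually abelian, because the quotient $\pi_1\mathcal{O}$ contains a non-abelian free subgroup (as $\chi(\mathcal{O})<0$ with $\partial \mathcal{O}\neq \varnothing$ forces $\pi_1\mathcal{O}$ to be a virtually non-abelian free group), and this property lifts to $\widehat{\pi_1M}$ using the residual finiteness of $\pi_1M$ together with \autoref{THM: correspondence of subgroup}.

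For part (1), the short exact sequence (\ref{EQU: Seifert exact sequence}) already realizes $\overline{K}\cong\widehat{\Z}$ as a closed normal procyclic subgroup. Conjugation in $\pi_1M$ acts on $K$ via $\Aut(\Z)\cong \Z/2$, so the centralizer of $K$ has index at most $2$ in $\pi_1M$; by density, the centralizer of $\overline{K}$ has index at most $2$ in $\widehat{\pi_1M}$, giving virtual centrality. For uniqueness and maximality, I would take any closed normal virtually central procyclic $L\le \widehat{\pi_1M}$ and show that its image in $\widehat{\pi_1M}/\overline{K}\cong \widehat{\pi_1\mathcal{O}}$ is trivial. The key input is that $\pi_1\mathcal{O}$, being the fundamental group of a $2$-orbifold with boundary and negative Euler characteristic, splits as a free product of finite cyclic groups with a free group (finite cone-point groups amalgamated along trivial edge groups). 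This splitting is efficient, so $\widehat{\pi_1\mathcal{O}}$ is the corresponding free profinite product and acts on the associated profinite Bass-Serre tree with only torsion vertex stabilizers and trivial edge stabilizers. A normal virtually central procyclic closed subgroup of $\widehat{\pi_1\mathcal{O}}$ then fixes pointwise every conjugate of every vertex (since it is virtually contained in the intersection of all conjugates), which forces it to lie in a torsion procyclic subgroup normal in $\widehat{\pi_1\mathcal{O}}$. But a non-abelian virtually free profinite group admits no nontrivial torsion normal subgroup, so this image is trivial, giving $L\subseteq \overline{K}$.

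For part (2), $\mathcal{O}$ is orientable if and only if $K$ is central in $\pi_1M$. Since the conjugation action of $\widehat{\pi_1M}$ on $\overline{K}$ is continuous and $\pi_1M$ is dense, this is equivalent to $\overline{K}$ being central in $\widehat{\pi_1M}$. By part (1), $\overline{K}$ is intrinsically detected by $\widehat{\pi_1M}$, so orientability of $\mathcal{O}$ is detected. For part (3), part (1) determines $\widehat{\pi_1\mathcal{O}}\cong \widehat{\pi_1M}/\overline{K}$, and the isomorphism type of $\pi_1\mathcal{O}$ as an abstract group is then recovered via the profinite rigidity of fundamental groups of $2$-orbifolds among themselves, which is one of the main results of Wilkes \cite{Wil17}.

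The hard part will be the uniqueness statement in (1): virtual centrality and normality readily exclude large subgroups, but carefully ruling out \emph{all} nontrivial procyclic virtually central normal closed subgroups of $\widehat{\pi_1\mathcal{O}}$ requires the profinite Bass-Serre machinery for the free profinite product decomposition, and one must deal with possible torsion arising from the cone points. Everything else is a routine passage between $\pi_1M$ and $\widehat{\pi_1M}$, facilitated by LERF-ness of $\pi_1M$ and the density of $\pi_1M$ in $\widehat{\pi_1M}$.
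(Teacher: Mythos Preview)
Your overall structure matches the paper's exactly: distinguish the three types by (virtual) abelianness, characterize $\overline{K}$ intrinsically for (1), read off orientability from centrality of $\overline{K}$ for (2), and recover $\pi_1\mathcal{O}$ from $\widehat{\pi_1\mathcal{O}}\cong\widehat{\pi_1M}/\overline{K}$ for (3). Two points of divergence are worth flagging.

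First, for (1) the paper does not attempt a direct argument: it simply cites \cite[Theorem~5.5]{Wil17}. Your Bass--Serre sketch is in the right direction, but the step ``fixes pointwise every conjugate of every vertex (since it is virtually contained in the intersection of all conjugates)'' is not justified as written; virtual centrality does not by itself force a subgroup to fix a vertex. A cleaner route is to pass to an open non-abelian free profinite subgroup $F\le\widehat{\pi_1\mathcal{O}}$, observe that the intersection of the image of $L$ with the centralizer inside $F$ is central in a non-abelian free profinite group, hence trivial, so the image is finite; then a finite normal subgroup acting on the profinite tree with trivial edge stabilizers must fix an edge (by normality it fixes two distinct vertices in the same orbit), hence is trivial.

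Second, your citation for (3) is off. The profinite rigidity of Fuchsian groups (hence of $\pi_1\mathcal{O}$ for $\chi(\mathcal{O})<0$) is due to Bridson--Conder--Reid \cite[Theorem~1.1]{BCR16}, not to Wilkes; \cite{Wil17} \emph{uses} this result rather than proving it. The paper invokes \cite{BCR16} at exactly this point.
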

\begin{proof}
Among Seifert fibered manifolds with non-empty incompressible boundary, a manifold $M$ is major, minor, or the thickened torus if and only if  the fundamental group $\pi_1M$ is not virtually abelian, virtually abelian but non-abelian, or abelian, which can be detected from the profinite completion.

When $M$ is a  major Seifert manifold, (\ref{PROP 8.1-1}) follows from \cite[Theorem 5.5]{Wil17}. In addition, the base orbifold $\O$ is orientable if and only if $K$ is central in $\pi_1M$, which is equivalent to $\overline {K}$ being central in $\widehat{\pi_1M}$. Thus, by (\ref{PROP 8.1-1}), $\widehat{\pi_1M}$ determines whether $\O$ is orientable. Moreover, (\ref{PROP 8.1-1}) shows that $\widehat{\pi_1M}$ determines the isomorphism type of $\widehat{\pi_1\O}\cong \widehat{\pi_1M}/\overline{K}$. Note that $\chi(\O)<0$, so $\pi_1\O$ is a finitely-generated Fuchsian group. \cite[Theorem 1.1]{BCR16} shows that the isomorphism type of $\widehat{\pi_1\O}$ determines the isomorphism type of $\pi_1\O$, which finishes the proof of (\ref{PROP 8.1-3}).
\end{proof}

\begin{corollary}\label{COR: Seifert rigid}
    The class of groups $$\mathcal{S}=\{ \pi_1M \mid M \text{: orientable Seifert fibered space with }  \text{non-empty }   \text{boundary}\}$$ is (relatively) profinite rigid. In other words, for any two  Seifert fibered spaces $M$ and $N$ with non-empty boundary, $\widehat{\pi_1M}\cong \widehat{\pi_1N}$ if and only if $\pi_1M\cong \pi_1N$.
\end{corollary}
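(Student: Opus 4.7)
The ``only if'' direction is immediate, since isomorphic abstract groups always have isomorphic profinite completions. For the converse, suppose $\widehat{\pi_1 M}\cong \widehat{\pi_1 N}$. By \autoref{PROP: Virtually central}, $M$ and $N$ are of the same subtype among Seifert fibered manifolds with non-empty boundary: both are thickened tori, both are minor Seifert manifolds, or both are major Seifert manifolds. If they are thickened tori, then $\pi_1 M\cong \Z^2\cong \pi_1 N$; if they are minor, then since the twisted $I$-bundle over the Klein bottle is the unique minor Seifert manifold up to homeomorphism, $\pi_1 M \cong \Z\rtimes \Z \cong \pi_1 N$. Hence the plan is to focus on the major case.

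Assume from now on that $M$ and $N$ are major Seifert manifolds, fibering over base orbifolds $\mathcal{O}_M$ and $\mathcal{O}_N$. By \autoref{PROP: Virtually central}, the abstract isomorphism type of the orbifold fundamental group is determined by the profinite completion, so $\pi_1\mathcal{O}_M\cong \pi_1\mathcal{O}_N$; moreover, the orientability of the base orbifold is also detected profinitely, so either both $\mathcal{O}_M,\mathcal{O}_N$ are orientable or both are non-orientable. Since $\partial M,\partial N\neq \varnothing$, both base orbifolds have non-empty boundary, so their orbifold fundamental groups admit a free product decomposition of the form $F_r * \Z/\alpha_1 * \cdots * \Z/\alpha_n$ (together with a single anti-orientation-type relation when the base is non-orientable), unique up to the usual Grushko ambiguity.

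The groups $\pi_1 M$ and $\pi_1 N$ each fit into an extension $1\to K \to \pi_1 M \to \pi_1\mathcal{O}_M\to 1$ (and similarly for $N$), with $K\cong \Z$ and the conjugation action of $\pi_1\mathcal{O}$ on $K$ given by the orientation character $\tau$. Such extensions (with fixed quotient and kernel) are classified by $H^2(\pi_1 \mathcal{O};\Z^\tau)$. The strategy is to show that the given isomorphism $\widehat{\pi_1 M}\cong \widehat{\pi_1 N}$ transports the extension class of $\pi_1 M$ to that of $\pi_1 N$ under the identification of base groups, modulo the action of $\mathrm{Out}(\pi_1\mathcal{O})$; this will then force $\pi_1 M\cong \pi_1 N$ as abstract groups. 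To that end I will: (i) observe, via \autoref{PROP: Virtually central}(\ref{PROP 8.1-1}), that $\overline K$ is the unique maximal virtually central procyclic subgroup of the profinite completion, so the sequence (\ref{EQU: Seifert exact sequence}) is intrinsic and is carried across the isomorphism; (ii) compute $H^2(\widehat{\pi_1\mathcal{O}};\widehat{\Z}^\tau)$ using the Mayer--Vietoris decomposition for the free profinite product $\widehat{\pi_1\mathcal{O}}\cong \widehat{F_r}\amalg \widehat{\Z/\alpha_1}\amalg \cdots \amalg \widehat{\Z/\alpha_n}$, noting that free profinite groups have cohomological dimension one and that $H^2(\widehat{\Z/\alpha_j};\widehat{\Z})\cong \Z/\alpha_j$; (iii) compare with the abstract $H^2(\pi_1\mathcal{O};\Z^\tau)\cong \bigoplus_j \Z/\alpha_j$ via the cohomological goodness of the Fuchsian orbifold group $\pi_1 \mathcal{O}$, deducing that the natural map between these $H^2$ groups is an isomorphism.

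The main obstacle is the final book-keeping: matching the $\mathrm{Out}(\widehat{\pi_1\mathcal{O}})$-orbit of the profinite extension class with the $\mathrm{Out}(\pi_1\mathcal{O})$-orbit of the abstract one. Because the $\mathrm{Out}$-action on each $H^2$-factor $\Z/\alpha_j$ factors through the unit group $(\Z/\alpha_j)^{\times}$, which is unchanged under profinite completion, this comparison should go through; in the non-orientable case one needs additionally to track the effect of the orientation character $\tau$ on Mayer--Vietoris, but the cohomological goodness of $\pi_1\mathcal{O}$ reduces the argument to the orientable one after passing to an index-two subgroup. Once these technicalities are dispatched, the abstract extension classes of $\pi_1 M$ and $\pi_1 N$ agree up to $\mathrm{Out}(\pi_1\mathcal{O})$, yielding $\pi_1 M\cong \pi_1 N$.
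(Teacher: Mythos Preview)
Your reduction to the major case via \autoref{PROP: Virtually central} matches the paper, but in the major case the paper takes a far more elementary route than your cohomological one. Rather than classifying extensions by $H^2(\pi_1\mathcal{O};\Z^\tau)$ and comparing extension classes across the profinite completion, the paper simply exhibits an explicit presentation for $\pi_1M$ that depends only on the data already extracted from $\pi_1\mathcal{O}$ and the orientation character. Decomposing the base orbifold as a bounded surface $F$ with one boundary component capped by disks carrying the cone points, van Kampen gives
\[
\pi_1M \;\cong\; \bigl\langle c_1,\ldots,c_k,\, h,\, v_1,\ldots,v_n \ \bigm|\ c_jhc_j^{-1}=h^{\epsilon},\ h=v_i^{p_i}\bigr\rangle,
\]
with $k=1-\chi(F)$ and no appearance of the Seifert invariants $q_j$ at all. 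Since \autoref{PROP: Virtually central} determines $\pi_1\mathcal{O}$ and $\epsilon$ from $\widehat{\pi_1M}$, and $\pi_1\mathcal{O}$ in turn determines $\chi(F)$ and the multiset $\{p_j\}$, the result follows. In the language of your approach, this presentation shows directly that $\mathrm{Out}(\pi_1\mathcal{O})$ already acts transitively on the relevant extension classes when $\partial M\neq\varnothing$, so the ``main obstacle'' you flag---matching abstract and profinite $\mathrm{Out}$-orbits---never arises. Your route is workable but requires justifying that the images of $\mathrm{Out}(\pi_1\mathcal{O})$ and $\mathrm{Out}(\widehat{\pi_1\mathcal{O}})$ in $\prod_j(\Z/p_j)^\times$ coincide, which you assert rather than prove.

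One omission: the class $\mathcal{S}$ as stated includes the solid torus, whose boundary is compressible and hence falls outside your thickened-torus/minor/major trichotomy. The paper handles this separately by noting that $\pi_1(D^2\times S^1)\cong\Z$ is absolutely profinitely rigid.
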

\begin{proof}
    The only Seifert fibered space with compressible boundary is the solid torus, whereas $\pi_1(D^2\times S^1)\cong\Z$ is profinitely rigid among all finitely-generated residually finite groups. Also note that $\pi_1(S^1\times S^1\times I)\cong \Z\times \Z$ and  $\pi_1(S^1\ttimes S^1\ttimes I)\cong \Z\rtimes \Z$ are both profinitely rigid in $\mathcal{S}$ by \autoref{PROP: Virtually central}. Thus, it suffices to restrict ourselves to major Seifert manifolds. 

    We claim that for a major Seifert manifold $M$, the isomorphism type of $\pi_1M$ is uniquely determined by the following two data: 
    \begin{enumerate}[leftmargin=*]
        \item\label{s-1} the isomorphism type of the orbifold fundamental group  $\pi_1\O$;
        \item\label{s-2} whether $\O$ is orientable.
    \end{enumerate}
    Part of this result was proven in \cite[Lemma 4.2]{Hem14}.

    In fact, $\pi_1\O$ is virtually free, and its isomorphism type determines the orbifold Euler characteristic $\chi(\O)$. To be explicit, if $\pi_1\O$ contains an index-$m$ subgroup isomorphic to $F_r$, then $\chi(\O)=\frac{1-r}{m}$. In addition, $\pi_1\O$ determines the singularities of $\O$, each corresponding to a conjugacy class of maximal  torsion subgroups in $\pi_1\O $, and the index of the singular point is exactly the order of the (finite cyclic) torsion subgroup. We denote the index of singularities by $p_1,\cdots, p_n$, and denote the underlying surface of $\O$ as $F$. Then, $\chi(F)=\chi(O)+\sum(1-\frac{1}{p_i})$ can also be determined from these data.

    As shown in \autoref{fig: orbifold}, the orbifold $\O$ can be obtained from $F$ by gluing disks $D_i$ with exactly one singularity of index $p_i$ to a boundary component of $F$ along the arcs $\alpha_i$. Consequently, $M$ can be obtained from the orientable $S^1$-bundle over $F$, by gluing solid tori $V_i$ to a boundary component $S^1\times \partial_iF$ along annuli $S^1\times \alpha_i$, so that the fiber $S^1\times\{\ast\}$ intersects $p_i$ times the meridian of $V_i$.

    \begin{figure}[ht]
        \centering
\includegraphics[width=2.8in, alt={A surface with no singular points, and several disks glued to one of its boundary components along disjoint arcs, each containing one singular point.}]{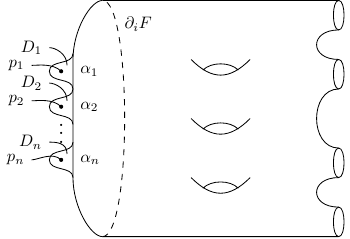}
        \caption{The orbifold $\O$}
        \label{fig: orbifold}
    \end{figure}

    Note that the fundamental group of the orientable $S^1$-bundle over $F$ has the presentation: $\left\langle c_1,\cdots,c_k,h\mid c_jhc_j^{-1}=h^\epsilon \right\rangle$, where $k=1-\chi(F)$, and $\epsilon=1$ if $F$ is orientable, while $\epsilon=-1$ if $F$ is non-orientable. Here, $ h$ represents a regular fiber $S^1\times \{\ast\}$. Then, by Seifert--van Kampen theorem, we obtain that $\pi_1(M)=\left\langle c_1,\cdots,c_k,h,v_1,\cdots,v_n\mid c_jhc_j^{-1}=h^\epsilon, h=v_i^{p_i} \right\rangle$, where each $v_i$ is a conjugacy representative of the core curve of $V_i$. Hence, the isomorphism type of $\pi_1M$ is determined from the two data  (\ref{s-1}) and (\ref{s-2}).
    
    Since these two data can be encoded from $\widehat{\pi_1M}$ as shown by \autoref{PROP: Virtually central}, we derive that  $\widehat{\pi_1M}$ determines the unique isomorphism type of $\pi_1M$.
\end{proof}

\begin{remark}
    Also note that the class $\mathcal{S}$ can be profinitely distinguished from the fundamental groups of all compact, orientable 3-manifolds, according to \autoref{COR: Determine boundary} which determines irreducibility and the toral boundary, together with \autoref{THM: Profinite Isom up to Conj} which determines the JSJ-decomposition. Thus, any $G\in \mathcal{S}$ is   profinitely rigid among the fundamental groups of all compact, orientable 3-manifolds.
\end{remark}

Note that in the bounded case, the group theoretic profinite rigidity does not imply rigidity in the class of manifolds as defined in \autoref{indef: almost rigidity}, since there exist non-homeomorphic Seifert manifolds with isomorphic fundamental groups. Nevertheless, we  can still derive profinite almost rigidity in the manifold version.

\begin{proposition}\label{PROP: Seifert almost rigid}
The class of orientable Seifert fibered spaces is profinitely almost rigid.
\end{proposition}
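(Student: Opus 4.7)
The plan is to split the class of orientable Seifert fibered spaces by boundary type and handle each piece using tools already in hand. The reducible orientable Seifert fibered spaces are exactly $S^2\times S^1$ and $\mathbb{RP}^3\#\mathbb{RP}^3$, whose fundamental groups $\mathbb{Z}$ and $\mathbb{Z}/2\ast\mathbb{Z}/2$ are profinitely rigid among finitely generated residually finite groups, so they contribute only finitely many exceptions. For the remaining (irreducible) cases, \autoref{COR: Determine boundary} implies that the profinite completion of $\pi_1 M$ detects whether $M$ is closed or has incompressible toral boundary, so any $N$ with $\widehat{\pi_1 N}\cong\widehat{\pi_1 M}$ has the same boundary type as $M$. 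This lets me treat the closed and bounded cases separately.

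For closed orientable Seifert fibered spaces, profinite almost rigidity has already been established by Wilkes \cite{Wil17}, which I will simply invoke.

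For Seifert fibered spaces with non-empty boundary, the only example with compressible boundary is the solid torus, already covered by the $\pi_1\cong\mathbb{Z}$ discussion. Otherwise $M$ has incompressible toral boundary, and \autoref{COR: Seifert rigid} supplies group-level profinite rigidity, yielding $\pi_1 M\cong\pi_1 N$ for any orientable Seifert fibered $N$ with $\widehat{\pi_1 N}\cong\widehat{\pi_1 M}$. From here, \autoref{Joh} (Johannson--Swarup) guarantees that an isomorphism class of 3-manifold group corresponds to only finitely many homeomorphism types among compact orientable 3-manifolds, so $\Delta_{\mathscr M}(M)$ is finite.

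The bulk of the work has already been done in \autoref{COR: Seifert rigid} and in Wilkes's closed-case theorem; no substantial obstacle is expected in this proposition. The only care needed is the bookkeeping to verify, via \autoref{COR: Determine boundary} and \autoref{PROP: Virtually central}, that a profinite isomorphism does not mix the closed, bounded-incompressible, compressible, and reducible subclasses (apart from the tiny $\widehat{\mathbb{Z}}$-ambiguity between $D^2\times S^1$ and $S^2\times S^1$, which is itself finite and thus harmless).
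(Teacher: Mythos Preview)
Your proposal is correct and follows essentially the same route as the paper: separate out the solid torus/$S^2\times S^1$ ambiguity as a harmless finite exception, invoke \autoref{COR: Determine boundary} to split closed from bounded, cite Wilkes \cite{Wil17} for the closed case, and combine \autoref{COR: Seifert rigid} with \autoref{Joh} for the bounded incompressible case. Your explicit handling of the reducible examples ($S^2\times S^1$, $\mathbb{RP}^3\#\mathbb{RP}^3$) is slightly more detailed than the paper's, but the argument is the same.
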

\begin{proof}
The only orientable Seifert fibered space with compressible boundary is the solid torus, which has the same fundamental group as $S^2\times S^1$. 
Except for this case, \autoref{COR: Determine boundary} implies that 
the profinite completion detects whether the manifold is closed. 
The profinite almost rigidity of closed Seifert fibered 3-manifolds follows from \cite[Theorem 5.1]{Wil17}; and the profinite almost rigidity of Seifert fibered 3-manifolds with incompressible boundary follows from \autoref{COR: Seifert rigid}, since there are only finitely many Seifert fibered spaces with isomorphic fundamental groups, see \autoref{Joh}.
\end{proof}

\subsection{Profinite isomorphism between Seifert manifolds}

We start with the simple case of minor Seifert manifolds.
The fundamental group of a minor Seifert manifold can be presented as 
\begin{equation}\label{EQU: Minor fundamental group}
\pi_1M=\left\langle a,b\mid bab^{-1}=a^{-1}\right\rangle,
\end{equation}
where $\langle a\rangle$ represents the fiber subgroup when the manifold is viewed as a Seifert fibration over the M\"obius band, while $\langle b^2\rangle$ represents the fiber subgroup when it is viewed as  a Seifert fibration over a disk with two singular points of index 2. Then. $\widehat{\pi_1M}=\overline{\left\langle a\right\rangle}\rtimes \overline{\left\langle b\right\rangle}\cong \widehat{\Z}\rtimes\widehat{\Z}$. The unique peripheral subgroup in $\widehat{\pi_1M}$ is given by $\overline{\left\langle a\right\rangle }\times \overline{\left\langle b^2\right\rangle}$.

\begin{lemma}\label{LEM: Minor completion}
Suppose $M$ and $N$ are minor Seifert manifolds, whose fundamental groups are 
presented as 
\begin{equation*}
\begin{gathered}
\pi_1M=\left\langle a,b\mid bab^{-1}=a^{-1}\right\rangle,\;\;
\pi_1N=\left\langle \alpha,\beta \mid \beta \alpha \beta^{-1}=\alpha^{-1}\right\rangle.
\end{gathered}
\end{equation*}
For any isomorphism $f:\widehat{\pi_1M}\ttt \widehat{\pi_1N}$, there exist  $\lambda,\mu\in \Zx$ and $\theta\in \widehat{\Z}$ such that 
\begin{equation*}
f(a)=\alpha^\mu,\;f(b)=\alpha^\theta \beta^\lambda,\; f(b^2)=\beta^{2\lambda}.
\end{equation*}
In particular, $f$ always respects the peripheral structure.
\end{lemma}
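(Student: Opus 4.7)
The plan is to pin down the image $f(a)$ first by identifying $\overline{\langle a\rangle}$ intrinsically inside $\widehat{\pi_1M}$, then determine $f(b)$ via the semidirect product structure, and finally verify the defining relation to compute $f(b^2)$. The starting observation is that $\pi_1M^{\mathrm{ab}}=\Z/2\oplus \Z$, generated by the classes of $a$ (of order $2$) and $b$ (of infinite order), since the relator yields $[a,b]=a^2$. Passing to profinite completions via \autoref{LEM: Abelianization} and \autoref{LEM: Completion of abelian group}, I obtain $\Ab{\widehat{\pi_1M}}\cong \widehat{\Z/2\oplus \Z}\cong \Z/2\oplus \widehat{\Z}$ with torsion subgroup $\Z/2$, and the preimage of this torsion under the abelianization map is precisely $\overline{\langle a\rangle}$. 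The analogous statement holds for $N$. Since any topological isomorphism preserves the abelianization and its torsion subgroup, $f(\overline{\langle a\rangle})=\overline{\langle \alpha\rangle}$, so $f(a)=\alpha^{\mu}$ for some $\mu\in \widehat{\Z}$, and $\mu\in \Zx$ because $f$ restricts to an automorphism of $\overline{\langle a\rangle}\cong \widehat{\Z}$.

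Next I write $f(b)=\alpha^{\theta}\beta^{\lambda}$ using the semidirect product decomposition $\widehat{\pi_1N}=\overline{\langle \alpha\rangle}\rtimes \overline{\langle \beta\rangle}$. The induced isomorphism on $\Ab{\widehat{\pi_1M}}\cong \Z/2\oplus \widehat{\Z}$ sends $(0,1)\mapsto (\theta\bmod 2,\lambda)$, and since this must be an isomorphism of $\Z/2\oplus \widehat{\Z}$, the coefficient $\lambda$ must lie in $\Zx$. In particular $\lambda$ is $2$-adically a unit, hence odd, so the continuous extension of the relation $\beta \alpha \beta^{-1}=\alpha^{-1}$ gives $\beta^{\lambda}\alpha^{\mu}\beta^{-\lambda}=\alpha^{-\mu}$. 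One then checks that the image of the defining relation $bab^{-1}=a^{-1}$ is automatically satisfied: $f(b)f(a)f(b)^{-1}=\alpha^{\theta}\alpha^{-\mu}\alpha^{-\theta}=\alpha^{-\mu}=f(a^{-1})$. This shows the claimed normal form exists with $\theta\in \widehat{\Z}$ unconstrained.

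A direct computation in the semidirect product then yields
\begin{equation*}
f(b^2)=\alpha^{\theta}\beta^{\lambda}\alpha^{\theta}\beta^{\lambda}=\alpha^{\theta}\bigl(\beta^{\lambda}\alpha^{\theta}\beta^{-\lambda}\bigr)\beta^{2\lambda}=\alpha^{\theta}\alpha^{-\theta}\beta^{2\lambda}=\beta^{2\lambda},
\end{equation*}
using that $\lambda$ is odd. Finally, $\overline{\pi_1\partial M}=\overline{\langle a\rangle}\times \overline{\langle b^2\rangle}$ and $\overline{\pi_1\partial N}=\overline{\langle \alpha\rangle}\times \overline{\langle \beta^2\rangle}$, and since $\mu,\lambda\in \Zx$ the elements $\alpha^{\mu}$ and $\beta^{2\lambda}$ topologically generate $\overline{\langle \alpha\rangle}$ and $\overline{\langle \beta^2\rangle}$ respectively; hence $f$ carries the peripheral subgroup of $M$ isomorphically onto that of $N$, so $f$ respects the peripheral structure.

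The only genuinely delicate point is the intrinsic characterization of $\overline{\langle a\rangle}$: a priori an automorphism of $\widehat{\Z}\rtimes \widehat{\Z}$ with this particular action could mix the two factors, and one needs the abelianization together with its torsion subgroup (rather than, say, a Bass–Serre argument) to rigidify the fiber direction. Once that step is in hand, everything else is a direct matrix-style calculation inside the semidirect product, aided by the fact that $\Zx$ elements are automatically odd in the $2$-adic component.
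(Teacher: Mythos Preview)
Your proof is correct but takes a different route from the paper. The paper first identifies $\overline{\langle b^2\rangle}$ as the center of $\widehat{\pi_1M}$, immediately yielding $f(b^2)=\beta^{2\lambda}$ with $\lambda\in\Zx$; it then uses the quotient $\widehat{\pi_1N}\to\widehat{\pi_1N}/\overline{\langle\alpha\rangle}\cong\widehat{\Z}$ and torsion-freeness of $\widehat{\Z}$ to extract $f(b)=\alpha^\theta\beta^\lambda$, and finally squares the relation $bab^{-1}a=1$ in the quotient to force $f(a)\in\overline{\langle\alpha\rangle}$. You instead characterize $\overline{\langle a\rangle}$ intrinsically as the preimage of the torsion subgroup under $\widehat{\pi_1M}\to\Ab{\widehat{\pi_1M}}\cong\Z/2\oplus\widehat{\Z}$, pin down $f(a)=\alpha^\mu$ first, then read off $\lambda\in\Zx$ from the induced automorphism of the torsion-free quotient of the abelianization, and compute $f(b^2)=\beta^{2\lambda}$ from $f(b)^2$ using that $\lambda$ is odd. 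Both arguments hinge on isolating one of the two canonical procyclic subgroups by a group-theoretic invariant (center versus abelianization torsion); the paper's choice gets to $f(b^2)$ in one step, while yours makes the role of $\mu$ and the semidirect-product normal form a bit more transparent.
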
 
\begin{proof}
It is easy to verify that the center of $\widehat{\pi_1M}$ is $\overline{\left\langle b^2\right\rangle}$, and similarly, the center of $\widehat{\pi_1N}$ is $\overline{\left\langle \beta^2\right\rangle}$. Thus, $f(\overline{\left\langle b^2\right\rangle})=\overline{\left\langle \beta^2\right\rangle}$ so there exists $\lambda \in \Zx$ such that $f(b^2)=\beta^{2\lambda}$. 

Let $p: \widehat{\pi_1N} \to \widehat{\Z}\cong \widehat{\left\langle \beta\right\rangle}$ denote the homomorphism defined by quotienting $\overline{\left\langle \alpha\right\rangle}$. Then $p(f(b))^2=p(f(b^2))=\beta^{2\lambda}$. Since $\widehat{\Z}$ is torsion-free, $p(f(b))=\beta^\lambda$, which implies that $f(b)=\alpha^\theta \beta^\lambda$ for some $\theta\in \widehat{\Z}$. 

Since $\widehat{\Z}$ is abelian, we have $p(f(a))^2=p(f(bab^{-1}a))=p(f(\mathbf{1}_{\widehat{\pi_1M}}))=\beta^0$. As $\widehat{\Z}$ is torsion-free, it follows that $p(f(a))=\beta^0$, i.e. $f(a)\in \overline{\left\langle\alpha\right\rangle } $, or equivalently, $f(\overline{\left\langle a\right\rangle})\subseteq \overline{\left\langle\alpha\right\rangle}$. The symmetric proof  for $f^{-1}$ implies that 
$f^{-1}(\overline{\left\langle\alpha\right\rangle})\subseteq \overline{\left\langle a\right\rangle}$. As a consequence, 
$f(\overline{\left\langle a\right\rangle})= \overline{\left\langle\alpha\right\rangle}$, so $f(a)=\alpha^\mu $ for some $\mu\in \Zx$.

In particular, $f(\overline{\left\langle a\right\rangle }\times \overline{\left\langle b^2\right\rangle})=\overline{\left\langle \alpha \right\rangle }\times \overline{\left\langle \beta^2\right\rangle}$. In other words, $f$ respects the peripheral structure.
\end{proof}

In order to avoid ambiguity in assigning the fiber subgroup of the minor Seifert manifold, we include the following convention
\begin{convention}\label{convention: minor}
    From now on, we always view the minor Seifert manifold as a Seifert fibration over the disk with two singular points of index 2.  In this case, the fiber subgroup $K_M\le \pi_1M$ is generated by $b^2$, following the presentation (\ref{EQU: Minor fundamental group}).
\end{convention}

\begin{lemma}\label{COR: Seifert Split}
Let $M$ and $N$ be   major or minor Seifert  manifolds such that $\widehat{\pi_1M}\cong \widehat{\pi_1N}$. Then, any isomorphism $f:\widehat{\pi_1M} \ttt \widehat{\pi_1N}$ splits as: 
\begin{equation*}
\begin{tikzcd}
1 \arrow[r] & \widehat{K_M}\cong \widehat{\Z} \arrow[r] \arrow[d, "f_\natural","\cong"'] & \widehat{\pi_1M} \arrow[r] \arrow[d, "f","\cong"'] & \widehat{\pi_1\O_M} \arrow[r] \arrow[d, "f_\flat","\cong"'] & 1 \\
1 \arrow[r] & \widehat{K_N}\cong \widehat{\Z} \arrow[r]                         & \widehat{\pi_1N} \arrow[r]                & \widehat{\pi_1\O_N} \arrow[r]                      & 1
\end{tikzcd}
\end{equation*}
\end{lemma}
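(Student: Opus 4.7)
The plan is to split into two cases, major and minor, after first ruling out the mixed case. If $M$ is major and $N$ is minor (or vice versa), then $\widehat{\pi_1M}\not\cong\widehat{\pi_1N}$ by \autoref{PROP: Virtually central}, contradicting the hypothesis. Hence either both $M,N$ are major, or both are minor.

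In the major case, I would invoke \autoref{PROP: Virtually central}~(\ref{PROP 8.1-1}) directly: the closure $\overline{K_M}$ of the fiber subgroup is the unique maximal virtually central normal procyclic subgroup of $\widehat{\pi_1M}$, and likewise $\overline{K_N}$ inside $\widehat{\pi_1N}$. Since uniqueness makes each $\overline{K_\bullet}$ a characteristic subgroup, any isomorphism $f:\widehat{\pi_1M}\ttt\widehat{\pi_1N}$ must satisfy $f(\overline{K_M})=\overline{K_N}$. We then set $f_\natural:=f|_{\overline{K_M}}$; since $\overline{K_\bullet}$ is normal, $f$ descends to a well-defined isomorphism $f_\flat$ between the quotients.

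In the minor case, by \autoref{convention: minor} the fiber subgroups are $K_M=\langle b^2\rangle$ and $K_N=\langle\beta^2\rangle$. A quick check (as recorded in the proof of \autoref{LEM: Minor completion}) shows that the center of $\widehat{\pi_1M}\cong\widehat{\Z}\rtimes\widehat{\Z}$ is precisely $\overline{\langle b^2\rangle}$, and similarly $Z(\widehat{\pi_1N})=\overline{\langle\beta^2\rangle}$. The center is obviously characteristic, so $f(\overline{K_M})=\overline{K_N}$; alternatively one can just quote \autoref{LEM: Minor completion} which gives $f(b^2)=\beta^{2\lambda}$ for some $\lambda\in\Zx$, whence $f(\overline{\langle b^2\rangle})=\overline{\langle\beta^{2\lambda}\rangle}=\overline{\langle\beta^2\rangle}$ since $\lambda$ is a unit.

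Finally, in both cases the short exact sequence $1\to\overline{K}\to\widehat{\pi_1}\to\widehat{\pi_1\O}\to 1$ obtained from \autoref{EQU: Seifert exact sequence} (which uses LERFness of $\pi_1$ together with \autoref{PROP: Completion Exact} and \autoref{PROP: Left exact}) is preserved by $f$, so $f$ induces the required commutative diagram with $f_\natural$ on the left and $f_\flat$ on the right, both isomorphisms by the five lemma. There is no real obstacle here; the only point requiring care is making sure the minor case uses the correct fiber subgroup under \autoref{convention: minor}, which is handled either by the centrality observation or by \autoref{LEM: Minor completion}.
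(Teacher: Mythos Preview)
Your proposal is correct and follows essentially the same approach as the paper: reduce to showing $f(\overline{K_M})=\overline{K_N}$, rule out the mixed major/minor case via \autoref{PROP: Virtually central}, then handle the major case by \autoref{PROP: Virtually central}~(\ref{PROP 8.1-1}) and the minor case by \autoref{LEM: Minor completion}, with the short exact sequence coming from (\ref{EQU: Seifert exact sequence}). Your version simply spells out a few details (the characteristic-subgroup reasoning, the five lemma) that the paper leaves implicit.
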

\begin{proof}
    In view of (\ref{EQU: Seifert exact sequence}), it suffices to show that $f(\overline{K_M})=\overline{K_N}$. 
    By \autoref{PROP: Virtually central}, either $M$ and $N$ are both minor, or they are both major. For the minor case, this follows from \autoref{LEM: Minor completion}; while in the major case, this follows from \autoref{PROP: Virtually central} (\ref{PROP 8.1-1}).
\end{proof}
\begin{example}\label{EX: Boundary map}
Let $M$ and $N$ be major or minor Seifert manifolds, and let $f:\widehat{\pi_1M}\ttt \widehat{\pi_1N}$ be an isomorphism. Suppose for some boundary components $\partial_iM$ and $\partial_jN$, $f$ sends $\overline{\pi_1\partial_iM}$ to a conjugate of $\overline{\pi_1\partial_jN}$. We may choose a free basis $\{h_M,\epsilon_i\}$ for $\pi_1\partial_iM\cong \Z\times\Z$, so that $h_M$ is a generator for the fiber subgroup $K_M$. Similarly, choose a free basis $\{h_N,\epsilon_j'\}$ for $\pi_1\partial_jN$ so that $h_N$ generates $K_N$. Since $f(\overline{K_M})=\overline{K_N}$, up to composing with a conjugation by $g\in \widehat{\pi_1N}$, $C_g\circ f (h_M)=h_N^\lambda$ for some $\lambda\in \Zx$, and the map $C_g\circ f|_{\overline{\pi_1\partial_iM}}:\overline{\pi_1\partial_iM}\to\overline{\pi_1\partial_jN}$ can be represented by an upper triangle matrix:
\begin{equation*}
C_g\circ f \begin{pmatrix} h_M&\epsilon_i\end{pmatrix}= \begin{pmatrix} h_N&\epsilon_j'\end{pmatrix} \begin{pmatrix} \lambda & \rho\\ 0 &\mu \end{pmatrix},\quad \begin{pmatrix} \lambda & \rho\\ 0 &\mu \end{pmatrix}\in \mathrm{GL}(2,\widehat{\Z}).
\end{equation*}

Indeed, the entries $\lambda$ and $\mu$ are uniquely determined by $f$ up to $\pm$-signs, that is to say they are not affected by different choices of bases. 
In particular, when $M$ and $N$ are minor Seifert manifolds, if we choose $(h,\epsilon)=(b^2,a)$ following (\ref{EQU: Minor fundamental group}), then $C_g\circ f$ is represented by a diagonal matrix $\left(\begin{smallmatrix}
     \lambda & 0\\ 0 & \mu 
\end{smallmatrix}\right)$.
\end{example}

\subsection{Profinite isomorphism respecting peripheral structure}\label{SEC: Seifert peripheral}

For a Seifert manifold $M$ with boundary, $\widehat{\pi_1M}$ marked with the peripheral structure $\{\overline{\pi_1\partial_iM}\}$ entails extra information beyond \autoref{PROP: Virtually central}. For instance, this additionally determines the number of boundary components of the base orbifold, which determines the unique isomorphism type of the base orbifold. Following the setting of \autoref{COR: Seifert Split}, if $f:\widehat{\pi_1M}\to \widehat{\pi_1N}$ respects the peripheral structure, then so does $f_\flat$. In addition, we have the following characterization for $f_\flat$.

\begin{definition}[{\cite[Definition 2.6]{Wil18}}]\label{DEF: Exotic Automorphism}
Let $\O$ be an orientable 2-orbifold with boundary, with fundamental group
$$\pi_1\O= \left< a_1,\ldots, a_r, e_1, \ldots, e_s, u_1, v_1, \ldots, u_g,v_g\mid a_j^{p_j}\right>,$$
where the boundary components of $\O$ are represented by the conjugacy classes of the elements $e_1, \ldots, e_s$ together with
$$ e_0 = \left(a_1 \cdots a_r e_1\cdots e_s [u_1, v_1]\cdots [u_g,v_g]\right)^{-1} .$$
Then, an {\em exotic automorphism of $\O$ of type $\mu\in \Zx$} is an automorphism $\psi\colon \widehat{\pi_1\O}\to\widehat{\pi_1\O}$ such that $\psi(a_j) \sim a_j^\mu$ and $\psi(e_i)\sim e_i^\mu$ for all $1\le j\le r$ and $0\le i\le s$, where $\sim$ denotes conjugacy in $\widehat{\pi_1\O}$.

Similarly, let $\O'$ be a non-orientable 2-orbifold with boundary, with fundamental group 
$$\pi_1\O'= \left< a_1,\ldots, a_r, e_1, \ldots, e_s, u_1,\ldots, u_g\mid a_j^{p_j}\right>,$$
where the boundary components of $\O'$ are represented by the conjugacy classes of the elements $e_1, \ldots, e_s$ together with
$$ e_0 = \left( a_1 \cdots a_r e_1\cdots e_su_1^2\cdots u_g^2\right)^{-1}. $$
Let $o\colon \widehat{\pi_1\O'}\to \{\pm 1\}$ be the orientation homomorphism of $\O'$. 
Then, an {\em exotic automorphism of $\O'$ of type $\mu\in\Zx$\iffalse with signs $\sigma_0, \ldots, \sigma_s$\fi} is an automorphism $\psi\colon\widehat{\pi_1\O'}\to\widehat{\pi_1\O'}$ such that $\psi(a_j) \sim a_j^\mu$ and $\psi(e_i)= g_i\cdot e_i^{o(g_i)\mu}\cdot g_i^{-1}$, where $g_i\in \widehat{\pi_1\O'}$.
\end{definition}

\begin{proposition}[{\cite{Wil17, Wil18}}]
\label{PROP: Hempel pair}
Suppose $M$ and $N$ are major or minor Seifert  manifolds, and $f:\widehat{\pi_1M} \ttt \widehat{\pi_1N}$ is an isomorphism respecting the peripheral structure.
Then:
\begin{enumerate}[leftmargin=*]
\item The base orbifolds $\O_M$ and $\O_N$ are isomorphic.
\item\label{8.5-2} 
By choosing an appropriate isomorphism of orbifolds, $\O_M$ and $\O_N$ can be identified as the same orbifold $\O$, so that the isomorphism $f_\flat$ given by \autoref{COR: Seifert Split} is an exotic automorphism of $\O$ of type $\mu\in \Zx$.
\item\label{8.5-3} By choosing generators $h_M\in K_M$ and $h_N\in K_N$, the isomorphism $f_\natural$ given by \autoref{COR: Seifert Split} sends $h_M$ to $h_N^\lambda$ for some $\lambda\in \Zx$.
\item\label{8.5-4} Denote $\kappa=\mu^{-1}\lambda$. If $M$ has Seifert invariants $(p_j,q_j)$, then by identifying the base orbifolds as in (\ref{8.5-2}) and the fiber subgroups as in (\ref{8.5-3}), $N$ has Seifert invariants $(p_j,q_j')$ where $q_j'\equiv \kappa q_j\pmod {p_j}$.
\end{enumerate}
\end{proposition}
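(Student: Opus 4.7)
The plan is to prove this proposition by first invoking \autoref{PROP: Virtually central} to separate the minor and major cases, since the profinite isomorphism $f$ respecting peripheral structure preserves this dichotomy and (in the major case) the orientability of the base orbifold.

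For the minor case, \autoref{LEM: Minor completion} supplies explicit formulas: $f(a)=\alpha^\mu$, $f(b^2)=\beta^{2\lambda}$ for some $\mu,\lambda\in\Zx$. Under \autoref{convention: minor}, $h_M=b^2$ generates $K_M$ and $h_N=\beta^2$ generates $K_N$, so $f_\natural$ is scalar multiplication by $\lambda$. The quotient $\pi_1\O$ of the disk with two order-$2$ cone points has the two cone generators represented by $a$ and $ab$; then $f_\flat$ sends $a\mapsto \alpha^\mu$ and $ab\mapsto \alpha^{\mu+\theta}\beta^\lambda$. One checks directly that these images are conjugate in $\widehat{\pi_1\O}$ to the $\mu$-th power of the generators (after verifying $\lambda\equiv \mu\pmod 2$, which follows because both generators have order $2$), establishing exoticness. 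The Seifert-invariant transformation is then a direct calculation from these formulas.

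For the major case, \autoref{COR: Seifert Split} produces the commutative diagram with $f_\natural$ scalar multiplication by some $\lambda\in\Zx$. Since $f$ respects peripheral structure and sends $\overline{K_M}$ to $\overline{K_N}$, the induced $f_\flat$ sends each $\overline{e_i}$ to a conjugate of some $\overline{e_{\sigma(i)}}$ in $\widehat{\pi_1\O_N}$. By \autoref{PROP: Virtually central}(3), $\pi_1\O_M\cong\pi_1\O_N$ as abstract groups; combined with the orientability data and the number of boundary components (detected by the peripheral structure), the underlying 2-orbifolds are isomorphic, and we identify them as $\O$. The heart of the argument is to show that, after this identification, $f_\flat$ is an exotic automorphism of $\O$ of a single type $\mu\in\Zx$. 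This proceeds by analyzing the conjugacy classes of finite-order elements in $\widehat{\pi_1\O}$ (which must be preserved by $f_\flat$ and correspond to cone-point generators), and showing that a unified exponent $\mu$ governs both the cone-point images $f_\flat(a_j)\sim a_j^\mu$ and the boundary images $f_\flat(e_i)\sim e_i^{\mu}$ (with the appropriate twist by the orientation homomorphism in the non-orientable case). This analysis is essentially the content of Wilkes' treatment of profinite isomorphisms between Fuchsian groups in \cite{Wil17,Wil18}, which adapts to the bounded-orbifold setting because the peripheral subgroups $\overline{\pi_1\partial_iM}$ project precisely to $\overline{\langle e_i\rangle}\rtimes\{\pm 1\}$ under the quotient.

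Finally, the Seifert-invariant transformation follows by a direct computation. Using the standard presentation of $\pi_1M$ with defining relations $a_j^{p_j}h_M^{q_j}=1$ (together with the global relation encoding the Euler number), we apply $f$ to obtain in $\widehat{\pi_1N}$ the relation $g_j\, a_j^{\mu p_j} g_j^{-1}\cdot h_N^{\lambda q_j}=1$ for some $g_j\in\widehat{\pi_1N}$. Since $h_N$ is (virtually) central, this rearranges to $a_j^{\mu p_j}=h_N^{-\lambda q_j}$ in the peripheral subgroup $\overline{\langle a_j\rangle}\cdot\overline{\langle h_N\rangle}$; comparing with the defining relation $a_j^{p_j}=h_N^{-q_j'}$ in $\pi_1N$, and using that $\mu\in\Zx$ is a unit modulo $p_j$, we conclude $q_j'\equiv \mu^{-1}\lambda\, q_j=\kappa q_j\pmod{p_j}$. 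The main obstacle I expect lies in the non-orientable case of step~(\ref{8.5-2}), where the interaction of $f_\flat$ with the orientation homomorphism forces a careful bookkeeping of conjugators and signs; this is the technically delicate part that mirrors Wilkes' analysis and has to be adapted with the peripheral-preserving hypothesis playing the role that the global fundamental relation played in the closed setting.
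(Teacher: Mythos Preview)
Your approach is essentially the same as the paper's: separate into minor and major cases via \autoref{PROP: Virtually central}, handle the minor case through \autoref{LEM: Minor completion}, and invoke Wilkes for the major case. The paper is much more terse, however: for the major case it simply cites \cite[Theorem~2.7]{Wil18} and \cite[Theorem~5.8]{Wil17} directly rather than sketching a re-derivation, and for the minor case it observes that item~(\ref{8.5-4}) holds automatically because $(\Z/2\Z)^{\times}$ has a single element, so no Seifert-invariant computation is needed at all.

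One slip in your minor-case sketch: in $\pi_1\O = \pi_1M/\langle b^2\rangle \cong D_\infty$, the two cone-point generators are the images of $b$ and $ab$ (the order-$2$ elements), not $a$ and $ab$; the image of $a$ generates the infinite cyclic subgroup and represents the boundary component $e_0$. This is exactly what the paper uses: $f(a)=\alpha^\mu$ immediately gives the exotic-automorphism condition on the boundary, and the condition on cone points is automatic since any unit in $\widehat{\Z}$ reduces to $1$ modulo~$2$. With this correction your plan goes through, but the paper's route avoids the bookkeeping entirely.
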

\begin{proof}
    When $M$ and $N$ are major Seifert manifolds, this follows from {\cite[Theorem 2.7]{Wil18}} and {\cite[Theorem 5.8]{Wil17}}. 
    
    When $M$ and $N$ are minor Seifert manifolds, this follows from \autoref{LEM: Minor completion}. In fact, following the group presentation (\ref{EQU: Minor fundamental group}), the boundary component of the base orbifold $D^2_{(2,2)}$ is represented by the image of $a$ in $\pi_1M/\langle b^2\rangle$, and $f(a)=\alpha^\mu$ according to \autoref{LEM: Minor completion}, which proves (\ref{8.5-2}). Meanwhile, (\ref{8.5-4}) automatically holds in the minor case, since $(\Z/2\Z)^{\times}$ has only one element.
\end{proof}

In this case, $(M,N)$ is called a \textit{Hempel pair} of scale factor $\kappa$, and $f$ is called a profinite isomorphism of \textit{scale type} $(\lambda,\mu)$. Note that the parameters $\lambda$ and $\mu$ are uniquely determined up to $\pm$-signs, regardless of the group presentation one chooses.

\autoref{PROP: Hempel pair} can also be described group-theoretically. For instance, when $\O$ is non-orientable, we can express $\pi_1M$ and $\pi_1N$ as
\small
\begin{equation}\label{EQU: Non-orientable}
\begin{aligned}
&\pi_1M=\left<\left.\begin{gathered}a_1,\ldots, a_r, e_0, e_1, \ldots, e_s, \\u_1, \ldots, u_g,h_M\end{gathered}\right.\left|\,\begin{gathered}e_0=\left(a_1 \cdots a_r e_1\cdots e_s u_1^2\cdots u_g^2\right)^{-1}\\ a_{j}^{p_j}h_M^{q_j}=1,  [a_j,h_M]=1,\\  [e_i,h_M]=1, \, u_ih_Mu_i^{-1}=h_M^{-1}\end{gathered}\right.\right>,\\
&\pi_1N=\left<\left.\begin{gathered}a_1',\ldots, a_r', e_0', e_1', \ldots, e_s', \\u_1', \ldots, u_g',h_N\end{gathered}\right.\left|\,\begin{gathered}e_0'=\left(a_1' \cdots a_r' e_1'\cdots e_s' u_1^{\prime 2}\cdots u_g^{\prime 2}\right)^{-1}\\(a_{j}')^{p_j}h_N^{q_j'}=1,\,  [a_j',h_N]=1,\\  [e_i',h_N]=1, \, u_i'h_Nu_i^{\prime -1}=h_N^{-1}\end{gathered}\right.\right>,
\end{aligned}
\end{equation}
\normalsize
where $q_j'\equiv \kappa q_j\pmod{p_j}$, such that $f(h_M)=h_N^{\lambda}$, and $f(e_i)=g_i(e_i')^{o(g_i)\mu} g_i^{-1}h_N^{\rho_i}$ for some $g_i\in \widehat{\pi_1N}$ and $\rho_i\in \widehat{\Z}$. Here, 
$$o:\widehat{\pi_1N}\tto \widehat{\pi_1\O_N}\tto \{\pm1\}$$ denotes the orientation homomorphism.

Similarly, when $\O$ is orientable, there exist presentations for $\pi_1M$ and $\pi_1N$
\small
\begin{equation}\label{EQU: Orientable}
\begin{aligned}
&\pi_1M=\left<\left.\begin{gathered}a_1,\ldots, a_r, e_0, e_1, \ldots, e_s, \\u_1, v_1, \ldots, u_g,v_g,h_M\end{gathered}\right.\left|\,\begin{gathered}e_0=\left(a_1 \cdots a_r e_1\cdots e_s [u_1, v_1]\cdots [u_g,v_g]\right)^{-1}\\ a_{j}^{p_j}h_M^{q_j}=1, \,h_M\text{ central}\end{gathered}\right.\right>,\\
&\pi_1N=\left<\left.\begin{gathered}a_1',\ldots, a_r',e_0', e_1', \ldots, e_s', \\u_1', v_1', \ldots, u_g',v_g',h_N\end{gathered}\right.\left|\,\begin{gathered}e_0'=\left(a_1' \cdots a_r' e_1'\cdots e_s' [u_1', v_1']\cdots [u_g',v_g']\right)^{-1}\\ (a_{j}')^{p_j}h_N^{q_j'}=1, \,h_N\text{ central}\end{gathered}\right. \right>,
\end{aligned}
\end{equation}
\normalsize
where $q_j'\equiv \kappa q_j\pmod{p_j}$, such that $f(h_M)=h_N^{\lambda}$, and $f(e_i)=g_i(e_i')^\mu g_i^{-1}h_N^{\rho_i}$ for some $g_i\in \widehat{\pi_1N}$ and $\rho_i\in \widehat{\Z}$. 
For consistency of notation, when $\O$ is orientable, we define
$$
o:\widehat{\pi_1N}\to \{\pm1\}
$$
to be the trivial homomorphism.

Consequently, when $f$ respects the peripheral structure, regardless whether $\O$ is orientable, $f$ restricting on the peripheral subgroups can be represented by matrices as shown by \autoref{EX: Boundary map}: 
$$ C_{g_i^{-1}}\circ f\begin{pmatrix}
    h_M& e_i
\end{pmatrix}=\begin{pmatrix}
    h_N& e_i'
\end{pmatrix}\begin{pmatrix}
   o(g_i) \lambda & o(g_i)\rho_i\\ 0 & o(g_i) \mu
\end{pmatrix},$$ where the pair of diagonal elements $(\lambda,\mu)$ is exactly the scale type of $f$, and is unified for all the boundary components up to $\pm$-signs.

For brevity of notation, the elements $e_i$ (or $e_i'$) are called {\em peripheral cross-sections} in the following context. Indeed, they can be identified as the boundary components of a cross-section of the base orbifold removing the neighbourhoods of the singular points. 
Note that the peripheral cross-sections are not unique, and they in fact depend on the precise Seifert invariants. 
Indeed, on each component of $\partial M$, the basis $\{h_M,e_i\}$ determines an orientation compatible with each other, and further determines the invariant $\sum \frac{q_j}{p_j}$. 

In particular, when $\lambda=\pm \mu$, a Hempel pair of scale factor $\pm 1$ is a pair of homeomorphic Seifert manifolds, according to the classification of Seifert fibrations. 


In the following, we denote $\Z^{-1}\widehat{\Z}=\left\{ \frac{\alpha}{p}\mid \alpha\in \widehat{\Z},\,p\in \Z\setminus \{0\}\right\}$ as the localization. Since $\widehat{\Z}$ is torsion-free, $\widehat{\Z}$ injects into $\Z^{-1}\widehat{\Z}$. 
The following lemma is based on the calculation of homology.

\begin{lemma}\label{LEM: Sum slope}
Let $(M,N)$ be a Hempel pair of scale factor $\kappa=\mu^{-1}\lambda$, and $f:\widehat{\pi_1M}\ttt\widehat{\pi_1N}$ be a profinite isomorphism respecting the peripheral structure, with scale type $(\lambda,\mu)$. Choose presentations of the fundamental groups as in (\ref{EQU: Non-orientable}) and (\ref{EQU: Orientable}), so that 
\begin{equation}\label{8.9equ1}
f(h_M)=h_N^\lambda\text{ and }f(e_i)=g_i(e_i')^{o(g_i)\mu}g_i^{-1}h_N^{\rho_i}.
\end{equation} 
Then, in $\Z^{-1}\widehat{\Z}$,
\newsavebox{\tempequa}
\begin{lrbox}{\tempequa}
$ \sum\limits_{i=0}^{s}\rho_i=\lambda\sum\limits_{j=1}^{r}\frac{q_j}{p_j}-\mu \sum\limits_{j=1}^{r}\frac{q_j'}{p_j}.$
\end{lrbox}
\begin{equation*}
\usebox{\tempequa}
\end{equation*}
\end{lemma}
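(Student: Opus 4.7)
My plan is to compute the abelianized image of the surface relator of $\pi_1 M$ with $\Z^{-1}\widehat\Z$-coefficients, and compare it against the explicit formula for $f$ on the peripheral generators.

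From the surface relator, abelianizing gives $\sum_{i=0}^s [e_i]+\sum_j [a_j]+[W]=0$ in $H_1(M;\Z)$, where $[W]=0$ in the orientable case and $[W]=2\sum_k[u_k]$ in the non-orientable case. Combined with the Seifert relations $p_j[a_j]+q_j[h_M]=0$, inverting nonzero integers yields $[a_j]=-\frac{q_j}{p_j}[h_M]$ in $H_1(M;\Z^{-1}\widehat\Z)$, so
\[
\sum_{i=0}^s [e_i]=\sum_{j=1}^r\frac{q_j}{p_j}[h_M]-[W].
\]
Applying the induced isomorphism $f_*:H_1(M;\Z^{-1}\widehat\Z)\to H_1(N;\Z^{-1}\widehat\Z)$ given by \autoref{PROP: Induce linear} and using $f(h_M)=h_N^\lambda$,
\[
\sum_{i=0}^s f_*[e_i]=\lambda\sum_{j=1}^r\frac{q_j}{p_j}[h_N]-f_*[W].
\]
On the other hand, since conjugation is trivial in the abelianization, the formula $f(e_i)=g_i(e_i')^{o(g_i)\mu}g_i^{-1}h_N^{\rho_i}$ gives $f_*[e_i]=o(g_i)\mu[e_i']+\rho_i[h_N]$, and hence
\[
\sum_{i=0}^s f_*[e_i]=\mu\sum_{i=0}^s o(g_i)[e_i']+\Big(\sum_{i=0}^s\rho_i\Big)[h_N].
\]

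In the orientable case, $o\equiv 1$ and $[W]=[W']=0$. Using the analogous identity $\sum_i [e_i']=\sum_j (q_j'/p_j)[h_N]$ in $N$, the two expressions for $\sum_i f_*[e_i]$ combine into
\[
\Big(\sum_i\rho_i+\mu\sum_j\frac{q_j'}{p_j}\Big)[h_N]=\lambda\sum_j\frac{q_j}{p_j}[h_N].
\]
Because $[h_N]$ has infinite order in $H_1(N;\Z)$ when the base is orientable, it generates a free $\Z^{-1}\widehat\Z$-submodule of $H_1(N;\Z^{-1}\widehat\Z)$, so one can extract the $[h_N]$-coefficient to obtain the stated identity.

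The non-orientable case is where I expect the main difficulty: $[h_N]$ is $2$-torsion in $H_1(N;\Z)$, so the direct coefficient extraction above fails. I would handle this by passing to the orientation double covers $\tilde M\to M$ and $\tilde N\to N$. The relation $u_k h_M u_k^{-1}=h_M^{-1}$ maps under $f$ to $f(u_k)h_N^\lambda f(u_k)^{-1}=h_N^{-\lambda}$, forcing $f(u_k)$ into the orientation-reversing coset of $\widehat{\pi_1 N}$, so $f$ preserves the orientation kernel and restricts to a profinite isomorphism $\tilde f:\widehat{\pi_1\tilde M}\to\widehat{\pi_1\tilde N}$ between Seifert manifolds with orientable base, with each $e_i$ already lying in $\pi_1\tilde M$ since $o(e_i)=1$. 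Applying the orientable case of the argument to $\tilde f$, and tracking how the Seifert invariants and $\rho$-parameters of $\tilde M,\tilde N$ assemble from those of $M,N$ (each singular point and each boundary component of $\mathcal{O}$ lifts to two in $\tilde{\mathcal{O}}$, with controlled sign conventions on the $q_j$'s), would yield the identity. A possibly cleaner uniform alternative is to work throughout in the twisted homology $H_1(N;\widehat\Z_{-})$ associated to the orientation homomorphism, in which $[h_N]$ becomes torsion-free even when $\mathcal{O}$ is non-orientable, removing the need to split into cases.
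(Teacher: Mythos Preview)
Your proposal is correct and follows essentially the same route as the paper. In the orientable case the paper clears denominators (multiplying by $\prod p_j$) and works in $\widehat\Z\otimes H_1^{\mathrm{free}}$ before invoking that $[h_N]$ is non-torsion, whereas you invert the $p_j$'s directly in $\Z^{-1}\widehat\Z$; these are the same computation. In the non-orientable case the paper does exactly what you outline: pass to the orientation double covers $M_o,N_o$ (characterized as the centralizer of the fiber, so preserved by $f$), observe that each boundary and each exceptional fiber lifts to two copies with the same Seifert data, apply the orientable case to get the identity with an extra factor of $2$ on both sides, and divide. Your twisted-homology alternative is not used in the paper.
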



\newsavebox{\tempequb}
\begin{lrbox}{\tempequb}
$\prod\limits_{j=1}^r p_j \sum\limits_{i=0}^{s}\left[e_i\right]=-\prod\limits_{j=1}^r p_j \sum\limits_{l=1}^{r}\left[a_l\right]=\sum\limits_{j=1}^{r} (\prod\limits_{k\neq j}p_k  )q_j[h_M].$
\end{lrbox}

\newsavebox{\tempequd}
\begin{lrbox}{\tempequd}
$
2\sum\limits_{i=0}^{s}\rho_i=\lambda\cdot 2\sum\limits_{j=1}^{r}\frac{q_j}{p_j}-\mu \cdot 2\sum\limits_{j=1}^{r}\frac{q_j'}{p_j}.
$
\end{lrbox}

\begin{proof}
We first prove the case when $M$ and $N$ have orientable base orbifolds. In this case, 
in $H_1(M;\Z)$ we have \begin{equation}\label{8.9equ2}
\usebox{\tempequb}
\end{equation}
Similarly, in $H_1(N;\Z)$ we have
\begin{equation}\label{8.9equ3}{\textstyle\prod\limits_{j=1}^r p_j \sum\limits_{i=0}^{s}\left[e_i'\right]=\sum\limits_{j=1}^{r} (\prod\limits_{k\neq j}p_k  )q_j'[h_N].}\end{equation}

Let $f_{\ast}: \tensor\hfree{M}\ttt \tensor \hfree{N}$ be the isomorphism induced by $f$ according to \autoref{PROP: Induce linear}. Then, in $\tensor\hfree{N}$, 

\begin{equation*}
\begin{aligned}
{\textstyle\lambda\cdot  \big(\sum\limits_{j=1}^{r} (\prod\limits_{k\neq j}p_k  )q_j \big)} [h_N]=& {\textstyle f_{\ast}\Big(  (\sum\limits_{j=1}^{r} (\prod\limits_{k\neq j}p_k  )q_j  ) [h_M]\Big)}& (\ref{8.9equ1})\\
=&{\textstyle f_\ast\Big(\prod\limits_{j=1}^r p_j \sum\limits_{i=0}^{s}\left[e_i\right]\Big)}&(\ref{8.9equ2})\\
=&{\textstyle\prod\limits_{j=1}^r p_j  \sum\limits_{i=0}^{s} \left (\mu[e_i']+\rho_i[h_N]\right)}&(\ref{8.9equ1})\\
=&{\textstyle\mu\cdot\prod\limits_{j=1}^r p_j \sum\limits_{i=0}^{s}\left[e_i'\right]+ \prod\limits_{j=1}^r p_j  ( \sum\limits_{i=0}^{s}\rho_i ) [h_N]}&\\
=&{\textstyle\Big(\mu\cdot \sum\limits_{j=1}^{r} (\prod\limits_{k\neq j}p_k  )q_j'+\prod\limits_{j=1}^r p_j  ( \sum\limits_{i=0}^{s}\rho_i )\Big ) [h_N]}&(\ref{8.9equ3})
\end{aligned}
\end{equation*}

\allowdisplaybreaks[0]
\newsavebox{\tempequc}
\begin{lrbox}{\tempequc}
$
\sum\limits_{i=0}^{s}\rho_i=\lambda\sum\limits_{j=1}^{r}\frac{q_j}{p_j}-\mu \sum\limits_{j=1}^{r}\frac{q_j'}{p_j}\in \Z^{-1}\widehat{\Z}.
$
\end{lrbox}

According to \cite[Lemma 2]{OVZ67}, $[h_N]$ is non-torsion in $H_1(N;\Z)$ when the base orbifold $\O_N$ is orientable, i.e. $[h_N]\neq 0$  in the finitely generated free $\Z$-module $ \hfree{N}$. Thus, the above coefficients must be equal:
\begin{equation*}{\textstyle\lambda\cdot \big(\sum\limits_{j=1}^{r} (\prod\limits_{k\neq j}p_k  )q_j\big )=\mu\cdot \sum\limits_{j=1}^{r} (\prod\limits_{k\neq j}p_k  )q_j'+\prod\limits_{j=1}^r p_j  ( \sum\limits_{i=0}^{s}\rho_i )}.\end{equation*}
Quotienting the non-zero integer $\prod_{j=1}^r p_j$ implies 
\begin{equation*}
\usebox{\tempequc}
\end{equation*}

Next, we move on to the case when $M$ and $N$ have non-orientable base orbifolds. Let $M_o$ and $N_o$ be the two-fold covers of $M$ and $N$ corresponding to the centralizer of the fiber subgroups, i.e. $\widehat{\pi_1M_o}\cong \overline{\pi_1M_o}$ is the centralizer of $\overline{K_M}$, and so is $\widehat{\pi_1N_o}$. Since $f(\overline{K_M})=\overline{K_N}$ by \autoref{PROP: Virtually central}, $f$ restricts to an isomorphism $f:\widehat{\pi_1M_o}\ttt \widehat{\pi_1N_o}$ that also respects the peripheral structure. In fact, the scale type of this isomorphism is also $(\lambda,\mu)$.

To be explicit, pick an element $\sigma \in \pi_1M\setminus \pi_1M_o$, and let $d_i=\sigma e_i^{-1}\sigma ^{-1}$. Note that each boundary component as well as each singular fiber of $M$ lifts to two components in $M_o$, and the conjugation by $\sigma$ on $\pi_1M_o$ corresponds to the deck transformation on $M_o$ which is orientation-preserving but fiber-reversing. Thus, $\{h_M^{-1},d_i^{-1}\}=\{\sigma h_M\sigma ^{-1},\sigma e_i\sigma^{-1}\}$ determines an orientation on the boundary component, which is consistent with $\{h_M, e_i\}$. By simultaneously reversing the two generators, the elements $e_0,\cdots, e_s,d_0,\cdots, d_s$ become the (conjugates of) peripheral cross-sections in $\pi_1M_o$. Together with the orientation on the fiber determined by the generator $h_M$, this determines the Seifert invariants of $M_o$ as $(p_1,q_1),(p_1,q_1),\cdots, (p_r,q_r),(p_r,q_r)$, i.e. copying the Seifert invariants of $M$ twice with the same $\pm$-sign.

Similarly, pick $\delta \in \pi_1N\setminus \pi_1N_o$, and let $d_i'=\delta e_i'^{-1} \delta^{-1}$. Then  the Seifert invariants of $N_o$, determined by these peripheral cross-sections and the fiber generator $h_N$, are $(p_1,q_1'),(p_1,q_1'),\cdots, (p_r,q_r'),(p_r,q_r')$.

For each $0\le i \le s$, $f(e_i)=g_i(e_i')^{o(g_i)\mu}g_i^{-1}h_N^{\rho_i}$ where $g_i\in \widehat{\pi_1N}$. If $o(g_i)=1$, i.e. $g_i\in \widehat{\pi_1M_o}$, then it is easy to verify that $f(d_i)=g_i'(d_i')^{\mu} g_i^{\prime -1} h_N^{\rho_i}$, where $g_i'=f(\sigma)g_i\delta^{-1}\in \widehat{\pi_1N_o}$. If $o(g_i)={-1}$, then similarly, $f(e_i)=l_i (d_i')^\mu l_i^{-1} h_N^{\rho_i}$ and $f(d_i)=l_i' (e_i')^\mu l_i^{\prime -1} h_N^{\rho_i}$, where $l_i=g_i\delta^{-1}\in \widehat{\pi_1N_o}$ and $l_i'=f(\sigma)g_i\in \widehat{\pi_1N_o}$.

This implies that $f:\widehat{\pi_1M_o}\ttt \widehat{\pi_1N_o}$ has scale type $(\lambda, \mu)$. The same calculation in the homology shows that
\begin{equation*}
\begin{gathered}
{\textstyle \prod\limits_{j=1}^r p_j \big(\sum\limits_{i=0}^{s}\left[e_i\right]+\sum\limits_{i=0}^s[d_i]\big)=2\sum\limits_{j=1}^{r} (\prod\limits_{k\neq j}p_k  )q_j[h_M]\in H_1(M_o;\Z),}\\
{\textstyle  \prod\limits_{j=1}^r p_j \big(\sum\limits_{i=0}^{s}\left[e_i'\right]+\sum\limits_{i=0}^s[d_i']\big)=2\sum\limits_{j=1}^{r} (\prod\limits_{k\neq j}p_k  )q_j'[h_N]\in H_1(N_o;\Z).}
\end{gathered}
\end{equation*}
Since $M_o$ and $N_o$ have orientable base orbifolds, the calculation in the first case implies  that
\begin{equation*}
\usebox{\tempequd}
\end{equation*}
Quotienting the coefficient $2$ yields the conclusion.
\end{proof}

\subsection{Realizing as homeomorphism}


This subsection serves as a preparation for the proof of \autoref{inthm: Graph new}. In particular, we prove \autoref{inthm: Graph new}~(\ref{di222}) with the graph manifolds replaced by Seifert fibered spaces.

\begin{proposition}\label{PROP: Seifert Zx-regular}
Let $M$ and $N$ be Seifert fibered manifolds with non-empty incompressible boundary. Suppose that $f:\widehat{\pi_1M}\ttt\widehat{\pi_1N}$ is an isomorphism respecting the peripheral structure, 
and that $f$ is peripheral $\widehat{\Z}^{\times}$-regular at some selected boundary components $\partial_0 M, \partial_1 M,\cdots, \partial_k M$ of $M$, where $1\le k+1 \le \# \partial M$.\footnote{The indices start from $0$ just for consistency with the subsequent notations.} 
\begin{enumerate}[leftmargin=*]
\item Then, $M$ and $N$ are homeomorphic.
\item There exists an element  $\lambda\in \widehat{\Z}^{\times}$ and two  homeomorphisms $\Phi^\pm:M\to N$ such that 
for each $0\le i\le k$, $f$ restricting on $\partial_iM$ is induced by the homeomorphism $\Phi^\pm$ with coefficient $\pm\lambda$, where the $\pm$-signs are consistent.
\end{enumerate}
\end{proposition}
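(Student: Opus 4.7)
My strategy has three phases: (A) deduce that the scale type $(\lambda,\mu)$ of $f$ (in the sense of \autoref{PROP: Hempel pair}) satisfies $\lambda=\pm\mu$; (B) conclude $M\cong N$ from the classification of Seifert fibrations; (C) explicitly construct homeomorphisms $\Phi^\pm$ realizing $f$ on the selected boundary components.

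For (A): since $f$ respects the peripheral structure, \autoref{PROP: Hempel pair} applies and $(M,N)$ is a Hempel pair of scale factor $\kappa=\mu^{-1}\lambda$ with $f$ of scale type $(\lambda,\mu)$. Following the presentations (\ref{EQU: Non-orientable})/(\ref{EQU: Orientable}) and \autoref{EX: Boundary map}, the restriction $f|_{\overline{\pi_1\partial_0 M}}$ in the bases $\{h_M,e_0\},\{h_N,e_0'\}$ is represented (after a suitable conjugation by $g_0\in\widehat{\pi_1 N}$) by the upper-triangular matrix
\[
\begin{pmatrix} o(g_0)\lambda & o(g_0)\rho_0 \\ 0 & o(g_0)\mu \end{pmatrix}\in\mathrm{GL}_2(\widehat{\Z}).
\]
Peripheral $\Zx$-regularity at $\partial_0 M$ demands this matrix equal $\eta\otimes A$ for some $\eta\in\Zx$ and $A\in\mathrm{GL}_2(\Z)$. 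The diagonal entries of $A$ are $\eta^{-1}o(g_0)\lambda$ and $\eta^{-1}o(g_0)\mu$, both lying in $\Zx\cap\Z=\{\pm1\}$, so $\lambda=\pm\mu$ and $\kappa=\pm1$. Phase (B) then follows from \autoref{PROP: Hempel pair}~(\ref{8.5-4}): $q_j'\equiv\pm q_j\pmod{p_j}$, so $M\cong N$ by the classification of Seifert fibered spaces.

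For (C): the same matrix analysis at each $\partial_i M$ with $0\le i\le k$ shows $\rho_i/\lambda\in\Z$, yielding integers $n_i^\pm:=\pm o(g_i)\rho_i/\lambda$. I build $\Phi^+_\ast:\pi_1 M\to\pi_1 N$ fiber-preservingly by setting $h_M\mapsto h_N$, $a_j\mapsto a_j'h_N^{(q_j'-q_j)/p_j}$ (the exponent is integral since $\kappa=1$; in the $\kappa=-1$ branch one replaces $a_j$'s image by $(a_j')^{-1}h_N^{(q_j+q_j')/p_j}$), $e_i\mapsto h_N^{n_i^+}e_i'$ for $i\le k$, and $e_j\mapsto h_N^{m_j}e_j'$ for $j>k$ with free integer parameters $m_j$, together with $u_l,v_l$ mapped to their primed counterparts. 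The orbifold relation $e_0 a_1\cdots a_r e_1\cdots e_s\prod[u_l,v_l]=1$ (and its non-orientable analogue) imposes a single linear constraint on $\sum_{i\le k}n_i^+ +\sum_{j>k}m_j$, which by \autoref{LEM: Sum slope} (applied with $\lambda=\mu$) can be solved for integer $m_j$. Waldhausen rigidity (\autoref{Wald}) upgrades $\Phi^+_\ast$ to a homeomorphism $\Phi^+:M\to N$. The homeomorphism $\Phi^-$ with coefficient $-\lambda$ is built analogously: reverse the fiber orientation by setting $h_M\mapsto h_N^{-1}$ and $e_i\mapsto h_N^{-n_i^+}(e_i')^{-1}$ for $i\le k$, and verify that the induced boundary matrices equal $-I$ times those of $\Phi^+$.

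The main obstacle is phase (C): one must simultaneously match $f$ on the $k+1$ selected boundaries while realizing the result by a genuine homeomorphism on all of $M$. The global sum formula from \autoref{LEM: Sum slope} is what makes this feasible---it guarantees that the integer shifts $n_i^+$ prescribed by $\Zx$-regularity at $\partial_0 M,\ldots,\partial_k M$ admit a compatible integer extension via $m_j$ to the unselected boundary tori, consistent with the orbifold relation. A secondary difficulty is tracking the orientation characters $o(g_i)$ in the non-orientable base case so that the signs assemble coherently across all selected boundaries; this is where the exotic-automorphism formalism of \autoref{DEF: Exotic Automorphism} underlying \autoref{PROP: Hempel pair} becomes essential.
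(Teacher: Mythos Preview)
Your phases (A) and (B) are correct and match the paper; the paper also normalizes to $\lambda=\mu$ by switching the fiber generator, so your $\kappa=-1$ branch never arises. For (C), the paper follows the same strategy but assembles $\Phi^+$ as a composition of prebuilt homeomorphisms rather than writing a single group-isomorphism formula: an apparent identification $\mathscr F:M\to N$ (after adjusting so that $q_j'=q_j$), vertical Dehn twists (\autoref{LEM: Dehn twist}) realizing the shifts $e_i\mapsto e_i h^{t_i}$, and in the non-orientable-base case a boundary-swap homeomorphism (\autoref{LEM: Swap boundary}) absorbing the signs $\epsilon_i=o(g_i)$; then $\Phi^-=\Phi^+\circ\tau$ with $\tau$ the $-1$-involution of \autoref{LEM: -1}. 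Your explicit formula is essentially the Dehn-twist step written in coordinates, and when the base orbifold is orientable it works (your ``single linear constraint'' is exactly the paper's use of \autoref{LEM: Sum slope}, which also handles the $k=s$ case where there are no free $m_j$).

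The genuine gap is the non-orientable-base case, which you flag but do not resolve. Your assignment $h_M\mapsto h_N$, $e_i\mapsto h_N^{n_i^+}e_i'$ cannot realize $f$ at a selected boundary with $o(g_i)=-1$: any conjugator $g'\in\widehat{\pi_1 N}$ with $C_{g'}\circ f(\overline{\pi_1\partial_iM})=\overline{\pi_1\partial_jN}$ must satisfy $g'g_i\in N_{\widehat{\pi_1N}}(\overline{\pi_1\partial_jN})=\overline{\pi_1\partial_jN}\subseteq\ker o$ by \autoref{LEM: major peripheral}, forcing $o(g')=-1$, whence $C_{g'}\circ f(h_M)=h_N^{-\lambda}\neq h_N^{\lambda}=(\lambda\otimes\Phi^+_\ast)(h_M)$. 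Fixing this requires a homeomorphism that flips the sign at precisely those boundaries, which is the content of \autoref{LEM: Swap boundary}; invoking the exotic-automorphism formalism alone does not produce such a homeomorphism. Your $\Phi^-$ sketch is likewise under-specified on $a_j,u_l$ and it is not clear it extends to a homomorphism; the paper sidesteps this via $\Phi^-=\Phi^+\circ\tau$. A minor omission: the thickened-torus case should be disposed of separately before invoking \autoref{PROP: Hempel pair}.
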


It is worth pointing out that a homeomorphism  between the manifolds is nothing more than a group isomorphism between their fundamental groups  respecting the peripheral structure, as is proven by Waldhausen \cite{Wal68}.

\begin{proposition}
\label{PROP: Homeo induce}
Let $M$ and $N$ be  compact, irreducible 3-manifolds with non-empty incompressible boundary. Then any isomorphism $\varphi: \pi_1M\to \pi_1N$  respecting the peripheral structure can be induced, up to conjugation, by  
a homeomorphism $\Phi :M\to N$.
\end{proposition}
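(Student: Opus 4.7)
The plan is to invoke the classical theorem of Waldhausen (\cite[Corollary 6.5]{Wal68}), already stated in the introduction as \autoref{Wald}, which directly applies once one verifies that $M$ and $N$ are Haken and that $\varphi$ preserves peripheral structure in the precise sense of that theorem.

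First I would verify that $M$ and $N$ are Haken. Recall that a compact 3-manifold is Haken if it is irreducible and contains a two-sided properly embedded incompressible surface. Here both $M$ and $N$ are irreducible by hypothesis, and each boundary component is a closed incompressible surface that bounds $M$ (resp. $N$), hence is two-sided. If all boundary components were orientable this is immediate; in the (a priori possible) non-orientable case one can instead take the orientable double cover of a boundary component, which embeds properly into $M$ as an incompressible two-sided surface. Either way $M$ and $N$ are Haken, and the hypotheses of \autoref{Wald} are satisfied.

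Second, I would apply Waldhausen's theorem to the given isomorphism $\varphi: \pi_1M \to \pi_1N$. By hypothesis, after fixing conjugacy representatives $\pi_1\partial_i M \subseteq \pi_1 M$ and $\pi_1\partial_{\sigma(i)} N \subseteq \pi_1 N$ of the peripheral subgroups and some bijection $\sigma$ between the boundary components, $\varphi$ sends $\pi_1\partial_i M$ into a conjugate of $\pi_1\partial_{\sigma(i)} N$ for each $i$. This is exactly the peripheral-preservation hypothesis of Waldhausen's theorem, which then yields a homeomorphism $\Phi: M \to N$ realizing $\varphi$ in the following sense: there is a point $x_0 \in M$ and a choice of path in $N$ from $\Phi(x_0)$ to the basepoint used to define $\pi_1 N$, producing an induced map $\Phi_*: \pi_1(M,x_0) \to \pi_1 N$ such that $\varphi$ and $\Phi_*$ agree up to an inner automorphism of $\pi_1 N$. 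Reinterpreting this, there exists $n \in \pi_1 N$ with $\varphi = C_n \circ \Phi_*$, which is precisely the statement that $\varphi$ is induced, up to conjugation, by the homeomorphism $\Phi$.

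There is essentially no obstacle in this proof beyond citing Waldhausen: the entire nontrivial content is absorbed into that theorem, and the only care needed is the bookkeeping around basepoints and inner automorphisms. The potential subtlety is that Waldhausen's theorem as originally stated presumes a marked pairing of boundary components, whereas we have only abstract conjugacy preservation; but by \autoref{PROP: peripheral structure}~(\ref{6.2-1}) (and the classical analog in $\pi_1$), distinct boundary components give rise to non-conjugate peripheral subgroups (except for parallel tori in $S^1\times S^1\times I$, where there is nothing to check), so the required pairing $\sigma$ is uniquely determined by $\varphi$ and the invocation is legitimate.
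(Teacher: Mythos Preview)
Your proposal is correct and matches the paper's approach: the paper does not give a proof of this proposition at all, but simply attributes it to Waldhausen \cite{Wal68} in the sentence preceding the statement. Your write-up is a reasonable unpacking of that citation, with the Haken verification and basepoint bookkeeping made explicit.
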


Thus, the homeomorphisms in \autoref{PROP: Seifert Zx-regular} can either be constructed in a concrete way, or be obtained from a group theoretical construction. Our proof will be a combination of these two methods.

\begin{lemma}\label{LEM: Dehn twist}
Let $M$ be a major or a  minor Seifert manifold.  Let $e_0,\cdots, e_s$ denote the peripheral cross-sections and let $h_M$ denote a generator of the fiber subgroup. For any $t_0,\cdots, t_s\in \Z$ such that $\sum_{i=0}^ s t_i=0$, there exists a homeomorphism $\Psi:M\to M$ such that $\Psi_{\ast}(h_M)=h_M$ and $\Psi_{\ast}(e_i)=g_ie_ig_i^{-1}h_M^{t_i}$, where every $g_i$ belongs to the kernel of the orientation homomorphism $o$.
\end{lemma}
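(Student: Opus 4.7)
My plan is to construct $\Psi$ as a composition of vertical Dehn twists along properly embedded essential annuli connecting pairs of distinct boundary components of $M$. Since the set of admissible shifts $(t_0,\ldots,t_s)\in \Z^{s+1}$ with $\sum t_i=0$ forms the kernel of the augmentation map $\Z^{s+1}\to\Z$, which is generated as an abelian group by differences of standard basis elements, it will suffice to construct, for each ordered pair of distinct indices $i\neq j$, a self-homeomorphism $\tau_{ij}\colon M\to M$ realizing the shift $+1$ on the $i$-th cross-section and $-1$ on the $j$-th cross-section while leaving all other cross-sections unchanged up to conjugation; composing appropriate powers of the $\tau_{ij}$ will then yield the prescribed vector of shifts.

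To construct $\tau_{ij}$, let $p\colon M\to \mathcal{O}_M$ denote the Seifert projection, and choose a properly embedded arc $\gamma_{ij}\subset \mathcal{O}_M$ connecting $\partial_i\mathcal{O}_M$ to $\partial_j\mathcal{O}_M$ and disjoint from the singular points. Since any $S^1$-bundle over an arc is trivial, the preimage $A_{ij}=p^{-1}(\gamma_{ij})$ is an embedded essential vertical annulus in $M$. I would then take $\tau_{ij}$ to be a Dehn twist along $A_{ij}$, supported in a tubular neighborhood $A_{ij}\times [-1,1]$ and twisting by one full rotation in the fiber direction. This map is fiber-preserving, forcing $(\tau_{ij})_\ast(h_M)=h_M$; and on the two affected boundary tori $\partial_iM,\partial_jM$ it modifies the chosen peripheral cross-section by multiplying by $h_M^{\pm 1}$, up to conjugation by a loop lying on the boundary torus itself. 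All other cross-sections lie outside the support of $\tau_{ij}$ and are preserved up to conjugation by loops in their own boundary tori.

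The condition $g_i\in\ker(o)$ is then straightforward: after a suitable choice of basepoint paths, the conjugators that appear are represented by loops running along the boundary tori of $M$ together with fiber-preserving paths inside $A_{ij}$. Every loop on an (orientable) boundary torus automatically lies in $\ker(o)$, and fiber-preserving paths also contribute trivially to $o$. The case of a minor Seifert manifold is trivial: there is only one boundary component, so $s+1=1$, the hypothesis $\sum t_i=0$ forces $t_0=0$, and $\Psi=\mathrm{id}$ suffices.

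The main technical obstacle will be in ensuring the orientation condition $g_i\in\ker(o)$ when $\mathcal{O}_M$ is non-orientable, since an arc $\gamma_{ij}$ might inadvertently be chosen so that the induced basepoint change involves an orientation-reversing loop in $\pi_1 M$. To handle this uniformly, I would pass to the two-fold orientable cover $M_o\to M$ corresponding to $\ker(o)$, construct the analogous Dehn twists $\tilde\tau_{ij}$ there equivariantly with respect to the deck involution, and descend them to $M$; this guarantees that the resulting conjugators represent classes in $\ker(o)$.
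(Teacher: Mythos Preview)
Your approach is correct and matches the paper's: both construct $\Psi$ as a composition of vertical Dehn twists along annuli lying over arcs in the base orbifold that avoid the singular points. The paper uses a slightly more economical generating set (arcs $\gamma_1,\ldots,\gamma_s$ connecting the boundary components in a chain, with $\Psi=\Psi_1^{t_0}\circ\Psi_2^{t_0+t_1}\circ\cdots\circ\Psi_s^{t_0+\cdots+t_{s-1}}$), but this is equivalent to your choice of generators $\tau_{ij}$.

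The one point where the paper is cleaner is the orientation condition. Rather than passing to the double cover $M_o$ and constructing equivariant twists, the paper simply chooses at the outset a compact \emph{orientable} sub-surface $F\subset\mathcal{O}_M$ containing all boundary components of $\mathcal{O}_M$ and no singular points, so that $p^{-1}(F)\cong F\times S^1$ is a trivial $S^1$-bundle. All the arcs $\gamma_i$ are then taken inside $F$, and the Dehn twists are supported in $F\times S^1$; since every loop in $F\times S^1$ lies in $\ker(o)$ automatically, the condition $g_i\in\ker(o)$ is immediate with no further argument. This sidesteps the equivariance issue entirely and is worth knowing as a simpler alternative to your double-cover workaround.
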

\begin{proof}
In fact, this can be realized through vertical Dehn twists. 
Let $F$ be a compact, connected, orientable sub-surface of the base orbifold $\O_M$ which contains all the boundary components of $M$, while it contains no singular points. Then the union of fibers over $F$ is a sub-manifold of $M$ containing $\partial M$, which decomposes as $F\times S^1$.

We may choose disjoint arcs $\gamma_1,\cdots, \gamma_s$ in $F$ connecting the boundary components of $F\cap \partial \O_M$ that correspond to $e_0,\cdots, e_s$ one-by-one. Let $\Psi_i$ be the vertical Dehn twist (with appropriate orientation) supported on $N(\gamma_i)\times S^1$, where $N(\gamma_i)$ denotes the neighbourhood of $\gamma_i$ in $F$. Then $\Psi=\Psi_1^{t_0}\circ \Psi_2^{t_1+t_0}\circ\cdots \circ \Psi_s^{t_{s-1}+\cdots+t_0}$ meets the requirement.

Group theoretically, if we fix the presentation of the fundamental group as in (\ref{EQU: Non-orientable}) or (\ref{EQU: Orientable}), then $\Psi_{\ast}$ simply sends $e_i$ to $e_ih_M^{t_i}$ and fixes all the other generators.
\end{proof}

\begin{lemma}\label{LEM: Swap boundary}
Suppose $M$ is a major Seifert manifold with a non-orientable base orbifold. Let $e_0,\cdots, e_s$ denote the peripheral cross-sections, let $h_M$ be a generator of the fiber subgroup, and let $o:\pi_1M\to \pi_1\O_M\to \{\pm1\}$ be the orientation homomorphism. Then, for any $\epsilon_0,\cdots,\epsilon_s\in \{\pm1\}$, there exists a homeomorphism $\Theta :M\to M$ such that $\Theta_{\ast}(h_M)=h_M$ and $\Theta_{\ast}(e_i)=g_ie_i^{\epsilon_i}g_i^{-1}$, where $o(g_i)=\epsilon _i$.
\end{lemma}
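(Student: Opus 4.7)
The plan is to reduce to the case of reversing exactly one boundary component at a time by composition, and then to realize each single reversal as a boundary slide along an orientation-reversing curve in $\O_M$ lifted to $M$.

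First, by induction on $\#\{i : \epsilon_i = -1\}$, it suffices to produce, for each $i_0$, a homeomorphism $\Theta^{(i_0)} : M \to M$ with $\Theta^{(i_0)}_*(h_M) = h_M$, with $\Theta^{(i_0)}_*(e_{i_0}) = g \cdot e_{i_0}^{-1} \cdot g^{-1}$ for some $g$ satisfying $o(g) = -1$, and with $\Theta^{(i_0)}_*(e_j) = g_j' \cdot e_j \cdot (g_j')^{-1}$ for some $g_j'$ satisfying $o(g_j') = +1$ when $j \neq i_0$.  Composing these for each $i$ with $\epsilon_i = -1$ yields the required $\Theta$; crucially, each $\Theta^{(i_0)}$ preserves $h_M$, so the induced action on $\pi_1 M$ also preserves the orientation homomorphism $o$ (since $o(g) = \pm 1$ is characterized by the conjugation action on $h_M$), whence the accumulated conjugators on any $e_i$ have the correct orientation type $\epsilon_i$.

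To build $\Theta^{(i_0)}$, I would exploit the non-orientability of $\O_M$ by picking an embedded orientation-reversing simple closed curve $C \subset \O_M$ disjoint from the singular orbifold points, together with an embedded arc $\alpha$ from a point of $\partial_{i_0}\O_M$ to a point of $C$ avoiding singular points and the other boundary components.  The regular neighbourhood $\Sigma = N(\partial_{i_0}\O_M \cup \alpha \cup C) \subset \O_M$ is homeomorphic to a M\"obius band with an open disk removed, with boundary consisting of $\partial_{i_0}\O_M$ and a simple closed curve $\gamma \subset \mathrm{int}(\O_M)$.  Its mapping class group contains the \emph{Y-homeomorphism} $\psi$ which fixes $\gamma$ pointwise and reverses the orientation of $\partial_{i_0}\O_M$.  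Extending $\psi$ by the identity outside $\Sigma$ yields a self-homeomorphism $Y$ of $\O_M$ preserving all singular points and the Seifert invariants; hence $Y$ admits a fiber-preserving lift to $M$, and among the two lifts differing by a fiber-reversal in the support of $Y$, I would choose $\widetilde Y$ satisfying $\widetilde Y_*(h_M) = h_M$.  On the boundary this gives $\widetilde Y_*(e_{i_0}) = g \cdot e_{i_0}^{-1} \cdot g^{-1} \cdot h_M^{\rho}$ for some $g$ with $o(g) = -1$ (from a loop traversing $C$) and some integer $\rho$, while $\widetilde Y$ is the identity on the peripheral data at $\partial_j M$ for $j \neq i_0$.

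The residual $h_M^\rho$ is absorbed by composing $\widetilde Y$ with a vertical Dehn twist $\tau^k$ along the fiber direction, supported in a collar of $\partial_{i_0} M$: such a twist fixes $h_M$ and sends $e_{i_0} \mapsto e_{i_0} \cdot h_M^k$.  An appropriate choice of $k$ (using the identity $g \cdot h_M \cdot g^{-1} = h_M^{o(g)}$ to commute factors through the conjugator) cancels $\rho$, so $\Theta^{(i_0)} := \tau^k \circ \widetilde Y$ satisfies the lemma's requirements.  The main technical difficulty lies in the lifting step: verifying that $Y$ admits a fiber-preserving lift $\widetilde Y$ with $\widetilde Y_*(h_M) = h_M$ globally on $M$, which requires analysing the interaction of the twisted $S^1$-bundle structure over the non-orientable region $\Sigma$ with the orientation-reversing traversal of $C$ encoded in $Y$, and ruling out a global monodromy obstruction to preserving the fiber generator.
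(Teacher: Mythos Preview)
Your reduction to a single reversal by composition is correct and matches the paper. The geometric idea of dragging $\partial_{i_0}\O_M$ around an orientation-reversing curve is also the right intuition.

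The gap is in the correction step. A homeomorphism supported in a collar $T^2 \times [0,1]$ of a boundary torus, equal to the identity on the inner face $T^2 \times \{1\}$, acts trivially on $\pi_1 M$: any based loop entering the collar must exit through $T^2 \times \{1\}$, and the deformation retraction of the collar onto that face undoes the twist. Your $\tau^k$ therefore cannot send $e_{i_0} \mapsto e_{i_0} h_M^k$. The vertical Dehn twists of \autoref{LEM: Dehn twist} that \emph{do} act nontrivially are supported on annuli over arcs joining distinct boundary components and come with the constraint $\sum_i t_i = 0$, so they cannot adjust a single $e_{i_0}$ in isolation. Consequently, if your lift $\widetilde Y$ produces a nonzero $\rho$, you have no mechanism to absorb it; you would instead need to argue directly that $\rho = 0$, which amounts to resolving the lifting obstruction you already flagged as the main difficulty.

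The paper sidesteps both problems by working algebraically: it writes down an explicit automorphism $\theta$ of $\pi_1 M$ (fixing $h_M$, the $a_j$, $e_1, \ldots, e_s$, and $u_1, \ldots, u_{g-1}$, while sending $u_g \mapsto t^{-1} u_g^{-1}$ with $t = a_1 \cdots a_r e_1 \cdots e_s u_1^2 \cdots u_{g-1}^2$), checks by hand that $\theta(e_0) = u_g e_0^{-1} u_g^{-1}$ with no $h_M$ defect, observes that $\theta$ respects the peripheral structure, and invokes Waldhausen (\autoref{PROP: Homeo induce}) to realize it by a homeomorphism. For a general index $i_0$ it conjugates by a boundary-permuting homeomorphism. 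The paper remarks that geometrically this corresponds to a $\Z/2$-equivariant homeomorphism on the orientation double cover $M_o$ swapping paired boundary components---close in spirit to your picture, but the algebraic route avoids the bundle-lifting analysis entirely.
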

\begin{proof}
By composition, it suffices to prove the conclusion when exactly one of $\epsilon_i$ is $-1$. 
We first show that the conclusion holds when  $\epsilon_0=-1$ and $\epsilon_1=\cdots=\epsilon_s=1$.

Fix the presentation  $$\pi_1M=\left<\left.\begin{gathered}a_1,\ldots, a_r, e_0, e_1, \ldots, e_s, \\u_1, \ldots, u_g,h_M\end{gathered}\right.\left|\,\begin{gathered}e_0=\left(a_1 \cdots a_r e_1\cdots e_s u_1^2\cdots u_g^2\right)^{-1}\\ a_{j}^{p_j}h_M^{q_j}=1,\,  [a_j,h_M]=1,\\  [e_i,h_M]=1, \, u_ih_Mu_i^{-1}=h_M^{-1}\end{gathered}\right.\right>.$$
Denote $t=a_1\cdots a_re_1\cdots e_su_1^2\cdots u_{g-1}^2$, and let $\theta:\pi_1M\to \pi_1M$ be defined by: 
\begin{align*}
&\theta(h_M)=h_M,&\\
&\theta(a_j)=a_j,& 1\le j \le r,\\
&\theta(e_i)=e_i,& 1\le i \le s,\\
&\theta(u_k)=u_k,& 1\le k\le g-1,\\
&\theta(u_g)=t^{-1}u_g^{-1}.&
\end{align*}
It is easy to verify that $\theta$ is a group isomorphism and $\theta(e_0)=\theta(u_g^{-2}t^{-1})=u_gtu_g=u_ge_0^{-1}u_g^{-1}$, where $o(u_g)=-1$. In particular, $\theta$ respects the peripheral structure, so it is induced by a homeomorphism $\Theta$ by \autoref{PROP: Homeo induce}, which satisfies our requirement.

Now we assume $\epsilon_j=-1$, and $\epsilon_i=1$ for all $i\neq j$. 
 Note that the boundary components of $M$ are symmetric, i.e. there exists a homeomorphism $\Phi$ on $M$ switching the two  boundary components corresponding to $\epsilon_0$ and $\epsilon_j$, while preserving the cross-section of the entire orbifold with the neighbourhoods of the singular points removed. Then up to composing with a conjugation, $\Phi_{\ast}$ sends $h_M$ to $h_M$, and sends $e_i$ to $\alpha_i e_{\sigma(i)}^{o(\alpha_i)}\alpha_i^{-1}$, where $\sigma(0)=j$ and $\sigma(j)=0$. 
Let $\Theta_j=\Phi^{-1}\circ \Theta\circ \Phi$. One can easily verify that $\Theta_j$ satisfies the requirement.

From another perspective, this is in fact realized by a $\Z/2$-equivariant homeomorphism on the two-fold cover $M_o$ corresponding to the kernel of $o$, which swaps the two components  in the pre-image of each boundary component of $M$ corresponding to $e_i$ with $\epsilon_i=-1$. 
\end{proof}

\begin{lemma}\label{LEM: -1}
Let $M$  be a Seifert fibered 3-manifold with incompressible boundary. Then there exists a homeomorphism $\tau: M\to M$ preserving each boundary component, such that up to composing with a conjugation, $\tau_{\ast}$ is a scalar multiplication by $-1$ on each peripheral subgroup $\pi_1\partial_iM\cong \Z\oplus\Z$.
\end{lemma}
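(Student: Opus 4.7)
The plan is to split into three cases according to the classification of Seifert fibered manifolds with incompressible boundary recalled in Section~8.1: the thickened torus $S^1\times S^1\times I$, the minor Seifert manifold, and major Seifert manifolds.

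For the thickened torus, the explicit involution $\tau(x,y,t)=(x^{-1},y^{-1},t)$ (with $S^1$ written multiplicatively) fixes each boundary torus and acts as $-I$ on its fundamental group. For the minor Seifert manifold, I would use the presentation $\pi_1M=\langle a,b\mid bab^{-1}=a^{-1}\rangle$ of Convention~\ref{convention: minor}, whose unique peripheral subgroup is $\langle a,b^2\rangle$. The assignment $a\mapsto a^{-1}$, $b\mapsto b^{-1}$ preserves the defining relation (indeed $\tau_*(bab^{-1})=b^{-1}a^{-1}b=a=\tau_*(a^{-1})$ reduces to $bab^{-1}=a^{-1}$), so defines an automorphism $\tau_*$ that respects the unique peripheral conjugacy class and acts on it as $\mathrm{diag}(-1,-1)$ in the basis $\{a,b^2\}$. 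Then \autoref{PROP: Homeo induce} produces a self-homeomorphism $\tau$ realising $\tau_*$, necessarily preserving the sole boundary torus.

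The substantive case is a major Seifert manifold $p\colon M\to \mathcal{O}$. I would construct $\tau$ as the lift of a suitable orbifold involution $\sigma\colon \mathcal{O}\to \mathcal{O}$ satisfying: (i) $\sigma$ preserves every boundary circle of $\mathcal{O}$, reversing its orientation; and (ii) $\sigma$ fixes every cone point. Such a $\sigma$ exists on any compact 2-orbifold with boundary, by realising the underlying surface symmetrically with respect to a hyperplane reflection whose fixed locus is a properly embedded $1$-complex passing through every cone point and bisecting every boundary circle. Next I lift $\sigma$ to a fiber-reversing involution $\tau$ of $M$: over a regular point set $\tau(x,\theta)=(\sigma(x),-\theta)$; over a cone point of Seifert invariant $(p,q)$, modelled as $(D^2\times S^1)/(\mathbb{Z}/p)$ with deck generator $\rho(z,\theta)=(e^{2\pi i/p}z,\theta+2\pi q/p)$, the local formula $\tilde\sigma(z,\theta)=(\bar z,-\theta)$ satisfies $\tilde\sigma\circ\rho=\rho^{-1}\circ\tilde\sigma$, hence descends to the quotient for every~$q$. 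Patching these local models across the regular locus yields a globally defined $\tau$. Restricted to a boundary torus $\partial_iM=\partial_i\mathcal{O}\times S^1$, the map $\tau$ is $\sigma|_{\partial_i\mathcal{O}}\times(\theta\mapsto -\theta)$, which is $-I$ on $\pi_1(\partial_iM)$ in the basis $\{e_i,h_M\}$.

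The main obstacle is the construction in the major case: producing $\sigma$ meeting both (i) and (ii) simultaneously on an arbitrary 2-orbifold, and then verifying the local lifts assemble into a global $\tau$. The crucial observation enabling the gluing is that the relation $\tilde\sigma\rho\tilde\sigma^{-1}=\rho^{-1}$ near a singular fiber is independent of the Seifert invariant $q$, so no obstruction arises from the local gluing data. If the geometric route proves awkward, a purely algebraic alternative is to produce an automorphism of $\pi_1^{\mathrm{orb}}(\mathcal{O})$ sending each boundary generator $e_i$ to a conjugate of $e_i^{-1}$ (which exists because $\sigma$ does), lift it to $\pi_1M$ via $h_M\mapsto h_M^{-1}$ using the central extension $1\to\langle h_M\rangle\to\pi_1M\to\pi_1^{\mathrm{orb}}(\mathcal{O})\to 1$, check the Seifert relations $a_j^{p_j}h_M^{q_j}=1$ are respected (which they are, as both sides are inverted), and invoke \autoref{PROP: Homeo induce} to produce $\tau$.
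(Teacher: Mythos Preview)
Your argument for the thickened torus and the minor Seifert manifold is correct and, in the minor case, identical to the paper's. For major Seifert manifolds with orientable base orbifold your direct lift of a reflection $\sigma$ on $\mathcal O$ to a fiber-reversing involution is also essentially the paper's idea in different clothing.

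The gap is the non-orientable base case. Your local formula $\tau(x,\theta)=(\sigma(x),-\theta)$ presumes a product structure on the regular part of $M\to\mathcal O$, but when $\mathcal O$ is non-orientable the circle bundle over the regular locus is genuinely twisted, so you must check compatibility of the local formula with the transition functions $\theta\mapsto -\theta$; this is not addressed. Your algebraic alternative compounds the problem by calling $1\to\langle h_M\rangle\to\pi_1M\to\pi_1^{\mathrm{orb}}\mathcal O\to 1$ a central extension, which it is not when $\mathcal O$ is non-orientable (recall $u_ih_Mu_i^{-1}=h_M^{-1}$), so the lifting step as written does not go through. The paper sidesteps this entirely: rather than working over the orbifold, it decomposes \emph{any} such $M$ as $X=F\times S^1$ with $F$ a compact \emph{orientable} surface, together with solid tori and copies of the minor Seifert manifold glued to some boundary tori of $X$. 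On $X$ one takes $(\sigma,r)$ with $\sigma$ an orientation-reversing involution of $F$ fixing each boundary circle and $r$ the reflection of $S^1$; this is $-I$ on every boundary torus of $X$, and one then extends across the attached pieces using the already-established solid-torus and minor cases. This modular reduction is what makes the non-orientable base case come for free.
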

\begin{proof} 
When $M$ is the minor Seifert manifold, this follows from the group theoretical construction as in \autoref{PROP: Homeo induce}, i.e. an automorphism of $\pi_1M=\left\langle a,b\mid bab^{-1}=a^{-1}\right\rangle $ sending $a$ to $a^{-1}$ and $b$ to $b^{-1}$. 

Also note that there is an automorphism of the solid torus $V$, which induces the $-id$--map on its boundary.

Generally, $M$ can be constructed in the following way. Take a compact, orientable surface $F$, and let $X=F\times S^1$. Choose some of the boundary components of $X$, and $M$ is obtained through attaching either a solid torus or a minor Seifert manifold to each of them in an appropriate way.

Let $\sigma$ be an orientation-reversing involution of $F$ preserving each boundary component, and let $r$ denote the reflection on $S^1$. Then the involution $\tau=(\sigma,r)$ on $X$ restricts to a scalar multiplication by $-1$ on each boundary component. Thus, for any of the above chosen boundary components $\partial_iX$, $\tau|_{\partial_iX}$ can be extended as a homeomorphism on the filled-in solid torus or minor Seifert manifold as shown by the above construction. This  yields an automorphism of $M$ satisfying the required condition.
\end{proof}

We are now ready to prove \autoref{PROP: Seifert Zx-regular} through combining the above constructions.

\begin{proof}[Proof of \autoref{PROP: Seifert Zx-regular}]
Note that \autoref{PROP: Seifert Zx-regular} holds apparently for the thickened tori. Thus, we assume that $M$ and $N$ are either major or minor Seifert manifolds in the following proof. 

Suppose the scale type of $f$ is $(\lambda,\mu)$. 
Since $f:\widehat{\pi_1M}\ttt\widehat{\pi_1N}$ is peripheral $\Zx$-regular at some of the boundary components, it follows from \autoref{EX: Boundary map} that $${ \left(\begin{matrix} \lambda &\rho \\ 0 &\mu\end{matrix}\right)= \eta \cdot A}$$  for some $\eta\in \Zx$ and $A\in \mathrm{GL}(2,\Z)$. Thus, $\lambda=\pm \mu$. 
By changing the generator of the fiber subgroup, we may assume $\lambda=\mu$. Then $(M,N)$ is a Hempel pair of scale factor $1$, and hence $M$ and $N$ are homeomorphic.

We follow the presentation of fundamental groups as in (\ref{EQU: Non-orientable})  or (\ref{EQU: Orientable}). Then, ${q_j'\equiv q_j\pmod{p_j}}$. In fact, by replacing $a_j'$ with $a_j'h_N^{c_j}$ for some appropriate $c_j\in \Z$ and adjusting $e_0'$ correspondingly, we may assume $q_j'=q_j$ without loss of generality, which establishes an apparent isomorphism between $\pi_1M$ and $\pi_1N$. This yields an apparent homeomorphism $\mathscr{F}:M\ttt N$ by \autoref{PROP: Homeo induce} which we fix henceforth. 

Based on this presentation, suppose $f(h_M)=h_N^\lambda$ and $f(e_i)=g_i(e_i')^{o(g_i)\lambda}g_i^{-1}h_N^{\rho_i}$ ($0\le i\le s$), where $o:\widehat{\pi_1N}\to \{\pm 1\}$ is the orientation homomorphism. It then  follows from \autoref{LEM: Sum slope} that $\sum_{i=0}^s \rho_i=0$. Fix peripheral subgroups $\pi_1\partial_iM=\left\langle h_M\right\rangle \times \left\langle e_i \right\rangle $ and $\pi_1\partial_iN=\left\langle h_N\right\rangle \times \left\langle e_i' \right\rangle $.
Then $C_{g_i^{-1}}\circ f: \overline{\pi_1\partial_iM}\to \overline{\pi_1\partial_iN}$ is represented by a matrix. 
\begin{equation}\label{8.20equ1}
C_{g_i^{-1}}\circ f \begin{pmatrix}h_M&e_i\end{pmatrix} =\begin{pmatrix} h_N & e_i'\end{pmatrix}\begin{pmatrix} o(g_i)\lambda & o(g_i)\rho_i\\ 0 & o(g_i)\lambda\end{pmatrix}.
\end{equation}

Recall that $f$ is peripheral $\Zx$-regular at $\partial_0M,\cdots,\partial_kM$. 
%
%
Thus, these matrices can be decomposed as $\lambda$ multiplicating a matrix in $\mathrm{GL}(2,\Z)$, regardless of the choice of the conjugators $g_i$, according to \autoref{PROP: peripheral structure}. 
Hence, 
for $0\le i \le k$, 
$\rho_i= \lambda t_i$ for some $t_i\in \Z$.

\newsavebox{\tempeque}
\begin{lrbox}{\tempeque}
$
\begin{tikzcd}
\Psi:\, M \arrow[r,"\cong"',"\Psi_M"] & M \arrow[r,"\cong"',"{\mathscr{F}}"] & N
\end{tikzcd}
$
\end{lrbox}
\newsavebox{\tempequf}
\begin{lrbox}{\tempequf}
$
\begin{tikzcd}
\Phi^+:\, M \arrow[r,"\cong"',"\Theta"] & M \arrow[r,"\cong"',"{\Psi}"] & N.
\end{tikzcd}
$
\end{lrbox}

If $k=s$, then $\sum_{i=0}^s\rho_i=0$ implies $\sum_{i=0}^s t_i=0$. If $k<s$, then we can choose $t_{k+1},\cdots, t_s\in \Z$ so that $\sum_{i=0}^s t_i=0$. Then, by \autoref{LEM: Dehn twist}, there exists a homeomorphism 
\begin{equation*}
\usebox{\tempeque}
\end{equation*}
 such that for each $0\le i\le s$, $\Psi_{\ast}(e_i)=\alpha_i e_i' \alpha_i^{-1} h_N^{t_i}$ for some $\alpha_i \in \ker (o)$.

Let $\epsilon_i=o(g_i)\in \{\pm 1\}$. 
When the base orbifolds of $M$ and $N$ are non-orientable,  let $\Theta: M\to M$ be the homeomorphism constructed in \autoref{LEM: Swap boundary}, 
and let 
\begin{equation*}
\usebox{\tempequf}
\end{equation*}
When the base orbifolds of $M$ and $N$ are orientable, $\epsilon_i=1$, and we simply let $$\Phi^+=\Psi.$$ 
In either case, for each $0\le i\le s$, we have 
\begin{equation}\label{equ820*}
\Phi^+_{\ast}(e_i)=\beta_i (e_i')^{\epsilon_i} \beta_i^{-1} h_N^{t_i},
\end{equation}
 where $\beta_i\in \pi_1N$ and $o(\beta_i)=\epsilon_i$.

\newsavebox{\tempequg}
\begin{lrbox}{\tempequg}
$
C_{g_i^+}\circ f| _{\overline{\pi_1\partial_i M}}=\lambda\otimes (\Phi^+_{\ast}|_{\pi_1\partial_i M}): \overline{\pi_1\partial_iM}\cong \widehat{\Z}\otimes \pi_1\partial_iM \tto \widehat{\Z}\otimes  \pi_1\partial_iN\cong \overline{\pi_1\partial_iN}
$
\end{lrbox}

Now, we take $g_i^+=\beta_ig_i^{-1}$, so  $o(g_i^+)=1$. By (\ref{8.20equ1}), $C_{g_i^+}\circ f$ sends $h_M$ to $h_N^{\lambda}$ and sends $e_i$ to $\beta_i (e_i')^{\epsilon_i\lambda} \beta_i^{-1} h_N^{\rho_i}$. Thus, according to (\ref{equ820*}), since $\rho_i=\lambda t_i$, we derive that  
\begin{equation*}
\scalebox{0.94}{\usebox{\tempequg}}
\end{equation*}
for each $0\le i\le k$. In other words, $f$ restricting on $\partial_iM$ is induced by the homeomorphism $\Phi^+$ with coefficient $\lambda$. 

For the alternate $\pm$-sign, let $\tau: M\to M$ be constructed by \autoref{LEM: -1}, which, up to conjugation, acts as a scalar multiplication of $-1$ on each peripheral subgroup of $\pi_1M$. Then, we  define $\Phi^-=\Phi^+\circ \tau$, and thus $f$ restricting on $\partial_iM$ is induced by the homeomorphism $\Phi^-$ with coefficient $-\lambda$. 
\end{proof}

\section{Graph manifolds}\label{SEC: Graph Mfd}

In our context, a graph manifold denotes a compact, orientable, irreducible 3-manifold with empty or incompressible toral boundary, which is not a closed $Sol$-manifold, and whose JSJ-decomposition consists of only Seifert fibered pieces. For consistency of proof, a single Seifert fibered manifold is also recognized as a graph manifold.

We shall denote a graph manifold as $M=(M_{\bullet},\Gamma)$, where $\Gamma$ is the oriented JSJ-graph of $M$, and $M_v$ is a Seifert fibered space for each $v\in V(\Gamma)$. 

\subsection{Profinite isomorphism between graph manifolds}\label{subSEC: graph1}

In this subsection, we consider the following setting. Let $M=(M_{\bullet},\Gamma)$ and $N=(N_{\bullet},\Gamma')$ be two graph manifolds, and let $f:\widehat{\pi_1M}\ttt\widehat{\pi_1N}$ be an isomorphism. According to \autoref{THM: Profinite Isom up to Conj}, $f$ determines a congruent isomorphism $f_{\bullet}$ between the JSJ-graphs of profinite groups. For brevity of notation, we  identify $\Gamma$ and $\Gamma'$ through $\F$.

\newsavebox{\butterfly}
\begin{lrbox}{\butterfly}
$
\begin{tikzcd}[column sep=1.25em]
\widehat{\pi_1M_u} \arrow[ddd, "f_u"',"\cong"] \arrow[r,symbol=\supseteq] & \overline{\pi_1\partial_iM_u} \arrow[rddd, "C_{g_0}^{-1}\circ f_u "',"\cong"] \arrow[rrrrrr, "\widehat{\psi}","\cong"', bend left=17] &                 &                                               & \widehat{\pi_1T^2} \arrow[rrr, "\widehat{\varphi_1}","\cong"'] \arrow[ddd, "f_e","\cong"'] \arrow[lll, "\widehat{\varphi_0}"',"\cong"] &                    &                     & \overline{\pi_1\partial_kM_v} \arrow[lddd, "C_{g_1}^{-1}\circ f_v ","\cong"'] \arrow[r,symbol=\subseteq] & \widehat{\pi_1M_v} \arrow[ddd, "f_v","\cong"'] \\
                                                 &                                                                                                                                        &                                                    & &            &                                                                                                            &                                         &                                                                                                                 &                                       \\
                                                 &                                                                                                                                        &                                                      & &          &                                                                                                            &                                         &                                                                                                                 &                                       \\
\widehat{\pi_1N_u}                               & \widehat{\pi_1N_u} \arrow[r,symbol=\supseteq] \arrow[l, "C_{g_0}"',"\cong"]                                                                                     & \overline{\pi_1\partial_jN_u} \arrow[rrrr, "\widehat{\psi'}"',"\cong", bend right=24] &  & \widehat{\pi_1T^2} \arrow[rr, "\widehat{\varphi_1'}","\cong"'] \arrow[ll, "\widehat{\varphi_0'}"',"\cong"]         &           & \overline{\pi_1\partial_lN_v} \arrow[r,symbol=\subseteq] &  \widehat{\pi_1N_v} \arrow[r, "C_{g_1}","\cong"']                                                                         & \widehat{\pi_1N_v}                   
\end{tikzcd}
$
\end{lrbox}

For any $e\in E(\Gamma)$, denote the two endpoints as $u=d_0(e)$ and $v=d_1(e)$. Then the congruent isomorphism $f_{\bullet}$ implies the following commutative diagram.
\begin{equation}\label{EQU: JSJ-graph diagram}
\scalebox{0.9}{\usebox{\butterfly}}
\end{equation}

Fix a free $\Z$-basis $\left\{h_{M_u},e_i\right\}$ for the peripheral subgroup $\pi_1\partial_i M_u\cong \Z\oplus\Z$, where $h_{M_u}$ is represented by a regular fiber, and $e_i$ is a peripheral cross-section (see \autoref{SEC: Seifert peripheral}). When $M_u$ is the minor Seifert manifold, we always view it as a Seifert fibration over the disk with two singular points of index $2$ so as to avoid the ambiguity of $h_{M_u}$. Then, $\overline{\pi_1\partial_i M_u}\subseteq \widehat{\pi_1M_u}$ is a free $\widehat{\Z}$-module over the same basis $\left\{h_{M_u},e_i\right\}$ according to \autoref{COR: Peripheral group tensor}. Similarly, we fix $\pi_1\partial_kM_v=\Z\left\{h_{M_v},e_k\right\}$, ${\pi_1\partial_jN_u}={\Z}\left\{h_{N_u},e_j'\right\}$, and ${\pi_1\partial_lN_v}={\Z}\left\{h_{N_v},e_l'\right\}$.

Over these bases, the maps $\widehat{\psi}$, $\widehat{\psi'}$, $C_{g_0}^{-1}\circ f_u$, and $C_{g_1}^{-1}\circ f_v$ can be represented by matrices in $\mathrm{GL}(2,\widehat{\Z})$ as shown by the following diagram.

\begin{equation}\label{EQU: Matrix}
\begin{tikzcd}[ampersand replacement=\&]
{\overline{\pi_1\partial_iM_u}=\widehat{\Z}\left\{h_{M_u},e_i\right\}} \arrow[rrr, "{\left(\begin{smallmatrix} \alpha_e &\beta_e\\   \gamma_e&\delta_e\end{smallmatrix}\right)}","\widehat{\psi}"'] \arrow[ddd, "{\left(\begin{smallmatrix} \lambda_{e,0} &\rho_{e,0}\\ 0&\mu_{e,0}\end{smallmatrix}\right)}"',"C_{g_0}^{-1}\circ f_u"] \& \& \& {\widehat{\Z}\left\{h_{M_v},e_k\right\}=\overline{\pi_1\partial_kM_v}} \arrow[ddd, "{\left(\begin{smallmatrix} \lambda_{e,1} &\rho_{e,1}\\0&\mu_{e,1}\end{smallmatrix}\right)}", "C_{g_1}^{-1}\circ f_v"'] \\
                                                                                                                                                                                                              \& \& \&                                                                                                                              \\
\& \& \& \\
{\overline{\pi_1\partial_jN_u}=\widehat{\Z}\left\{h_{N_u},e_j'\right\}} \arrow[rrr, "{\left(\begin{smallmatrix} \alpha'_e &\beta'_e\\     \gamma'_e&\delta'_e\end{smallmatrix}\right)}"',"\widehat{\psi'}"]                                                                                 \& \& \& {\widehat{\Z}\left\{h_{N_v},e_l'\right\}=\overline{\pi_1\partial_lN_v}}                                                                                    
\end{tikzcd}
\end{equation}

In particular, the upper-triangle matrices $\left(\begin{smallmatrix}\lambda & \rho \\0 & \mu\end{smallmatrix}\right)$ (with indices omitted) follow from \autoref{EX: Boundary map}, and $\lambda,\mu\in \Zx$. Note that  $\widehat{\psi}$ is in fact the profinite completion of the gluing map in $\mathrm{Isom}_{\Z}(\pi_1\partial_iM_u,\pi_1\partial_kM_v)$, so $ \left(\begin{smallmatrix} \alpha_e &\beta_e\\   \gamma_e&\delta_e\end{smallmatrix}\right)\in \mathrm{GL}(2,\Z)$.  
 Similarly, $\left(\begin{smallmatrix} \alpha'_e &\beta'_e\\     \gamma'_e&\delta'_e\end{smallmatrix}\right)\in \mathrm{GL}(2,\Z)$.

\begin{lemma}\label{LEM: Graph mfd calculation}
We follow the notation of (\ref{EQU: Matrix}), and omit the subscript $e$ for brevity. Then,
\begin{enumerate}[leftmargin=*]
\item\label{9.2-1} $\gamma=\pm\gamma'\neq 0$;
\item\label{9.2-2} $\lambda_1=\pm \mu_0$ and $\mu_1=\pm \lambda_0$; 
\item\label{9.2-3}  $\frac{\delta'}{\gamma'}=\lambda_0\mu_0^{-1}\frac{\delta}{\gamma}-\mu_0^{-1}\rho_0$ and $\frac{\alpha'}{\gamma'}=\mu_1^{-1}\lambda_1\frac{\alpha} {\gamma} +\mu_1^{-1}\rho_1$ in $\Z^{-1}\widehat{\Z}$;
\item\label{9.2-4} if $\lambda_0=\pm \mu_0$, then $\rho_0,\rho_1\in \lambda_0\Z$, and consequently, $f_u$ is peripheral $\lambda_0$-regular at $\partial_iM_u$ and $f_v$ is peripheral $\lambda_0$-regular at $\partial_kM_v$.
\end{enumerate}
\end{lemma}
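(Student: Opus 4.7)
The whole statement is an algebraic consequence of the commutativity of diagram (\ref{EQU: Matrix}). Reading the square in the form (codomain basis)$\cdot$(matrix) and tracking row vectors through the two paths from $\overline{\pi_1\partial_iM_u}$ to $\overline{\pi_1\partial_lN_v}$, commutativity amounts to the $\mathrm{GL}(2,\widehat{\Z})$ identity
\begin{equation*}
\begin{pmatrix}\lambda_1 & \rho_1\\ 0 & \mu_1\end{pmatrix}\begin{pmatrix}\alpha & \beta\\ \gamma & \delta\end{pmatrix}=\begin{pmatrix}\alpha' & \beta'\\ \gamma' & \delta'\end{pmatrix}\begin{pmatrix}\lambda_0 & \rho_0\\ 0 & \mu_0\end{pmatrix}.
\end{equation*}
The four scalar equations obtained by comparing entries will drive every part of the lemma, supplemented by two arithmetic facts about $\widehat{\Z}$ that have already appeared in the paper: $\Z\cap\widehat{\Z}^\times=\{\pm 1\}$ and (via \cite[Lemma~2.2]{Wil18}) $\mathbb{Q}\cap\widehat{\Z}=\Z$.

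For (\ref{9.2-1}) I would start from the $(2,1)$--entry $\mu_1\gamma=\gamma'\lambda_0$. Since $\lambda_0,\mu_1\in\widehat{\Z}^\times$ and $\gamma,\gamma'\in\Z$, this equality forces $\gamma$ and $\gamma'$ to vanish simultaneously; to rule this out I would invoke the minimality of the JSJ decomposition of a graph manifold, which implies that the fiber slopes of the two adjacent Seifert pieces on a JSJ torus are distinct (otherwise the two pieces would assemble into a single Seifert fibration, contradicting that $e$ is a JSJ edge). Equivalently, the gluing isomorphism $\psi$ sends $h_{M_u}$ off the line spanned by $h_{M_v}$, so its matrix has $\gamma\ne 0$. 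Then $\gamma'/\gamma=\mu_1\lambda_0^{-1}\in\mathbb{Q}^\times\cap\widehat{\Z}^\times=\{\pm1\}$, giving $\gamma'=\pm\gamma$ and, as a byproduct, $\mu_1=\pm\lambda_0$. For (\ref{9.2-2}), the remaining identity $\lambda_1=\pm\mu_0$ then follows by comparing determinants: the two $\mathrm{GL}(2,\Z)$ gluing matrices have determinant $\pm 1$, so $\lambda_1\mu_1=\pm\lambda_0\mu_0$, and combining with $\mu_1=\pm\lambda_0$ yields $\lambda_1=\pm\mu_0$.

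For (\ref{9.2-3}) I would solve the $(2,2)$--equation $\mu_1\delta=\gamma'\rho_0+\delta'\mu_0$ for $\delta'$ and divide by $\gamma'$, then use $\mu_1/\gamma'=\lambda_0/\gamma$ (coming from the $(2,1)$--entry) to obtain the claimed expression for $\delta'/\gamma'$ in $\Z^{-1}\widehat{\Z}$; the analogous manipulation of the $(1,1)$--equation $\lambda_1\alpha+\rho_1\gamma=\alpha'\lambda_0$ produces the formula for $\alpha'/\gamma'$. Finally, for (\ref{9.2-4}), assume $\lambda_0=\pm\mu_0$. Then $\mu_0^{-1}\lambda_0=\pm 1$, so the formula of (\ref{9.2-3}) rewrites as $\mu_0^{-1}\rho_0=\pm\delta/\gamma-\delta'/\gamma'\in\mathbb{Q}$. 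Since $\rho_0\in\widehat{\Z}$, this shows $\mu_0^{-1}\rho_0\in\mathbb{Q}\cap\widehat{\Z}=\Z$, hence $\rho_0\in\mu_0\Z=\lambda_0\Z$; the dual computation with $\alpha'/\gamma'$ (using $\lambda_1=\pm\mu_0=\pm\lambda_0$ and $\mu_1=\pm\lambda_0$) gives $\rho_1\in\lambda_0\Z$ similarly. Once $\rho_0\in\lambda_0\Z$, the matrix representing $C_{g_0}^{-1}\circ f_u|_{\overline{\pi_1\partial_iM_u}}$ factors as
\begin{equation*}
\begin{pmatrix}\lambda_0 & \rho_0\\ 0 & \mu_0\end{pmatrix}=\lambda_0\begin{pmatrix}1 & \rho_0/\lambda_0\\ 0 & \mu_0/\lambda_0\end{pmatrix},
\end{equation*}
and the second matrix has all entries in $\Z$ with determinant $\pm 1$; this exhibits a decomposition $\lambda_0\otimes\psi$ with $\psi\in\mathrm{Isom}_{\Z}(\pi_1\partial_iM_u,\pi_1\partial_jN_u)$, i.e.\ peripheral $\lambda_0$--regularity of $f_u$ at $\partial_iM_u$, and symmetrically for $f_v$ at $\partial_kM_v$.

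The main obstacle, and really the only non-mechanical input, is the assertion $\gamma\ne 0$ in (\ref{9.2-1}); everything else is linear algebra over $\widehat{\Z}$ localized at $\Z\setminus\{0\}$. I plan to justify $\gamma\ne 0$ by appealing directly to the geometric minimality of the JSJ decomposition of a graph manifold (distinct fiber slopes across any JSJ torus), which is standard and does not require any profinite machinery.
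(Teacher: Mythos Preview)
Your proof is correct and follows essentially the same route as the paper's: both derive the matrix identity from commutativity of diagram (\ref{EQU: Matrix}), extract $\gamma'=\pm\gamma$ and $\mu_1=\pm\lambda_0$ from the $(2,1)$--entry together with the JSJ minimality argument for $\gamma\neq 0$, obtain $\lambda_1=\pm\mu_0$ by comparing determinants, and read off the remaining parts from the other entries using the arithmetic fact $\mathbb{Q}\cap\widehat{\Z}=\Z$ (\cite[Lemma~2.2]{Wil18}). The only cosmetic difference is that for (\ref{9.2-4}) you invoke the formula of (\ref{9.2-3}) directly to see $\mu_0^{-1}\rho_0\in\mathbb{Q}\cap\widehat{\Z}$, whereas the paper clears denominators first and shows $\mu_0^{-1}\rho_0\gamma'\in\Z$ before dividing by $\gamma'$; these are equivalent.
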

The calculation was mainly done in the proof of \cite[Theorem 10.1]{Wil18}. We shall briefly reproduce this calculation for completeness.
\begin{proof}
Note that $\gamma$ is the intersection number of the regular fibers of $M_u$ and $M_v$ on the common toral boundary. The minimality of the JSJ-system of tori implies that $\gamma\neq 0$, for otherwise, gluing $M_u$ and $M_v$ along this common boundary in a fiber-parallel way still yields a Seifert fibered space. Similarly, $\gamma'\neq 0$.

The commutative diagram (\ref{EQU: Matrix}) implies
\begin{equation*}
\begin{aligned}
\left(\begin{matrix} \lambda_1\alpha+\rho_1\gamma& \lambda_1\beta+\rho_1\delta\\ \mu_1\gamma&\mu_1\delta\end{matrix}\right)=&\left(\begin{matrix} \lambda_1 & \rho_1 \\ 0 & \mu_1\end{matrix}\right ) \left( \begin{matrix} \alpha & \beta \\ \gamma & \delta\end{matrix}\right)=\left(\begin{matrix}\alpha'&\beta'\\ \gamma' & \delta'\end{matrix}\right)\left(\begin{matrix} \lambda_0 & \rho_0\\ 0 &\mu_0\end{matrix}\right )\\=&\left(\begin{matrix} \lambda_0\alpha' & \rho_0\alpha'+\mu_0\beta'\\ \lambda_0\gamma' & \rho_0\gamma'+\mu_0\delta'\end{matrix}\right).
\end{aligned}
\end{equation*}
\normalsize

In particular, the lower-left entry implies $\lambda_0^{-1}\mu_1\gamma=\gamma'$. Since $\gamma$ and $\gamma'$ are non-zero integers, while $\lambda_0^{-1}\mu_1\in \Zx$, \cite[Lemma 2.2]{Wil18} implies that $\gamma=\pm \gamma'$ and $\lambda_0^{-1}\mu_1=\pm1$. Thus, $\mu_1=\pm \lambda_0$. 
In addition, since $\left(\begin{smallmatrix} \alpha&\beta\\   \gamma&\delta\end{smallmatrix}\right)\in \mathrm{GL}(2,\Z)$, $\det\left(\begin{smallmatrix} \alpha&\beta\\   \gamma&\delta\end{smallmatrix}\right) =\pm1$, and similarly  $\det\left(\begin{smallmatrix} \alpha'&\beta'\\   \gamma'&\delta'\end{smallmatrix}\right) =\pm1$. Thus, $\det \left(\begin{smallmatrix} \lambda_0 & \rho_0\\ 0 &\mu_0\end{smallmatrix}\right )= \pm \det \left(\begin{smallmatrix} \lambda_1 & \rho_1\\ 0 &\mu_1\end{smallmatrix}\right )$, i.e. $\lambda_0\mu_0=\pm \lambda_1\mu_1$. Thus, $\mu_1=\pm \lambda_0$ implies $\lambda_1=\pm \mu_0$. This proves (\ref{9.2-1}) and (\ref{9.2-2}).

Moreover, the lower-right entry implies $$\frac{\delta'}{\gamma'}=\frac {\mu_0^{-1}\mu_1 \delta-\mu_0^{-1}\rho_0\gamma'}{\gamma'}=\frac{\mu_0^{-1}\mu_1\delta}{\lambda_0^{-1}\mu_1\gamma}-\mu_0^{-1}\rho_0=\lambda_0\mu_0^{-1}\frac{\delta}{\gamma}-\mu_0^{-1}\rho_0,$$ and the upper-left entry implies $$\frac{\alpha'}{\gamma'}=\frac{\lambda_0^{-1}\lambda_1\alpha+\lambda_0^{-1}\rho_1\gamma}{\lambda_0^{-1}\mu_1\gamma}=\mu_1^{-1}\lambda_1\frac{\alpha} {\gamma} +\mu_1^{-1}\rho_1,$$ which proves (\ref{9.2-3}).

When $\lambda_0=\pm \mu_0$, we have $\mu_0^{-1}\mu_1=\pm1$ and $\lambda_0^{-1}\lambda_1=\pm 1$. The lower-right entry implies $\mu_0^{-1}\rho_0\gamma'=\mu_0^{-1}\mu_1\delta-\delta'\in \Z$. Since $\gamma'\in \Z\setminus\{ 0\}$, \cite[Lemma 2.2]{Wil18} then implies $\mu_0^{-1}\rho_0\in \Z$, i.e. $\rho_0\in \mu_0\Z=\lambda_0\Z$. Similarly, the upper-left entry implies $\lambda_0^{-1}\rho_1\gamma=\alpha'-\lambda_0^{-1}\lambda_1\alpha\in \Z$. Again, $\gamma\in \Z\setminus\{ 0\}$ and \cite[Lemma 2.2]{Wil18} implies $\lambda_0^{-1}\rho_1\in \Z$, i.e. $\rho_1\in \lambda_0\Z$. This finishes the proof of (\ref{9.2-4}).
\end{proof}

\subsection{Profinite almost rigidity of graph manifolds}\label{subsec: graph almost}

\begin{theorem}\label{THM: Graph mfd almost rigid}
The class of graph manifolds is profinitely almost rigid.
\end{theorem}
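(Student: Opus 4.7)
The plan is to fix a graph manifold $M$ and show that, up to homeomorphism, only finitely many graph manifolds $N$ satisfy $\widehat{\pi_1N}\cong\widehat{\pi_1M}$. If $M$ has trivial JSJ-decomposition then $M$ is a single Seifert fibered piece and the conclusion follows immediately from \autoref{PROP: Seifert almost rigid} (together with \autoref{COR: Determine boundary}, which guarantees that $N$ is then also irreducible with at most toral boundary). So I focus on the case of non-trivial JSJ-decomposition. By \autoref{THM: Profinite Isom up to Conj}, any profinite isomorphism $f:\widehat{\pi_1M}\ttt\widehat{\pi_1N}$ induces a congruent isomorphism $f_\bullet$ between the JSJ-graphs of profinite groups $(\widehat{\mathcal{G}_M},\Gamma_M)$ and $(\widehat{\mathcal{G}_N},\Gamma_N)$, preserving geometric type of vertices. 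Thus the underlying oriented graph $\Gamma:=\Gamma_M\cong\Gamma_N$ is fixed by $M$, and for each $v\in V(\Gamma)$ the Seifert vertex pieces $M_v$ and $N_v$ satisfy $\widehat{\pi_1M_v}\cong\widehat{\pi_1N_v}$.

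Next I would invoke \autoref{PROP: Seifert almost rigid} at each vertex: this pins down the homeomorphism type of $N_v$ to finitely many choices. Fix one such choice per vertex; then by \autoref{PROP: Hempel pair}, any profinite isomorphism $f_v:\widehat{\pi_1M_v}\ttt\widehat{\pi_1N_v}$ realises $(M_v,N_v)$ as a Hempel pair, and carries a scale type $(\lambda_v,\mu_v)\in\Zx\times\Zx$ that is unique up to $\pm$-signs. Because $N_v$ has been fixed and its Seifert invariants $(p_j,q_j')$ must agree with those predicted by the scale factor $\kappa_v=\mu_v^{-1}\lambda_v$, this $\kappa_v$ is determined modulo each $p_j$, so only finitely many scale types $(\lambda_v,\mu_v)$ are admissible modulo the fiber subgroups.

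The heart of the argument is to bound the gluing data along each edge $e\in E(\Gamma)$ with endpoints $u=d_0(e), v=d_1(e)$. Using the bases described in \S\ref{subSEC: graph1}, the gluing maps for $M$ and $N$ at $e$ are represented by matrices $A_e, A_e'\in\mathrm{GL}(2,\Z)$ and fit into diagram (\ref{EQU: Matrix}). By \autoref{LEM: Graph mfd calculation}(\ref{9.2-1})--(\ref{9.2-3}), we have $\gamma_e'=\pm\gamma_e\neq 0$, and the equalities
\[
\frac{\delta_e'}{\gamma_e'}=\lambda_{e,0}\mu_{e,0}^{-1}\frac{\delta_e}{\gamma_e}-\mu_{e,0}^{-1}\rho_{e,0},\qquad \frac{\alpha_e'}{\gamma_e'}=\mu_{e,1}^{-1}\lambda_{e,1}\frac{\alpha_e}{\gamma_e}+\mu_{e,1}^{-1}\rho_{e,1}
\]
hold in $\Z^{-1}\widehat{\Z}$, while \autoref{LEM: Graph mfd calculation}(\ref{9.2-2}) forces $\lambda_{e,1}=\pm\mu_{e,0}$ and $\mu_{e,1}=\pm\lambda_{e,0}$. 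Clearing the denominator $\gamma_e$ (a nonzero integer), $\alpha_e'$ and $\delta_e'$ are each determined as integers modulo $\gamma_e$, and $\beta_e'$ is then forced by $\det A_e'=\pm 1$. Thus for each edge only finitely many gluing matrices $A_e'$ can occur. Combining these finite bounds, the combinatorial data $(\Gamma,\{N_v\}_{v\in V(\Gamma)},\{A_e'\}_{e\in E(\Gamma)})$ which recovers $N$ up to homeomorphism lies in a finite set, proving the theorem.

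The main obstacle I anticipate is the bookkeeping that ensures the edge constraints $\lambda_{e,1}=\pm\mu_{e,0}$ and the vertex-wise Hempel consistency from \autoref{PROP: Hempel pair} fit together globally on $\Gamma$: in principle the scale types at different vertices are independent finite data, but they must propagate coherently along every edge. This is the step where Wilkes's argument for the closed case \cite{Wil18} is essentially transplanted; the boundary components of $M$ introduce additional ``free'' peripheral data, but these are already controlled by the gluing analysis above since each boundary torus corresponds to the distinguished end of a single Seifert vertex and contributes no further edge compatibility.
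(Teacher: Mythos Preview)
Your outline follows the same opening moves as the paper: reduce to non-trivial JSJ, use \autoref{THM: Profinite Isom up to Conj} to fix $\Gamma$ and get $\widehat{\pi_1M_v}\cong\widehat{\pi_1N_v}$, invoke \autoref{PROP: Seifert almost rigid} to pin down the vertex pieces to finitely many types, and then appeal to \autoref{LEM: Graph mfd calculation} to control the edge data. But the final step contains a genuine gap.

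You write that ``$\alpha_e'$ and $\delta_e'$ are each determined as integers modulo $\gamma_e$, and $\beta_e'$ is then forced by $\det A_e'=\pm1$. Thus for each edge only finitely many gluing matrices $A_e'$ can occur.'' This is not right: there are only finitely many \emph{residue classes} of $\alpha_e',\delta_e'$ modulo $\gamma_e$, but infinitely many integer representatives in each class, hence infinitely many matrices $A_e'$. To get finiteness of homeomorphism types you must argue that two choices of gluing matrices which agree on $(\gamma_e;\alpha_e'\bmod\gamma_e,\delta_e'\bmod\gamma_e)$ for all $e$ yield homeomorphic manifolds. The paper does this via \autoref{LEM: Numerical invariants} (vertical Dehn twists implement the operation (DT)), and crucially that lemma shows these data are \emph{not} sufficient on their own: at every vertex $v$ with $\#\partial M_v=\deg_\Gamma(v)$ one must in addition match the rational total slope $\tau(v)$. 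Your argument does not mention $\tau(v)$ at all, and the equations from \autoref{LEM: Graph mfd calculation}(\ref{9.2-3}) by themselves do not bound it, since the off-diagonal entries $\rho_{e,i}\in\widehat\Z$ are a priori unconstrained. The paper closes this gap with \autoref{LEM: Total slope}, which uses \autoref{LEM: Sum slope} (a homology computation summing the $\rho_i$ at a vertex) together with \autoref{LEM: Graph mfd calculation}(\ref{9.2-3}) to prove $\tau'(v)=\mu^{-1}\lambda\,\tau(v)$ in $\Z^{-1}\widehat\Z$, whence $\tau'(v)=\pm\tau(v)$. Only after this does the list of invariants in (\ref{new invariants}) become finite and the argument concludes.
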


\begin{remark}
This was proved for the class of closed graph manifolds by Wilkes \cite{Wil18}, for which a finite list of numerical equations was introduced to determine precisely whether two  closed graph manifolds are profinitely isomorphic. 
In particular, Wilkes showed that closed graph manifolds are profinitely almost rigid but not profinitely rigid, with the exceptions constructed from gluing Hempel pairs.
\end{remark}

\newsavebox{\tempequi}
\begin{lrbox}{\tempequi}
$
\begin{aligned}
        &\pi_1M_v=\left<\left.\begin{gathered}h_v, a_1,\ldots, a_r,\\ u_1, v_1, \ldots, u_g,v_g,  \\ e_{0,v}, e_{1,v}, \ldots, e_{s,v}\end{gathered}\,\right|\left.\,\begin{gathered}e_{0,v}=\left(a_1 \cdots a_r e_{1,v}\cdots e_{s,v} [u_1, v_1]\cdots [u_g,v_g]\right)^{-1}\\ a_{j}^{p_j}h^{q_j}=1, \,h\text{ central}\end{gathered}\right.\right>,\\
       \text{or } &\pi_1M_v=\left<\left.\begin{gathered}a_1,\ldots, a_r, u_1, \ldots, u_g, \\e_{0,v}, e_{1,v}, \ldots, e_{s,v},h_v\end{gathered}\right.\left|\,\begin{gathered}e_{0,v}=\left(a_1 \cdots a_r e_{1,v}\cdots e_{s,v} u_1^2\cdots u_g^2\right)^{-1}\\ a_{j}^{p_j}h^{q_j}=1, \,  [a_j,h]=1,\\  [e_{i,v},h]=1, \, u_khu_k^{-1}=h^{-1}\end{gathered}\right.\right>.
    \end{aligned}
$
\end{lrbox}

As a preparation for the proof of \autoref{THM: Graph mfd almost rigid}, we briefly review the numerical invariants for an oriented  graph manifold, and the readers are referred to \cite[Section 11.3]{FM13} for more details.

Fix a finite graph $\Gamma$ as the JSJ-graph of a graph manifold $M$. We assume that $\Gamma$ is not a single vertex, in which case the classification of graph manifolds reduces to the classification of Seifert fibered spaces. For each $v\in  V(\Gamma)$, fix an oriented Seifert fibered 3-manifold $M_v$. 
Then we choose a presentation of $\pi_1M_v$ as in \autoref{SEC: Seifert}:
\begin{equation*}
\scalebox{0.92}{\usebox{\tempequi}}
\end{equation*}
We additionally require that
\begin{enumerate}[leftmargin=*]
    \item $\{e_{i,v},h_v\}$ determines the  orientation on the boundary component induced by the given orientation of $M_v$;  
    \item $0<q_j<p_j$ for each $1\le j\le r$.
\end{enumerate}
Then $\{e_{i,v},h_v\}_{i=0}^{s}$ is called an admissible coordinate system on $\partial M_v$, following \cite[Definition 11.4]{FM13}.

Then, an oriented graph manifold $M$ with underlying JSJ-graph $\Gamma$ and corresponding Seifert pieces $M_v$ is determined by the gluing maps $\psi_e$ at each edge $e\in E(\Gamma)$. 
As in \autoref{subSEC: graph1}, the gluing map at $e\in E(\Gamma)$, 
up to isotopy, is determined by a matrix:
\begin{equation*}
\begin{tikzcd}[ampersand replacement=\&, column sep=large]
\psi_e:\pi_1(\partial_i M_u)=\Z\{h_u,e_{i,u}\} 
\arrow[r, "{\left(\begin{smallmatrix} \alpha_e & \beta_e\\ \gamma_e & \delta_e\end{smallmatrix}\right)}"] 
\& 
\Z\{h_v,e_{k,v}\}=\pi_1(\partial_k M_v)
\end{tikzcd}
\end{equation*}
where $u=d_0(e)$, $v=d_1(e)$, $\left(\begin{smallmatrix} \alpha_e & \beta_e\\ \gamma_e & \delta_e\end{smallmatrix}\right)\in \mathrm{GL}(2,\Z)$, and $\det \left(\begin{smallmatrix} \alpha_e & \beta_e\\ \gamma_e & \delta_e\end{smallmatrix}\right)=-1$ by choice of orientation.

Let the JSJ-torus $T^2_e$ inherit the boundary orientation of $M_v$ on the $d_1$--side, then $\gamma_e=i(h_u,h_v)\neq 0$ is the intersection number of the regular fibers at $T^2_e$.  
Similarly, $\delta_e=i(e_{i,u},h_v)$ and $\alpha_e=i(e_{k,v},h_u)$. And $\beta_e\in \Z$ is uniquely determined by $\alpha_e,\delta_e,\gamma_e$ from  $\det \left(\begin{smallmatrix} \alpha_e & \beta_e\\ \gamma_e & \delta_e\end{smallmatrix}\right)=-1$.

It is possible that different gluing matrices yield the same graph manifold. 
\cite[Theorem 11.2]{FM13} introduces a method to   classify   precisely the homeomorphism type of graph manifolds from the gluing matrices. For simplicity, we shall only need a partial result.

\begin{lemma}\label{LEM: Numerical invariants}
    Let $\overline{M}$ be a graph manifold obtained from the same oriented Seifert fibered pieces through  an alternate choice of  gluing matrices:  
\begin{equation*}
\begin{tikzcd}[ampersand replacement=\&, column sep=large]
\overline{\psi_e}:\pi_1(\partial_i M_u)=\Z\{h_u,e_{i,u}\} 
\arrow[r, "{\left(\begin{smallmatrix} \overline{\alpha_e} & \overline{\beta_e}\\ \overline{\gamma_e} & \overline{\delta_e}\end{smallmatrix}\right)}"] 
\& 
\Z\{h_v,e_{k,v}\}=\pi_1(\partial_k M_v).
\end{tikzcd}
\end{equation*}
Then $\overline{M}\cong M$ 
if the following conditions hold.
\begin{enumerate}[leftmargin=*]
    \item\label{dtcond1} $\overline{\gamma_e}=\gamma_e$, for each $e\in E(\Gamma)$;
    \item\label{dtcond2} $\overline{\delta_e}\equiv\delta_e\pmod{ \gamma_e}$, and $\,\overline{\alpha_e}\equiv\alpha_e\pmod{ \gamma_e}$, for each $e\in E(\Gamma)$;
    \item\label{inv3} for any $v\in V(\Gamma)$ such that $\# \partial M_v=deg_{\Gamma}(v)$  (i.e. $M_v\cap \partial M=\varnothing$), 
    $$\sum_{ e\in E(\Gamma):\,d_0(e)=v }\frac{\delta_e}{\gamma_e}-\sum_{e\in E(\Gamma):\,d_1(e)=v}\frac{\alpha_e}{\gamma_e}=\sum_{e\in E(\Gamma):\,d_0(e)=v}\frac{\overline{\delta_e}}{\overline{\gamma_e}}-\sum_{e\in E(\Gamma):\,d_1(e)=v}\frac{\overline{\alpha_e}}{\overline{\gamma_e}}.$$
\end{enumerate}
\end{lemma}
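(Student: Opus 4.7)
The plan is to construct a homeomorphism $M \to \overline{M}$ by combining vertical Dehn twist homeomorphisms on each Seifert piece $M_v$, chosen so that the gluing matrices of $M$ are transformed into those of $\overline{M}$. First I would compute how a cross-section shift $e_{j,v} \mapsto e_{j,v} h_v^{t_{j,v}}$ on a boundary component $\partial_j M_v$ (realized by \autoref{LEM: Dehn twist}) affects the incident gluing matrices. Direct computation via left- or right-multiplication by a unipotent matrix $\left(\begin{smallmatrix}1 & t\\ 0 & 1\end{smallmatrix}\right)$ shows that $\gamma_e$ is unchanged; at an edge $e$ with $d_0(e) = v$ whose $v$-side boundary is $\partial_{i(e)} M_v$, the entry $\delta_e$ becomes $\delta_e + t_{i(e),v}\gamma_e$; at an edge $e$ with $d_1(e) = v$ whose $v$-side boundary is $\partial_{k(e)} M_v$, the entry $\alpha_e$ becomes $\alpha_e - t_{k(e),v}\gamma_e$.

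Next I would select the shifts. For each glued boundary component $\partial_j M_v$ corresponding to an edge $e$, set
$$t_{j,v} = \begin{cases} (\overline{\delta_e} - \delta_e)/\gamma_e & \text{if } v = d_0(e), \\ (\alpha_e - \overline{\alpha_e})/\gamma_e & \text{if } v = d_1(e), \end{cases}$$
which is an integer by condition (\ref{dtcond2}). To invoke \autoref{LEM: Dehn twist} at $v$, the shifts summed over all boundary components of $M_v$ must vanish. Summing the above over all glued boundaries at $v$ yields exactly $\overline{S_v} - S_v$, where $S_v$ denotes the quantity in condition (\ref{inv3}). When $M_v \cap \partial M = \varnothing$, this sum is zero by condition (\ref{inv3}); when $M_v$ has free boundary components, I would assign arbitrary integer shifts on those free components to absorb the discrepancy. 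In either case \autoref{LEM: Dehn twist} yields a homeomorphism $\Psi_v\colon M_v \to M_v$ realizing the prescribed shifts on each boundary component, up to inner conjugation.

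Finally I would assemble the $\Psi_v$'s. After their application, the gluing matrix at every edge $e$ agrees with $\overline{\psi_e}$ in the entries $\alpha$, $\gamma$, $\delta$. Because both matrices lie in $\mathrm{GL}(2,\Z)$ with determinant $-1$ and $\gamma_e \neq 0$, the remaining entry is forced by $\beta = (\alpha\delta + 1)/\gamma$, so the matrices agree in full. The conjugations produced by \autoref{LEM: Dehn twist} are compatible across JSJ-tori once basepoints are matched, so the $\Psi_v$'s assemble into the desired homeomorphism $M \to \overline{M}$. The main obstacle I anticipate is verifying that condition (\ref{inv3}) gives precisely the consistency required by \autoref{LEM: Dehn twist} at closed vertices; accepting this identification, the remainder is routine matrix bookkeeping.
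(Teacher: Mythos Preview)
Your proposal is correct and follows essentially the same approach as the paper: both arguments use vertical Dehn twists at each Seifert piece (the paper packages this as an operation ``(DT)'' and invokes \cite[Theorem 11.2]{FM13}, whereas you carry out the construction directly via \autoref{LEM: Dehn twist}), and both observe that conditions (\ref{dtcond1})--(\ref{inv3}) are exactly what is needed to choose the twist parameters consistently, with the $\beta$-entry determined by the determinant constraint.
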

\begin{proof}
    We define an operation on the gluing matrices as follows.

(DT): 
Choose a vertex $v\in V(\Gamma)$. For each element in $D_v=\{(e,i)\in E(\Gamma)\times \{0,1\} \mid d_i(e)=v\}$,  assign an integer $t_{e,i}\in \Z$. If $\#\partial M_v=deg_{\Gamma}(v)$, we further require that $\sum_{(e,i)\in D_v}t_{e,i}=0$. First, for each $e\in E(\Gamma)$ so that $(e,0)\in D_v$, replace $\left(\begin{smallmatrix}
            \alpha_e&\beta_e\\\gamma_e&\delta_e
        \end{smallmatrix}\right)$ by $\left(\begin{smallmatrix}
            \alpha_e&\beta_e\\\gamma_e&\delta_e
        \end{smallmatrix}\right)\left(\begin{smallmatrix}
            1& t_{e,0}\\ 0 & 1
        \end{smallmatrix}\right)$. And then, for each $e\in E(\Gamma)$ so that $(e,1)\in D_v$, replace $\left(\begin{smallmatrix}
            \alpha_e&\beta_e\\\gamma_e&\delta_e
        \end{smallmatrix}\right)$ by $\left(\begin{smallmatrix}
            1& -t_{e,1}\\ 0 & 1
        \end{smallmatrix}\right)\left(\begin{smallmatrix}
            \alpha_e&\beta_e\\\gamma_e&\delta_e
        \end{smallmatrix}\right)$.
    
    \cite[Theorem 11.2]{FM13} implies that two choices of gluing matrices yield homeomorphic graph manifolds if one of them can be obtained from the other one through finite steps of operation \dt. In fact, {\dt} is realized by a vertical Dehn twist on the Seifert piece $M_v$ as shown in \autoref{LEM: Dehn twist}. The restriction $\sum t_{e,i}=0$ when $\#\partial M_v=deg_{\Gamma}(v)$ is the same restriction in the Dehn twist construction, as illustrated in \autoref{LEM: Dehn twist}; and this restriction is not needed when $\#\partial M_v>deg_{\Gamma}(v)$ since we can adjust the twisting on the extra boundary component in $\partial M\cap M_v$ accordingly.

    It is easy to verify that $\big\{\left(\begin{smallmatrix}
            \overline{\alpha_e}&\overline{\beta_e}\\\overline{\gamma_e}&\overline{\delta_e}
        \end{smallmatrix}\right)\big\}_{e\in E(\Gamma)}$ can be obtained from $\big\{\left(\begin{smallmatrix}
            \alpha_e&\beta_e\\\gamma_e&\delta_e
        \end{smallmatrix}\right)\big \}_{e\in E(\Gamma)}$ through finitely many steps of {\dt}    if and only if the conditions (\ref{dtcond1}), (\ref{dtcond2}), (\ref{inv3}) hold.
    %
\end{proof}

In fact, condition (\ref{inv3}) introduced in \autoref{LEM: Numerical invariants} is equivalently determined by a rational number 
\begin{align*}
    \tau(v)
    =& \sum_{e\in E(\Gamma):\,d_0(e)=v}\frac{\delta_e}{\gamma_e}-\sum_{e\in E(\Gamma):\,d_1(e)=v}\frac{\alpha_e}{\gamma_e}-\sum_{j}\frac{q_j}{p_j}\\
    =&\sum_{e\in E(\Gamma)\text{ adjoining } v}\frac{i(e_{i,v},h_w)}{i(h_v,h_w)}-\sum_{j}\frac{q_j}{p_j}.
\end{align*}
    In  the second line, $e\in E(\Gamma)$ with $d_0(e)=d_1(e)=v$ is count twice, and $w$ denotes the other endpoint of $e$. Note that the fraction $\frac{i(e_{i,v},h_w)}{i(h_v,h_w)}$ is independent with the orientation of the JSJ-torus as well as the orientation of $h_w$, so the fraction is well-defined. 
    
    The Seifert invariants $(p_j,q_j)$ for $M_v$ are fixed beforehand by the choice of fundamental group presentations. Thus, condition (\ref{inv3}) holds if and only if $\tau(v)=\overline{\tau}(v)$. The number $\tau(v)$ is also referred to as the {\em total slope} at $M_v$ 
    in \cite{Wil18}.  
    
Therefore, 
the (orientation-preserving) homeomorphism type of the graph manifold $M$ can be determined by a list of numerical invariants: 
\begin{equation}\label{new invariants}
    \begin{aligned}
        (1) \quad & \gamma_e\in \Z\text{ for each }e\in E(\Gamma);\\
        (2) \quad & \delta_e(\mathrm{mod}\, \gamma_e)\text{ and }\alpha_e(\mathrm{mod}\, \gamma_e)\text{ for each }e\in E(\Gamma);\\
        (3) \quad & \tau(v)\text{ for each }v\in \{v\in V(\Gamma)\mid \#\partial M_v=deg_{\Gamma}(v)\}.
    \end{aligned}
\end{equation}

Let us now return to the profinite isomorphism between graph manifolds.


\begin{lemma}\label{LEM: Total slope}
Let $M$ and $M'$ be oriented graph manifolds over the same JSJ-graph $\Gamma$ with the same oriented Seifert fibered pieces $M_v$. We fix the presentation of $\pi_1M_v$ as in the beginning of this subsection, and $M$ and $M'$ can be described by the gluing matrices.

Suppose $f:\widehat{\pi_1M}\ttt\widehat{\pi_1M'}$ is an isomorphism, so that the  map $\F:\Gamma\to \Gamma$ induced by  \autoref{THM: Profinite Isom up to Conj} is the identity map.  
Suppose $v\in V(\Gamma)$ is a vertex such that $\# \partial M_v= deg_{\Gamma}(v)$. 
Then, the corresponding total slopes are equal up to a $\pm$-sign: $\tau(v)=\pm \tau'(v)$.
\end{lemma}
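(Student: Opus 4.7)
The strategy is to compare the two total slopes edge-by-edge via Lemma~\ref{LEM: Graph mfd calculation}(3), package the correction terms into a single sum of $\rho$-coefficients attached to $M_v$, and then evaluate that sum using the Sum slope formula (\autoref{LEM: Sum slope}) applied to an appropriate Hempel pair.

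First, since $\F$ is the identity on $\Gamma$, \autoref{THM: Profinite Isom up to Conj} produces a congruent isomorphism yielding, for each vertex $w$, a profinite automorphism $f_w\colon \widehat{\pi_1 M_w}\to\widehat{\pi_1 M'_w}=\widehat{\pi_1 M_w}$. The coherence relation at each edge forces $f_w$ to preserve (up to conjugacy) the closures of the peripheral subgroups coming from JSJ-tori. For our vertex $v$ with $\#\partial M_v=\deg_\Gamma(v)$, \emph{every} boundary component of $M_v$ is a JSJ-torus, so $f_v$ respects the entire peripheral structure of $M_v$; hence \autoref{PROP: Hempel pair} identifies $(M_v,M_v)$ as a Hempel pair and assigns $f_v$ a well-defined scale type $(\lambda_v,\mu_v)\in \Zx\times\Zx$ (up to a common sign). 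Crucially, since $M_v=M'_v$ as oriented Seifert pieces, the Seifert invariants are literally equal: $q'_j=q_j$.

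Next, I apply \autoref{LEM: Graph mfd calculation}(3) at each edge $e$ incident to $v$. When $d_0(e)=v$, the $(e,0)$-side of the diagram (\ref{EQU: Matrix}) is exactly the restriction of $f_v$ to that boundary component, so $(\lambda_{e,0},\mu_{e,0})=(\epsilon_e\lambda_v,\epsilon_e\mu_v)$ for some $\epsilon_e\in\{\pm 1\}$; consequently the ratio $\lambda_{e,0}\mu_{e,0}^{-1}=\lambda_v/\mu_v$ and the quantity $\mu_{e,0}^{-1}\rho_{e,0}=\mu_v^{-1}\rho_{i(e)}$ are sign-independent, where $\rho_{i(e)}$ is the coefficient of $h_v$ in the expression for $f_v(e_{i(e),v})$ in the Lemma~\ref{LEM: Sum slope} convention. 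The analogous statement holds for the $(e,1)$-side when $d_1(e)=v$; a loop $d_0(e)=d_1(e)=v$ contributes separately on both sides. Summing \autoref{LEM: Graph mfd calculation}(3) over all edges at $v$ yields
\[
\tau'(v)+\sum_j\frac{q_j}{p_j} \;=\; \frac{\lambda_v}{\mu_v}\!\left(\tau(v)+\sum_j\frac{q_j}{p_j}\right) - \mu_v^{-1}\sum_{i=0}^{s}\rho_i
\]
in $\Z^{-1}\widehat{\Z}$, where the $\rho_i$ range over all boundary components of $M_v$.

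Applying \autoref{LEM: Sum slope} to the Hempel self-pair $(M_v,M_v)$ with scale type $(\lambda_v,\mu_v)$ gives $\sum_i\rho_i=(\lambda_v-\mu_v)\sum_j q_j/p_j$. Substituting back and simplifying, all terms involving $\sum_j q_j/p_j$ cancel, leaving $\tau'(v)=(\lambda_v/\mu_v)\tau(v)$ in $\Z^{-1}\widehat{\Z}$. Because $\tau(v),\tau'(v)\in\mathbb{Q}$ while $\lambda_v/\mu_v\in\Zx$, if $\tau(v)\neq 0$ then $\lambda_v/\mu_v=\tau'(v)/\tau(v)\in\mathbb{Q}\cap\Zx$. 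But $\mathbb{Q}\cap\Zx=\{\pm 1\}$ (any rational $a/b\in\Zx$ with $\gcd(a,b)=1$ must have $a,b\in\Z^\times=\{\pm 1\}$), so $\lambda_v/\mu_v=\pm 1$ and $\tau'(v)=\pm\tau(v)$. The case $\tau(v)=0$ forces $\tau'(v)=0$ immediately.

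The main obstacle is the careful bookkeeping between the two conventions: the per-edge matrix entries $\rho_{e,0},\rho_{e,1}$ from \autoref{LEM: Graph mfd calculation} and the per-Seifert-piece coefficients $\rho_i$ from \autoref{LEM: Sum slope}. One must verify that, after accounting for the orientation homomorphism $o$ and the conjugators $g_i$, the two families agree so that summing edge-by-edge really recovers $\sum_i\rho_i$. Once this matching is made, the numerical identity above is purely formal.
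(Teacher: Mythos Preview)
Your proposal is correct and follows essentially the same approach as the paper's proof: both apply \autoref{LEM: Graph mfd calculation}(3) edge-by-edge at the vertex $v$, sum the resulting identities, invoke \autoref{LEM: Sum slope} (with $q_j'=q_j$) to evaluate $\sum_i\rho_i$, and conclude $\tau'(v)=\mu_v^{-1}\lambda_v\,\tau(v)$ before appealing to $\mathbb{Q}\cap\Zx=\{\pm1\}$. Your explicit remark that the sign factors $o(g_i)$ cancel in the ratios $\lambda_{e,\alpha}\mu_{e,\alpha}^{-1}$ and $\mu_{e,\alpha}^{-1}\rho_{e,\alpha}$ is exactly the bookkeeping the paper handles implicitly by writing the boundary matrices as $\begin{pmatrix} o(g_i)\lambda & o(g_i)\rho_i\\ 0 & o(g_i)\mu\end{pmatrix}$.
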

\begin{proof}
    Let $f_{\bullet}$ denote the induced congruent isomorphism between the JSJ-graphs of the profinite groups. 
    Then, 
    $\# \partial M_v= deg_{\Gamma}(v)$ implies that $f_v$ in fact respects the peripheral structure, as is completely witnessed in the JSJ-graph. 
Therefore, by \autoref{PROP: Hempel pair}, there exist $\lambda,\mu \in \Zx$ and $\rho_i\in \widehat{\Z}$ such that $f_v(h_v)=h_{v}^\lambda$ and $f_v(e_{i,v})=g_ie_{i,v}^{o(g_i)\mu}g_i^{-1}h_v^{\rho_i}$. Here $g_i\in \widehat{\pi_1M_v}$, and $o$ is the orientation homomorphism whose kernel is the centralizer of $h_v$. 
    By composing with a conjugation, we have
    %
    $$
C_{g_i^{-1}}\circ f \begin{pmatrix}h_{v}&e_{i,v}\end{pmatrix} =\begin{pmatrix} h_{v} & e_{i,v}\end{pmatrix}\begin{pmatrix} o(g_i)\lambda & o(g_i)\rho_i\\ 0 & o(g_i)\mu\end{pmatrix}.$$

For any edge $e\in E(\Gamma)$ with $d_1(e)=v$, we have a commutative diagram as in (\ref{EQU: Matrix}).

\begin{equation*}
\begin{tikzcd}[ampersand replacement=\&, row sep=0.5cm,column sep=0.62cm]
{\overline{\pi_1\partial_jM_u}=\widehat{\Z}\left\{h_{u},e_{j,u}\right\}} \arrow[rrr, "{\left(\begin{smallmatrix} \alpha_e &\beta_e\\   \gamma_e&\delta_e\end{smallmatrix}\right)}"] \arrow[dd] \& \& \& {\widehat{\Z}\left\{h_{v},e_{i,v}\right\}=\overline{\pi_1\partial_iM_v}} \arrow[dd, "{\left(\begin{smallmatrix} o(g_i)\lambda & o(g_i)\rho_i\\ 0 & o(g_i)\mu\end{smallmatrix}\right)}", "C_{g_i}^{-1}\circ f_v"'] \\
                                                                                                                                                                                                              \& \& \&                                                                                                                              \\
{\overline{\pi_1\partial_jM_u}=\widehat{\Z}\left\{h_{u},e_{j,u}\right\}} \arrow[rrr, "{\left(\begin{smallmatrix} \alpha'_e &\beta'_e\\     \gamma'_e&\delta'_e\end{smallmatrix}\right)}"']                                                                                 \& \& \& {\widehat{\Z}\left\{h_{v},e_{i,v}\right\}=\overline{\pi_1\partial_iM_v}}                                                                                    
\end{tikzcd}
\end{equation*}
Then, \autoref{LEM: Graph mfd calculation} (\ref{9.2-3}) implies that 
\begin{equation}\label{9.10equ1}
\frac{\alpha'_e}{\gamma'_e}=\mu^{-1}\lambda \frac{\alpha_e}{\gamma_e}+\mu^{-1}\rho_i\in \Z^{-1}\widehat{\Z}.
\end{equation}

Similarly, for $e\in E(\Gamma)$ with $d_0(e)=v$ we have:
\begin{equation*}
\begin{tikzcd}[ampersand replacement=\&, row sep=0.5cm,column sep=0.62cm]
{\overline{\pi_1\partial_iM_v}=\widehat{\Z}\left\{h_{v},e_{i,v}\right\}} \arrow[rrr, "{\left(\begin{smallmatrix} \alpha_e &\beta_e\\   \gamma_e&\delta_e\end{smallmatrix}\right)}"] \arrow[dd,"{\left(\begin{smallmatrix} o(g_i)\lambda & o(g_i)\rho_i\\ 0 & o(g_i)\mu\end{smallmatrix}\right)}"', "C_{g_1}^{-1}\circ f_v"] \& \& \& {\widehat{\Z}\left\{h_{w},e_{k,w}\right\}=\overline{\pi_1\partial_kM_w}} \arrow[dd] \\
                                                                                                                                                                                                              \& \& \&                                                                                                                              \\
{\overline{\pi_1\partial_iM_v}=\widehat{\Z}\left\{h_{v},e_{i,v}\right\}} \arrow[rrr, "{\left(\begin{smallmatrix} \alpha'_e &\beta'_e\\     \gamma'_e&\delta'_e\end{smallmatrix}\right)}"']                                                                                 \& \& \& {\widehat{\Z}\left\{h_{w},e_{k,w}\right\}=\overline{\pi_1\partial_kN_w}}                                                                                    
\end{tikzcd}
\end{equation*}
It again follows from \autoref{LEM: Graph mfd calculation} (\ref{9.2-3}) that
\begin{equation}\label{9.10equ2}
    \frac{\delta_e'}{\gamma_e'}=\mu^{-1}\lambda\frac{\delta_e}{\gamma_e}-\mu^{-1}\rho_i\in \Z^{-1}\widehat{\Z}.
\end{equation}

We denote the Seifert invariants of $M_v$ as $(p_j,q_j)$. 
Since $f_v$ respects the peripheral structure, in $\Z^{-1}\widehat{\Z}$ we obtain:
\allowdisplaybreaks
\begin{align*}
    \tau'(v)=& {\textstyle \sum\limits_{e\in E(\Gamma):\,d_0(e)=v}}\frac{\delta_e'}{\gamma_e'}-{\textstyle\sum\limits_{e\in E(\Gamma):\,d_1(e)=v}}\frac{\alpha_e'}{\gamma_e'}-{\textstyle\sum\limits_{j}}\frac{q_j}{p_j}&\\
    =&\mu^{-1}\lambda \big( {\textstyle\sum\limits_{d_0(e)=v}}\frac{\delta_e}{\gamma_e}-{\textstyle\sum\limits_{d_1(e)=v}}\frac{\alpha_e}{\gamma_e} \big) -\mu^{-1}{\textstyle\sum\limits_{i}}\rho_i -{\textstyle\sum\limits_{j}}\frac{q_j}{p_j} & (\ref{9.10equ1}),\,(\ref{9.10equ2})\\
    =&\mu^{-1}\lambda \big( {\textstyle\sum\limits_{d_0(e)=v}}\frac{\delta_e}{\gamma_e}-{\textstyle\sum\limits_{d_1(e)=v}}\frac{\alpha_e}{\gamma_e} \big ) -\mu^{-1}\big (\lambda {\textstyle \sum\limits_{j} }\frac{q_j}{p_j}- \mu {\textstyle\sum\limits_{j}}\frac{q_j}{p_j}\big ) -{\textstyle\sum\limits_{j}}\frac{q_j}{p_j} & (\text{\autoref{LEM: Sum slope}})\\
    =&\mu^{-1}\lambda \big( {\textstyle\sum\limits_{d_0(e)=v}}\frac{\delta_e}{\gamma_e}-{\textstyle\sum\limits_{d_1(e)=v}}\frac{\alpha_e}{\gamma_e} -{\textstyle\sum\limits_{j}} \frac{q_j}{p_j} \big ) &\\
    =&\mu^{-1}\lambda\cdot  \tau(v) &
\end{align*}
\normalsize
\allowdisplaybreaks[0]

Note that $\tau(v), \tau'(v)\in \mathbb{Q}$ and $\mu^{-1}\lambda\in\Zx$, by multiplicating the denominators, it follows from \cite[Lemma 2.2]{Wil18} that either $\tau(v)=\tau'(v)=0$, or $\mu^{-1}\lambda=\pm 1$ and $\tau(v)=\pm \tau'(v)$. This finishes the proof of this lemma.
\end{proof}

We are  now ready to prove \autoref{THM: Graph mfd almost rigid}.
\begin{proof}[Proof of \autoref{THM: Graph mfd almost rigid}]
    Let $\mathscr{M}_G$ denote the class of  graph manifolds. We show that $\Delta_{\mathscr{M}_{G}}(M_0)$ is finite for any $M_0\in \mathscr{M}_{G}$.
    
    According to \autoref{THM: Profinite Isom up to Conj}, the profinite completion of the fundamental group of a graph manifold determines its JSJ-graph as well as the profinite completion of the fundamental group of the Seifert piece at each vertex. 
    Thus, any manifold in $\Delta_{\mathscr{M}_{G}}(M_0)$ can be viewed as a graph manifold over the same JSJ-graph $\Gamma$.
    We assume that $\Gamma$ is non-trivial, for the trivial case follows directly from \autoref{PROP: Seifert almost rigid}. 

    By \autoref{PROP: Seifert almost rigid}, the profinite completion of the fundamental group of each Seifert piece determines the homeomorphism type of each Seifert piece  to finitely many possibilities. 
    Thus, $\Delta_{\mathscr{M}_{G}}(M_0)$ can be divided into finitely many subsets $\Delta_1,\cdots, \Delta_n$ such that for any $M,M'\in \Delta_i$, we can choose orientations on $M$ and $M'$, which determines the orientation on their Seifert pieces, so that  $M_v\cong M'_v$ by an orientation preserving homeomorphism for each $v\in V(\Gamma)$.

    It suffices to show that each $\Delta_i$ is finite. Within $\Delta_i$, we can fix the presentation of each $\pi_1M_v$ as in the beginning of this subsection, and describe the graph manifolds through the gluing matrices. 
Then, within $\Delta_i$, \autoref{LEM: Graph mfd calculation}~(\ref{9.2-1}) implies that the profinite completion determines the intersection number  of the regular fibers $\gamma_e$ at each $e\in E(\Gamma)$ up to $\pm$-sign. Moreover, \autoref{LEM: Total slope} implies that the profinite completion determines, up to $\pm$-sign, the total slope $\tau(v)$ of each vertex $v$ such that $\#\partial M_v=deg_{\Gamma}(v)$. Note that when the intersection number $\gamma_e\in \Z\setminus\{0\}$ is fixed, there are only finitely many choices of $\delta_e(\mathrm{mod}\, \gamma_e)$ and $\alpha_e(\mathrm{mod}\, \gamma_e)$. Therefore, within $\Delta_i$, there are only finitely many choices for the numerical invariants listed in (\ref{new invariants}). 
    This implies the finiteness of $\Delta_i$, finishing the proof. 
\end{proof}

\subsection{From peripheral $\Zx$-regularity to homeomorphism}\label{subsec: Zx imply homeo}

Although graph manifolds are not profinitely rigid, even among the closed ones \cite{Wil18}, an additional condition of peripheral $\Zx$-regularity will imply certain rigidity. 
This subsection is devoted to the proof of \autoref{inthm: Graph new}~(\ref{di222}), elaborated below as \autoref{THM: Graph Zx-regular implies homeo}. The proof is based on \autoref{PROP: Seifert Zx-regular}, through joining together homeomorphisms of the Seifert fibered pieces according to the gluing lemma (\autoref{inLem1'}).

\begin{theorem}\label{THM: Graph Zx-regular implies homeo}
Let $M=(M_\bullet, \Gamma)$ and $N=(N_\bullet,\Gamma')$ be 
 graph manifolds with non-empty toral boundary. 
Suppose $f:\widehat{\pi_1M}\ttt\widehat{\pi_1N}$ is an isomorphism respecting the peripheral structure, 
and that $f$ is 
peripheral $\widehat{\Z}^{\times}$-regular at some of the boundary components $\partial_1 M,\cdots, \partial_k M$, where $1\le k\le \#\partial M$. Then, 
\begin{enumerate}[leftmargin=*]
\item $M$ and $N$ are homeomorphic;
\item 
there exists an element  $\lambda\in \widehat{\Z}^{\times}$ and two  homeomorphisms $\Phi^\pm:M\to N$ such that 
for each $1\le i\le k$, $f$ restricting on $\partial_iM$ is induced by  homeomorphism $\Phi^\pm$ with coefficient $\pm\lambda$, where the $\pm$-signs are consistent.
\end{enumerate}
\end{theorem}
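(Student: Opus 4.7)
The plan is to reduce the global statement to a vertex-by-vertex application of \autoref{PROP: Seifert Zx-regular} for the Seifert pieces, and then to glue the pieces together using \autoref{inLem1'}. First, by \autoref{THM: Profinite Isom up to Conj}, the isomorphism $f$ induces a congruent isomorphism $f_\bullet$ between the JSJ-graphs of profinite groups $(\widehat{\mathcal{G}_M},\Gamma_M)$ and $(\widehat{\mathcal{G}_N},\Gamma_N)$, sending Seifert fibered vertices to Seifert fibered vertices. Since $f$ respects the peripheral structure, so does each $f_v$ by \autoref{COR: peripheral preserving}. We identify $\Gamma_M$ with $\Gamma_N$ via $\F$.

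Next I propagate peripheral $\Zx$-regularity across the entire graph. Each selected boundary $\partial_jM$ ($1\le j\le k$) lies in some Seifert piece $M_{v_j}$; by \autoref{EX: peripheral regular}, $f_{v_j}$ is peripheral $\Zx$-regular at $\partial_jM$. Applying \autoref{PROP: Seifert Zx-regular} to $f_{v_j}$ (which respects the peripheral structure of $M_{v_j}$), $M_{v_j}$ and $N_{v_j}$ are homeomorphic and $f_{v_j}$ is peripheral $\Zx$-regular at every boundary component of $M_{v_j}$, including those coming from JSJ-tori, with a single coefficient $\lambda_{v_j}\in\Zx$ up to sign. Invoking the \hyperref[inLem1']{gluing lemma} \autoref{inLem1'}~(\ref{lemmain-1'}) across each edge of $\Gamma_M$ then transfers peripheral $\Zx$-regularity to adjacent Seifert pieces, with the coefficient unchanged up to $\pm$-sign. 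Because $\Gamma_M$ is connected, iterating this yields a unified coefficient $\lambda\in\Zx$ (chosen up to sign) such that every $f_v$ is peripheral $\Zx$-regular at every boundary component of $M_v$, with coefficient $\pm\lambda$. In particular $M_v\cong N_v$ for every $v\in V(\Gamma_M)$.

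Now I build the global homeomorphisms. For each $v$, I apply \autoref{PROP: Seifert Zx-regular} again: there exist homeomorphisms $\Phi^+_v,\Phi^-_v:M_v\to N_v$ such that for every boundary component of $M_v$, $f_v$ restricted to that component is induced by $\Phi^\pm_v$ with coefficient $\pm\lambda$ respectively (the key point being that the coefficients produced by \autoref{PROP: Seifert Zx-regular} are unified across all boundaries of $M_v$, and both signs are realizable). At each edge $e\in E(\Gamma_M)$, the pair $(\Phi^+_{d_0(e)},\Phi^+_{d_1(e)})$ satisfies the hypothesis of \autoref{inLem1'}~(\ref{lemmain-2'}) with common coefficient $\lambda$; so these homeomorphisms are compatible, up to isotopy, with the gluing maps at the corresponding JSJ-torus. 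Assembling over all edges (applying the corollary \autoref{Cor: gluing} by induction on subgraphs when some vertices occur twice in an edge), we obtain a global homeomorphism $\Phi^+:M\to N$ realizing $f$ at each $\partial_iM$ ($1\le i\le k$) with coefficient $\lambda$. The same construction with $\Phi^-_v$ yields $\Phi^-$ with coefficient $-\lambda$. This proves both (1) and (2).

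The main obstacle is the sign-consistency issue: \autoref{inLem1'}~(\ref{lemmain-1'}) only guarantees that the coefficients at two adjacent Seifert pieces agree up to a $\pm$-sign, so a priori the coefficients $\lambda_v$ produced vertex by vertex form only a $\pm$-coherent family over $\Gamma_M$. The point to verify carefully is that the two homeomorphisms $\Phi^\pm_v$ produced by \autoref{PROP: Seifert Zx-regular}, which realize both signs on each piece, give us exactly the flexibility needed to match the sign demanded by the gluing at every edge simultaneously; this is what allows the local coefficients to be globally unified to a single $\lambda$ (up to the overall $\pm$ of the conclusion). Once this global coherence of signs is in place, the compatibility portion of \autoref{inLem1'}~(\ref{lemmain-2'}) applies edge by edge and the individual $\Phi^\pm_v$ assemble into the desired $\Phi^\pm:M\to N$.
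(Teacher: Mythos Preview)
Your overall strategy—propagate regularity across the JSJ-graph and then apply \autoref{PROP: Seifert Zx-regular} at each vertex, gluing via \autoref{inLem1'}—is the same as the paper's, but there is a genuine gap in your propagation step. You assert that applying \autoref{PROP: Seifert Zx-regular} to $f_{v_j}$ makes $f_{v_j}$ peripheral $\Zx$-regular at \emph{every} boundary component of $M_{v_j}$, including those coming from JSJ-tori. That is not what \autoref{PROP: Seifert Zx-regular} says: its conclusion concerns only the \emph{selected} boundary components where regularity was assumed. Concretely, at a non-selected boundary $\partial_iM_{v_j}$ the restriction of $f_{v_j}$ is (after conjugation) represented by $\left(\begin{smallmatrix}\pm\lambda & \rho_i\\ 0 & \pm\lambda\end{smallmatrix}\right)$ with $\rho_i\in\widehat{\Z}$, and nothing in \autoref{PROP: Seifert Zx-regular} forces $\rho_i\in\lambda\Z$. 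Without this you cannot invoke \autoref{inLem1'}~(\ref{lemmain-1'}) at the adjacent edge, since that lemma transfers regularity \emph{across} a JSJ-torus, not \emph{within} a single Seifert piece from one boundary to another.

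The paper closes this gap with \autoref{LEM: Graph mfd calculation}. Peripheral $\Zx$-regularity at one boundary of $M_{v_0}$ forces the scale type to be $(\pm\lambda,\lambda)$ via \autoref{EX: Boundary map}; part~(\ref{9.2-2}) of \autoref{LEM: Graph mfd calculation} then propagates the condition $\lambda_v=\pm\mu_v$ across every edge, so by connectedness of $\Gamma$ it holds at every vertex; and part~(\ref{9.2-4}) shows that once $\lambda_v=\pm\mu_v$, the integrality of the JSJ gluing matrices (specifically $\gamma_e\in\Z\setminus\{0\}$) forces $\rho\in\lambda\Z$ on \emph{both} sides of each JSJ-torus. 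Only after this does the paper apply \autoref{PROP: Seifert Zx-regular} at each vertex to obtain the homeomorphisms $\Phi^\pm_v$. Your final assembly via \autoref{inLem1'}~(\ref{lemmain-2'}) is then correct and matches the paper.
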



\begin{proof}
Let $f_\bullet$  be the congruent isomorphism between the JSJ-graphs of profinite groups $(\widehat{\mathcal{G}_M},\Gamma)$ and $(\widehat{\mathcal{G}_N},\Gamma')$ given by \autoref{THM: Profinite Isom up to Conj}. According to \autoref{COR: peripheral preserving}, since $f$ respects the peripheral structure, for each $v\in V(\Gamma)$, $f_v:\widehat{\pi_1M_v}\ttt \widehat{\pi_1N_{\F(v)}}$ also respects the peripheral structure. 
%
Thus, by \autoref{PROP: Hempel pair}, for each $v\in V(\Gamma)$,  we can define the scale type of $f_v$ as $(\lambda_v,\mu_v)\in (\Zx)^2$, up to  $\pm$-signs. 

We claim that there exists a unified $\lambda\in \Zx$, such that for each $v\in V(\Gamma)$, $\lambda_v=\pm \lambda$ and $\mu_v=\pm \lambda$. First, choose a vertex $v_0\in V(\Gamma)$ such that $M_{v_0}$ contains one of  $\partial_1M,\cdots, \partial_kM$. Then $f_{v_0}$ is peripheral $\Zx$-regular at this boundary component, according to \autoref{EX: peripheral regular}. As a consequence of \autoref{EX: Boundary map}, $f_{v_0}$ has scale type $(\pm\lambda,\lambda)$ for some $\lambda\in \Zx$. For any two adjacent vertices $u,v\in V(\Gamma)$ (i.e. connected by an edge), \autoref{LEM: Graph mfd calculation}~(\ref{9.2-2}) implies $\lambda_u=\pm \mu_v$ and $\mu_u=\pm \lambda_v$. Thus, starting from $v_0$, the connectedness of $\Gamma$ implies that $\lambda_v=\pm \lambda$ and $\mu_v=\pm \lambda$ for all $v\in V(\Gamma)$.

\def\especial{selected }

Therefore, \autoref{LEM: Graph mfd calculation} (\ref{9.2-4}) implies that for each $v\in V(\Gamma)$, $f_v$ is peripheral $\lambda$-regular at the boundary components obtained from the JSJ-tori corresponding to the edges adjoining $v$.  
In addition, according to \autoref{EX: peripheral regular}, each $f_v$ is also peripheral $\lambda$-regular at any of the listed boundary components $\partial_1M,\cdots,\partial_kM$ which belongs to $M_v$. Let these two kinds of boundary components be the ``\especial  boundary components'' of $M_v$. 
Then, each $M_v$ contains at least one \especial boundary component, and $f_v$ is peripheral $\lambda$-regular at all the  \especial 
boundary components. 

Recall that $f_v$ also respects the peripheral structure. Thus, \autoref{PROP: Seifert Zx-regular} implies $M_v\cong N_{\F(v)}$ for each $v\in V(\Gamma)$, and there exists a pair of homeomorphisms $\Phi^{\pm}_{v}$ such that   $f_v$ restricting on each \especial boundary component of $M_v$ is induced by the homeomorphism $\Phi^{\pm}_{v}$ with coefficient $\pm\lambda$. 
In particular, concerning those boundary components corresponding to the JSJ-tori, the gluing lemma, \autoref{inLem1'}~(\ref{lemmain-2'}), implies that the homeomorphism $\{\Phi^+_v\}_{v\in V(\Gamma)}$ are compatible with the gluing maps at all JSJ-tori. Thus, combining these homeomorphisms yields a homeomorphism $\Phi^+: M\to N$. 

In addition, for $1\le i\le k$, suppose $\partial_iM$ belongs to a JSJ-piece $M_v$. According to diagram (\ref{6.10dig}), $f$ restricting on $\overline{\pi_1\partial_iM}$ is equivalent to $f_v$ restricting on $\overline{\pi_1\partial_iM}$ composing with a conjugation in $\widehat{\pi_1M}$. Since $f_v$ restricting on $\partial_iM$ is  induced by the homeomorphism $\Phi^+_v=\Phi^+|_{M_v}$ with coefficient $\lambda$, it follows by definition that $f$ restricting on $\partial_iM$ is  induced by the homeomorphism $\Phi^+ $ with coefficient $\lambda$.

Similarly, the homeomorphisms $\{\Phi^-_v\}_{v\in V(\Gamma)}$ are also compatible with the gluing maps at all JSJ-tori. Combining these homeomorphisms yields a homeomorphism $\Phi^-: M\to N$, where $f$ restricting on each $\partial_iM$ ($1\le i\le k$) is  induced by the homeomorphism $\Phi^- $ with coefficient $-\lambda$.
\end{proof}

\subsection{Profinite almost auto-symmetry for graph manifolds}\label{SEC: Hgroup}
Generally, a profinite isomorphism between graph manifolds with boundary may not respect the peripheral structure, as for an isomorphism between their fundamental groups. In this case, \autoref{THM: Graph Zx-regular implies homeo} is no longer applicable. In order to prove the profinite almost rigidity in the bounded case, we show that graph manifolds  satisfy the {\PAA} property.

\begin{theorem}\label{THM: Hgroup finite index}
Let $M=(M_\bullet,\Gamma)$ be a graph manifold with non-empty boundary, and let $\mathcal{P}$ be a collection of some boundary components of $M$. 
Then, $M$ is {\PAA} at $\mathcal{P}$.
\end{theorem}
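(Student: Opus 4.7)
The plan is to construct a group homomorphism $\Pi\colon \XX_{\Zx}(M,\mathcal{P})\to Q$ to a finite group $Q$ and to verify that $\ker\Pi\subseteq \XX_{h}(M,\mathcal{P})$, which immediately gives the desired finite-index conclusion. The quotient $Q$ will record three pieces of data associated to each $f$: (i) the induced automorphism of the JSJ-graph $\Gamma$, (ii) the $\pm$-sign ambiguity of the common scale coefficient $\lambda\in\Zx$ that will emerge, and (iii) the class of $f|_{\overline{\pi_1\partial_iM}}$ in the finite quotient of $\mathrm{GL}(2,\Z)$ obtained by killing the signs together with those Dehn twists that can be realized by self-homeomorphisms of the Seifert piece containing $\partial_iM$.

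First, by \autoref{THM: Profinite Isom up to Conj}, every $f\in \XX_{\Zx}(M,\mathcal{P})$ induces a congruent isomorphism $f_\bullet$ of the JSJ-graph of profinite groups with underlying graph automorphism $\F\in\Aut(\Gamma)$. Since $\Aut(\Gamma)$ is finite, I can pass to the finite-index subgroup on which $\F=\mathrm{id}_\Gamma$. For such an $f$, \autoref{EX: peripheral regular} ensures that $f_v$ is peripheral $\Zx$-regular at each component of $\mathcal{P}\cap M_v$. Choosing a vertex $v_0\in V(\Gamma)$ whose Seifert piece meets $\mathcal{P}$ and reading off the upper-triangular matrix form from \autoref{EX: Boundary map}, the $\Zx$-regularity forces the scale type $(\lambda_{v_0},\mu_{v_0})$ of $f_{v_0}$ to satisfy $\lambda_{v_0}=\pm\mu_{v_0}$. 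Applying part (2) of \autoref{LEM: Graph mfd calculation} propagates this across the connected JSJ-graph $\Gamma$, producing a common $\lambda\in\Zx$ with $\lambda_v=\pm\lambda$ and $\mu_v=\pm\lambda$ at every vertex; part (4) of the same lemma then upgrades $f_v$ to be peripheral $\Zx$-regular at every JSJ-torus boundary of $M_v$. A further finite-index reduction pins down the global $\pm$-sign of $\lambda$ across the vertices.

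With these reductions in place, I turn to each Seifert piece separately. Let $\mathcal{P}_v\subseteq \partial M_v$ be the union of the JSJ-torus boundaries of $M_v$ with $\mathcal{P}\cap M_v$. The map $f_v$ respects the peripheral structure at every component of $\mathcal{P}_v$ and is peripheral $\lambda$-regular there, so $f_v|_{\overline{\pi_1\partial_iM_v}}$ is represented, after conjugation, by a matrix $\lambda\left(\begin{smallmatrix}\pm 1 & t_i\\ 0 & \pm 1\end{smallmatrix}\right)$ with $t_i\in\Z$. Adapting the Seifert-piece construction in the proof of \autoref{PROP: Seifert Zx-regular}, which combines the vertical Dehn twists of \autoref{LEM: Dehn twist}, the boundary swaps of \autoref{LEM: Swap boundary}, and the reflection of \autoref{LEM: -1}, I will build a homeomorphism $\Phi_v\colon M_v\to M_v$ whose induced action agrees with $f_v$ on $\mathcal{P}_v$ up to the coefficient $\lambda$, modulo finitely many ambiguity classes (the diagonal signs and the residues of the $t_i$'s in the sublattice of Dehn-twist-realizable integer tuples). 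The compatibility condition $\sum t_i=0$ from \autoref{LEM: Dehn twist} is automatic on the kernel of the finite quotient $\Pi$, thanks to \autoref{LEM: Sum slope}; whenever $M_v$ additionally carries a boundary component in $\partial M\setminus \mathcal{P}$, that freedom already absorbs the constraint.

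Finally, because the pieces $\Phi_v$ all realize $f_v$ on the JSJ-torus boundaries with the \emph{same} common coefficient $\lambda$, the gluing lemma (\autoref{Cor: gluing}, second item) guarantees that the $\Phi_v$'s are compatible, up to isotopy, across every JSJ-torus, and hence assemble into a global homeomorphism $\Phi\colon M\to M$. By construction, $f$ restricted to each $\partial_iM\in \mathcal{P}$ is induced by $\Phi$ with coefficient $\lambda$, placing $f$ into $\XX_{h}(M,\mathcal{P})$. The main obstacle I anticipate lies in the per-piece step: when $M_v$ has boundary components in $\partial M\setminus \mathcal{P}$, the map $f_v$ need not respect the peripheral structure there, so I cannot simply invoke \autoref{PROP: Seifert Zx-regular} as a black box. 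I expect this to be tractable because the construction of $\Phi_v$ only needs to match $f_v$ on $\mathcal{P}_v$, and the extra freedom at components outside $\mathcal{P}$ in fact simplifies the Dehn-twist bookkeeping by eliminating the sum-zero constraint in \autoref{LEM: Dehn twist}.
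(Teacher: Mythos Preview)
Your approach is broadly correct and in fact more direct than the paper's. The paper splits into two cases: Case~1 assumes that every JSJ-piece meeting $\mathcal{P}$ also carries a boundary component in $\partial M\setminus\mathcal{P}$, and argues exactly as you do (upper-triangular matrices, finite-index reduction on diagonal signs, absorb the Dehn-twist constraint into the extra component). Case~2, however, proceeds by \emph{induction on $\#V(\Gamma)$}: it picks a vertex $v_0$ with $\mathcal{P}_{v_0}=\partial M_{v_0}$, collapses the full subgraph $\Upsilon$ it spans, and applies the induction hypothesis to the remaining components $M_{\Gamma_j}$, using \autoref{THM: Graph Zx-regular implies homeo} as a black box on $M_\Upsilon$. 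Your argument bypasses the induction entirely: at a vertex with $\mathcal{P}_v=\partial M_v$ you observe that $f_v$ genuinely respects the full peripheral structure, so \autoref{LEM: Sum slope} forces $\sum t_i=0$ directly, and the construction of $\Phi_v$ from \autoref{PROP: Seifert Zx-regular} goes through unchanged. This buys you a unified, non-inductive proof; the paper's route buys modularity by reusing \autoref{THM: Graph Zx-regular implies homeo} wholesale.

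One point where you should be careful: you frame the argument as constructing a \emph{homomorphism} $\Pi$ to a finite group and showing $\ker\Pi\subseteq\XX_h(M,\mathcal{P})$. The sign-tracking in step~(ii)---``pinning down the global $\pm$-sign of $\lambda$ across the vertices''---and the edge-by-edge compatibility of coefficients needed for the gluing lemma are not obviously group homomorphisms (the scale coefficient $\lambda$ is only well-defined up to sign, and the relative signs across edges involve choices). The paper sidesteps this by working with finitely many coset \emph{representatives} rather than a kernel: it partitions $\XX^0_{\Zx}$ into finitely many classes and shows each class meets $\XX_h$. Your argument can be repaired the same way, so this is a framing issue rather than a gap, but you should either verify that your $\Pi$ really is a homomorphism or switch to the representative formulation.
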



\begin{proof}
The proof of this theorem consists of the following two cases.

\textbf{Case 1:} 
Suppose that each JSJ-piece containing a boundary component in $\mathcal{P}$ also contains a boundary component in $\partial M\setminus \mathcal{P}$.

In this case, $M$ is not the minor Seifert manifold. Denote $\mathcal{P}=\{\partial_1M,\cdots, \partial_kM\}$, and fix conjugacy representatives of peripheral subgroups $\pi_1\partial_1M,\cdots,\pi_1\partial_kM$. Then \autoref{PROP: peripheral structure} implies that there is a well-defined group homomorphism
\begin{equation*}
\begin{tikzcd}[row sep=tiny]
\mathcal{F}:\quad  \XX_{\Zx}(M,\mathcal{P}) \arrow[r] & \prod\limits_{i=1}^{k} \cfrac{\Aut(\pi_1\partial_iM)}{\{\pm id\}}\cong (\mathrm{PGL}_2(\Z))^k\\
\;\;f \arrow[r, maps to] & ([\varphi_1],\cdots, [\varphi_k])
\end{tikzcd}
\end{equation*}
such that for each $1\le i\le k$, $C_{g_i}\circ f|_{\overline{\pi_1\partial_iM}}=\lambda_i\otimes \varphi_i$ for some $g_i\in \widehat{\pi_1M}$ and $\lambda_i\in \Zx$.


Suppose each $\partial_iM$ belongs to a JSJ-piece $M_{v_i}$. According to diagram (\ref{6.10dig}), $f$ restricting on $\overline{\pi_1\partial_{i}M}$ is equivalent to the induced isomorphism $f_{v_i}$ restricting on $\overline{\pi_1\partial_iM}$ composing with a conjugation in $\widehat{\pi_1M}$.
Let $\{h_i,e_i\}$ be the free $\Z$-basis of each $\pi_1\partial_iM$, where $h_i$ generates the fiber subgroup of $M_{v_i}$, and $e_i$ is a peripheral cross-section. 
Then according to \autoref{EX: Boundary map}, any element in the image of $\mathcal{F}$ 
can be represented by matrices $$\left(\left[ \begin{pmatrix} \pm 1 & t_1 \\ 0 & \pm1 \end{pmatrix} \right],\cdots,\left[ \begin{pmatrix} \pm 1 & t_k \\ 0 & \pm1 \end{pmatrix} \right]\right)$$ over these bases. 
Consequently, there exists a subgroup $K\le \im(\mathcal{F})$ with index at most $2^k$ such that matrices in $K$ have identical $\pm$-signs in the diagonal elements. In other words, any element in $K$ can be represented by matrices $$\left(\left[ \begin{pmatrix}   1 & t_1 \\ 0 &  1 \end{pmatrix} \right],\cdots,\left[ \begin{pmatrix} 1 & t_k \\ 0 &  1 \end{pmatrix} \right]\right).$$

It suffices to show that $\mathcal{F}^{-1}(K)\subseteq \XX_{h}(M,\mathcal{P})$. Indeed, this can be realized through vertical Dehn twists between each  component  in $\mathcal{P}$ with an extra boundary component in $\partial M\setminus \mathcal{P}$ in the corresponding JSJ-piece. 

Let $f\in \mathcal{F}^{-1}(K)$, and $\mathcal{F}(f)=\left(\left[ \left( \begin{smallmatrix}   1 & t_1 \\ 0 &  1 \end{smallmatrix}\right) \right],\cdots,\left[\left( \begin{smallmatrix} 1 & t_k \\ 0 &  1 \end{smallmatrix}\right) \right]\right)$. 
Suppose $M_v$ is a JSJ-piece that contains a  component in $\mathcal{P}$, and denote $\mathcal{P}\cap M_v= \partial_{i_1}M,\cdots, \partial_{i_s}M$. Let $\partial'M_v\in \partial M\setminus \mathcal{P}$ be an extra boundary component in $M_v$ guaranteed by our assumption.  Then \autoref{LEM: Dehn twist} implies that there exists a vertical  Dehn twist $\Phi_v$ along $\partial_{i_1}M, \cdots, \partial_{i_s}M$ and $\partial'M_v$, which induces 
the automorphism represented by $\left(\begin{smallmatrix} 1 & t_{i_j} \\ 0 & 1\end{smallmatrix}\right)$ on $\pi_1\partial_{i_j}M$,  
and $\Phi_v$ is meanwhile the identity map on each boundary component of $M_v$ obtained from the JSJ-tori. For those Seifert pieces $M_u$ containing no distinguished boundary components, simply take $\Phi_u=Id$. Then the homeomorphisms $\{\Phi_w\}_{w\in V(\Gamma)}$ are all identity maps at the JSJ-tori. Hence, they are compatible, and yields a homeomorphism $\Phi$ on $M$, so that $f$ restricting on each $\partial_iM\in \mathcal{P}$ is induced by $\Phi$ with certain coefficient in $\Zx$. In other words, $f\in \XX_{h}(M,\mathcal{P})$.

\textbf{Case 2:} There exists a JSJ-piece $M_{v_0}$, such that $M_{v_0}$ contains a boundary component in $\mathcal{P}$, but  contains no extra boundary components in $\partial M\setminus \mathcal{P}$. 

We  prove this case through induction on $\#V(\Gamma)$.

When $\#V(\Gamma)=1$, it follows from our assumption that $\mathcal{P}=\partial M$. Thus, any $f\in \XX_{\Zx}(M,\mathcal{P})$ respects the peripheral structure, and it follows from \autoref{THM: Graph Zx-regular implies homeo} that $\XX_{\Zx}(M,\mathcal{P})=\XX_{h}(M,\mathcal{P})$. 

Now we assume $\#V(\Gamma)>1$. 
%
Let $\Gamma_{\text{uo}}$ be  the unoriented underlying graph of $\Gamma$. There is a group homomorphism
\begin{equation*}
\begin{tikzcd}[row sep=tiny]
\rho:\quad  \XX_{\Zx}(M,\mathcal{P}) \arrow[r] & \Aut(\Gamma_{\text{uo}})\\
\;\;f \arrow[r, maps to] & \F
\end{tikzcd}
\end{equation*}
where $\F$ is the graph isomorphism within the congruent isomorphism $f_\bullet$ induced by $f$, according to \autoref{THM: Profinite Isom up to Conj}. Let $$\XX^{0}_{\Zx}(M,\mathcal{P})= \ker(\rho),\;  \text{and }   \XX^0_h(M,\mathcal{P})= \XX_h(M,\mathcal{P})\cap \XX^{0}_{\Zx}(M,\mathcal{P}).$$ Note that $\Gamma$ is a finite graph, so $\Aut(\Gamma_{\text{uo}})$ is a finite group, and $\XX^{0}_{\Zx}(M,\mathcal{P})$ is finite-index in $\XX_{\Zx}(M,\mathcal{P})$. Thus, it suffices to show that $ \XX^0_h(M,\mathcal{P})$ is finite-index in $\XX^{0}_{\Zx}(M,\mathcal{P})$.

Let $\Upsilon$ be the full subgraph of $\Gamma$ spanned by the vertex $v_0$. In other words, $V(\Upsilon)=\{v_0\}$, and $E(\Upsilon)=\{e\in E(\Gamma)\mid d_0(e)=d_1(e)=v_0\}$. Let $M_{\Upsilon}$ be the submanifold of $M$ corresponding to the subgraph $\Upsilon$, which is obtained from $M_{v_0}$ by gluing some of its boundary components.   
Let $\Gamma_1,\cdots,\Gamma_r$ be the connected components of $\Gamma\setminus \Upsilon$, and let $M_{\Gamma_j}$ be the submanifold of $M$ corresponding to the subgraph $\Gamma_j$.  
We label the oriented edges connecting $\Upsilon $ and $\Gamma\setminus \Upsilon$ as $\{e_1,\cdots, e_s\}$. For each $1\le k\le s$ and  $\alpha\in \{0,1\}$, let $\delta_\alpha(e_k)=\Upsilon\text{ or }\Gamma_j$ if $d_{\alpha}(e_k)\in \Upsilon\text{ or }\Gamma_j$, and let $\partial_{\alpha,e_k}  M_{\delta_\alpha(e_k)}$ be the corresponding  boundary component of $M_{\delta_\alpha(e_k)}$ obtained from the JSJ-torus. 

For any $f\in \XX^{0}_{\Zx}(M,\mathcal{P})$, let $f_\bullet$ be the congruent isomorphism on $(\widehat{\mathcal{G}_M},\Gamma)$. Then there are profinite isomorphisms
\begin{equation*}
\begin{gathered}
f_{\Upsilon}: \widehat{\pi_1M_{\Upsilon}}\cong \Pi_1(\widehat{\mathcal{G}_M},\Upsilon)\ttt \Pi_1(\widehat{\mathcal{G}_M},\Upsilon)\cong  \widehat{\pi_1M_{\Upsilon}},\quad\text{and}\\
f_{\Gamma_j}: \widehat{\pi_1M_{\Gamma_j}}\cong \Pi_1(\widehat{\mathcal{G}_M},\Gamma_j)\ttt \Pi_1(\widehat{\mathcal{G}_M},\Gamma_j)\cong  \widehat{\pi_1M_{\Gamma_j}}
\end{gathered}
\end{equation*}
induced by $f_\bullet$ on the subgraphs $\Upsilon$ and  $\Gamma_j$ as shown in \autoref{LEM: congruent isom induce isom}. 

Our assumption implies that  $f_{v_0}$ and $ f_{\Upsilon}$ respect  the peripheral structure, as the peripheral subgroups obtained from JSJ-tori are witnessed by the congruent isomorphism $f_\bullet$, and the  peripheral subgroups corresponding to $M_{v_0}\cap \mathcal{P}$ are preserved by $f$, and thus preserved by $f_{v_0}$  and $ f_{\Upsilon}$ according to \autoref{lem: 111111}. Moreover, \autoref{EX: peripheral regular} implies that $f_{v_0}$ is peripheral $\Zx$-regular at each component  of  $M_{v_0}\cap \mathcal{P}$, and $f_{\Gamma_j}$ is peripheral $\Zx$-regular at each component  of  $M_{\Gamma_j}\cap \mathcal{P}$. 
In particular, $f_{v_0}$ has scale type $(\pm\lambda,\lambda)$ for some $\lambda\in \Zx$. Then, \autoref{LEM: Graph mfd calculation}~(\ref{9.2-4}) implies that $f_{\Upsilon}$ and $f_{\Gamma_j}$ are all peripheral $\lambda$-regular at the boundary components given by the JSJ-tori corresponding to $e_1,\cdots, e_s$. 
Let $$\mathcal{P}_j= (\mathcal{P}\cap M_{\Gamma_j})\cup (\partial M_{\Gamma_j}\cap \partial M_{v_0})=(\mathcal{P}\cap M_{\Gamma_j})\cup \left\{\partial_{\alpha,e_k} M_{\Gamma_j}\mid \delta_\alpha(e_j)=\Gamma_j\right\}$$ be a collection of boundary components in $M_{\Gamma_j}$. 
Then, $$f_{\Upsilon}\in \XX_{\Zx}(M_{\Upsilon},\partial M_{\Upsilon}),\; \text{and } f_{\Gamma_j}\in \XX_{\Zx}(M_{\Gamma_j},\mathcal{P}_j).$$

Indeed, $f_{\Upsilon}\in \XX_{h}(M_{\Upsilon},\partial M_{\Upsilon})$ according to \autoref{THM: Graph Zx-regular implies homeo}. On the other hand, note that $M_{\Gamma_j}$ has less JSJ-pieces than $M$, so $(M_{\Gamma_j},\mathcal{P}_j)$ either falls into {Case 1}, or falls into a proven case in {Case 2}. 
Thus, the induction hypothesis implies that $M_{\Gamma_j}$ is {\PAA} at $\mathcal{P}_j$,  
i.e. $\XX_h(M_{\Gamma_j},\mathcal{P}_j)$ is finite index in $\XX_{\Zx}(M_{\Gamma_j},\mathcal{P}_j)$. 

Therefore, we can find finitely many representatives $f^\star_1,\cdots, f^\star_n\in \XX_{\Zx}^0(M,\mathcal{P})$ such that for any $f\in \XX_{\Zx}^0(M,\mathcal{P})$, there exists $1\le i\le n$ such that $((f^\star_i)^{-1}\circ f)_{\Gamma_j}\in \XX_h(M_{\Gamma_j},\mathcal{P}_j)$ for each $1\le j\le r$. 
  Denote $f'=(f^\star_i)^{-1}\circ f$, and there exist  homeomorphisms
$$
\Phi_{\Upsilon}^{f'}: M_{\Upsilon}\ttt M_{\Upsilon}\;\text{and }  \Phi_{\Gamma_j}^{f'}: M_{\Gamma_j}\ttt M_{\Gamma_j}
$$
such that 
\begin{enumerate}[leftmargin=*]
\item for each $\partial_iM\in \mathcal{P}$, $f'_x$ restricting on $\partial_iM$ is induced by the homeomorphism $\Phi_x^{f'}$ with certain coefficient in $\Zx$, where $x\in \{\Upsilon,\Gamma_1,\cdots, \Gamma_r\}$ such that $\partial_iM\subseteq M_x$; 
\item for each $1\le k\le s$ and  $\alpha\in \{0,1\}$, $f_{\delta_\alpha(e_k)}'$ restricting on $\partial_{\alpha,e_k}  M_{\delta_{\alpha}(e_k)}$ is induced by the homeomorphism $\Phi_{\delta_\alpha(e_k)}^{f'}$ with coefficient $\lambda_{\alpha,k}^{f'}\in\Zx$.  
\end{enumerate}
Then according to \autoref{Cor: gluing}~(\ref{lemmain-1''}), $\lambda_{0,k}^{f'}=\pm\lambda_{1,k}^{f'}$ for each $1\le k \le s$.

However, in order to apply the gluing lemma, it is  required that $\lambda_{0,k}^{f'}= \lambda_{1,k}^{f'}$. Indeed, one can choose at most $2^s$ representatives
\begin{equation*}
\left\{g_1,\cdots, g_m\right\}\subseteq \left\{ g\in \XX^0_{\Zx}(M,\mathcal{P})\mid g_{\Gamma_j}\in \XX_{h}(M_{\Gamma_j},\mathcal{P}_j),\,\forall 1\le j\le r \right\},
\end{equation*}
and construct more representatives 
\begin{equation*}
\left\{f_1^{\star\star},\cdots, f_N^{\star\star}\right\}=\left\{f_{i}^\star\circ g_{l}\mid 1\le i \le n,\, 1\le l \le m\right\} \subseteq \XX_{\Zx}^0(M,\mathcal{P})
\end{equation*}
such that for any $f\in \XX_{\Zx}^0(M,\mathcal{P})$, there exists $1\le i\le N$ such that $f''=(f^{\star\star}_i)^{-1}\circ f$ satisfies $f''_{\Gamma_j}\in \XX_h(M_{\Gamma_j},\mathcal{P}_j)$ for each $1\le j\le r$, and $\lambda_{0,k}^{f''}=\lambda_{1,k}^{f''}$ for each $1\le k\le s$. 

In this case, according to \autoref{Cor: gluing}~(\ref{lemmain-2''}), the homeomorphisms $\Phi_{\Upsilon}^{f''}, \Phi_{\Gamma_1}^{f''},\cdots, \Phi_{\Gamma_r}^{f''}$ are compatible at the JSJ-tori, and yield an ambient homeomorphism $\Phi^{f''}: M\to M$. In addition, for each $\partial_iM\in \mathcal{P}$, let $x\in \{\Upsilon,\Gamma_1,\cdots, \Gamma_r\}$ such that $\partial_iM\subseteq M_x$.  According to diagram (\ref{6.10dig}), $f''$ restricting on $\overline{\pi_1\partial_iM}$ is equivalent to $f_x''$ restricting on $\overline{\pi_1\partial_iM}$ composing with a conjugation in $\widehat{\pi_1M}$. Since $f_x''$ restricting on $\partial_iM$ is  induced by the homeomorphism $\Phi_{x}^{f''}=\Phi^{f''}|_{M_x}$ with certain coefficient, it follows  that $f''$ restricting on $\partial_iM$ is  induced by the homeomorphism $\Phi^{f''}$ with certain coefficient in $\Zx$. In other words, $f''\in \XX_{h}(M,\mathcal{P})$. 
Therefore, $[\XX_{\Zx}^{0}(M,\mathcal{P}):\XX_{h}^0(M,\mathcal{P})]\le N<+\infty$, finishing the proof. 
\end{proof}

\section{Profinite almost rigidity for 3-manifolds}\label{SEC: Main}

\subsection{Profinite rigidity in the Seifert part}\label{subsec: Seifert rigid}
Let us first prove \autoref{Mainthm: SF rigid}.

\normalsize

\begin{THMA}
Suppose $M$ and $N$ are mixed 3-manifolds with empty or incompressible toral boundary, and $f:\widehat{\pi_1M}\ttt \widehat{\pi_1N}$ is an isomorphism respecting the peripheral structure. Then the Seifert parts of $M$ and  $N$ are homeomorphic.
\end{THMA}

\begin{proof}
According to \autoref{THM: Profinite Isom up to Conj}, the isomorphism $f:\widehat{\pi_1M}\ttt\widehat{\pi_1N}$ induces a congruent isomorphism $f_\bullet$ between the JSJ-graphs of profinite groups $(\widehat{\mathcal{G}_M},\Gamma_M)$ and $(\widehat{\mathcal{G}_N},\Gamma_N)$. According to \autoref{COR: peripheral preserving}, for every $v\in V(\Gamma_M)$, the isomorphism $f_v:\widehat{\pi_1M_v}\ttt\widehat{\pi_1N_{\F(v)}}$ respects the peripheral structure. In addition,   $\F$ preserves the hyperbolic or Seifert type of the vertices. Thus, $\F$ witnesses an isomorphism between the full subgraphs spanned by the Seifert vertices, and   establishes a one-to-one correspondence between the connected components of the Seifert parts of $M$ and $N$. It suffices to prove that the corresponding components of the Seifert parts are homeomorphic. 

For any connected component $M_0$ of the Seifert part $M$, which corresponds to a connected component $\Gamma_0$ of the Seifert subgraph of $\Gamma_M$, let $N_0$ denote the connected component of the Seiferft part of $N$ corresponding to the subgraph $\F(\Gamma_0)$. Then, by \autoref{LEM: congruent isom induce isom}, $f_\bullet$ establishes an isomorphism $$f_0:\widehat{\pi_1M_0}\cong \Pi_1(\widehat{\mathcal{G}_M},\Gamma_0)\ttt\Pi_1(\widehat{\mathcal{G}_N},\F(\Gamma_0))\cong \widehat{\pi_1N_0}$$ which also respects the  peripheral structure according to \autoref{COR: peripheral preserving}.

Since $M$ and $N$ are mixed manifolds, one can choose an edge $e\in E(\Gamma_M)$ which connects a vertex $v\in V(\Gamma_0)$ with a hyperbolic vertex $u\in V(\Gamma)$. 
The boundary gluing at the JSJ-tori corresponding to $e$ and $\F(e)$ can be described as
\begin{equation*}
\begin{gathered}
  \begin{tikzcd}
M_u\supseteq \partial_iM_u & T^2_e \arrow[l] \arrow[r] & \partial_kM_0\subseteq M_v\subseteq M_0,
\end{tikzcd}\\
  \begin{tikzcd}
N_{\F(u)}\supseteq \partial_jN_{\F(u)} & T^2_{\F(e)}  \arrow[l] \arrow[r] & \partial_lN_0 \subseteq N_{\F(v)}\subseteq N_0.
\end{tikzcd}
\end{gathered}
\end{equation*}

Since $M_u$ and $N_{\F(u)}$ are hyperbolic, \autoref{THM: Hyperbolic peripheral regular} implies that $f_u$ is peripheral $\Zx$-regular at the boundary component $\partial_iM_u$. Thus, according to \autoref{Cor: gluing}~(\ref{lemmain-1''}), $f_0$ is also peripheral $\Zx$-regular at $\partial_kM_0$. Recall that $f_0$ respects the peripheral structure. 
Hence, \autoref{THM: Graph Zx-regular implies homeo} implies that $M_0$ and $N_0$ are homeomorphic.
\end{proof}


\subsection{Lack of rigidity without respecting peripheral structure}\label{SEC: Counter-example}

Given the fundamental group  $G$ of an irreducible 3-manifold with incompressible  toral boundary,  when $G$ is marked with its peripheral structure, it determines the unique homeomorphism type of the manifold as shown by \autoref{Wald}. Without marking of the peripheral structure, the homeomorphism type of 3-manifolds with this fundamental group may not be unique, for instance, when the manifold is not purely hyperbolic. Yet, in this case, the isomorphism type of $G$ is enough to determine the JSJ-decomposition group theoretically by \cite[Theorem 3.1]{SS01}, that is, it determines the JSJ-graph of groups up to congruent isomorphism. In particular, the isomorphism type of $G$ uniquely determines the isomorphism type of the fundamental groups of the Seifert part of the corresponding manifold.

Unfortunately, this fails to hold when $G$ is replaced by merely $\mathcal{C}(G)$. Although $\widehat{G}$ marked with the peripheral structure, or equivalently $\mathcal{C}(G;\pi_1\partial_1M,\cdots, \pi_1\partial_nM)$, determines the homeomorphism type of the Seifert part within mixed manifolds, $\widehat{G}$, or equivalently $\mathcal{C}(G)$, itself may not even determine the fundamental group of the Seifert part as shown by the following construction.

\begin{THMB}
There exists a pair of irreducible, mixed 3-manifolds $M$ and $N$ with incompressible toral boundary, such that $\widehat{\pi_1M}\cong \widehat{\pi_1N}$, while the fundamental groups of the  Seifert part of $M$ and $N$ are not isomorphic.
\end{THMB}
\begin{proof}
We shall construct examples over the JSJ-graph $\Gamma$ shown in \autoref{JSJ-graph}. 

\begin{figure}[ht]
\includegraphics[width=2.9in, alt={Three vertices in a row, labeled u,v,w in order, connected by two edges between them forming a tree.}]{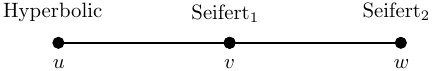}
\caption{{JSJ-graph $\Gamma$ for a non-rigid example}}
\label{JSJ-graph}
\end{figure}

Let $M_u=N_u=X$ be a finite-volume hyperbolic 3-manifold. Let $F$ be a compact, orientable surface with at least three boundary components, and let \({M_v=N_v=F\times S^1=Y}\). 
Let $M_w$ be the Seifert fibered space $(S;(p,q))$ and $N_w$ be the Seifert fibered space $(S;(p,q'))$, such that:
\begin{enumerate}[leftmargin=*]
\item the compact surface $S$ has exactly one boundary component;
\item $\chi(S)<0$, so that $M_w$ and $N_w$ are major Seifert manifolds;
\item $q'\equiv \kappa q\pmod p$ for some $\kappa\in \Zx$, i.e. $(M_w,N_w)$ is a Hempel pair of scale factor~$\kappa$;
\item $q'\not\equiv \pm q\pmod p$, so that $M_w$ and $N_w$ are not homeomorphic.
\end{enumerate}


Let us first clarify the peripheral subgroups of these JSJ-pieces.  For the hyperbolic piece $X$, fix a boundary component $T\subseteq \partial X$ and a conjugacy representative of peripheral subgroup $\pi_1T$. For the intermediate manifold $Y=F\times S^1$, let $e_1,e_2\in \pi_1F$ represent two boundary components of $F$, and let $h$ denote a generator of $\pi_1(S^1)=\Z$. This defines two peripheral subgroups $\pi_1\partial_1Y=\Z\{h,e_1\}$ and $\pi_1\partial_2Y=\Z\{h,e_2\}$ for which we shall construct the gluing. 
For the Seifert fibered space $M_w$, let $h_w$ represent a regular fiber in $\pi_1M_w$, and let $e_w$  denote a peripheral cross-section with appropriate orientation yielding the Seifert invariants. Then, let $\pi_1\partial M_w=\Z\{h_w,e_w\}$ be a conjugacy representative of the peripheral subgroup. Likewise, let $h_w'$ represent a regular fiber in $\pi_1N_w$, let $e'_w$  represent the appropriate   peripheral cross-section, and let $\pi_1\partial N_w=\Z\{h_w',e'_w\}$ be a conjugacy representative of the peripheral subgroup.

Now we define $M$ and $N$ through the gluing maps at these peripheral subgroups. First, fix a gluing map $\psi:\pi_1T\ttt\pi_1 \partial_1Y=\Z\{h,e_1\}$. Second, since $p$ and $q$ (resp. $p$ and $q'$) are coprime, we can choose 
$$A=\begin{pmatrix} -q & \ast  \\ p & \ast \end{pmatrix} \in \mathrm{GL}(2,\Z) \quad\text{and}\quad B= \begin{pmatrix} -q' & \ast  \\ p & \ast \end{pmatrix} \in \mathrm{GL}(2,\Z)$$
 such that $\det A=\det B=-1$.

Let $M$ be defined by the gluing maps:
\normalsize
\begin{equation*}
\begin{tikzcd}[ampersand replacement=\&, column sep=tiny,row sep=1em ]
\pi_1T \arrow[d, symbol=\subseteq] \arrow[rrrr, "\psi"] \&  \&  \&  \& \pi_1\partial_1Y \arrow[d, symbol=\subseteq] \&                                \&   \&  \& \& \&  \&  \&                   \&  \& \\
\pi_1X        \arrow[rrrr,dash,dashed]                           \&  \&  \&  \& \pi_1Y \arrow[d, symbol=\supseteq]          \arrow[rrrrrrrrrr,dash,dashed]    \&                                   \&  \&  \&  \& \&  \& \& \&  \& \pi_1M_w \arrow[d, symbol=\supseteq] \\
                                      \&  \&  \&  \& \pi_1\partial_2Y \arrow[r, symbol=\mathop{=}] \& {\Z\{h,e_2\}} \arrow[rrrrrrrr, "{A=\left(\begin{smallmatrix} -q & \ast  \\ p & \ast \end{smallmatrix}\right)}"'] \&  \& \& \&  \& \&  \&  \& \Z\{h_w,e_w\} \arrow[r, symbol=\mathop{=}] \& \pi_1\partial M_w 
\end{tikzcd}
\end{equation*}
\normalsize
Note that the lower-left entry of $A$ is non-zero, so the gluing between $M_v$ and $M_w$ is not fiber-parallel. Thus, \autoref{JSJ-graph} is indeed the JSJ-decomposition of $M$, by \cite[Proposition 1.6.2]{AFW15}.

Similarly, let $N$ be  defined by the gluing maps:
\normalsize
\begin{equation*}
\begin{tikzcd}[ampersand replacement=\&, column sep=tiny,row sep=1em ]
\pi_1T \arrow[d, symbol=\subseteq] \arrow[rrrr, "\psi"] \&  \&  \&  \& \pi_1\partial_1Y \arrow[d, symbol=\subseteq] \&                                \&   \&  \& \& \&  \&  \&                   \&  \& \\
\pi_1X  \arrow[rrrr,dash,dashed]                              \&  \&  \&  \& \pi_1Y \arrow[d, symbol=\supseteq]   \arrow[rrrrrrrrrr,dash,dashed]        \&                                   \&  \&  \&  \& \&  \& \& \&  \& \pi_1N_w \arrow[d, symbol=\supseteq] \\
                                      \&  \&  \&  \& \pi_1\partial_2Y \arrow[r, symbol=\mathop{=}] \& {\Z\{h,e_2\}} \arrow[rrrrrrrr, "{B=\left(\begin{smallmatrix} -q' & \ast  \\ p & \ast \end{smallmatrix}\right)}"'] \&  \& \& \&  \& \&  \&  \& \Z\{h_w',e_w'\} \arrow[r, symbol=\mathop{=}] \& \pi_1\partial N_w 
\end{tikzcd}
\end{equation*}
\normalsize

We show that $\widehat{\pi_1M}\cong\widehat{\pi_1N}$ by constructing a congruent isomorphism between the JSJ-graphs of profinite groups. 
We first deal with the Hempel pair $(M_w,N_w)$. Let $$\rho=\kappa\frac{q}{p}-\frac{q'}{p}\in \Z^{-1}\widehat{\Z}.$$ In fact, $\rho\in \widehat{\Z}$ since $q'\equiv \kappa q\pmod p$. 
Then, \cite[Theorem 10.7]{Wil18} constructs an isomorphism $f_w:\widehat{\pi_1M_w}\ttt\widehat{\pi_1N_w}$ respecting the peripheral structure, with scale type $(\kappa,1)$, such that $$f_w(h_w)=(h_w^{\prime})^{ \kappa}\quad\text{and}\quad f_w(e_w)=ge'_wg^{-1}{(h_w')}^\rho$$ for some $g\in \ker (o:\widehat{\pi_1N_w}\to \{\pm 1\})$. Thus, restricting on the peripheral subgroup $\overline{\pi_1\partial M_w}$, $$C_g^{-1}\circ f_w \begin{pmatrix} h_w &e_w\end{pmatrix}=\begin{pmatrix} h_w'&e'_w \end{pmatrix} \begin{pmatrix}\kappa & \rho \\ 0 & 1\end{pmatrix}.$$

We claim that 
$$B^{-1}\begin{pmatrix}\kappa & \rho \\ 0 & 1\end{pmatrix}A=\begin{pmatrix} 1 & \beta \\ 0 & \kappa \end{pmatrix},\quad \text{for some }\beta\in \widehat{\Z}.$$
%
In fact, $B^{-1}=\left(\begin{smallmatrix} \ast & \ast \\ p & q'\end{smallmatrix}\right)$
, so $B^{-1}\left(\begin{smallmatrix}\kappa & \rho \\ 0 & 1\end{smallmatrix}\right) A=\left(\begin{smallmatrix}\ast & \ast \\  p\kappa & p\rho+q'\end{smallmatrix}\right)A= \left(\begin{smallmatrix}\ast & \ast \\ p(-q\kappa +p\rho+q') &\ast\end{smallmatrix}\right)$. Recall that  $\rho=\kappa\frac{q}{p}-\frac{q'}{p}$, so $p(-q\kappa +p\rho+q')=0$, i.e. 
$B^{-1}\left(\begin{smallmatrix}\kappa & \rho \\ 0 & 1\end{smallmatrix}\right)A=\left(\begin{smallmatrix} \alpha & \beta \\ 0 & \gamma \end{smallmatrix}\right)$ 
is an upper triangle matrix. In addition, 
$\left(\begin{smallmatrix} -\alpha q' & \ast \\ \ast & \ast\end{smallmatrix}\right)= B\left(\begin{smallmatrix} \alpha & \beta \\ 0 & \gamma \end{smallmatrix}\right)=\left(\begin{smallmatrix}\kappa & \rho \\ 0 & 1\end{smallmatrix}\right)A=\left(\begin{smallmatrix} -\kappa q + \rho p & \ast \\ \ast & \ast\end{smallmatrix}\right)=\left(\begin{smallmatrix} -q' & \ast \\ \ast & \ast\end{smallmatrix}\right)$. Thus, $\alpha =1$ since $q'\in\Z\setminus\{0\}$ and  $\widehat{\Z}$ is torsion-free. Note that $\gamma=\det \left(\begin{smallmatrix} \alpha & \beta \\ 0 & \gamma \end{smallmatrix}\right)=\det(B^{-1}\left(\begin{smallmatrix}\kappa & \rho \\ 0 & 1\end{smallmatrix}\right)A)= \kappa$. Thus, $B^{-1}\left(\begin{smallmatrix}\kappa & \rho \\ 0 & 1\end{smallmatrix}\right)A=\left(\begin{smallmatrix} \alpha & \beta \\ 0 & \gamma \end{smallmatrix}\right)=\left(\begin{smallmatrix} 1 & \beta \\ 0 & \kappa \end{smallmatrix}\right)$.

Then, we deal with the intermediate manifold $Y= F\times S^1$. Indeed, since ${\#\partial F \ge 3}$, the fundamental group $\pi_1F$ can be presented as a free group over the generators $e_1,e_2,b_3,\cdots, b_r$. 
Thus,  
$\pi_1Y=\left\langle e_1,e_2,b_3,\cdots, b_r\right\rangle \times \left\langle h\right\rangle =F_r\times \Z$. 
It follows from (\ref{EQU: product group}) 
that $\widehat{F_r\times \Z}\cong \widehat{F_r}\times \widehat{\Z}$. 
Let $\tau \in \Aut(\widehat{\pi_1Y})\cong \Aut(\widehat{F_r}\times\widehat{\Z})$ be defined by $$\tau(e_1)=e_1,\; \tau(e_2)=e_2^\kappa h^\beta,\; \tau(b_i)=b_i,\; \text{and  }\tau(h)=h.$$  In particular, 
restricting on the peripheral subgroups $\pi_1\partial_1Y$ and $\pi_1\partial_2Y$, we have 
\begin{align*}
&\tau\begin{pmatrix} h& e_1\end{pmatrix} =\begin{pmatrix} h & e_1\end{pmatrix}\begin{pmatrix} 1 & 0 \\ 0 & 1\end{pmatrix};\\
&\tau\begin{pmatrix} h& e_2\end{pmatrix} =\begin{pmatrix} h & e_2\end{pmatrix}\begin{pmatrix} 1 & \beta \\ 0 & \kappa\end{pmatrix}.
\end{align*}
Note that $\tau$ does NOT respect the peripheral structure whenever $\kappa\neq 1$, as shown by \autoref{PROP: Hempel pair}.

Combining these constructions, $(id_{\widehat{\pi_1X}}, \tau, f_w)$ establishes a congruent isomorphism between the JSJ-graphs of profinite groups $(\widehat{\mathcal{G}_M},\Gamma)$ and $(\widehat{\mathcal{G}_N},\Gamma)$, as shown by the following two commutative diagrams. 

\begin{equation*}
\begin{tikzcd}[column sep=tiny, row sep=large]
\widehat{\pi_1X} \arrow[d, "id"'] \arrow[r,symbol=\supseteq] & \overline{\pi_1T} \arrow[d, "id"] \arrow[rrrrr, "\widehat{\psi}"] &  & & &  & \overline{\pi_1\partial_1Y} \arrow[r, symbol=\subseteq] \arrow[d, "id"'] & \widehat{\pi_1Y} \arrow[d, "\tau"] \arrow[r,symbol=\cong] & \widehat{F_r}\times \widehat{\Z} \\
\widehat{\pi_1X} \arrow[r,symbol=\supseteq]                  & \overline{\pi_1T} \arrow[rrrrr, "\widehat{\psi}"']                &  &  & & & \overline{\pi_1\partial_1Y} \arrow[r,symbol=\subseteq]                  & \widehat{\pi_1Y} \arrow[r,symbol=\cong]                   & \widehat{F_r}\times \widehat{\Z}
\end{tikzcd}
\end{equation*}

\begin{equation*}
\begin{tikzcd}[ampersand replacement=\&, column sep=tiny]
\widehat{\pi_1Y} \arrow[dd, "\tau"'] \arrow[r,symbol=\supseteq] \& \overline{\pi_1\partial_2Y} \arrow[r,symbol=\mathop{=}] \& {\widehat{\Z}\{h,e_2\}} \arrow[dd, "{\left(\begin{smallmatrix} 1 & \beta \\ 0 & \kappa \end{smallmatrix}\right)}"] \arrow[rrrrrrrr, "{A=\left(\begin{smallmatrix} -q & \ast \\ p & \ast \end{smallmatrix}\right)}"] \&  \& \&  \& \& \& \&  \& {\widehat{\Z}\{h_w,e_w\}} \arrow[r,symbol=\mathop{=}] \arrow[dd, "{\left(\begin{smallmatrix} \kappa & \rho \\ 0 & 1 \end{smallmatrix}\right)}"'] \& \overline{\pi_1\partial M_w} \arrow[r,symbol=\subseteq] \& \widehat{\pi_1M_w} \arrow[dd, "f_w"] \\
                                               \&                                       \&                                                              \&  \&  \& \&  \&                                           \& \&   \&        \&                                        \&                                      \\
\widehat{\pi_1Y} \arrow[r,symbol=\supseteq]                     \& \overline{\pi_1\partial_2Y} \arrow[r,symbol=\mathop{=}] \& {\widehat{\Z}\{h,e_2\}} \arrow[rrrrrrrr, "{B=\left(\begin{smallmatrix} -q' & \ast \\ p & \ast \end{smallmatrix}\right)}"']               \& \&   \&  \&  \&  \&  \& \& {\widehat{\Z}\{h_w',e_w'\}} \arrow[r,symbol=\mathop{=}]                  \& \overline{\pi_1\partial N_w} \arrow[r,symbol=\subseteq] \& \widehat{\pi_1N_w}                  
\end{tikzcd}
\end{equation*}

Therefore, according to \autoref{LEM: congruent isom induce isom} and \autoref{PROP: JSJ Efficient}, we obtain an isomorphism  
\begin{equation*}
\begin{tikzcd}
\widehat{\pi_1M}\cong \Pi_1(\widehat{\mathcal{G}_M},\Gamma)\arrow[r,"\cong"] &   \Pi_1(\widehat{\mathcal{G}_N},\Gamma)\cong \widehat{\pi_1N}
\end{tikzcd}
\end{equation*}
 which does not respect the peripheral structure.

Denote the Seifert parts of $M$ and $N$ as $M_S$ and $N_S$ respectively, which correspond to the subgraph consisting of $v$, $w$ and the edge adjoining them. 
We now show that 
$\pi_1M_S$ and $\pi_1N_S$ are not isomorphic. 

Indeed, if there is an isomorphism $\varphi:\pi_1M_S\ttt \pi_1N_S$, the group theoretical JSJ-decomposition \cite[Theorem 3.1]{SS01} implies a congruent isomorphism $\varphi_\bullet$ between their JSJ-graph of fundamental groups. Note that $\pi_1M_w\not \cong \pi_1Y= F_r\times \Z$ since $M_w$ contains an exceptional fiber. Therefore, $\varphi$ must induce an isomorphism $\varphi_w:\pi_1M_w\ttt\pi_1N_w$. Since both $M_w$ and $N_w$ have exactly one boundary component, which is adjoint to $Y$, the isomorphism $\varphi_w$ obtained from the congruent isomorphism indeed respects the peripheral structure. Thus, \autoref{Wald} implies that $M_w$ and $N_w$ are homeomorphic, which is a contradiction with our construction.
\end{proof}

In particular, this construction yields $\widehat{\pi_1M}\cong \widehat{\pi_1N}$ while $\pi_1M\not\cong \pi_1N$.

\subsection{Profinite almost rigidity}\label{subsec: almost rigid}
In this subsection, we prove \autoref{Mainthm: Almost rigid}. 

Let $\mathscr{M}$ denote the class of compact, orientable 3-manifolds; let $\mathscr{M}'$ denote the sub-class of $\mathscr{M}$ consisting of   boundary-incompressible 3-manifolds; and let $\mathscr{M}_0$ denote the sub-class of $\mathscr{M}'$ consisting of irreducible 3-manifolds with empty or incompressible toral boundary.  Then  \autoref{Mainthm: Almost rigid} is restated as follows.

\begin{THMC}
Any compact, orientable 3-manifold with empty or  toral boundary is profinitely almost rigid in $\mathscr{M}$.
\end{THMC}

We shall first reduce \autoref{Mainthm: Almost rigid} to the most crucial case.

\begin{lemma}\label{LEM: compression}
Suppose $M\in \mathscr{M}$. Then, there exists $M'\in \mathscr{M}'$ such that $\pi_1M\cong \pi_1M'$. In addition, if $M$ has empty or toral boundary, then so does $M'$. 
\end{lemma}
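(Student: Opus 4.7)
The plan is to reduce to a single prime factor via the Kneser--Milnor decomposition, and then handle each prime factor using standard $3$-manifold topology. I would begin by writing $M = M_1 \# \cdots \# M_m \# (S^2\times S^1)^{\#r}$, where each $M_i$ is irreducible and $M_i \not\cong S^3$. If for each $M_i$ we produce some $M_i' \in \mathscr{M}'$ with $\pi_1 M_i' \cong \pi_1 M_i$, then $M' := M_1' \# \cdots \# M_m' \# (S^2\times S^1)^{\#r}$ lies in $\mathscr{M}'$ (each summand being boundary-incompressible) and satisfies $\pi_1 M' \cong \pi_1 M$ by van Kampen. Moreover, since $\partial M = \bigsqcup \partial M_i$, if $\partial M$ is empty or toral, so is each $\partial M_i$, and it will suffice to produce $M_i'$ with the same property.

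I would next fix an irreducible prime factor $M_i$. If $\partial M_i$ is empty or incompressible, take $M_i' = M_i \in \mathscr{M}'$. Otherwise $\partial M_i$ is compressible. In the principal case for this paper, where $\partial M$ is empty or toral, $\partial M_i$ consists of tori; a classical consequence of the Loop Theorem together with irreducibility asserts that any compact orientable irreducible $3$-manifold containing a compressible toral boundary component must be a solid torus $D^2\times S^1$. Hence $M_i \cong D^2\times S^1$ with $\pi_1 M_i \cong \mathbb{Z} \cong \pi_1(S^2\times S^1)$, and we may set $M_i' := S^2\times S^1$, which is closed and lies in $\mathscr{M}'$.

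For the general case of an arbitrary compressible boundary, I would invoke the compression-body decomposition: $M_i = N \cup C$, where $C$ is a disjoint union of compression bodies with $\partial_+ C = \partial M_i$, and $N$ is a compact orientable $3$-manifold with incompressible boundary. Equivalently, $M_i$ can be recovered from $N \sqcup (\text{finitely many } 3\text{-balls})$ by attaching a finite collection of $1$-handles. By iterated van Kampen applications, this gives
\begin{equation*}
\pi_1 M_i \cong \pi_1 N_1 \ast \cdots \ast \pi_1 N_s \ast F_k,
\end{equation*}
where $N_1, \ldots, N_s$ are the components of $N$ with non-trivial fundamental group and $k \geq 0$. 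Each $N_j$ inherits irreducibility and incompressible boundary from $M_i$, so $N_j \in \mathscr{M}'$, and $M_i' := N_1 \# \cdots \# N_s \# (S^2\times S^1)^{\#k}$ lies in $\mathscr{M}'$ with the desired fundamental group.

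The main obstacle is establishing the compression-body decomposition and tracking the effect on the fundamental group in the general case; both ingredients are standard in $3$-manifold topology but require care to ensure irreducibility of the pieces $N_j$ and that toral boundary is preserved under compression. For the ``empty or toral'' statement of the lemma, the simpler solid-torus argument in the second paragraph suffices and circumvents this difficulty entirely.
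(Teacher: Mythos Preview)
Your argument is correct, but the paper takes a more direct and elementary route that bypasses both the prime decomposition and the characteristic compression-body decomposition. The paper simply applies the Loop Theorem to find a compressing disc $D$ for any compressible boundary component, attaches a $2$-handle along $\partial D$ (which preserves $\pi_1$ since $\partial D$ is already null-homotopic in $M$), caps off any resulting boundary spheres with $3$-balls, and iterates until no compressible boundary remains. Equivalently, one glues compression bodies onto the \emph{outside} of $M$, whereas your general-case argument excises a characteristic compression body from the \emph{inside}. For the toral-boundary case the paper observes that this process is precisely a Dehn filling, so the resulting $M'$ still has empty or toral boundary; this avoids the detour through ``irreducible with compressible torus boundary is a solid torus'' and the replacement by $S^2\times S^1$ (though of course both procedures output the same $M'$). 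The advantage of your approach is that it identifies $M'$ more explicitly in terms of the incompressible core of each prime factor; the advantage of the paper's approach is that it needs nothing beyond the Loop Theorem and elementary handle theory.
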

\begin{proof}
This is a standard application of the loop theorem \cite{Sta60}. Indeed, $M'$ can be constructed starting from $M$ by repeatedly attaching 2-handles to the compressible boundary components of $M$ while preserving the fundamental group, and then capping off the emerging boundary spheres by 3-handles, until there are no more compressible boundary components. This process can also be viewed as attaching suitable compression-bodies to the compressible boundary components of $M$. In particular, if  $M$ has empty or toral boundary, then $M'$ constructed above is a Dehn filling of $M$, so $M'$ also has empty or toral boundary.
%
\end{proof}

\begin{corollary}\label{COR: reduce to incompressible}
\autoref{Mainthm: Almost rigid} holds if any compact, orientable 3-manifold with empty or incompressible toral boundary is profinitely almost rigid in $\mathscr{M}'$.
\end{corollary}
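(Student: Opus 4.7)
The plan is a straightforward two-step reduction. Let $M\in\mathscr{M}$ have empty or toral boundary; we need to show $\Delta_{\mathscr{M}}(M)$ is finite assuming the hypothesis. First, apply \autoref{LEM: compression} to $M$ to obtain $M'\in\mathscr{M}'$ with empty or toral boundary and $\pi_1M\cong\pi_1M'$, so that $\widehat{\pi_1M}\cong\widehat{\pi_1M'}$. By the standing hypothesis, $M'$ is profinitely almost rigid in $\mathscr{M}'$, so $\Delta_{\mathscr{M}'}(M')$ is finite.

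Next, for any $N\in\Delta_{\mathscr{M}}(M)$, so that $\widehat{\pi_1N}\cong\widehat{\pi_1M}\cong\widehat{\pi_1M'}$, apply \autoref{LEM: compression} to $N$ to obtain $N'\in\mathscr{M}'$ with $\pi_1N'\cong\pi_1N$. Then
$$\widehat{\pi_1N'}\cong\widehat{\pi_1N}\cong\widehat{\pi_1M'},$$
so $N'\in\Delta_{\mathscr{M}'}(M')$. Hence the assignment $N\mapsto N'$ lands in the finite set $\Delta_{\mathscr{M}'}(M')$, and it remains to bound the fibers.

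For the fiber bound, observe that for each fixed $N'\in\Delta_{\mathscr{M}'}(M')$, \autoref{Joh} guarantees that there are only finitely many homeomorphism types $N\in\mathscr{M}$ with $\pi_1N\cong\pi_1N'$. Combining the finiteness of $\Delta_{\mathscr{M}'}(M')$ with the finiteness of each fiber yields finiteness of $\Delta_{\mathscr{M}}(M)$, completing the reduction. I anticipate no real obstacle here: both \autoref{LEM: compression} and \autoref{Joh} do all the heavy lifting, and the only care needed is to note that \autoref{LEM: compression} preserves the condition of having empty or toral boundary, which is exactly what allows the hypothesis to be invoked for $M'$.
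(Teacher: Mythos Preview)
Your proof is correct and follows essentially the same approach as the paper: pass from $M$ to a boundary-incompressible $M'$ via \autoref{LEM: compression}, use the hypothesis to get finiteness of $\Delta_{\mathscr{M}'}(M')$, and then bound $\Delta_{\mathscr{M}}(M)$ by noting each $N\in\Delta_{\mathscr{M}}(M)$ has $\pi_1N$ isomorphic to $\pi_1N'$ for some $N'\in\Delta_{\mathscr{M}'}(M')$, with the fibers controlled by \autoref{Joh}. The paper phrases the conclusion as an inclusion $\Delta_{\mathscr{M}}(M)\subseteq\bigcup_{N'\in\Delta_{\mathscr{M}'}(M')}\{M_\star\in\mathscr{M}\mid\pi_1M_\star\cong\pi_1N'\}$, which is exactly your fiber argument rewritten.
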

\begin{proof}
Suppose $M\in\mathscr{M}$ has empty or toral boundary. Let $M'\in \mathscr{M}'$ be the manifold constructed in \autoref{LEM: compression}   such that $\pi_1M\cong \pi_1M'$. We claim that 
\begin{equation}\label{EQU:10.2}
\Delta_{\mathscr{M}}(M)=\bigcup_{N\in \Delta_{\mathscr{M}'}(M')}\left\{ M_\star\in \mathscr{M}\mid \pi_1M_\star\cong \pi_1 N\right\}.
\end{equation}
The ``$\supseteq$'' direction of (\ref{EQU:10.2}) is clear. 
On the other hand, for any $M_\star\in \Delta_{\mathscr{M}(M)}$, let $M_\star'\in \mathscr{M}'$ be constructed according to  \autoref{LEM: compression}   such that $ {\pi_1M_\star}\cong  {\pi_1M_\star'}$. Then $\widehat{\pi_1M_\star'}\cong \widehat{\pi_1M_\star}\cong \widehat{\pi_1M}\cong \widehat{\pi_1M'}$, i.e. $M_\star'\in \Delta_{\mathscr{M}'}(M')$. This implies that $M_\star$ belongs to the right-hand side of (\ref{EQU:10.2}).  

Note that $M'$ has empty or incompressible toral boundary. If $M'$ is profinitely almost rigid in $\mathscr{M}'$, then $\Delta_{\mathscr{M}'}(M')$ is finite. In addition, for each $N\in \Delta_{\mathscr{M}'}(M')$, the set $\{ M_\star\in \mathscr{M}\mid \pi_1M_\star\cong \pi_1 N\}$ is finite according to \autoref{Joh}. Thus, the union $\Delta_{\mathscr{M}}(M)$ is finite, finishing the proof.
\end{proof}

Moreover, the profinite detection of prime decomposition (\autoref{THM: Detect Kneser-Milnor}) reduces the proof to the sub-class of irreducible 3-manifolds, and earlier results on $Sol$-manifolds \cite{GPS80}, hyperbolic manifolds \cite{Liu23} and graph manifolds  (\autoref{THM: Graph mfd almost rigid}) further reduces the proof to the profinite  rigidity of mixed 3-manifolds, which is the crucial part.

\begin{lemma}\label{LEM: reduce to mixed}
\autoref{Mainthm: Almost rigid} holds if any mixed 3-manifold is profinitely almost rigid in $\mathscr{M}_0$.
\end{lemma}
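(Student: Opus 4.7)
The plan is to combine the two previous reduction steps with the profinite almost rigidity results already established for each geometric/topological type in $\mathscr{M}_0$, invoking the hypothesis only for the mixed case. First I would apply \autoref{COR: reduce to incompressible} to replace the target class $\mathscr{M}$ by $\mathscr{M}'$, so it suffices to show that any compact, orientable 3-manifold $M$ with empty or incompressible toral boundary is profinitely almost rigid in $\mathscr{M}'$. Given such an $M$ and any $N\in\mathscr{M}'$ with $\widehat{\pi_1N}\cong\widehat{\pi_1M}$, \autoref{COR: Determine boundary} forces $N$ to have empty or toral boundary, and \autoref{THM: Detect Kneser-Milnor} matches the prime decompositions of $M$ and $N$ factor-by-factor up to profinite isomorphism (with equal numbers of $S^2\times S^1$ summands).

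Next I would observe that each irreducible prime factor of $M$ (and correspondingly of $N$) lies in $\mathscr{M}_0$: the toral boundary of $M$ distributes among the irreducible factors, and a compressing disk in a factor would be a compressing disk in the ambient $M'$-type manifold, contradicting boundary incompressibility. So it is enough to show that every $M_i\in\mathscr{M}_0$ is profinitely almost rigid in $\mathscr{M}_0$; once each $\widehat{\pi_1M_i}$ admits only finitely many realisations in $\mathscr{M}_0$, there are only finitely many resulting multisets $\{N_j\}$, and \autoref{Joh} (applied to the reconstructed fundamental group $\pi_1N\cong\pi_1N_1\ast\cdots\ast\pi_1N_n\ast F_r$) bounds the ambient $N$ to finitely many homeomorphism types.

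Within $\mathscr{M}_0$, each manifold falls into one of several mutually exclusive classes, and I would use the profinite completion to detect which class it belongs to. The detection tools are: residual finiteness of $\pi_1M_i$ for the finite (closed spherical) case; the characterisation of Seifert fibered pieces via the unique maximal virtually central procyclic subgroup (\autoref{PROP: Virtually central} and the Wilton–Zalesskii criteria cited in the proof of \autoref{THM: Profinite Isom up to Conj}); and \autoref{THM: Profinite Isom up to Conj} itself to recognise the JSJ-graph along with the Seifert/hyperbolic labelling of its vertices, which separates the graph-manifold, mixed, and purely hyperbolic cases. In each class, profinite almost rigidity is already in hand: the finite case is immediate from \autoref{Joh}; closed $Sol$-manifolds via \cite{GPS80}; closed and bounded Seifert fibered spaces via \cite{Wil17} and \autoref{PROP: Seifert almost rigid}; finite-volume hyperbolic manifolds via \cite{Liu23}; non-trivial graph manifolds via \autoref{THM: Graph mfd almost rigid}; and the mixed case by the standing hypothesis of the lemma.

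The main work is essentially bookkeeping, so the only delicate point I expect is making the last step of the reduction rigorous: namely, that bounding the profinite genus of each prime factor really bounds the profinite genus of the reassembled manifold in $\mathscr{M}'$. This uses the validity of Kneser's conjecture together with \autoref{Joh} to go from the list of fundamental groups $\{\pi_1N_i\}$ (each of which has only finitely many possibilities by the irreducible case just handled) back to the homeomorphism type of $N$; this is precisely the content already packaged in \autoref{COR: reduce to incompressible}, so no new ingredient is needed.
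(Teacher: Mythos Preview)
Your outline is correct and follows the paper's proof essentially step for step: reduce to $\mathscr{M}'$ via \autoref{COR: reduce to incompressible}, use \autoref{COR: Determine boundary} and \autoref{THM: Detect Kneser-Milnor} to pass to irreducible factors in $\mathscr{M}_0$, then run a case analysis ($Sol$ via \cite{GPS80}, Seifert via \autoref{PROP: Seifert almost rigid}, hyperbolic via \cite{Liu23}, graph manifolds via \autoref{THM: Graph mfd almost rigid}, mixed by hypothesis). Two small corrections: the reassembly of $N$ from its prime factors needs only the uniqueness of Kneser--Milnor prime decomposition (plus a choice of orientation on each factor), not \autoref{Joh}, and this is \emph{not} the content of \autoref{COR: reduce to incompressible}; also, the $Sol$ case must be detected and set aside \emph{before} invoking \autoref{THM: Profinite Isom up to Conj}, since that theorem explicitly excludes closed $Sol$-manifolds---the paper cites \cite[Theorem~8.4]{WZ17} for the detection.
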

\begin{proof}
According to \autoref{COR: reduce to incompressible}, it suffices to show that any $M\in \mathscr{M}'$ with empty or incompressible toral boundary is profinitely almost rigid in $\mathscr{M}'$. In fact, for any $N\in \Delta_{\mathscr{M}'}(M)$, \autoref{COR: Determine boundary} implies that $N$ also has empty or toral boundary. 

In addition, suppose the prime decomposition of $M$ is $M\cong M_1\# \cdots \# M_m\#(S^2\times S^1)^{\# r}$, where $M_i\in \mathscr{M}_0$. Then \autoref{THM: Detect Kneser-Milnor} implies the  prime decomposition  $N\cong N_1\#\cdots N_m\#(S^2\times S^1)^{\#r}$, where  $N_i \in \Delta_{\mathscr{M}_0}(M_i)$ for each $1\le i\le m$. Note that the homeomorphism type of $N$ can be determined by the homeomorphism type of each $N_i$ together with an orientation chosen for each $N_i$. Thus, $\#\Delta_{\mathscr{M}'}(M)<+\infty$ if and only if $\#\Delta_{\mathscr{M}_0}(M_i)<+\infty$ for each $1\le i \le m$.

Therefore, \autoref{Mainthm: Almost rigid} holds if  $\Delta_{\mathscr{M}_0}(M )$ is finite for any $M \in \mathscr{M}_0$.

When $M $ is a closed $Sol$-manifold,  any $N\in  \Delta_{\mathscr{M}_0}(M )$ is also closed according to \autoref{COR: Determine boundary}. \cite[Theorem 8.4]{WZ17} further implies that $N$ is also a $Sol$-manifold. The conclusion follows from the profinite almost rigidity of $Sol$-manifolds deduced from the profinite almost rigidity of polycyclic-by-finite groups \cite{GPS80}.

Similarly, when $M$ is either a finite-volume hyperbolic 3-manifold, a Seifert fibered space, or a  graph manifold, any $N\in  \Delta_{\mathscr{M}_0}(M )$ is also either finite-volume hyperbolic, Seifert fibered, or a  graph manifold according to \autoref{THM: Profinite Isom up to Conj}, and the finiteness of $\Delta_{\mathscr{M}_0}(M)$ follows from \autoref{Liu1}, \autoref{PROP: Seifert almost rigid}, or \autoref{THM: Graph mfd almost rigid} respectively.  

The only remaining case is that $M $ is mixed. Thus, \autoref{Mainthm: Almost rigid} holds if any mixed 3-manifold $M \in \mathscr{M}_0$ is profinitely almost rigid in $\mathscr{M}_0$.
\end{proof}

We are now ready to complete the proof of \autoref{Mainthm: Almost rigid}.

\begin{proof}[Proof of \autoref{Mainthm: Almost rigid}]
Based on \autoref{LEM: reduce to mixed}, it suffices to show that any mixed 3-manifold $M$ is profinitely almost rigid in $\mathscr{M}_0$.According to \autoref{THM: Profinite Isom up to Conj}, all manifolds in $ \Delta_{\mathscr{M}_0}(M)$ can be identified over the same JSJ-graph $\Gamma$ with each vertex fixed as either hyperbolic type or Seifert type, so that for each $N\in \Delta_{\mathscr{M}_0}(M)$, there exists a congruent isomorphism $f_\bullet:(\widehat{\mathcal{G}_M},\Gamma)\to(\widehat{\mathcal{G}_N},\Gamma)$ such that $\F=id_{\Gamma}$. 

Let us first list some notations related to the oriented graph $\Gamma$. 
Let $V_{\text{hyp}}(\Gamma)$ denote the collection of hyperbolic vertices in $\Gamma$, and each $v\in V_{\text{hyp}}(\Gamma)$ is also viewed as a subgraph of $\Gamma$. Let $\Lambda_1,\cdots, \Lambda_m$ be the connected components of the full subgraph spanned by the Seifert vertices. 
Let $\Gamma_\ast$ be an oriented graph obtained from $\Gamma$ by collapsing each $\Lambda_i$ to a vertex. 
Then we can identify  $V(\Gamma_\ast)$ as $V_{\text{hyp}}(\Gamma)\cup\{\Lambda_1,\cdots,\Lambda_m\}$, and identify  $E(\Gamma_\ast)$ as the collection of edges in $\Gamma$ adjoint to a hyperbolic vertex.  Let $\delta_{0,1}$ denote the relation map in $\Gamma_\ast$, which is to say for $e\in E(\Gamma_\ast)$ and $\alpha\in \{0,1\}$,  $\delta_{\alpha}(e)=\Upsilon\in V(\Gamma_\ast)$ if and only if $d_{\alpha}(e)$ belongs to the subgraph $\Upsilon$. 

For each $N\in \Delta_{\mathscr{M}_0}(M)$ and $\Upsilon\in V(\Gamma_\ast)$,  denote by $N_{\Upsilon}$  the submanifold of $N$ corresponding to the subgraph $\Upsilon$, which is either a hyperbolic manifold or a graph manifold. For any $e\in E(\Gamma_\ast)$ and $\alpha\in \{0,1\}$, let $\partial_{\alpha,e} N_{\delta_\alpha(e)}$ denote the boundary component of $N_{\delta_\alpha(e)}$ obtained from the $\delta_\alpha$-side of the JSJ-torus corresponding to $e$. We define
$$
\mathcal{P}_{\Upsilon}^N=\left\{ \partial_{\alpha,e} N_{\delta_\alpha(e)} \mid \delta_\alpha(e)=\Upsilon,\; e\in E(\Gamma_\ast)\text{ and }\alpha\in \{0,1\}\right\}
$$
 as a collection of boundary components of $N_\Upsilon$. 

Then there are profinite isomorphisms 
\begin{equation*}
\begin{tikzcd}[row sep=tiny]
\widehat{\pi_1M_{\Upsilon}}\cong \Pi_1(\widehat{\mathcal{G}_M},\Upsilon) \arrow[r,"\cong"] & \Pi_1(\widehat{\mathcal{G}_N},\Upsilon)\cong  \widehat{\pi_1N_{\Upsilon}},\quad \Upsilon\in V(\Gamma_\ast)
\end{tikzcd}
\end{equation*}
induced by the congruent isomorphism $f_\bullet$ on the subgraphs $\Upsilon$ according to  \autoref{LEM: congruent isom induce isom}. 


According to \autoref{Liu1} for the hyperbolic pieces, and  \autoref{THM: Graph mfd almost rigid} for the graph submanifolds, the profinite completion determines the homeomorphism type of $N_v$ ($v\in V_{\text{hyp}}$) and $N_{\Lambda_i}$ to finitely many possibilities. 
To be more precise, we can divide $ \Delta_{\mathscr{M}_0}(M)$ into finitely many subsets $\Delta_1,\cdots,\Delta_n$, such that for any $N$ and $N'$ belonging to the same subset $\Delta_j$, there is a homeomorphism $\Psi_{\Upsilon}: N'_\Upsilon\ttt N_\Upsilon$ for each $\Upsilon \in V(\Gamma_\ast)$ so that $\Psi_{\delta_\alpha(e)}(\partial_{\alpha,e}N'_{\delta_\alpha(e)})=\partial_{\alpha,e} N_{\delta_\alpha(e)}$ for each $e\in E(\Gamma_\ast)$ and $\alpha\in \{0,1\}$. 
In other words, the manifolds in $\Delta_j$ have the same hyperbolic and graph manifold pieces, with the same boundary pairing. 
In order to show the finiteness of each $\Delta_j$, it remains to determine how these pieces are glued up along the JSJ-tori. 

For each $\Delta_j$, we fix a representative $N\in \Delta_j$. Then, for any $N'\in \Delta_j$, we fix the aforementioned homeomorphisms $\Psi_\Upsilon: N'_\Upsilon\to N_\Upsilon$, and identify $N'_\Upsilon$ with $N_\Upsilon$ through these homeomorphisms in the following context. In addition, we fix a congruent isomorphism $f^{N'}_\bullet:(\widehat{\mathcal{G}_{N'}},\Gamma)\to (\widehat{\mathcal{G}_{N}},\Gamma)$ such that $(f^{N'})^\dagger=id_{\Gamma}$. 
Let \begin{equation*}
\begin{tikzcd}
f^{N'}_\Upsilon:\;\widehat{\pi_1N_{\Upsilon}}=\widehat{\pi_1N'_{\Upsilon}} \cong \Pi_1(\widehat{\mathcal{G}_{N'}},\Upsilon)\arrow[r,"\cong"] & \Pi_1(\widehat{\mathcal{G}_N},\Upsilon)\cong  \widehat{\pi_1N_{\Upsilon}},\; \Upsilon\in V(\Gamma_\ast)
\end{tikzcd}
\end{equation*}
be the isomorphisms induced by the congruent isomorphism $f_\bullet$ on the subgraphs $\Upsilon$ according to  \autoref{LEM: congruent isom induce isom}. 
Then, $f^{N'}_{\delta_{\alpha}(e)}$ preserves the conjugacy class of $\overline{\pi_1\partial_{\alpha,e}N_{\delta_\alpha(e)}}$ in $\widehat{\pi_1N_{\delta_\alpha(e)}}$, for any $e\in E(\Gamma_\ast)$ and $\alpha\in \{0,1\}$. 

According to \autoref{THM: Hyperbolic peripheral regular}~(\ref{THM: Hyperbolic peripheral regular (1)}), for each $v\in V_{\text{hyp}}(\Gamma)$, $f^{N'}_v$ is peripheral $\Zx$-regular at all boundary components in $\mathcal{P}^{N'}_v=\mathcal{P}^N_v$. Thus, $f^{N'}_v$ can be viewed as an element in $\XX_{\Zx}(N_v,\mathcal{P}^N_v)$. In addition, any component in $\mathcal{P}^{N'}_{\Lambda_i}=\mathcal{P}^{N}_{\Lambda_i}$ is attached to a hyperbolic piece, i.e. attached to a component in $\mathcal{P}^N_v$ for some $v\in V_{\text{hyp}}(\Gamma)$. Thus, the gluing lemma  (\autoref{Cor: gluing}~(\ref{lemmain-1''})) implies that $f^{N'}_{\Lambda_i}$ is peripheral $\Zx$-regular at all components in $\mathcal{P}^{N}_{\Lambda_i}$. Therefore, $f^{N'}_{\Lambda_i}$ can be viewed as an element $\XX_{\Zx}(N_{\Lambda_i},\mathcal{P}^N_{\Lambda_i})$. 

Moreover, for each $\Upsilon \in V(\Gamma_\ast)$, $N_\Upsilon$ is {\PAA} at $\mathcal{P}^N_\Upsilon$ according to \autoref{COR: Hyperbolic PAA} and \autoref{THM: Hgroup finite index} respectively. 
In other words, $\XX_{h}(N_{\Upsilon},\mathcal{P}^N_{\Upsilon})$ is finite index in $\XX_{\Zx}(N_{\Upsilon},\mathcal{P}^N_{\Upsilon})$. Thus, there exist finitely many representatives $N^{\star}_1,\cdots,N^{\star}_s$ in $\Delta_j$ so that for each $N'\in \Delta_j$, there exists $1\le k\le s$ such that for all $\Upsilon \in V(\Gamma_\ast)$, 
\begin{equation*}
\begin{tikzcd}
((f^{N^\star_k}_\bullet)^{-1}\circ f^{N'}_\bullet)_{\Upsilon}=(f^{N^\star_k}_\Upsilon)^{-1}\circ f^{N'}_\Upsilon: \;\widehat{\pi_1N_{\Upsilon}} \arrow[r,"\cong"] &  \widehat{\pi_1N_{\Upsilon}}
\end{tikzcd}
\end{equation*}
 belongs to $\XX_{h}(N_{\Upsilon},\mathcal{P}^{N}_{\Upsilon})$.

Denote $f'_\bullet = (f^{N^\star_k}_\bullet)^{-1}\circ f^{N'}_\bullet$ as the congruent isomorphism between $(\widehat{\mathcal{G}_{N'}},\Gamma)$ and $(\widehat{\mathcal{G}_{N^\star_k}},\Gamma)$. Then, for each $\Upsilon\in V(\Gamma_\ast)$ there exists a homeomorphism $\Phi_{\Upsilon}: N_{\Upsilon}=N'_{\Upsilon}\ttt (N^\star_k)_{\Upsilon}=N_{\Upsilon}$ such that for each  $\partial_{\alpha,e}N_{\Upsilon}\in \mathcal{P}^N_{\Upsilon}$, $f_{\Upsilon}'$ restricting on $\partial_{\alpha,e}N_{\Upsilon}$ is induced by the  homeomorphism $\Phi_{\Upsilon}$ with coefficient $\lambda_{\alpha,e}\in \Zx$. In particular, for each $e\in E(\Gamma_\ast)$, $\lambda_{0,e}=\pm \lambda_{1,e}$ according to  \autoref{Cor: gluing}~(\ref{lemmain-1''}).

In order to apply the gluing lemma, one actually requires $\lambda_{0,e}= \lambda_{1,e}$. Similar to the proof of \autoref{THM: Hgroup finite index}, one can choose more (a total of at most $2^{\#E(\Gamma_\ast)}s$) representatives $N^{\star\star}_1,\cdots, N^{\star\star}_t$  in $\Delta_j$, such that for each $N'\in \Delta_j$, there exists $1\le l \le t$ such that $f'_\Upsilon= (f^{N^{\star\star}_l}_\Upsilon)^{-1}\circ f^{N'}_\Upsilon\in \XX_{h}(N_{\Upsilon},\mathcal{P}^N_{\Upsilon})$, and $\lambda_{0,e}=\lambda_{1,e}$ for each $e\in E(\Gamma_{\ast})$.  In this case, according to \autoref{Cor: gluing}~(\ref{lemmain-2''}), the homeomorphisms $\{\Phi_{\Upsilon}\}_{\Upsilon\in V(\Gamma_\ast)}$  are compatible with the gluing maps of $N'$ and $N^{\star\star}_l$ at each JSJ-torus corresponding to $e\in E(\Gamma_\ast)$. As a result, $\{\Phi_{\Upsilon}\}_{\Upsilon\in V(\Gamma_\ast)}$  yields an ambient homeomorphism $\Phi: N'\ttt N^{\star\star}_l$.

Therefore, $\Delta_j=\left\{N^{\star\star}_1,\cdots, N^{\star\star}_t\right\}$ is finite, and $ \Delta_{\mathscr{M}_0}(M)=\Delta_1\cup\cdots\cup\Delta_n$ is also finite, which finishes the proof.
\end{proof}

\appendix
\section{Proof of \autoref{THM: finite quotients with marked subgroups}}\label{APPproof}
\autoref{THM: finite quotients with marked subgroups} might be well-known to experts. As we haven't found a documented version of its proof, we supplement this in the appendix.

\newtheorem*{Appproof}{\autoref{THM: finite quotients with marked subgroups}}
\begin{Appproof}
Let $G$ and $G'$ be finitely generated groups. Suppose $H_1,\cdots, H_m$ are subgroups of $G$, and $H_1',\cdots, H_m'$ are subgroups of $G'$.
Then the following are equivalent.
\begin{enumerate}[leftmargin=*]
\item\label{c1} $\mathcal{C} (G;H_1,\cdots, H_m)=\mathcal{C} (G';H_1',\cdots, H_m')$.
\item\label{c3} There exists an isomorphism as profinite groups $f:\widehat{G}\to \widehat{G'}$, such that $f(\overline{H_i})$ is a conjugate of $\overline{H_i'}$ in $\widehat{G'}$ for each $1\le i \le m$.
\item\label{c2} There exists an isomorphism as abstract groups $f:\widehat{G}\to \widehat{G'}$, such that $f(\overline{H_i})$ is a conjugate of $\overline{H_i'}$ in $\widehat{G'}$ for each $1\le i \le m$.
\end{enumerate}
\end{Appproof}

\begin{proof}
(\ref{c1})$\Rightarrow$(\ref{c3}): Let $\mathcal{U}_n=\{U\le G\mid [G:U]\le n\}$, and $\mathcal{V}_n=\{V\le G'\mid [G':V]\le n\}$. Since $G$ and $G'$ are finitely generated, $\mathcal{U}_n$ and $\mathcal{V}_n$ are finite for each $n$. Let $U_n=\bigcap_{U\in\mathcal{U}_n} U$ and $V_n=\bigcap_{V\in\mathcal{V}_n}V$, then $U_n$ and $V_n$ are finite-index normal subgroups of $G$ and $G'$ respectively.

For each $n$, since $\mathcal{C} (G;H_1,\cdots, H_m)=\mathcal{C} (G';H_1',\cdots, H_m')$, there exists $K\lhd_f G$ and an isomorphism $\psi: G/K\ttt G'/V_n$ such that $\psi(H_i/(H_i\cap K))$ is a conjugate of $H_i'/(H_i'\cap V_n)$ for each $1\le i\le m$. 

We claim that $K= U_n$. 

For each $V\in \mathcal{V}_n$, consider the homomorphism 
\begin{equation*}
\begin{centering}
\begin{tikzcd}
f_V:G \arrow[r, two heads] & G/K \arrow[r, "\cong"',"\psi"] & G'/V_n \arrow[r, "\pi_V", two heads] & G'/V.
\end{tikzcd}
\end{centering}
\end{equation*}
Since $\bigcap_{V\in \mathcal{V}_n} \mathrm{ker}(\pi_V)=1$, it follows that $\bigcap_{V\in \mathcal{V}_n} \mathrm{ker}(f_V)=K$. On the other hand, $\abs{G/\mathrm{ker}(f_V)}=\abs{G'/V}\le n$, which implies that $\mathrm{ker}(f_V)\in \mathcal U_n$. Therefore, $K=\bigcap\mathrm{ker}(f_V) \supseteq U_n$. 
In particular, this shows that $\abs{G/U_n}\ge \abs{G/K}=\abs{G'/V_n}$. By switching $G$ and $G'$, we can symmetrically prove that $\abs{G'/V_n}\ge \abs{G/U_n}$. Thus, $\abs{G/U_n}= \abs{G'/V_n}=\abs{G/K}$. This implies that $K=U_n$, i.e. $G/U_n\cong G'/V_n$.

\newsavebox{\xxxyyy}
\begin{lrbox}{\xxxyyy}
$
X_n=\left\{\psi:G/U_n\ttt G'/V_n\mid \psi(H_i/(H_i\cap U_n))\sim H_i'/(H_i'\cap V_n),\,\forall 1\le i\le m\right\},
$
\end{lrbox}

For each $n$, denote 
\begin{equation*}
\scalebox{0.97}{\usebox{\xxxyyy}}
\end{equation*}
where $\sim$ means conjugation in $G'/V_n$. The above construction shows that $X_n$ is non-empty, and $X_n$ is also finite due to the finiteness of $G/U_n$ and $G'/V_n$. 

Notice that $U_{n-1}/U_n$ is the intersection of all subgroups in $G/U_n$ with index no more than $n-1$, 
and so is $V_{n-1}/V_{n}$ in $G'/V_n$. Thus, any element $\psi\in X_n$ sends $U_{n-1}/U_n$ to $V_{n-1}/V_n$. Consequently, $\psi$ induces an isomorphism 
\begin{equation*}
\begin{tikzcd}
\widetilde{\psi}:G/U_{n-1} \arrow[r,"\cong"] &  G'/V_{n-1},
\end{tikzcd}
\end{equation*}
which sends $$\frac{H_i/(H_i\cap U_n)}{(H_i/(H_i\cap U_n) )\cap( U_{n-1}/U_n)}=\frac{H_i}{H_i\cap U_{n-1}}$$ to a conjugate of $$\frac{H_i'/(H_i'\cap V_n)}{(H_i'/(H_i'\cap V_n) )\cap(V_{n-1}/V_n)}=\frac{H_i'}{H_i'\cap V_{n-1}}.$$  Therefore, we have constructed a map $X_n\to X_{n-1}$ sending $\psi\in X_n$ to $\widetilde{\psi}\in X_{n-1}$, which makes  $\left\{X_n\right\}$ an inverse system of non-empty finite sets over a directed partially ordered set. It follows from \cite[Proposition 1.1.4]{RZ10} that $\limi_n X_n$ is non-empty.

Choose $(\psi_n)_{n\ge 0}\in \limi X_n$. Indeed, we obtain a series of compatible isomorphisms $\psi_n:G/U_n\stackrel{\cong}{\longrightarrow}G'/V_n$. As $U_n$ and $V_n$ are cofinal systems of finite-index normal subgroups in $G_1$ and $G_2$, by taking inverse limits, we derive an isomorphism of profinite groups $$\hat{\psi}: \widehat{G}=\limi G/U_n\stackrel{\cong}{\longrightarrow}\limi G'/V_n=\widehat{G'}.$$

Denote $$Y_{n,i}=\left\{ g_{n,i}\in G'/V_n\mid \psi_n(H_i/(H_i\cap U_n))=g_{n,i} (H_i'/(H_i'\cap V_n)) g_{n,i}^{-1}\right\}.$$ Then $Y_{n,i}$ is a non-empty finite set. Note that the maps $(\psi_n)$ are compatible, and thus the quotient map $G'/V_n \twoheadrightarrow G'/V_{n-1}$ induces a map $Y_{n,i} \to Y_{n-1,i}$, making $\{Y_{n,i}\}$ an inverse system. Again, \cite[Proposition 1.1.4]{RZ10} implies that $\limi_n Y_{n,i}$ is non-empty. 
Note that $$\overline{H_i}=\limi H_i/(H_i\cap U_n) \subseteq \widehat{G},\; \text{and }\quad\overline{H_i'}=\limi H_i'/(H_i'\cap V_n) \subseteq \widehat{G'}.$$  
Thus, by choosing $\hat{g}_i\in \limi Y_{n,i} \subseteq \limi G'/V_n =\widehat{G'}$,  the above construction shows that $\hat{\psi}(\overline{H_i})=\hat{g}_i \cdot \overline{H_i'} \cdot \hat{g}_i^{-1}$.

(\ref{c3})$\Rightarrow$(\ref{c2}) is trivial.

(\ref{c2})$\Rightarrow$(\ref{c1}): Condition (\ref{c2}) actually implies that $$\mathcal{C} (\widehat{G};\overline{H_1},\cdots, \overline{H_m})=\mathcal{C} (\widehat{G'};\overline{H_1'},\cdots, \overline{H_m'}).$$ Thus, it suffices to show that 
\begin{equation}\label{cplEQU1}
\mathcal{C} (G; H_1,\cdots, H_m)=\mathcal{C} (\widehat{G};\overline{H_1},\cdots, \overline{H_m}),\end{equation}
 and 
\begin{equation}\label{cplEQU2}
\mathcal{C} (G'; H_1',\cdots, H_m')=\mathcal{C} (\widehat{G'};\overline{H_1'},\cdots, \overline{H_m'}).\end{equation}


Since $G$ is finitely generated, it follows from a deep theorem of Nikolov-Segal \cite{NS03} that every finite-index subgroup in $\widehat{G}$ is open.  
Thus, \autoref{THM: correspondence of subgroup} implies that there is a one-to-one correspondence between the finite-index normal subgroups of $G$ and the finite-index normal subgroups of $\widehat{G}$, by sending $U\lhd_fG$ to $\overline{U}\lhd_oG$. We claim that 
\begin{equation}\label{cplEQU3}
\scalebox{.89}{
$\left[\quo{G}{U};\,\quo{H_1}{(H_1\cap U)},\cdots,\quo{H_m}{(H_m\cap U)}\right]$}=\scalebox{.8}{$\left[\quo{\widehat{G}}{\overline{U}};\,\quo{\overline{H_1}}{(\overline{H_1}\cap \overline U)},\cdots,\quo{\overline {H_m}}{(\overline {H_m}\cap \overline U)}\right].$
}
\end{equation}
\normalsize

In fact, let $\iota: G\to \widehat{G}$ be the canonical homomorphism. Consider the homomorphism 
\begin{equation*}
\begin{tikzcd}
\phi:\;G \arrow[r, "\iota"] & \widehat{G} \arrow[r] & \widehat{G}/\overline{U}.
\end{tikzcd}
\end{equation*}
 Note that $\phi(G)$ is dense in $\widehat{G}/\overline{U}$, since $\iota(G)$ is dense in $\widehat{G}$. In addition, since $\overline{U}\lhd_o \widehat{G}$, the quotient group $\widehat{G}/\overline{U}$ is a finite discrete group. Therefore, $\phi$ is surjective. On the other hand, $\ker(\phi)=\iota^{-1}(\overline{U})=U$. Thus, $\phi$ induces an isomorphism 
\begin{equation*}
\begin{tikzcd}
\phi_0: G/U\arrow[r,"\cong"] &  \widehat{G}/\overline{U}
\end{tikzcd}
\end{equation*}
between finite groups.

By construction, for each $1\le i \le m$, $\phi_0(H_i/(H_i\cap U))$ is a dense subgroup of $\overline{H_i}/(\overline{H_i}\cap \overline{U})$. Since $\overline{H_i}/(\overline{H_i}\cap \overline{U})\le \widehat{G}/\overline{U}$ is finite and discrete, we conclude that $\phi_0(H_i/(H_i\cap U))=\overline{H_i}/(\overline{H_i}\cap \overline{U})$. 

Therefore, we obtain (\ref{cplEQU3}) through $\phi_0$, which implies (\ref{cplEQU1}). Similarly, (\ref{cplEQU2}) holds, which finishes the proof of this theorem.
%
\end{proof}

\bibliographystyle{amsplain}

\providecommand{\bysame}{\leavevmode\hbox to3em{\hrulefill}\thinspace}
\providecommand{\MR}{\relax\ifhmode\unskip\space\fi MR }
\providecommand{\MRhref}[2]{%
  \href{http://www.ams.org/mathscinet-getitem?mr=#1}{#2}
}
\providecommand{\href}[2]{#2}

\end{sloppypar}
\end{document}